\title[Stability for Martin boundaries in rel. hyp. groups]
{Stability phenomena for Martin boundaries of relatively hyperbolic groups}
\author{Matthieu Dussaule}
\author{Ilya Gekhtman}
\date{}
\newcommand\N{\mathbb{N}}
\newcommand\Z{\mathbb{Z}}
\newcommand\R{\mathbb{R}}
\theoremstyle{definition}
\newtheorem{hyp}{Assumption}
\theoremstyle{plain}
\newtheorem{definition}{Definition}[section]
\newtheorem{proposition}[definition]{Proposition}
\newtheorem{corollary}[definition]{Corollary}
\newtheorem{theorem}[definition]{Theorem}
\newtheorem{lemma}[definition]{Lemma}
\newtheorem*{thm*}{Theorem}
\newtheorem*{prop*}{Proposition}
\newtheorem*{lem*}{Lemma}
\theoremstyle{remark}
\newtheorem*{rem*}{Remark}
\DeclareMathOperator{\Cay}{Cay}
\DeclareMathOperator{\supp}{supp}
\DeclareMathOperator{\Tr}{Tr}
\DeclareMathOperator{\Leb}{Leb}
\DeclareMathOperator{\reg}{reg}
\apptocmd{\sloppy}{\hbadness 10000\relax}{}{}
\apptocmd{\sloppy}{\vbadness 10000\relax}{}{}
\begin{document}

\begin{abstract}
    Let $\Gamma$ be a relatively hyperbolic group and let $\mu$ be an admissible symmetric finitely supported probability measure on $\Gamma$.
    We extend Floyd-Ancona type inequalities from \cite{GGPY} up to the spectral radius $R$ of $\mu$.
    We use them to find the precise homeomorphism type of the $r$-Martin boundary, which describes $r$-harmonic functions, for every $r\leq R$.
    We also define a notion of spectral degeneracy along parabolic subgroups which is crucial to describe the homeomorphism type of the $R$-Martin boundary.
    Finally, we give a criterion for (strong) stability of the Martin boundary in the sense of Picardello and Woess \cite{PicardelloWoess} in terms of spectral degeneracy.
    We then prove that this criterion is always satisfied in small rank,
    so that in particular,
    the Martin boundary of an admissible symmetric finitely supported probability measure on a geometrically finite Kleinian group of dimension at most 5 is always strongly stable.
\end{abstract}

\maketitle

\section{Introduction}\label{Sectionintroduction}
\subsection{Martin boundaries and stability}
We consider a finitely generated group $\Gamma$ together with a probability measure $\mu$.
The $\mu$-random walk starting at $e$ on $\Gamma$ is defined as
$X_n=g_1...g_n$,
where the $g_k$ are independent, identically distributed according to $\mu$ random variables on $\Gamma$.
We always assume that the random walk is admissible: the support of $\mu$ generates the group $\Gamma$ as a semi-group.
In many situations, one can understand the asymptotic behaviour of $X_n$ in terms of geometric properties at large scale of $\Gamma$.

One way to do so is to try to compare geometric boundaries of the group, encoding how geodesics behave at infinity, with probabilistic boundaries, encoding how the random walk behaves asymptotically.
In the groups we will study here, the random walk will always be transient, meaning that it almost surely goes to infinity.
One can then define the \textit{Martin boundary} as follows.

Define the Green function $G(\cdot,\cdot)$ by
$$G(x,y)=\sum_{n\geq 0}\mu^{*n}(x^{-1}y),$$
where $\mu^{*n}$ is the $n$th convolution power of $\mu$.
Precisely,
$$\mu^{*n}=\sum_{z_0=x,z_1,...,z_n=y}\mu(z_0^{-1}z_1)\mu(z_1^{-1}z_2)...\mu(z_{n-1}^{-1}z_n).$$

Define then the Martin kernel $K(\cdot,\cdot)$ by
$$K(x,y)=\frac{G(x,y)}{G(e,y)}.$$
We endow $\Gamma$ with the discrete topology.
The Martin compactification of $\Gamma$ and $\mu$ is the smallest compact metrizable set $X$ such that $\Gamma$ embeds as a dense and open set in $X$ and such that the function $K(\cdot,\cdot)$ extends as a continuous function on $\Gamma \times X$.
In other words, a sequence $g_n$ in $\Gamma$ converges to a point in the Martin compactification if and only if
for every $g\in \Gamma$, $K(g,g_n)$ converges to some limit.
The Martin boundary is the complement of $\Gamma$ in $X$.
The Martin compactification and the Martin boundary always abstractly exist and they are unique up to homeomorphism, see \cite{Kaimanovichboundaries} or \cite{Sawyer}.
We will denote the Martin boundary by $\partial_{\mu}\Gamma$.

The random walk almost surely converges to a point in the Martin boundary (see \cite{Sawyer}).
Letting $X_{\infty}$ be the corresponding random variable in $\partial_{\mu}\Gamma$, we can consider the law of $X_{\infty}$.
This yields a probability measure $\nu$ on $\partial_{\mu}\Gamma$ which is called the \textit{harmonic measure}.
The Martin boundary, together with $\nu$, is a model for the so-called \textit{Poisson boundary}.
We will not be interested in the Poisson boundary in the following, so we do not give more details and instead refer to \cite{Vershik}, \cite{KaimanovichVershik} or \cite{Kaimanovichannals}.

\medskip
More generally, one can define the weighted Green function at $r$ by
$$G_r(x,y)=\sum_{n\geq 0}r^n\mu^{*n}(x^{-1}y).$$
Let $R$ be the radius of convergence of this power series, which we call the \textit{spectral radius} of $\mu$.
All groups we will study in this paper are non-amenable.
It thus follows from a result of Guivarc'h (see \cite[p.~85, remark~b)]{Guivarch}) that for every $x,y$, $G_{R}(x,y)<+\infty$ (see also \cite[Theorem~7.8]{Woess-book} for a stronger statement).
For $r\leq R$, we then define similarly the $r$-Martin kernel by
$$K_r(x,y)=\frac{G_r(x,y)}{G_r(e,y)}$$
and the $r$-Martin compactification and boundary.
As before, a sequence $g_n$ in $\Gamma$ converges to a point in the $r$-Martin compactification if and only if
for every $g\in \Gamma$, $K_r(g,g_n)$ converges to some limit.
The $r$-Martin compactification also always abstractly exist, see \cite{Woess-book} and is unique up to homeomorphism.
We will denote the $r$-Martin boundary by $\partial_{r\mu}\Gamma$.

One important aspect of the $r$-Martin boundary is that it gives a description of $r$-harmonic positive functions in the following sense.
Recall that a function $f:\Gamma \to \R$ is called $r$-harmonic if for all $x\in \Gamma$
$$\sum_{y\in \Gamma}\mu(x^{-1}y)f(y)=rf(x).$$
This can be written as $Pf=rf$, where $P$ is the Markov operator associated with $\mu$.
Every $r$-harmonic positive function can be represented as an integral over the Martin boundary.
Precisely, for every such function $f$, there exists a probability measure $\nu_f$ on $\partial_{r\mu}\Gamma$ such that for all $x\in \Gamma$,
$$f(x)=\int_{\partial_{r\mu}\Gamma}K(x,\xi)\nu_f(d\xi).$$
In general, the measure $\nu_f$ is not unique.
To obtain uniqueness, we restrict our attention to the minimal boundary that we now define.

An $r$-harmonic positive function $f$ is called minimal if for every other $r$-harmonic positive function $g$ satisfying $g\leq Cf$ for some constant $C$, we have
$g=C'f$ for some constant $C'$.
The $r$-minimal Martin boundary is then the set
$$\partial_{r\mu}^{m}\Gamma=\{\xi \in \partial_{r\mu}\Gamma, K_r(\cdot,\xi) \text{ is minimal harmonic}\}.$$

Let $f$ be an $r$-harmonic positive function.
Then, one can choose $\nu_f$ giving full measure to $\partial_{r\mu}^m\Gamma$ and in this case, $\nu_f$ is unique.
This is the so-called Martin representation Theorem.

\medskip
In \cite{PicardelloWoess}, Picardello and Woess define the stability of the Martin boundary of a random walk (or more generally of a Markov chain) as follows (see also \cite[IV.28.A]{Woess-book}).

\begin{definition}
Let $\Gamma$ be a finitely generated group and $\mu$ be a probability measure on $\Gamma$.
Let $R$ be the spectral radius of $\mu$.
Say that the Martin boundary is stable if the following conditions hold.
\begin{enumerate}
    \item For every $0<r_1,r_2< R$, the $r_1$ and $r_2$-Martin compactifications are homeomorphic, that is, $K_{r_1}(\cdot, y_n)$ converges pointwise if and only if $K_{r_2}(\cdot,y_n)$ converges pointwise.
    For simplicity we then write then $\partial_{\mu}\Gamma$ for the $r$-Martin boundary whenever $0<r<R$.
    \item The identity on $\Gamma$ extends to a continuous and equivariant surjective map $\phi_{\mu}$ from $\Gamma\cup \partial_{\mu}\Gamma$ to $\Gamma \cup \partial_{R\mu}\Gamma$.
    We then write $K_R(x,\xi)=K_R(x,\phi_{\mu}(\xi))$ for $\xi\in \partial_\mu\Gamma$.
    \item The map $(r,x,\xi)\in (0,R]\times \Gamma \times \partial_\mu\Gamma \mapsto K_r(x,\xi)$ is continuous in the three variables $(r,x,\xi)$.
\end{enumerate}
Also say that the Martin boundary is strongly stable if the first condition holds for every $0<r_1,r_2\leq R$ so that in particular, the map $\phi_{\mu}$ induces a homeomorphism from the $r$-Martin boundary to the $R$-Martin boundary.
\end{definition}

In general, identifying the homeomorphism type of the Martin boundary is difficult
and there are few example for which we know whether the Martin boundary is stable.
Often, all the $r$-Martin compactifications are homeomorphic for $r<R$ but not at $r=R$.

Let us give some examples.
We start with the classical case of finitely supported random walks in $\Z^d$.
It is well known there are no non-constant positive $R$-harmonic function and so the $R$-Martin boundary is reduced to a point.
Actually to ensure the existence of the $R$-Martin boundary, we need to ensure that the random walk is $R$-transient, that is $G_R(x,y)$ is finite, so we need to assume that $d\geq 3$ (see \cite[\S 7.B]{Woess-book} for more details).
On the other hand, the $r$-Martin boundary is homeomorphic to a $d-1$-dimensional sphere for $r<R$, as proved by Ney and Spitzer in some particular case \cite{NeySpitzer}.
We refer to \cite[\S 25.B]{Woess-book} for a complete discussion.
In particular, the Martin boundary is stable but not strongly stable.
Moreover, in this situation, the $r$-Martin boundary is minimal for every $r\leq R$.


For nilpotent groups, we do not know in general the homeomorphism type of the $r$-Martin boundary. However, according to results of Margulis \cite{Margulis}, we know that for finitely supported random walks, it is not reduced to a point for $r<R$, whereas the $R$-Martin boundary is reduced to a point. We do not know if the $r$-Martin boundary is minimal when $r< R$.

The situation in hyperbolic groups is different.
Gou\"ezel \cite{Gouezel-local} proved that the Martin boundary of a finitely supported symmetric probability measure on a nonelementary Gromov-hyperbolic group is strongly stable.
For every $r\leq R$, the $r$-Martin boundary  is always minimal and coincides with the Gromov boundary.
This was already proved by Gou\"ezel and Lalley \cite{GouezelLalley} for co-compact Fuchsian groups.

\subsection{Random walks on relatively hyperbolic groups}
In this paper, we are interested in symmetric finitely supported random walks on relatively hyperbolic groups.
We will give a precise definition of these groups below.
Archetypal examples are given by free products and finite co-volume (or more generally geometrically finite) Kleinian groups.
We are particularly interested in the case where the parabolic subgroups (which are the conjugates of the free factors for the particular case of a free product) are virtually abelian,
although a lot of our results still hold for any relatively hyperbolic group.

The Martin boundary of such a group has a strong relation with the so-called Bowditch boundary, the Gromov boundary of a Gromov hyperbolic space on which the group acts geometrically finitely, see \cite{GGPY}.
We will give a precise definition of the Bowditch boundary below.
In the case of a free product, it is constructed by gluing one point at infinity for every coset of every free factor to the set of infinite words.
If the free factors are one-ended, the Bowditch boundary coincides with the set of ends.
The relation between the set of ends and the Martin boundary was explored by Woess in \cite{Woess-ends}.
In the case of a finite co-volume Kleinian group, the Bowditch boundary coincides with the ideal boundary of the hyperbolic space $\mathbb{H}^n$.

The 1-Martin boundary of a finitely supported probability measure on a relatively hyperbolic group is described in \cite{DGGP}.
It is obtained from the Bowditch boundary by blowing up each parabolic point into a sphere of dimension one less than the rank of its stabilizer.
Roughly speaking, those spheres appear as the Martin boundary of the induced random walk on the virtually abelian parabolic subgroups.
The term "induced random walk" refers to the sub-Markov chain corresponding to first return transition kernel to the parabolic subgroup, see Section~\ref{Sectionspectraldegeneracy}.
In other words, the 1-Martin boundary roughly consists of the Martin boundary of the parabolic subgroups, glued to the conical limit points.
In this paper, we will give a precise description of the homeomorphism type of the $r$-Martin boundary for every $r\leq R$.
We will also prove that stability of the Martin boundary depends on the behaviour of the measure $\mu$ along parabolic subgroups.

\medskip
Before giving a precise statement of our results, let us give a brief explanation of what happens in the particular case of adapted random walks on a free product.
Let $\Gamma=\Gamma_1*\Gamma_2$ be a free product of two finitely generated groups $\Gamma_1$ and $\Gamma_2$.
Then, $\Gamma$ is hyperbolic relative to the conjugates of $\Gamma_1$ and $\Gamma_2$.
Recall that a probability measure $\mu$ on $\Gamma$ is called adapted if it can be written as
$$\mu=\alpha \mu_1+(1-\alpha)\mu_2,$$
where $\mu_i$ is a probability measure on $\Gamma_i$ and where $\alpha>0$.
Let $G_r$ be the Green function for $\mu$ on $\Gamma$ and let $G_r^{(i)}$ be the Green function for $\mu_i$ on $\Gamma_i$.
Then, in \cite{Woess-Martin}, Woess shows that for every $r$, there exists $\rho_i(r)$ such that for all $x,y\in \Gamma_i$,
 $G_r(x,y)=G_{\rho_i(r)}^{(i)}(x,y)$.
Moreover, letting $R_i$ be the spectral radius of $\mu_i$
we have $\rho_i(r)<R_i$ whenever $r<R$.
Call $\mu$ spectrally degenerate along $\Gamma_i$ if $\rho_i(R)=R_i$.
In other words, $\mu$ is spectrally degenerate along $\Gamma_i$ if when $\mu$ reaches it spectral radius $R$, then $\rho_i$ reaches the spectral radius $R_i$ of $\mu_i$.

It is then proved in \cite{Woess-Martin} that the $r$-Martin boundary of $\Gamma$ consists of a copy of the $\rho_i(r)$-Martin boundary of $(\Gamma_i,\mu_i)$ for each coset $\gamma \Gamma_i$ glued to the set of infinite words, see \cite{Woess-Martin} for a precise statement.
In particular, if the groups $\Gamma_i$ are virtually abelian, then the Martin boundary of $(\Gamma,\mu)$ can only be strongly stable if the random walk is not spectrally degenerate along $\Gamma_1$ and $\Gamma_2$.

\medskip
We will generalize Woess's results to any relatively hyperbolic group with virtually abelian parabolic subgroups.
We thus need to define a notion of spectral degeneracy along parabolic subgroups which mimics the one above.
We will give a precise formulation in Section~\ref{Sectionspectraldegeneracy}.
Let us give a rough explanation here.
Consider a relatively hyperbolic group $\Gamma$ and let $H$ be a parabolic subgroup.
The main difference with the above situation is that there is no a priori probability measure like $\mu_i$ on $H$.
We introduce the first return transition kernel $p_r$ on $H$ associated with $r\mu$, $r\leq R$.
Then, the spectral radius of this induced kernel $p_r$ is at least 1.
We say that $\mu$ is spectrally degenerate along $H$ if the spectral radius of $p_R$ is exactly 1.
This is another way of evaluating if the induced walk on $H$ reaches its spectral radius as the original $\mu$-random walk on $\Gamma$ reaches its own spectral radius.
We equivalently say that the parabolic limit point fixed by $H$ is spectrally degenerate.

\medskip
We will identify the $R$-Martin boundary with a geometric boundary that we now informally describe, see Section~\ref{Sectionspectraldegeneracy} for a precise construction.
We start with the Bowditch boundary.
If $\xi$ is a parabolic limit point of $\Gamma$ whose stabilizer is a virtually abelian group of rank $d$ and if $\xi$ is not spectrally degenerate, we replace $\xi$ by a sphere of dimension $d-1$.
We do not replace spectrally degenerate parabolic limit points.

We will also identify the $r$-Martin boundary, $r<R$, with a geometric boundary.
In this case, we need to replace every parabolic limit point, spectrally degenerate or not, with a sphere of the appropriate dimension.
We will use the following terminology.
\begin{enumerate}
    \item Whenever $r<R$ we call a boundary obtained by replacing every parabolic fixed point in the Bowditch boundary with a sphere of dimension one less than the rank of its stabilizer a \textit{$r$-geometric boundary}.
    \item We call a boundary obtained by replacing every spectrally non-degenerate parabolic limit point in the Bowditch boundary with a sphere of the appropriate dimension a \textit{$R$-geometric boundary}.
\end{enumerate}
Note that the $R$-geometric boundary depends on the measure $\mu$, whereas for $r<R$ the $r$-geometric boundary does not.
Also note that the identity map on $\Gamma$ extends to a continuous equivariant  surjection from the $r$-geometric compactification to the $R$-geometric compactification, which consists of collapsing the added spheres onto one point.
Our main result is the following identification of the homeomorphism type of the Martin boundary.

\begin{theorem}\label{maintheorem}
Let $\Gamma$ be a non-elementary relatively hyperbolic group with respect to a collection of virtually abelian subgroups.
Let $\mu$ be a symmetric probability measure on $\Gamma$ whose finite support generates $\Gamma$ and let $R$ be its spectral radius.
Then for every $r\leq R$, the identity of $\Gamma$ extends to an equivariant homeomorphism from the $r$-geometric boundary to the $r$-Martin boundary.

In particular, the identity of $\Gamma$ extends to a continuous equivariant surjection $\phi_{\mu}$ from the $r$-Martin compactification to the $R$-Martin compactification.
\end{theorem}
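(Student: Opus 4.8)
The plan is to prove the two claims separately: first the identification of the $r$-Martin boundary with the $r$-geometric boundary for each fixed $r \le R$, and then derive the existence of the collapsing surjection $\phi_\mu$ from the $r=R$ and $r<R$ cases together with the fact that collapsing added spheres is a continuous equivariant map between the geometric compactifications.

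\textbf{Setting up the boundary identification.} For a fixed $r \le R$, I would build the homeomorphism between the $r$-geometric compactification and the $r$-Martin compactification by analyzing convergence of sequences $y_n \to \infty$ in $\Gamma$. The key dichotomy: either $y_n$ stays within bounded distance of a single parabolic coset (eventually, along a subsequence), or it escapes every parabolic coset and converges to a conical limit point of the Bowditch boundary. In the conical case, the extended Floyd--Ancona inequalities up to the spectral radius (the generalization of \cite{GGPY} announced in the abstract) should force $K_r(\cdot, y_n)$ to converge to a Martin kernel depending only on the conical limit point — i.e. the Martin kernel is controlled by the geometry in the thick part, exactly as in the hyperbolic case of Gou\"ezel \cite{Gouezel-local}. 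In the parabolic case, $K_r(\cdot, y_n)$ should be asymptotically governed by the Green function of the \emph{induced} (first-return) random walk on the parabolic subgroup $H$, which by the deviation inequalities is a finitely supported (or at least well-controlled) random walk on a virtually abelian group; its $r$-Martin boundary is a sphere of dimension $\mathrm{rk}(H)-1$ when the induced walk is evaluated strictly below its own spectral radius, and a point when evaluated at its spectral radius. This is precisely where spectral degenerescence enters: for $r<R$ the induced walk is always strictly subcritical (so we get a full sphere), while for $r=R$ the sphere survives exactly at the spectrally non-degenerate parabolic points. Assembling these local pictures $\Gamma$-equivariantly over all parabolic cosets and gluing to the conical part gives a bijection of boundaries; checking it is a homeomorphism amounts to checking that the two topologies (geometric convergence vs.\ pointwise convergence of Martin kernels) agree on mixed sequences, which again reduces to the Floyd--Ancona estimates controlling how the Martin kernel transitions between the thick part and a parabolic coset.

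\textbf{Deriving the surjection $\phi_\mu$.} Granting the identifications for $r=R$ and for some (equivalently any) $r<R$, the map $\phi_\mu$ is obtained as the composition
$$
\Gamma \cup \partial_{r\mu}\Gamma \;\xrightarrow{\ \cong\ }\; (r\text{-geometric compactification}) \;\xrightarrow{\ \text{collapse}\ }\; (R\text{-geometric compactification}) \;\xrightarrow{\ \cong\ }\; \Gamma \cup \partial_{R\mu}\Gamma,
$$
where the middle arrow collapses to a point every sphere sitting over a spectrally degenerate parabolic point (and is the identity on all other spheres and on the conical part). This middle map is visibly a continuous equivariant surjection of compact metrizable spaces restricting to the identity on $\Gamma$, as already noted in the excerpt; composing with the two homeomorphisms gives $\phi_\mu$ with all the required properties, and its restriction to $\Gamma$ is the identity by construction. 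Equivariance is inherited from equivariance of the Bowditch boundary action and the fact that $\Gamma$ permutes the parabolic cosets, preserving spectral degenerescence (since conjugate parabolic subgroups have conjugate induced walks, hence equal induced spectral radii).

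\textbf{Main obstacle.} The crux of the argument — and the step I expect to consume most of the work — is establishing convergence of $K_R(\cdot, y_n)$ at the \emph{spectral radius} $r=R$, both along conical sequences and, for spectrally degenerate parabolic points, the collapse to a single limit. At $r<R$ one has exponential Green-function decay and the classical Ancona machinery; pushing Floyd--Ancona inequalities all the way up to $r=R$ (the first stated contribution of the paper) is delicate because the Green metric degenerates and one must control the first-return kernel to parabolic subgroups uniformly. The finest point is showing that at a spectrally degenerate parabolic point the entire coset contributes a \emph{single} Martin kernel (the sphere genuinely collapses), which mirrors the phenomenon that the $R$-Martin boundary of a critical random walk on $\Z^d$ is a point \cite{Woess-book}; one needs that the induced walk at its own spectral radius has trivial ($r$-)Martin boundary and that this triviality propagates to the ambient group via the Floyd--Ancona comparison. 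Once that uniform control is in hand, the topological bookkeeping of the gluing and the construction of $\phi_\mu$ are routine.
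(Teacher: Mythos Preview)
Your proposal is correct and follows essentially the same architecture as the paper's proof: the conical/parabolic dichotomy, Floyd--Ancona inequalities (pushed up to $r=R$) for conical limit points, analysis of the first-return kernel on $N_\eta(H)$ for parabolic limit points with the subcritical/critical distinction governed by spectral degenerescence, and the construction of $\phi_\mu$ by composing the two boundary identifications with the geometric collapse map. The paper organizes the homeomorphism step slightly differently---it builds a continuous map from the geometric to the Martin compactification via the convergence propositions, proves injectivity by a separate lemma exhibiting sequences that distinguish Martin kernels, and then invokes compact-metrizable to upgrade the continuous bijection to a homeomorphism---but this is bookkeeping, and you correctly flag the genuine technical core (exponential moments of the induced kernel at $r=R$ and the collapse at degenerate parabolics).
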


We also prove that the Martin boundary is strongly stable when the measure is not spectrally degenerate.

\begin{theorem}\label{mainthmstability}
Let $\Gamma$ be a non-elementary relatively hyperbolic group with respect to a collection of virtually abelian subgroups.
Let $\mu$ be a symmetric probability measure on $\Gamma$ whose finite support generates $\Gamma$ and let $R$ be its spectral radius.
Assume that $\mu$ is spectrally non-degenerate.
Let $\partial_\mu\Gamma$ denote the $r$-Martin boundary for $0<r\leq R$.
The map $(r,x,\xi) \mapsto K_r(x,\xi)$ is continuous in $(x,r,\xi)\in (0,R]\times \Gamma\times \Gamma \cup \partial_\mu\Gamma$.
\end{theorem}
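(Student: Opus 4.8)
The plan is to deduce everything from Theorem~\ref{maintheorem}, the extended Floyd--Ancona inequalities of \cite{GGPY}, and the structure of the induced walks on the parabolic subgroups. Conditions (1) and (2) in the definition of stability are immediate: for $0<r_1,r_2<R$, Theorem~\ref{maintheorem} identifies both the $r_1$- and the $r_2$-Martin compactification with the $r$-geometric compactification, which is the same space for all $r<R$; and condition (2) is the ``in particular'' part of Theorem~\ref{maintheorem}, with $\phi_\mu$ the collapsing map from the $r$-geometric to the $R$-geometric compactification read through the homeomorphisms of Theorem~\ref{maintheorem}. So the real content is condition (3) --- joint continuity of $(r,x,\xi)\mapsto K_r(x,\xi)$ --- together with the strong-stability dichotomy.

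For condition (3) I would fix $x\in\Gamma$, observe that the map is locally constant in $x$ since $\Gamma$ is discrete, and prove joint continuity in $(r,\xi)$ on $[r_0,R]\times(\Gamma\cup\partial_\mu\Gamma)$ for every $r_0>0$, using the convention $K_R(x,\xi)=K_R(x,\phi_\mu(\xi))$. By the Harnack inequality, uniform in $r\in[r_0,R]$ because $\mu$ is finitely supported, the normalized kernels $K_r(\cdot,\xi)$ stay in a compact family of positive functions on $\Gamma$, and any subsequential pointwise limit of $K_{r_n}(\cdot,\xi_n)$ along $(r_n,\xi_n)\to(r_\infty,\xi_\infty)$ is again a positive $r_\infty$-harmonic function normalized at $e$. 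Hence it suffices to show that every such limit equals the value $K_{r_\infty}(\cdot,\xi_\infty)$ prescribed by the convention. When $\xi_\infty\in\Gamma$ the point is isolated, so $\xi_n=\xi_\infty$ eventually and only continuity of $r\mapsto K_r(x,\xi_\infty)$ is at stake, which is standard; when $r_n=R$ for infinitely many $n$ we may assume $r_n\equiv R$, and then $\phi_\mu(\xi_n)\to\phi_\mu(\xi_\infty)$ by continuity of $\phi_\mu$ and $K_R(\cdot,\phi_\mu(\xi_n))\to K_R(\cdot,\phi_\mu(\xi_\infty))$ by continuity of the $R$-Martin kernel (Theorem~\ref{maintheorem}).

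The substantial case is $r_n<R$ with $r_n\to r_\infty\le R$, and here I would split according to the geometric type of $\xi_\infty$ given by Theorem~\ref{maintheorem}. If $\xi_\infty$ is a conical limit point, one approximates $\xi_\infty$ and each $\xi_n$ by group elements along geodesics and uses the Floyd--Ancona inequalities valid uniformly up to the spectral radius to force the subsequential limit to be $K_{r_\infty}(\cdot,\xi_\infty)$; this is the relatively hyperbolic analogue of Gou\"ezel's argument \cite{Gouezel-local}, and the uniformity in $r$ up to $R$ is exactly what the extended inequalities provide. If $\xi_\infty$ lies on the sphere $S_p$ blown up from a parabolic point $p$ with stabilizer $H$ virtually abelian of rank $d$, one uses the first-return description of Section~\ref{Sectionspectraldegenerescence}: along $H$ the Green function of $\mu$ coincides with the Green function of the induced walk evaluated at an internal spectral parameter $\rho_H(r)$, in the spirit of \cite{Woess-Martin}, so the problem reduces to joint continuity in $(\rho,\zeta)$ of the $\rho$-Martin kernels of the virtually abelian group $H$ --- a Ney--Spitzer type computation \cite{NeySpitzer} --- together with continuity of $r\mapsto\rho_H(r)$ on $(0,R]$. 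When $p$ is spectrally non-degenerate, $\rho_H(r_\infty)<R_H$ and one lands on an honest point of $S_p=S^{d-1}$, recovering $K_{r_\infty}(\cdot,\xi_\infty)$; when $p$ is spectrally degenerate (which forces $r_\infty=R$) one has $\rho_H(R)=R_H$, the $\rho$-Martin boundary of $H$ degenerates to a single point at $\rho=R_H$, $\phi_\mu$ collapses $S_p$ to $p$, and the limit is $K_R(\cdot,p)$, as required. I expect this last, degenerate, situation to be the hard part: there the Green function of $H$ blows up as $\rho\uparrow R_H$, so one must argue with the level sets of the Green function and with ratios rather than the Green function itself, and one must keep careful track of how $\phi_\mu$ matches the blown-up spheres to the surviving parabolic points.

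Finally, for the strong-stability dichotomy: by Theorem~\ref{maintheorem} the $r$-Martin compactification for $r<R$ is the $r$-geometric one, the $R$-Martin compactification is the $R$-geometric one, and $\phi_\mu$ is the collapsing map. A continuous bijection between compact Hausdorff spaces is a homeomorphism, so $\phi_\mu$ is a homeomorphism precisely when it collapses no sphere $S_p$ nontrivially; since each $S_p=S^{d-1}$ with $d\ge 1$ has at least two points, this holds if and only if no parabolic point is spectrally degenerate, i.e. if and only if $\mu$ is spectrally non-degenerate. Conversely, if some $p$ is spectrally degenerate, interleaving two sequences in $H$ escaping to infinity in different directions produces a sequence that converges in the $R$-Martin compactification but not in the $r$-Martin compactification, so condition (1) of stability fails for the pair $(r,R)$ and the boundary is not strongly stable.
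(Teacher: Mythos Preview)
Your outline is essentially the paper's own route: conditions (1) and (2) from Theorem~\ref{maintheorem}, the conical case of condition (3) via Ancona-type inequalities uniform in $r\le R$, the parabolic case via the induced walk on $H$, and the strong-stability dichotomy by checking when $\phi_\mu$ collapses a sphere. Two points deserve correction or sharpening.

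First, in the conical case you will need the \emph{strong} Floyd--Ancona inequalities (Theorem~\ref{thmstrongAncona}), not just the weak ones. The paper's argument (Proposition~\ref{stabilityconical}) uses the exponential bound $|K_{s_n}(x,\xi_n)/K_{s_n}(x,\xi)-1|\le C\rho^{k_n}$, with $k_n\to\infty$ independent of $s_n$; weak Ancona only gives uniform comparability of these ratios, not convergence to $1$. Your reference to ``Gou\"ezel's argument'' is the right pointer, but the step fails if you only invoke the weak inequalities.

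Second, and more substantively, the parabolic reduction is not correctly framed. There is no ``internal spectral parameter'' $\rho_H(r)$ in the general relatively hyperbolic setting: the identity $G_r|_H=G^{(H)}_{\rho_H(r)}$ is special to adapted measures on free products. What holds here is that $G_r$ restricted to $N_\eta(H)$ equals the Green function of the first-return kernel $p_{H,\eta,r}$ evaluated at $t=1$, where the kernel $p_{H,\eta,r}$ itself varies with $r$. The paper then shows (Section~\ref{Sectionstability}) that the matrix $F^{(r)}(u)=\bigl(\sum_{x}p_{j,k}(e,x)e^{x\cdot u}\bigr)_{j,k}$ depends continuously on $r$ (monotonicity in $r$ plus the exponential-moment bound of Lemma~\ref{exponentialmomentsinducedchain}), hence so do its Perron--Frobenius eigenvalue $\lambda^{(r)}(u)$ and eigenvectors, and the explicit formula $K_r((z,k),\xi)=\dfrac{C_r(u_r)_k}{C_r(u_r)_0}\,e^{u_r\cdot z}$ from \cite{Dussaule} then gives continuity of $K_r(h,\xi)$ in $(r,\xi)$ for $h\in N_\eta(H)$ (Lemma~\ref{uniformconvergenceGreenalongparabolics}). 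Passing from $h\in N_\eta(H)$ to arbitrary $x\in\Gamma$ and from $\xi_n\in\partial H$ to general $\xi_n\to\xi_\infty\in\partial H$ again requires Corollary~\ref{Anconaepsilon}, exactly as in Proposition~\ref{propconvergenceparabolic1}. Your Ney--Spitzer intuition is the right one, but the object that varies continuously is the whole induced kernel, not a single scalar $\rho_H(r)$. In the degenerate case this same formula still applies: the level set $\{\lambda^{(R)}=1\}$ shrinks to a single point $u_0$, so $K_R(\cdot,\xi)$ is independent of $\xi\in\partial H$, which is exactly the collapsing by $\phi_\mu$.
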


Note that the third variable $\xi$ is allowed to vary in the whole compactification $\Gamma \cup \partial_{\mu}\Gamma$ and not only in the boundary $\partial_\mu\Gamma$.
We can also prove that the $r$-Martin boundary is always minimal.

\begin{theorem}\label{minimality}
Let $\Gamma$ be a non-elementary relatively hyperbolic group with respect to a collection of virtually abelian subgroups.
Let $\mu$ be a symmetric probability measure on $\Gamma$ whose finite support generates $\Gamma$ and let $R$ be its spectral radius.
Then, for every $0<r\leq R$, the $r$-Martin boundary is minimal.
\end{theorem}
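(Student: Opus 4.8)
The plan is to verify, for every $\xi$ in the $r$-Martin boundary, the standard criterion for minimality: $K_r(\cdot,\xi)$ is minimal if and only if the Doob transform $p^h(x,y) = r^{-1}\mu(x^{-1}y)\,h(y)/h(x)$ associated with $h = K_r(\cdot,\xi)$ defines a Markov chain that converges almost surely to $\xi$ in the $r$-Martin compactification (see \cite[Ch.~24]{Woess-book}); equivalently, in every integral representation $K_r(\cdot,\xi) = \int K_r(\cdot,\eta)\,d\lambda(\eta)$ one has $\lambda = \delta_\xi$. By Theorem~\ref{maintheorem} we identify $\partial_{r\mu}\Gamma$ with the $r$-geometric boundary, so $\xi$ is of one of two types: (a) a conical limit point of the Bowditch boundary; or (b) a point of a blown-up sphere $\mathbb{S}^{d-1}$ over a parabolic fixed point $p$ whose stabilizer $H$ has rank $d$. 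When $r = R$, such spheres sit only over spectrally non-degenerate parabolic points; a spectrally degenerate parabolic point is itself a boundary point, which we handle as a degenerate instance of (b) in which the blow-up is trivial.

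In case (a) this is the relatively hyperbolic counterpart of Ancona's argument for hyperbolic groups. The extended Floyd--Ancona inequalities, valid for all $r \le R$, say that $G_r$ is roughly multiplicative along relative geodesics avoiding deep parts of horoballs; hence if $g_n \to \xi$ along a geodesic ray toward $\xi$ staying in the thick part, then $K_r(g_n,\xi) \to \infty$ (because $G_r(e,g_n) \to 0$) while $K_r(g_n,\eta)$ remains bounded, uniformly over $\eta$ outside any fixed neighbourhood of $\xi$. Inserting this into the integral representation and shrinking the neighbourhood forces $\lambda = \delta_\xi$; equivalently, the $p^h$-chain makes linear progress along $\xi$-geodesics and the Ancona inequalities prevent it from accumulating anywhere else, so it converges to $\xi$. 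This is done in \cite{DGGP} for $r = 1$, and the same argument applies for every $r \le R$ once the Floyd--Ancona inequalities are available up to $R$.

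In case (b) the key structural input, from Section~\ref{Sectionspectraldegenerescence} together with \cite{DGGP}, is that the restriction of $K_r(\cdot,\xi)$ to a coset of $H$ is, up to normalization, the $\rho(r)$-Martin kernel of the induced first-return random walk $\hat\mu$ on $H$ at the point of $\mathbb{S}^{d-1}$ coded by $\xi$, with $\rho(r) \le R_H$ and equality exactly for spectrally degenerate $p$. Since $H$ is virtually abelian, its $s$-Martin boundary is minimal for every $s \le R_H$ (Ney--Spitzer; see \cite[\S25.B]{Woess-book}), so this restriction is a minimal $\rho(r)$-harmonic function on $H$ (in the degenerate case it is the constant function, still minimal). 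Given $K_r(\cdot,\xi) = \int K_r(\cdot,\eta)\,d\lambda(\eta)$, restricting to $H$: the Floyd--Ancona inequalities show that $K_r(\cdot,\eta)|_H$ is negligible compared with $K_r(\cdot,\xi)|_H$ unless $\eta$ lies in the same sphere over $p$, so $\lambda$ is supported there; and on that sphere $\eta \mapsto K_r(\cdot,\eta)|_H$ is injective with image in the Martin kernels of $\hat\mu$, whose minimality then forces $\lambda = \delta_\xi$. Equivalently, under $p^h$ the walk is eventually absorbed into the horoball at $p$, its trace on $H$ is the $h$-process of $\hat\mu$, which converges to $\xi$ by the abelian case, and the excursions off $H$ are transient.

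I expect the endpoint $r = R$ to be the main obstacle. One must ensure that the Floyd--Ancona inequalities remain sharp enough at $r = R$ to separate a conical point from the spheres and to separate distinct spheres over the same parabolic point; and one must correctly analyse $\rho(R)$ and the induced walk $\hat\mu$ at its own spectral radius for spectrally degenerate $p$, precisely the case in which the sphere collapses and the $R$-Martin kernel over $H$ degenerates to a constant. A secondary difficulty is the transfer step in (b): deducing minimality of $K_r(\cdot,\xi)$ on all of $\Gamma$ from minimality of its restriction to $H$, which requires controlling how an $r$-harmonic function on $\Gamma$ dominated by $K_r(\cdot,\xi)$ restricts to $H$ and how the transformed walk's excursions away from $H$ behave.
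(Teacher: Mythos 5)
Your overall framework (represent $K_r(\cdot,\xi)$ by a measure $\lambda$ on the minimal boundary and show $\lambda=\delta_\xi$, splitting into conical and parabolic cases) matches the paper's, but the separation mechanism you propose points in the wrong direction and leaves a genuine gap. In case (a) you send $g_n\to\xi$ and argue that $K_r(g_n,\xi)\to\infty$ while $K_r(g_n,\eta)$ stays bounded uniformly for $\eta$ outside a neighbourhood $V$ of $\xi$, and claim this "forces $\lambda=\delta_\xi$". It does not: it only shows $\int_{V^c}K_r(g_n,\eta)\,d\lambda(\eta)$ is bounded while $\int_V K_r(g_n,\eta)\,d\lambda(\eta)\to\infty$, i.e.\ that $\xi$ lies in the support of $\lambda$. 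A measure concentrated on a sequence $\eta_k\to\xi$ with $\lambda(\{\xi\})=0$ is not excluded by this comparison, since the kernels $K_r(g_n,\eta_k)$ for $\eta_k$ close to $\xi$ may blow up at rates comparable to $K_r(g_n,\xi)$. What is needed, and what the paper does, is the opposite comparison: for each candidate point $\tilde\xi_0'\neq\tilde\xi_0$ one produces a compact neighbourhood $\mathcal{U}$ of $\tilde\xi_0'$ and a sequence $g_n$ along which $K_r(g_n,\cdot)$ admits a \emph{uniform lower bound over all of $\mathcal{U}$} that outgrows $K_r(g_n,\tilde\xi_0)$ (or stays bounded below while $K_r(g_n,\tilde\xi_0)\to0$); then $K_r(g_n,\tilde\xi_0)\geq\int_{\mathcal{U}}K_r(g_n,\tilde\zeta)\,d\nu(\tilde\zeta)$ forces $\nu(\mathcal{U})=0$ directly. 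For points over a different Bowditch limit point the paper does not even use a growth comparison at $\xi$ but rather the potential-theoretic Lemma~\ref{lemmaminimalMartin} (\cite[Proposition~II.1.6]{AnconaLecturenotes}): for $\nu$-a.e.\ minimal point, $G_r(g_n,e)/K_r(g_n,\tilde\xi_0)\to0$ along $g_n$ converging to that point, which contradicts the weak Floyd--Ancona upper bound $K_r(g_n,\tilde\xi_0)\leq CG_r(g_n,e)$. Nothing playing the role of this lemma appears in your proposal.

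Your case (b) has a second gap, which you yourself flag: you reduce to minimality of the restriction of $K_r(\cdot,\xi)$ to $H$ for the induced walk and then need to transfer minimality back to $\Gamma$. This transfer is not carried out, and it is not routine: the restriction of a positive $r$-harmonic function on $\Gamma$ to $N_\eta(H)$ is in general only superharmonic for the first-return kernel, and "$\lambda$ is supported on the sphere over $p$" is asserted via the same insufficient negligibility-of-ratios argument as above. The paper circumvents the transfer entirely: it uses the explicit exponential form $K_r(g,\tilde\xi)=C_g e^{\pi(g)\cdot u}$ on $N_\eta(H)$ (Lemma~\ref{minimalharmonicnilpotent} and \cite[Proposition~3.27]{Dussaule}) to build, for each $\tilde\xi_0'$ on the same sphere, a direction $\theta$ and a sequence $g_n$ in $H$ realizing the uniform-lower-bound dichotomy of Lemma~\ref{one-to-onepreciseparabolic}, and then applies the support-exclusion argument on $\Gamma$ itself. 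To repair your proof you would need either to import Lemma~\ref{lemmaminimalMartin} and reverse the direction of your comparisons, or to justify the restriction-and-transfer step, neither of which is present.
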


We will also show that the random walk cannot be spectrally degenerate at parabolic subgroups of small rank.
More precisely, let $H$ be a parabolic subgroup which is virtually abelian of rank $d$.
If $d\leq 4$, then the random walk is not spectrally degenerate at $H$.
In particular, the Martin boundary is stable in small dimension.
This implies the following.

\begin{theorem}\label{stabilitylowdimension}
Let $\Gamma$ be the fundamental group of a geometrically finite hyperbolic manifold of dimension $d\leq 5$.
Let $\mu$ be a symmetric probability measure whose finite support generates $\Gamma$.
Let $R$ be its spectral radius.
Then, for every $r\leq R$, the $r$-Martin boundary coincides with the CAT(0) boundary of $\Gamma$.
In particular, the Martin boundary is strongly stable.
\end{theorem}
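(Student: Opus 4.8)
The plan is to reduce the statement to the two main theorems via the rank bound for spectral non-degeneracy, and then to match the $r$-geometric boundary with the CAT(0) boundary by purely geometric considerations.

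First I would record the structure of $\Gamma=\pi_1(M)$. If $M$ is a geometrically finite hyperbolic manifold of dimension $d$, then by the thick--thin decomposition each cusp of $M$ has a closed flat manifold as cross-section, so the maximal parabolic subgroups of $\Gamma$ are (torsion-free) crystallographic groups; in particular they are virtually abelian, of some rank $k$ with $1\leq k\leq d-1$, and $\Gamma$ is non-elementary relatively hyperbolic with respect to the collection $\Omega$ of these subgroups. When $d\leq 5$ every such rank is at most $4$. Then I would invoke the rank bound stated above the theorem: a symmetric, finitely supported, admissible measure is never spectrally degenerate along a virtually abelian parabolic subgroup of rank at most $4$. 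Since all parabolic subgroups of $\Gamma$ have rank at most $4$, the measure $\mu$ is spectrally non-degenerate, so by Theorem~\ref{mainthmstability} the Martin boundary is strongly stable. By Theorem~\ref{maintheorem}, for every $r\leq R$ the identity of $\Gamma$ extends to an equivariant homeomorphism from the $r$-geometric boundary onto the $r$-Martin boundary; and because there are no spectrally degenerate parabolic points, the $R$-geometric boundary agrees with the ($\mu$-independent) boundary obtained from the Bowditch boundary of $\Gamma$ by blowing up each parabolic fixed point stabilized by a virtually $\Z^k$ subgroup into a sphere $\Ss^{k-1}$.

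It then remains to identify this blown-up Bowditch boundary with the CAT(0) boundary of $\Gamma$. A geometrically finite hyperbolic manifold group acts geometrically on a CAT(0) space with isolated flats, the flats being the Euclidean spaces carried by the cusps; by the work of Hruska and Kleiner on Hadamard spaces with isolated flats, the visual boundary of such a space does not depend on the chosen CAT(0) model and is obtained from the Bowditch boundary by replacing each rank-$k$ parabolic point with the $(k-1)$-sphere at infinity of the corresponding flat. This is exactly the $r$-geometric boundary described above, so concatenating the homeomorphisms from the previous paragraph with this identification yields the theorem, strong stability being already granted by Theorem~\ref{mainthmstability}. The main obstacle is precisely this last step: one must either make the CAT(0) model explicit for geometrically finite (possibly infinite-volume) $M$ or quote the relevant literature with care, and then check that the added spheres and the $\Gamma$-action match on the nose with those produced by Theorem~\ref{maintheorem}. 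Everything else is bookkeeping built on the results already proved.
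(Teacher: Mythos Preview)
Your proposal is correct and follows essentially the same route as the paper: reduce to the rank bound (Proposition~\ref{propdegenerateimplieshighrank}) via the fact that cusp stabilizers in dimension $d\leq 5$ have rank at most $4$, conclude spectral non-degeneracy, and then invoke Theorems~\ref{maintheorem} and~\ref{mainthmstability}. For the final identification of the $r$-geometric boundary with the CAT(0) boundary, the paper simply cites \cite[Corollary~1.5]{GGPY} rather than sketching the Hruska--Kleiner argument, so the step you flag as the ``main obstacle'' is in fact already available in the literature and need not be reworked.
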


We emphasize that Theorem~\ref{maintheorem} is optimal and cannot be extended in the following sense :
there exist relatively hyperbolic groups with spectrally degenerate measures. Indeed, even for adapted measures on free products of abelian groups where one factor has rank at least $5$, Candellero and Gilch \cite{Candellero-Gilch} construct examples of both spectrally degenerate and spectrally non-degenerate measures, so the Martin boundary may or may not be strongly stable.
Moreover, the Candellero-Gilch construction might be adapted to construct spectrally degenerate measures on any relatively hyperbolic group with respect to virtually abelian groups of large enough rank.

\subsection{Relative form of Ancona inequalities up to the spectral radius}
Let us briefly explain how we prove these results.
Whenever $f$ and $g$ are two functions satisfying that there exists a constant $C$ such that $\frac{1}{C}f\leq g \leq C g$, we will write
$f\asymp g$. If the constant depends on some parameter, we will not use this notation to avoid confusion, except if the parameter is clear from the context.
Also, whenever $f\leq Cg$ for some constant $C$, we will write $f\lesssim g$.

Ancona \cite{Ancona} proved that the Martin boundary of a finitely supported admissible random walk on a hyperbolic group coincides with the Gromov boundary. To prove this, he used the following deviation inequalities.
If $x,y,z$ are three points in $\Gamma$ such that $y$ is on a geodesic $[x,z]$ from $x$ to $z$, then we have
\begin{equation}\label{weakAnconahyperbolic}
    G(x,z)\asymp G(x,y)G(y,z).
\end{equation}
The implicit constant only depends on the hyperbolicity parameters of the group.
We can restate these inequalities saying that the random walk must pass within a bounded distance of $y$ with high probability.

To prove that the Martin boundary is stable, Gou\"ezel \cite{Gouezel-local} showed that these deviation inequalities still hold at the spectral radius.
For every $r\leq R$, we have
\begin{equation}\label{weakAnconahyperbolic'}
    G_r(x,z)\asymp G_r(x,y)G_r(y,z).
\end{equation}
where the implicit constant does not depend on $r$.
He actually proved a stronger version of them, which allowed him to prove H\"older regularity of the Martin kernels.
More precisely, he proved that whenever two geodesics $[x,y]$ and $[x',y']$ fellow travel for a time at least $n$, then for all $r\leq R$, we have
\begin{equation}\label{strongAnconahyperbolic}
    \left | 1-\frac{G_r(x,y)G_r(x',y')}{G_r(x',y)G_r(x,y')}\right |\leq C \rho^n
\end{equation}
for some constants $C\geq 0$ and $0<\rho<1$ independent of $r$.
We will call~(\ref{weakAnconahyperbolic'}) weak Ancona inequalities and~(\ref{strongAnconahyperbolic}) strong Ancona inequalities up to the spectral radius in what follows.

\medskip
For relatively hyperbolic groups, such inequalities are not expected to hold in general, for if $y$ is deep in a parabolic subgroup, there is no reason that the random walk passes within a bounded distance of $y$.

However, we can prove analogous inequalities if we restrict ourselves to so-called transition points. Say a point $y$ on a geodesic $\alpha=[x,z]$ is an $(\epsilon,\eta)$-transition point of $\alpha$ if the length-$\eta$ interval of $\alpha$ around $y$ is not contained in the $\epsilon$-neighborhood of a single parabolic subgroup.
We show that the inequalities~(\ref{weakAnconahyperbolic'}) hold whenever $y$ is an  $(\epsilon,\eta)$-transition point on a geodesic connecting $x,z$.
Similarly, we prove~(\ref{strongAnconahyperbolic}) holds whenever there exist $n$ distinct $(\epsilon,\eta)$-transition points $z_i\in [x,y]$, $z'_i\in [x',y']$ with $d(z_i,z'_i)<D$.
In both cases the implied constants are independent of $r\leq R$.


In fact, we prove a more general version of these inequalities, using Floyd functions. Fix a function $f(n)=\lambda^n$, $0<\lambda<1$.
The Floyd distance viewed from $y\in \Gamma$, which we denote by $\delta^{f}_{y}(.,.)$ is the path metric on $\Gamma$ obtained from rescaling the Cayley graph by declaring the length of an edge $\tau$ to be $f((d(\tau,y))$. 
The connection with transition points on word geodesics in relatively hyperbolic groups is as follows: if $\Gamma$ is relatively hyperbolic, and $f$ a suitable exponentially decaying function, then for each $\epsilon,\eta,D>0$ there is a $\delta>0$ such that whenever $x,y,z\in \Gamma$ and $y$ is $D$ close to an $(\epsilon,\eta)$-transition point of $[x,z]$ we have $\delta^{f}_{y}(x,z)>\delta$. 
In hyperbolic groups, all points on a geodesic are transition points and this condition on the Floyd distance is equivalent to saying that $y$ is within a bounded distance of $[x,z]$.

We say that a probability measure $\mu$ on a finitely generated group $\Gamma$ satisfies the weak Floyd-Ancona inequalities up to the spectral radius if for every $\delta>0$, and every $x,y,z\in \Gamma$ with $\delta^{f}_{y}(x,z)>\delta$ the inequality~(\ref{weakAnconahyperbolic'}) holds.
We say that $\mu$ satisfies the strong Floyd-Ancona inequalities up to the spectral radius if the inequality~(\ref{strongAnconahyperbolic}) holds whenever
there is a sequence of $n$ distinct points $z_1,...,z_n$ such that for every $i$, $\delta^{f}_{z_i}(x,y)>\delta$ and  $\delta^{f}_{z_i}(x',y')>\delta$.
In both cases, the implied constants are required to depend only on $\delta>0$ but not on $r\leq R$.
Adapting the techniques of Gou\"ezel in \cite{Gouezel-local}, we prove the following.

\begin{theorem}\label{maintheoremAncona}[Theorem~\ref{thmweakAncona}, Theorem~\ref{thmstrongAncona}]
Let $\mu$ be a symmetric measure on a finitely generated group $\Gamma$, whose finite support generates $\Gamma$.
Then, $\mu$ satisfies both weak and strong Floyd-Ancona inequalities up to the spectral radius with respect to any exponential Floyd function.
\end{theorem}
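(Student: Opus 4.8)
The plan is to run Ancona's barrier argument with the Floyd metric playing the role that geodesics play for hyperbolic groups, and then, following Gou\"ezel \cite{Gouezel-local}, to make every implied constant uniform in $r\le R$. The lower bounds in~(\ref{weakAnconahyperbolic'}) and~(\ref{strongAnconahyperbolic}) are the easy direction: splitting a path from $x$ to $z$ at its last visit to $y$ gives $G_r(x,z)\ge G_r(x,y)G_r(y,z)/G_r(e,e)$, and similarly for the cross-ratio, while $G_r(e,e)$ stays bounded as $r\uparrow R$; so only the upper bounds require work. We will also use the Harnack inequality $G_r(gx,y)\asymp G_r(x,y)$ for $g$ of bounded length, whose constant is uniform for $r$ in compact subsets of $(0,R]$ since $G_r(gx,y)\ge(r\min_s\mu(s))^{d(x,gx)}G_r(x,y)$ (the range of small $r$ being classical).

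Fix $\delta>0$ and $x,y,z$ with $\delta^f_y(x,z)>\delta$. The geometric input, which we import from the Floyd-metric analysis of \cite{GGPY}, is that this condition exhibits a \emph{bottleneck} at $y$: because $f$ decays exponentially, $\delta^f_y(x,z)>\delta$ forces every $\Gamma$-path from $x$ to $z$ to cross a nested family of separating shells around $y$ whose number and thickness depend only on $\delta$; in the hyperbolic case this reduces to the fact that $y$ lies within bounded distance of every geodesic $[x,z]$ while detours around it have exponentially large length. This is exactly the configuration on which Ancona's barrier method operates: decomposing $G_r(x,z)$ by first passage across the outermost shell, bounding the Green mass that crosses a shell and returns, and invoking the uniform Harnack inequality, one obtains $G_r(x,z)\le C(\delta)\,G_r(x,y)G_r(y,z)$ --- \emph{provided} the gain from crossing one shell is bounded away from $1$ uniformly in $r\le R$.

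Securing that uniformity is the heart of the proof, and the step I expect to be the main obstacle; it is where the techniques of \cite{Gouezel-local} are essential. For each fixed $r<R$ the classical (Floyd-)Ancona argument already gives a finite constant $C(\delta,r)$, but a priori it could diverge as $r\uparrow R$. To prevent this, one uses the Floyd metric to cut the bottleneck at $y$ into many shorter sub-bottlenecks and applies the estimate recursively via the strong Markov property, producing a self-improving inequality whose correction terms are controlled by tails of $\sum_n r^n\mu^{*n}$. Since $\sum_n R^n\mu^{*n}(e)=G_R(e,e)<\infty$, these tails are summable uniformly in $r\le R$, and the self-improving inequality then forces $\sup_{r\le R}C(\delta,r)<\infty$; taking the number of pieces large enough simultaneously produces a shell-crossing contraction factor $\rho<1$ independent of $r$. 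This is precisely Gou\"ezel's device for pushing Ancona's inequalities to the spectral radius in the hyperbolic case, and the Floyd formulation lets it go through once the bottleneck has been isolated.

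The strong inequality~(\ref{strongAnconahyperbolic}) then follows by iteration. From $n$ distinct points $z_1,\dots,z_n$ with $\delta^f_{z_i}(x,y)>\delta$ and $\delta^f_{z_i}(x',y')>\delta$ one extracts a definite fraction of them spanning pairwise disjoint bottlenecks between $x$ and $y$ (and between $x'$ and $y'$); successive first-passage decompositions then express the cross-ratio $\tfrac{G_r(x,y)G_r(x',y')}{G_r(x',y)G_r(x,y')}$ as a composition of first-passage operators, one per bottleneck, and each of these contracts the Hilbert projective metric on positive Green functions by a factor $\le\rho<1$ --- a Birkhoff-type estimate whose uniformity in $r\le R$ is supplied by the uniform weak inequality and uniform Harnack above. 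This yields $\bigl|1-\tfrac{G_r(x,y)G_r(x',y')}{G_r(x',y)G_r(x,y')}\bigr|\le C\rho^n$ uniformly in $r\le R$, which, as in \cite{Gouezel-local}, also gives the H\"older regularity of the Martin kernels used later in the paper.
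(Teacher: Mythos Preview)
Your high-level architecture is right --- Floyd bottlenecks plus a barrier/Markov decomposition, with Gou\"ezel's machinery to push the constants to $r=R$, and then a contraction scheme for the strong inequality --- but the mechanism you name for the crucial uniformity step is not the one that works, and I do not see how to make your version go through.

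You write that the self-improving inequality has ``correction terms controlled by tails of $\sum_n r^n\mu^{*n}$'' and that $G_R(e,e)<\infty$ suffices. This is too weak: finiteness of $G_R(e,e)$ is automatic for any non-amenable group and carries no information about how much Green mass can cross a shell while avoiding a ball around $y$. What the paper (following \cite{Gouezel-local}) actually establishes first is the uniform $\ell^2$ sphere bound
\[
\sup_{k\ge0}\ \sup_{r\le R}\ \sum_{d(e,x)=k} G_r(e,x)^2 \ <\ \infty,
\]
proved via a supermultiplicativity trick using Yang's Extension Lemma for contracting elements (Lemma~\ref{extensionlemmaFloyd}): one shows $u_k(r)u_l(r)\lesssim\sum_{|i-k-l|\le C}u_i(r)$ with implied constant independent of $r$, and this forces $\sup_k u_k(r)\lesssim 1$ uniformly. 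It is this $\ell^2$ bound, fed into the operators $L_i:\ell^2(A_{i+1})\to\ell^2(A_i)$ across Floyd shells, that yields the \emph{super-exponential} decay $G_r(x,z;B_\eta(y)^c)\le e^{-\delta c^\eta}$ of Proposition~\ref{propsuperexponentialdecay}. The super-exponential rate is what beats the exponential growth of balls and makes the Ancona constant independent of $r$; a merely summable tail does not.

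For the strong inequality, your Birkhoff-contraction picture is morally correct, but the actual implementation requires more than the weak inequality on $\Gamma$: one needs the weak Floyd--Ancona inequality \emph{relative to starlike subdomains} $\Omega\subset\Gamma$ (Proposition~\ref{strengthenedweakAncona}), because at each step the harmonic functions are decomposed by exit from a domain $\Omega_j$ and one must compare $G_r(w,w';\Omega_j)$ to $G_r(w,z_j;\Omega_j)G_r(z_j,w';\Omega_j)$. Proving that relative version needs the refined ``regular entry'' estimate (Proposition~\ref{propstrengthenedsuperexponentialdecay}), not just the ball-avoidance bound. Your sketch elides both of these ingredients; without them the iteration does not close.
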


We prove that these inequalities hold for admissible symmetric finitely supported measures on any finitely generated groups. 
However, the statement will only be non-vacuous if the Floyd boundary, which is the complement of the group $\Gamma$ inside its completion for the Floyd distance, is infinite.
The only known examples of finitely generated groups with infinite Floyd boundary are nonelementary relatively hyperbolic groups.

For $r=1$, the weak Floyd-Ancona inequalities were proved in \cite{GGPY}. The strong Floyd-Ancona inequalities are new even for $r=1$.

The results concerning the identification of the Martin boundary in this paper use only the weak Floyd-Ancona inequalities at the spectral radius. However, the strong inequalities are important for other reasons. In the setting of hyperbolic groups, these strong versions of Ancona inequalities for $r=1$ were 
used by many authors including \cite{INO}, \cite{GMM}, where they are the key ingredient to prove 
H\"older continuity of the Martin kernel on the Gromov boundary. The uniformity of these inequalities up to the spectral radius was established in \cite{Gouezel-local}, \cite{GouezelLalley} and used to prove a local limit theorem for random walks on hyperbolic groups.
The Floyd-Ancona inequalities proved here are used by the first author in \cite{DussauleLLT2} to establish a weaker H\"older-type property and use it to prove a local limit theorem for some classes of relatively hyperbolic groups.



\subsection{Organization of the paper}\label{Sectionorganization}
In Section~\ref{Sectionspectraldegeneracy}, we first introduce relatively hyperbolic groups.
We then properly define the notion of spectral degeneracy for a probability measure on such a group.
We also define the geometric boundaries we will identify the Martin boundaries with and propose a construction using arguments of Dahmani \cite{Dahmani}.

\medskip

In Section~\ref{Sectiondeviationinequalities}, we prove deviation inequalities.
In particular, we prove Theorem~\ref{maintheoremAncona}, generalizing Gou\"ezel approach in \cite{Gouezel-local}.
We first prove that the probability of going from $x$ to $z$, missing a ball centered on a point $y$ satisfying $\delta_y^f(x,z)\geq \delta$, decays super-exponentially fast in the radius of the ball.
We can then conclude that weak Floyd-Ancona inequalities hold, as in \cite{GGPY}.
To prove that strong Floyd-Ancona inequalities also hold, we have to be more precise and we first prove a refined version of weak inequalities, see Proposition~\ref{strengthenedweakAncona}.

\medskip

Section~\ref{SectiondescriptionMartin} is devoted to the proof of our main theorem.
We first prove that if a sequence converges to a point in the geometric boundary, then it converges to a point in the Martin boundary.
If the limit point in the boundary is a conical limit point, the result is given by \cite{GGPY}, once weak Floyd-Ancona inequalities are established.
We thus only have to prove the result when the limit is a parabolic limit point.
We deal separately with the degenerate and non-degenerate case.

Essentially, the strategy goes as follows.
When the parabolic limit point is spectrally degenerate, we prove that the Martin boundary of the induced chain on the corresponding parabolic subgroup is reduced to a point.
We then have to prove that the Martin boundary of this induced chain embeds into the Martin boundary of the whole group.
This is a difficult task in general, but this follows here from the fact that the Martin boundary of the induced chain on any fixed neighborhood of the parabolic subgroup is also reduced to a point.
We can actually prove this when the parabolic subgroups are virtually nilpotent and we do not need to assume there that they are virtually abelian.

When the parabolic limit point is not spectrally degenerate, we use the results of \cite{Dussaule} to prove that the Martin boundary of the induced chain on the corresponding parabolic subgroup is a Euclidean sphere at infinity.
We again have to prove that this Martin boundary embeds in the whole Martin boundary.
This was proved in \cite{DGGP} for the 1-Martin boundary and the same strategy works here.
However, some crucial technical details have to be changed so we rewrite the entire proof.

\medskip

In Section~\ref{Sectionstability}, we prove Theorem~\ref{mainthmstability} and Theorem~\ref{minimality}.
The proof of minimality is similar to the proof of minimality for the 1-Martin boundary in \cite{GGPY}.

To prove continuity of $(r,x,\xi)\mapsto K_r(x,\xi)$, we again deal separately with conical limit points and parabolic limit points $\xi$.
In the first case, continuity is a direct consequence of strong Ancona inequalities.
Our proof also shows that the map $(r,x,\xi)\mapsto K_r(x,\xi)$ is continuous for every point in the Martin boundary, when $\Gamma$ is hyperbolic.
To the authors' knowledge, this is not stated anywhere, although this is implicit in \cite{Gouezel-local}, where there is a description of the $r$-Martin boundary for every $r\leq R$.

For parabolic limit points, we need to prove a form of continuity in $r$ of the first return kernel to a parabolic limit group associated with $r\mu$.
This uses in turn results of Section~\ref{SectiondescriptionMartin}.
We conclude the proof of Theorem~\ref{mainthmstability} using the results of the first author in \cite{Dussaule}.
\medskip

Finally, in Section~\ref{Sectionspectrallydegeneratemeasures}, we prove that the Martin boundary is strongly stable in small dimension.
Precisely, we prove that if $H$ is a parabolic subgroup which is virtually abelian of rank $d$ and if $\mu$ is spectrally degenerate along $H$, then $d\geq 5$.
Roughly speaking, we prove this in two steps.
We first show that the derivative at 1 of the Green function along a parabolic subgroup is always finite.
We then show that if $\mu$ is spectrally degenerate along $H$, then the induced transition kernel $p$ on $H$ satisfies a local limit theorem of the form $p^{(n)}(e)\sim C n^{-d/2}$, where $p^{(n)}$ is the $n$th convolution power of $p$.
This is a version of the classical local limit theorem in $\Z^d$ that we prove using results in \cite{Dussaule}, see also \cite{Woess-book}.
Since the Green function along a parabolic subgroup is given by $t\mapsto \sum_{n\geq 0}t^np^{(n)}(e)$, the first derivative is finite if and only if $n\geq 5$.

\section{Relatively hyperbolic groups and boundaries}\label{Sectionspectraldegeneracy}
\subsection{Some background on the Floyd distance}\label{Sectionrelativelyhyperbolicgroups}
Let $\Gamma$ be a finitely generated group. The action of $\Gamma$ on a compact Hausdorff space $T$ is called a convergence action if the induced action on triples of distinct points of $T$ is properly discontinuous. 
Suppose $\Gamma\curvearrowright T$ is a convergence action. The set of accumulation points  $\Lambda \Gamma$ of any orbit $\Gamma \cdot x\ (x\in T)$ is called the {\it limit set} of the action. As long as $\Lambda \Gamma$ has more than two points, it is uncountable and is the unique minimal closed $\Gamma$-invariant subset of $T$.
The action is then said to be non-elementary. In this case, the orbit of every point in $\Lambda \Gamma$ is infinite. 
The action is {\it minimal} if $\Lambda \Gamma=T$.

A point $\zeta\in\Lambda \Gamma$  is called {\it conical} if there exists a sequence $g_{n}$ of $\Gamma$ and distinct points $\alpha,\beta \in \Lambda \Gamma$ such that
$g_{n}\zeta \to \alpha$ and $g_{n}\eta \to \beta$ for all $\eta \in  T\setminus\{ \zeta\}$.
The point $\zeta\in\Lambda \Gamma$ is called bounded parabolic if it is the unique fixed point of its stabilizer in $\Gamma$, which is infinite and acts co-compactly on $\Lambda \Gamma \setminus \{\zeta\}$.
The stabilizers of bounded parabolic points are called maximal parabolic subgroups.
The convergence action $\Gamma \curvearrowright  T$ is called geometrically finite if every point of $\Lambda \Gamma \subset T$ is either conical or bounded parabolic.
Since $\Gamma$ is assumed to be finitely generated, every maximal parabolic subgroup is finitely generated too (see \cite[Main~Theorem~(d)]{Gerasimov2}).
Then, by Yaman's results \cite{Yaman}, it follows that for every  minimal geometrically finite action $\Gamma \curvearrowright T$ there exists a proper geodesic Gromov hyperbolic space $X$ on which $\Gamma$ acts properly discontinuously by isometries and a $\Gamma$-equivariant homeomorphism $T \to \partial X$.

Suppose now $\Omega$ is a collection of subgroups of $\Gamma$. We say that $\Gamma$ is \textit{hyperbolic relative to $\Omega$} if there exists some compactum $T$ on which $\Gamma$ acts minimally and geometrically finitely and such that the maximal parabolic subgroups are the elements of $\Omega$.
Such a compactum is then unique up to $\Gamma$-equivariant homeomorphism \cite{Bowditch} and is called the Bowditch boundary of $(\Gamma, \Omega)$. 
The group $\Gamma$ is said to be \textit{non-elementary relatively hyperbolic} if it admits a non-elementary geometrically finite convergence action on some infinite compactum.

\medskip

Let $\Gamma$ be a finitely generated group and let $S$ be a finite generating set.
Consider then a function $f:\mathbb{R}^{+}\to \mathbb{R}^{+}$ satisfying the two following conditions: $\sum_{n\geq 0}f(n)$ is finite and there exists a $\lambda\in (0,1)$ such that $1\geq f(n+1)/f(n)\geq \lambda$ for all $n\in\mathbb{N}$. The function $f$ is called a {\it rescaling function} or a \textit{Floyd-function}.
When $f$ is of the form $f(n)=\lambda^{n}$, we say that $f$ is an exponential Floyd function.

Pick a basepoint $o\in \Gamma$ and rescale the Cayley graph $\Cay(\Gamma,S)$ by declaring the length of an edge $\tau$ to be $f(d(o,\tau))$. The induced short-path metric on $\Cay(\Gamma,S)$  is called the {\it Floyd metric} with respect to the basepoint $o$ and Floyd function $f$ and denoted by $\delta^{f}_{o}(.,.)$.
Its Cauchy completion, whose topology does not depend on the basepoint, is called the Floyd compactification $\overline{\Gamma}_{f}$ and $\partial_{f}\Gamma= \overline{\Gamma}_{f} \setminus \Gamma$ is called the Floyd boundary.
Karlsson \cite{Karlsson} proved that the Floyd boundary is either uncountably infinite or reduced to 0, 1 or 2 points.

If $\Gamma$ is non-elementary relatively hyperbolic and if the Floyd function $f$ is not decreasing exponentially too fast, Gerasimov proved that the Floyd boundary is infinite.
One can take $f$ to be of the form $f(n)=\lambda^{n}$ for suitable $\lambda$.
Moreover, there is a continuous $\Gamma$-equivariant surjection ({\it Floyd map}) from the Floyd boundary to the Bowditch boundary \cite[Map theorem]{Gerasimov}.
Furthermore,  Gerasimov and Potyagailo \cite[Theorem~A]{GePoJEMS} proved that the pre-image  of any conical point by this map is a singleton and the pre-image of a parabolic fixed point $p$ is the limit set for the action of its stabilizer $\Gamma_p$ on  $\partial_f\Gamma$.
In particular if $\Gamma_p$ is an amenable non-virtually cyclic group then its limit set on the Floyd boundary is a point. Consequently, when $\Gamma$ is hyperbolic relative to a collection of infinite amenable subgroups none of which are virtually cyclic, the Floyd boundary is homeomorphic to the Bowditch boundary.
This is in particular the case if the parabolic subgroups are virtually abelian.

We will mainly be interested in relatively hyperbolic groups in this paper.
However, the deviation inequalities that we will prove in Section~\ref{Sectiondeviationinequalities} will be formulated with the Floyd distance and will hold in any finitely generated group.

\medskip
To conclude this section, we recall the precise definition of transition points.

\begin{definition}\label{deftransitionusual}
If $\alpha$ is a (finite or infinite) geodesic in $\Cay(\Gamma,S)$ 
, a point $p\in \alpha$ is said to be {\it $(\epsilon,\eta)$-deep} if there is a $g \in \Gamma$, $P\in \Omega$ such that the part of $\alpha$ containing the points at distance at most $\eta$ from $p$
is contained in the $\epsilon$-neighborhood of $gP$. Otherwise, $p\in \alpha$ is called an \textit{$(\epsilon,\eta)$-transition point} of $\alpha$.
\end{definition}

The following result relates transition points to the Floyd metric.

\begin{proposition}\label{Floydgeo}\cite[Corollary 5.10]{GePoCrelle}
For every $\epsilon>0$, $\eta>0$ and $D>0$ there exists $\delta>0$ such that
if $y$ is within word distance $D$ of an $(\epsilon,\eta)$-transition point of a word geodesic from $x$ to $z$ then $\delta^{f}_{y}(x,z)>\delta$.
\end{proposition}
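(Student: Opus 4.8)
The plan is to argue by contradiction through a compactness argument, transporting the whole configuration to the Bowditch boundary via the Floyd map. Suppose the statement fails for some $\epsilon,\eta,D>0$; then there are points $x_n,y_n,z_n\in\Gamma$, word geodesics $\alpha_n$ from $x_n$ to $z_n$, and $(\epsilon,\eta)$-transition points $p_n\in\alpha_n$ with $d(y_n,p_n)\le D$, while $\delta^{f}_{y_n}(x_n,z_n)\to 0$. Left translation by $y_n^{-1}$ sends word geodesics to word geodesics and transition points to transition points and only moves the basepoint of the Floyd metric, so I would first reduce to $y_n=e$: now $d(e,p_n)\le D$ and $\delta^{f}_{e}(x_n,z_n)\to 0$. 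Next I would pass to a subsequence along which $x_n$ and $z_n$ converge in the Floyd compactification $\overline{\Gamma}_{f}$; since the Floyd metric extends continuously to $\overline{\Gamma}_{f}\times\overline{\Gamma}_{f}$ and $\delta^{f}_{e}(x_n,z_n)\to 0$, the two limits coincide at a single point $\xi$. If $\xi\in\Gamma$ the configuration degenerates (eventually $x_n=z_n=\xi$, leaving no room for a genuine geodesic carrying an $(\epsilon,\eta)$-transition point), so $\xi\in\partial_{f}\Gamma$.

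I would then push $\xi$ forward. Fix, via Yaman's theorem \cite{Yaman}, a proper geodesic hyperbolic space $X$ on which $\Gamma$ acts properly discontinuously, with $\partial X$ the Bowditch boundary, and recall that the identity of $\Gamma$ extends to a continuous surjection from $\overline{\Gamma}_{f}$ onto the Bowditch compactification, restricting on the boundary to the Floyd map $F$ of \cite{Gerasimov}; hence $x_n$ and $z_n$ both converge in $X\cup\partial X$ to $\bar\xi=F(\xi)\in\partial X$. From here the contradiction would come from estimating $d_X$ from the image of $e$ to an $X$-geodesic $[x_n,z_n]_X$ in two incompatible ways. On one side, $x_n\to\bar\xi$ and $z_n\to\bar\xi$ in $\partial X$ force the Gromov product of $x_n,z_n$ based at $e$ (computed in $X$) to tend to infinity, hence $d_X(e,[x_n,z_n]_X)\to\infty$. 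On the other side, a comparison between word geodesics and geodesics of the hyperbolic model --- the orbit map sends the $(\epsilon,\eta)$-transition points of $\alpha_n$ to within a distance of $[x_n,z_n]_X$ bounded in terms of $\epsilon,\eta$ and the group --- together with the fact that $p_n$ is at bounded $X$-distance from $e$, bounds $d_X(e,[x_n,z_n]_X)$ uniformly in $n$. This contradiction shows no such bad sequence exists, and the value $\delta=\delta(\epsilon,\eta,D)$ is read off from the compactness argument. Note that this route treats conical and parabolic limit points uniformly.

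The step I expect to be the main obstacle is precisely the geodesic comparison in the previous paragraph: proving, with constants depending only on $\epsilon,\eta$ and the group, that a point lying $D$-close to an $(\epsilon,\eta)$-transition point of a word geodesic is close, \emph{in the hyperbolic model space}, to the corresponding $X$-geodesic. This relies on the bounded coset penetration behaviour of geodesics in relatively hyperbolic groups and needs care in tracking how the notion of transition point interacts with the parameters $\epsilon,\eta$; it is essentially the content of \cite[\S5]{GePoCrelle}. A more self-contained variant would avoid $X$ altogether and instead establish a Karlsson-type lower bound directly: show that the Floyd length based at a transition point $p$ of any path from $x$ to $z$ is bounded below, using that near $p$ a word geodesic behaves hyperbolically so that a Floyd-efficient competitor cannot route around $p$. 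Either way, the crux is the same interplay between the intrinsic word geometry at transition points and the rescaled Floyd geometry.
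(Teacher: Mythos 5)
First, note that the paper does not prove this proposition: it is imported verbatim as \cite[Corollary 5.10]{GePoCrelle}, so there is no internal proof to compare against. Your compactness reduction is sound as far as it goes --- left-translating so that $y_n=e$, extracting limits in the Floyd compactification, identifying the two limits because $\delta^f_e$ extends continuously to the completion, pushing forward by the Floyd map of \cite{Gerasimov} to the Bowditch boundary realized as $\partial X$ via \cite{Yaman}, and playing off the divergence of the Gromov product $(x_n|z_n)_o$ in $X$ against a uniform bound coming from $p_n$. This skeleton does produce a $\delta$ depending only on $(\epsilon,\eta,D)$, albeit ineffectively.

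However, the argument has a genuine gap exactly where you locate it, and it is not a small one: the assertion that an $(\epsilon,\eta)$-transition point of a word geodesic $[x,z]$ lies within a distance, bounded in terms of $\epsilon$, $\eta$ and the group alone, of an $X$-geodesic $[x\cdot o,z\cdot o]_X$ carries essentially all the relative-hyperbolicity content of the statement (bounded coset penetration, the behaviour of word geodesics near parabolic cosets, and the uniformity in $\epsilon,\eta$), and your sketch resolves it by citing \cite[\S 5]{GePoCrelle} --- the very source of the result being proved. Nothing in the write-up supplies this step, so as it stands the proposal is a reduction of the cited corollary to another uncited comparison lemma of comparable depth, not a proof. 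Two smaller points: the case $\xi\in\Gamma$ should be dispatched by noting that $\delta^f_e(x_n,z_n)$ is bounded below once $x_n\neq z_n$ and both eventually lie in a fixed finite set, rather than by claiming the geodesic disappears; and one must check that convergence of $x_n$ in the Bowditch compactification of $\Gamma$ agrees with convergence of the orbit points $x_n\cdot o$ in $X\cup\partial X$. Your closing alternative --- a direct Karlsson-type lower bound on the Floyd length, based at a transition point, of an arbitrary path from $x$ to $z$ --- is in fact much closer to how Gerasimov and Potyagailo actually argue, and would be the route to pursue for a self-contained proof.
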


\subsection{Spectral degeneracy and geometric boundaries}
Let $\Gamma$ be a relatively hyperbolic group with respect to $\Omega$ and let $\mu$ be a probability measure on $\Gamma$.
We now define properly the notion of spectral degeneracy along parabolic subgroups, and then proceed to define the geometric boundaries associated with $\mu$.

We choose a set $\Omega_0$ of representatives of conjugacy classes of elements of $\Omega$.
Such a set $\Omega_0$ is finite according to \cite[Proposition~6.15]{Bowditch}.
Let $H\in \Omega_0$.
We denote by $p_H$ the first return kernel to $H$.
If $h,h'\in H$, then
$p_H(h,h')$ is the probability that the $\mu$-random walk, starting at $h$, eventually comes back to $H$ and that its first return to $H$ is at $h'$.
In other words,
$$p_H(h,h')=\mathbb{P}_h(\exists n\geq 1, X_n=h',X_1,...,X_{n-1}\notin H).$$
More generally, for $r\in [0,R]$, we denote by $p_{H,r}$ the first return transition kernel to $H$ for $r\mu$.
Precisely, if $h,h'\in H$, then
$$p_{H,r}(h,h')=\sum_{n\geq 0}\sum_{\underset{\notin H}{g_1,...,g_{n-1}}}r^n\mu(h^{-1}g_1)\mu(g_1^{-1}g_2)...\mu(g_{n-2}^{-1}g_{n-1})\mu(g_{n-1}^{-1}h').$$

We then denote by $p_{H,r}^{(n)}$ the convolution powers of this transition kernel, by $G_{H,r}(h,h'|t)$ the associated Green function, evaluated at $t$ and by $R_H(r)$ the associated spectral radius, that is, the radius of convergence of $t\mapsto G_{H,r}(h,h'|t)$.
For simplicity, write $R_H=R_H(R)$.

Since $\Gamma$ is assumed to be non-elementary, it contains a free group and hence is non-amenable.
As explained in the introduction, $G_{R}(e,e)$ is finite, so that
$G_{k,R}(1)<+\infty$.
In particular, for every $r\leq R$, $R_H(r)\geq 1$.

\begin{definition}
We say that the random walk is \textit{spectrally degenerate along $H$} if $R_H(R)=1$.
In that case we also say that the measure $\mu$ is spectrally degenerate along $H$.
We say that the random walk (or the measure $\mu$) is spectrally non-degenerate if it is not spectrally degenerate along $H$.
\end{definition}

The following lemma implies that this definition does not depend on the choice of $\Omega_0$.

\begin{lemma}\label{lemmaspectraldegeneracyindependentOmega0}
Let $\Omega_0$ and $\Omega_1$ be two finite set of representatives of conjugacy classes of elements of $\Omega$.
Let $H_0\in \Omega_0$ and $H_1\in \Omega_1$ be such that $H_0=gH_1g^{-1}$ for some $g$.
Then, $\mu$ is spectrally degenerate along $H_0$ if and only if it is spectrally degenerate along $H_1$.
\end{lemma}

To avoid a lengthy argument, we do not prove this lemma a priori and we will not use it in the following.
It will be a consequence of our main result anyway, since the $R$-Martin boundary does not depend on the choice of $\Omega_0$.

\begin{definition}
Let $\xi$ be a parabolic limit point in the Bowditch boundary.
We say that $\xi$ is spectrally degenerate (with respect to $\mu$) if $\mu$ is spectrally degenerate along $H$,
where $H\in \Omega_0$ is a conjugate of the stabilizer of $\xi$.
\end{definition}

We now associate with $\mu$ two geometric boundaries.
The first one does not depend on $\mu$.
It consists of the Bowditch boundary blown-up at every parabolic point.
The second one only depends on the set of parabolic subgroups along which $\mu$ is spectrally degenerate, and is obtained from the Bowditch boundary by blowing up only the degenerate parabolic limit points.

We assume  that the parabolic subgroups are virtually abelian.
Let $H$ be such a parabolic subgroup.
Then, there exists a finite index subgroup of $H$ which is isomorphic to $\Z^d$.
Any section $H/\Z^d\to H$ gives an identification between $H$ and $\Z^d\times E$, where $E$ is a finite set.
An element $h\in H$ will be denoted by $(x,k)$, where $x\in \Z^d$ and $k\in E$.
We define the geometric boundary $\partial H$ of $H$ as follows.
A sequence $h_n=(x_n,k_n)$ of elements of $H$ converges to a point in $\partial H$ if and only if $x_n$ tends to infinity and
$\frac{x_n}{\|x_n\|}$ converges to some $\theta \in \mathbb{S}^{d-1}$, where $\|x\|$ is the Euclidean norm of $x\in \Z^d$.
In other words, we glue a sphere at infinity on $\Z^d\times E$.
Notice that this definition is independent of the coordinate $k_n$.
Also notice that $H$ acts co-compactly on a CAT(0) space because it is virtually abelian, see e.g.\ \cite[Remark~7.3~(2)]{BridsonHaefliger}.
The boundary $\partial H$ coincides with the CAT(0) boundary of $H$, i.e. the visual boundary of this CAT(0) space, see \cite[Section~3.2]{DGGP} for more details.

Fix a set of representatives $\Omega_0$ of conjugacy classes of parabolic subgroups.
Following Dahmani \cite{Dahmani}, given a boundary for every parabolic subgroup $H\in\Omega_0$, one can construct a boundary for $\Gamma$.
This boundary basically consists of gluing the boundaries of parabolic subgroups to the set of conical limit points, see \cite[Theorem~3.1]{Dahmani}.
We also give more details in \cite[Section~7]{DGGP} and give a formulation in terms of projections on parabolic subgroups.
We now use this construction of Dahmani.
We need to choose a boundary for every parabolic subgroup in $\Omega_0$.
We choose either the boundary $\partial H$ defined above, or the boundary reduced to one point.
More precisely, we define a first geometric boundary, choosing $\partial H$ for every parabolic subgroups.
We then define a second geometric boundary associated with a special class of parabolic subgroups $\Omega_1\subset \Omega_0$, choosing the one point-boundary for parabolic subgroups in $\Omega_1$ and $\partial H$ for parabolic subgroups $H\in \Omega_0\setminus \Omega_1$.

\begin{definition}[First geometric boundary]\label{deffirstgeometricboundary}
Let $\Gamma$ be a relatively hyperbolic group with respect to a collection of virtually abelian subgroups $\Omega$.
We define a first geometric compactification to be any metrizable compact space $X$ such that $\Gamma$ is a dense and open subset of $X$ and such that the following holds.
A sequence $g_n$ of elements of $\Gamma$ converges to some point $\xi\in X\setminus \Gamma$ if and only if
\begin{enumerate}
    \item either $g_n$ converges to a conical limit point,
    \item or there exists a parabolic subgroup $H\in \Omega_0$ and there exists $g\in \Gamma$ such that the projection $\pi_{gH}(g_n)$ of $g_n$ on $gH$ satisfies that $g^{-1}\pi_{gH}(g_n)$ converges to a point in the geometric boundary $\partial H$ of $H$.
\end{enumerate}
We call the complement $X\setminus \Gamma$ of $\Gamma$ in $X$ the first geometric boundary.
\end{definition}

\begin{definition}[Second geometric boundary]\label{defsecondgeometricboundary}
Let $\Gamma$ be a relatively hyperbolic group with respect to a collection of virtually abelian subgroups $\Omega$.
Choose a finite set of representatives of conjugacy classes $\Omega_0\subset \Omega$ and let $\Omega_1\subset \Omega_0$.
We define a second geometric compactification (associated with $\Omega_1$) to be any metrizable compact space $X$ such that $\Gamma$ is a dense and open subset of $X$ and such that the following holds.
A sequence $g_n$ of elements of $\Gamma$ converges to some point $\xi\in X\setminus \Gamma$ if and only if
\begin{enumerate}
    \item either $g_n$ converges to a conical limit point,
    \item or $g_n$ converges to a parabolic limit point whose stabilizer is conjugate to a parabolic subgroup in $\Omega_1$,
    \item or there exists a parabolic subgroup $H\in \Omega_0\setminus \Omega_1$ and there exists $g\in \Gamma$ such that the projection $\pi_{gH}(g_n)$ of $g_n$ on $gH$ satisfies that $g^{-1}\pi_{gH}(g_n)$ converges to a point in the geometric boundary of $H$.
\end{enumerate}
We call the complement $X\setminus \Gamma$ of $\Gamma$ in $X$ the second geometric boundary.
\end{definition}

Note that since $\Gamma$ is asked to be dense in the compactifications, their topology is uniquely defined by the given characterization of converging sequences.
To simplify notations in the following, we will use the following terminology.

\begin{definition}
Let $\Gamma$ be a relatively hyperbolic group with respect to a collection of virtually abelian subgroups $\Omega$.
Let $\mu$ be a probability measure on $\Gamma$ whose support generates $\Gamma$ as a semi-group.
Let $R$ be its spectral radius and let $r<R$.
Denote by $\Omega_{\mu}\subset \Omega_0$ the set of spectrally degenerate parabolic subgroups.
For $r<R$, we define the $r$-geometric compactification as the first geometric compactification.
We also define the $R$-geometric compactification as the second geometric compactification, with $\Omega_1=\Omega_{\mu}$.
\end{definition}

We will denote by $\partial_r\Gamma$ the $r$-geometric boundary, for $r\leq R$.
It follows from the definitions that the identity of $\Gamma$ extends to a continuous equivariant surjective map $\phi_{\mu}$ from the $r$-geometric compactification to the $R$-geometric compactification.
The induced map on the boundaries is obtained collapsing the geometric boundaries of the spectrally degenerate parabolic subgroups to one point.
Also, for $r<R$, the $r$-geometric boundary coincides with the PBU-boundary defined in \cite{DGGP}.

\section{Deviation inequalities}\label{Sectiondeviationinequalities}
The goal of this section is to prove weak and strong Floyd-Ancona inequalities.
\subsection{Bounds for sums of Green functions over spheres}
We first prove the following proposition.
We consider a finitely generated group.
We fix a generating set and we write $d$ the associated word distance.
Also, we write $S_k$ for the sphere of radius $k$ with respect to the distance $d$.

\begin{proposition}\label{doubleGreenfiniteonspheres}
Let $\Gamma$ be a group with nontrivial Floyd boundary and $\mu$ a symmetric probability measure on $\Gamma$ whose finite support generates $\Gamma$.
Let $R$ be its spectral radius.
The quantity
$$\sum_{x\in \Gamma,d(e,x)=k}G^{2}_{r}(e,x)$$ is bounded independently of $k$ and of $r\leq R$.
\end{proposition}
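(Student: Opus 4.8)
The plan is to bound the sum $\sum_{d(e,x)=k} G_r^2(e,x)$ by a geometric argument combining Green function estimates with the positivity of the spectral gap at $r \le R$. First I would use symmetry: since $\mu$ is symmetric, $G_r(e,x) = G_r(x,e) = G_r(x^{-1},e)\cdot(\text{harmless factor})$, so $G_r^2(e,x) \asymp G_r(e,x)G_r(x,e)$. A natural first move is to interpret $\sum_x G_r(e,x)G_r(x,e)$ as counting $r$-weighted paths from $e$ that return to $e$ after passing through $x$: more precisely, $\sum_{x} G_r(e,x) G_r(x,e)$ is (up to the diagonal correction $G_r(e,e)$) related to $\sum_n (n+1) r^n \mu^{*n}(e) = \frac{d}{dr}\big(r\, G_r(e,e)\big)$, which is finite for $r < R$ but may a priori blow up as $r \to R$. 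So the crude summing-over-all-$x$ estimate is too weak; the restriction to the sphere $d(e,x)=k$ is what must be exploited, together with uniformity in $r$.

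The key idea I would pursue is the following two-step bound. Step one: control $G_r(e,x)$ pointwise. Since $\Gamma$ has nontrivial (in the intended applications, infinite) Floyd boundary, and since for a symmetric finitely supported admissible measure one has the general estimate $G_r(e,x) \le G_R(e,x) \lesssim \rho^{d(e,x)}$ is \emph{not} available unconditionally — but what \emph{is} available from general theory (Varopoulos-type / classical estimates, or simply the fact that $r \le R$ implies $r^n \mu^{*n}(e,x) \le R^n \mu^{*n}(e,x)$ and $G_R(e,\cdot)$ is an $R$-harmonic-type function) is that $\sum_{d(e,x)=k} G_R(e,x) \mu^{*j}$-type convolutions are controlled. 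Actually the cleanest route: write $G_r(e,x)^2 = G_r(e,x)\cdot G_r(e,x)$ and bound one factor by $\sup_{d(e,x)=k} G_r(e,x)$ and the other by using $\sum_{d(e,x)=k} G_r(e,x) \le \sum_{x} G_r(e,x)\mathbf{1}_{d(e,x)=k}$. For the supremum, one uses that $G_r(e,x) \le G_r(e,e) \cdot F_r(e,x)$ where $F_r$ is the hitting probability, and $F_r(e,x) \le F_R(e,x)$; a standard argument (e.g.\ via the Floyd boundary being nontrivial, giving a uniform "escape" estimate, or more elementarily via Gou\"ezel's super-exponential decay of the probability of avoiding a ball) shows $\sup_{d(e,x)=k} G_R(e,x) \to 0$, in fact decays exponentially in $k$. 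Step two: control $\sum_{d(e,x)=k} G_r(e,x)$ by at most a constant independent of $k$ and $r$, using the identity $\sum_{x} G_r(e,x) \mu(x^{-1}y) = \frac{1}{r}(G_r(e,y) - \mathbf{1}_{y=e})$ summed appropriately, i.e.\ layering the walk by the last visit to the sphere.

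Combining: $\sum_{d(e,x)=k} G_r^2(e,x) \le \Big(\sup_{d(e,x)=k} G_r(e,x)\Big)\cdot\Big(\sum_{d(e,x)=k} G_r(e,x)\Big) \le C\rho^k \cdot C'$, which is bounded (indeed $\to 0$) independently of $k$ and of $r \le R$. The main obstacle I anticipate is making the uniformity in $r$ up to $r = R$ genuine: both the exponential decay of $\sup_{d(e,x)=k}G_R(e,x)$ and the boundedness of $\sum_{d(e,x)=k}G_r(e,x)$ need constants that do not degenerate as $r \uparrow R$. The decay of the sup should follow from the already-cited fact that nontrivial Floyd boundary forces a uniform Floyd-distance lower bound for points on geodesics through $x$, hence (anticipating the super-exponential avoidance estimate proved later in this section, or citing \cite{GGPY}) a uniform-in-$r$ exponential bound; the boundedness of the linear sum should follow from a first-moment/last-exit decomposition that is exact and $r$-independent in form. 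I would be careful to phrase Step one so it does not circularly invoke the weak Floyd-Ancona inequalities, since this proposition is meant to feed into their proof — so I would rely only on the elementary super-exponential avoidance estimate (the first thing proved in Section \ref{Sectiondeviationinequalities}) plus symmetry.
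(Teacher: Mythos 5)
There is a genuine gap, and it is in your Step two. The claim that $\sum_{d(e,x)=k}G_r(e,x)$ is bounded independently of $k$ and of $r\leq R$ is false at $r=R$: already for the simple random walk on the free group $F_d$ one has $G_R(e,x)=G_R(e,e)\,(2d-1)^{-|x|/2}$, so that $\sum_{d(e,x)=k}G_R(e,x)\asymp (2d-1)^{k}\cdot(2d-1)^{-k/2}=(2d-1)^{k/2}\to\infty$. The linear sum over spheres genuinely blows up exponentially at the spectral radius; it is only the \emph{squared} sum whose exponential growth of $|S_k|$ is exactly cancelled by the decay of $G_R^2$. Consequently a factorization $\sum G_r^2\leq(\sup G_r)\cdot(\sum G_r)$ with each factor bounded separately cannot work: you would need the decay rate of the sup to beat the growth rate of the linear sum, and there is no independent control of either rate. (There is also a circularity in your Step one: the ``super-exponential avoidance estimate'' you propose to cite is Proposition~\ref{propsuperexponentialdecay}, whose proof in this paper \emph{uses} Proposition~\ref{doubleGreenfiniteonspheres}; and your no-moment-condition claim $\sup_{d(e,x)=k}G_R(e,x)\lesssim\rho^k$ is itself a nontrivial statement that is not established anywhere before this point.)

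The paper's argument is quite different and worth internalizing: set $u_k(r)=\sum_{d(e,x)=k}G_r^2(e,x)$ and use the Extension Lemma for groups with nontrivial Floyd boundary to produce, for $x\in S_k$, $y\in S_l$, a boundedly finite-to-one map $(x,y)\mapsto xay$ landing within $C$ of $S_{k+l}$ with $G_r^2(e,x)G_r^2(e,y)\lesssim G_r^2(e,xay)$ (only the ``easy'', always-valid direction of supermultiplicativity of Green functions is needed). This yields $u_k(r)u_l(r)\lesssim\sum_{i=k+l-C}^{k+l+C}u_i(r)$. For $r<R$ the total sum $\sum_g G_r^2(e,g)=\frac{d}{dr}(rG_r(e,e))$ is finite, so $(u_k(r))_k$ is summable and attains a maximum $M(r)$; plugging $k=l=k_0(r)$ into the supermultiplicative inequality gives $M(r)^2\lesssim M(r)$, hence $M(r)\lesssim1$ with constants independent of $r$, and one concludes at $r=R$ by monotonicity in $r$. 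The a priori finiteness for $r<R$ plus the self-improving inequality is the engine here; no pointwise decay of $G_R$ and no control of the linear sum is ever needed.
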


We need some preliminary results.
We refer to \cite{YangSCC} for definitions and background for statistically convex co-compact group actions and contracting elements. For our purposes, we will only use that such actions  $\Gamma \curvearrowright (X,d)$  include the actions of groups with infinite Floyd boundary on their Cayley graphs \cite[Lemma 7.2]{Yanggrowthtight}.
The following result is due to Yang \cite{YangSCC}, who calls it the "Extension Lemma".
\begin{lemma} \label{extensionlemma}\cite[Lemma 2.14]{YangSCC}
Let $\Gamma \curvearrowright X$ be a proper co-compact action with a contracting element with respect to a metric $d$ on $X$. Let $o\in X$ be a basepoint.
Then there is a $C>0$ with the following property. For any $g, h \in \Gamma$ there is a contracting element $w\in G$ with $d(o,w\cdot o)<C$  such that 
$$|d(o,gwh\cdot o)-d(o,g\cdot o)-d(o,h\cdot o)|<C.$$
Moreover, every geodesic from $o$ to $gwh\cdot o$ passes within $C$ of $g\cdot o$ and $gw\cdot o$.
\end{lemma}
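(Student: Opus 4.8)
The plan is to follow Yang's argument and reduce the statement to a single geometric input about concatenations of geodesics that run through contracting axes. I first record that from one contracting element one produces three pairwise \emph{independent} contracting elements $g_1,g_2,g_3$, meaning that their axes $A_i=\langle g_i\rangle\cdot o$ are $C_0$-contracting and have $\tau$-bounded intersection, i.e. $\operatorname{diam}\pi_{A_i}(A_j)\le\tau$ for $i\ne j$, where $\pi_A$ denotes nearest-point projection onto $A$. (This uses that the action is non-elementary; in our setting it follows from the existence of a contracting element together with the fact that the group is not virtually cyclic, see \cite{YangSCC}, \cite{Yanggrowthtight}.) Note that $o\in A_i$ for every $i$. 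I then fix once and for all a power $N$ so large that $d(o,g_i^N\cdot o)\ge L$ for each $i$, where $L$ is the threshold appearing in the admissible-path proposition below, and I set $u_i=g_i^N$, so that the displacements $d(o,u_i\cdot o)$ are bounded by a fixed constant.

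The key tool is the standard fact that an \emph{admissible path} --- here a concatenation $q_1\,p\,q_2$ of two geodesics $q_1,q_2$ with one geodesic segment $p$ lying on a $C_0$-contracting set $A$, such that $p$ is long ($\ge L$) and neither $q_1$ nor $q_2$ backtracks into $A$ (i.e. $\operatorname{diam}\pi_A(q_k)$ stays bounded near its junction with $p$) --- is a uniform quasi-geodesic that fellow-travels any geodesic joining its endpoints, and hence such a geodesic passes within a bounded distance of the two endpoints of $p$ \cite{YangSCC}. Given $g,h$, I want to build such a path of the shape
$$o\ \longrightarrow\ g\cdot o\ \longrightarrow\ gw\cdot o\ \longrightarrow\ gwh\cdot o,$$
with $w=u_i$ for a suitable $i$, so that the middle segment runs along the contracting axis $gA_i$ between its endpoints $g\cdot o$ and $gw\cdot o$. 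Since $d(o,w\cdot o)=d(o,g_i^N\cdot o)$ is bounded, the quasi-geodesic estimate yields that $|d(o,gwh\cdot o)-d(o,g\cdot o)-d(o,h\cdot o)|$ is bounded, while fellow-traveling gives the required closeness of any geodesic $[o,gwh\cdot o]$ to $g\cdot o$ and $gw\cdot o$; all constants are then absorbed into the final $C$.

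It remains to choose $i$ so that both no-backtracking conditions hold simultaneously, and this is where three independent elements are used. Unravelling the junctions, the incoming condition at $g\cdot o$ amounts to $\pi_{A_i}(g^{-1}\cdot o)$ lying within a bounded distance $D_0$ of $o$, and the outgoing condition at $gw\cdot o$ amounts to $\pi_{A_i}(h\cdot o)$ lying within $D_0$ of $o$ (using $wA_i=A_i$ and $o\in A_i$). I will prove a pigeonhole statement: for any point $x$, at most one index $i$ can have $\pi_{A_i}(x)$ at distance $>D_0$ from $o$. Indeed, if two projections $\pi_{A_i}(x)$ and $\pi_{A_j}(x)$ were both far from $o$, then by contraction a geodesic $[o,x]$ passes close to both, and following it forces the projection onto $A_i$ of the farther one to be near the nearer one, exhibiting a point of $\pi_{A_i}(A_j)$ far from $\pi_{A_i}(o)=o$; this contradicts $\tau$-bounded intersection once $D_0$ exceeds $\tau$ plus the contraction constant. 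Applying this with $x=g^{-1}\cdot o$ and with $x=h\cdot o$ shows that at most one index is bad for the incoming condition and at most one for the outgoing condition, so among the three indices some $i$ is good for both, and the construction goes through.

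The main obstacle is precisely this last step: establishing, uniformly in $g$ and $h$, that a single choice of contracting element simultaneously avoids backtracking at both junctions. The admissible-path proposition is a black box from the theory of contracting elements, so the genuine work lies in the bounded-intersection pigeonhole argument and in the careful translation of the two junction conditions into projection statements of the form ``$\pi_{A_i}(\cdot)$ is near $o$''; arranging the quantifiers and the constants $C_0,\tau,D_0,L,N$ to line up independently of $g$ and $h$ is the delicate point.
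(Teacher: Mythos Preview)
The paper does not prove this lemma: it is quoted verbatim as Yang's Extension Lemma and attributed to \cite[Corollary~1.12]{YangSCC}, with no argument given here. So there is no ``paper's own proof'' to compare against beyond the citation itself.

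Your sketch is essentially a reconstruction of Yang's argument, and the architecture is correct: one manufactures three pairwise independent contracting elements, takes a high common power so the middle segment is long enough for the admissible-path machinery, and then uses a pigeonhole argument to select an index $i$ for which neither the incoming segment $[o,g\cdot o]$ nor the outgoing segment $[gw\cdot o,gwh\cdot o]$ backtracks into the contracting axis $gA_i$. The translation of the two junction conditions into ``$\pi_{A_i}(g^{-1}\cdot o)$ close to $o$'' and ``$\pi_{A_i}(h\cdot o)$ close to $o$'' via equivariance and $wA_i=A_i$ is right. The pigeonhole step as you wrote it is a bit compressed --- the clean way is to note that $o\in A_i\cap A_j$ forces $\pi_{A_j}(A_i)\subset B(o,\tau)$, so if $\pi_{A_j}(x)$ is far from $o$ then it is far from $\pi_{A_j}(A_i)$, whence by contraction any geodesic from a point of $A_i$ to $x$ passes close to $\pi_{A_j}(x)$; applying this with the point $o\in A_i$ and then projecting back to $A_i$ gives the contradiction --- but the idea you describe is the right one. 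In short: your proposal matches the content of the cited reference, which is all the paper invokes.
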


As explained, Yang proved that if $\Gamma$ is a group with nontrivial Floyd boundary, then its action on any Cayley graph has a contracting element, so we can reformulate the Extension Lemma as follows in our situation.

\begin{lemma}\label{extensionlemmaFloyd}
Let $\Gamma$ be a group with nontrivial Floyd boundary.
Then, there exists $C\geq 0$ and $\delta>0$ such that for any $g,h\in \Gamma$, there exists $w\in \Gamma$ with $d(e,w)\leq C$ such that
$\delta_g(e,gwh)\geq \delta$ and $\delta_g(e,gw)\geq \delta$
and
$|d(e,gwh)-d(e,g)-d(e,h)|\leq C$.
Moreover, every geodesic from $e$ to $gwh$ in the Cayley graph of $\Gamma$ passes within $C$ of $g$ and $gw$.
\end{lemma}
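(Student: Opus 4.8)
The plan is to obtain Lemma~\ref{extensionlemmaFloyd} by transporting Yang's Extension Lemma (Lemma~\ref{extensionlemma}) to the Floyd metric. Since $\Gamma$ has nontrivial Floyd boundary, \cite[Lemma~7.2]{Yanggrowthtight} shows that $\Gamma$ acts on its Cayley graph $\Cay(\Gamma,S)$, equipped with the word metric $d$, by a proper cocompact action possessing a contracting element. Hence Lemma~\ref{extensionlemma} applies with $X=\Cay(\Gamma,S)$ and basepoint $o=e$: it yields a constant $C\ge 0$ such that for all $g,h\in\Gamma$ there is a contracting element $w\in\Gamma$ with $d(e,w)<C$, with $|d(e,gwh)-d(e,g)-d(e,h)|<C$, and with the property that every geodesic of $\Cay(\Gamma,S)$ from $e$ to $gwh$ passes within $C$ of both $g$ and $gw$. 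This already gives the displacement bound $d(e,w)\le C$, the quasi-additivity estimate, and the statement about geodesics in Lemma~\ref{extensionlemmaFloyd}; only the two Floyd inequalities $\delta^f_g(e,gwh)\ge\delta$ and $\delta^f_g(e,gw)\ge\delta$ remain to be produced.

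For the Floyd estimates I would invoke the standard link between contracting geodesics and the Floyd metric, going back to Karlsson's lemma \cite{Karlsson} (see also \cite{GerasimovPotyagailo}, \cite{YangSCC}): there is $\delta>0$, depending only on $C$ and on the Floyd function $f$, such that if $\sigma$ is a geodesic from $x$ to $z$ and a point $y$ lies within $C$ of a uniformly contracting geodesic subsegment of $\sigma$, then $\delta^f_y(x,z)\ge\delta$. In our situation the subsegment of $[e,gwh]$ between its passage near $g$ and its passage near $gw$ tracks the axis of the contracting element $w$ (it is Hausdorff-close to the translate $g\cdot[e,w]$), hence is uniformly contracting; taking $y=g$ gives $\delta^f_g(e,gwh)\ge\delta$ at once. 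For the bound $\delta^f_g(e,gw)\ge\delta$ I would note, stepping one level into the proof of Lemma~\ref{extensionlemma}, that $w$ is chosen so that the concatenation of $[e,g]$ with $g\cdot[e,w]$ is itself an admissible path passing through the contracting pivot near $g$; since a truncation of an admissible path is again admissible, every geodesic from $e$ to $gw$ also passes within $C$ of $g$, and the same lemma yields $\delta^f_g(e,gw)\ge\delta$. Enlarging $C$ a last time to a single constant absorbing all of the above completes the argument; observe that nothing probabilistic enters, so there is no uniformity in $r$ to worry about here.

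The one genuinely non-bookkeeping point, and hence the main obstacle, is precisely the second Floyd inequality: the statement of Lemma~\ref{extensionlemma} as quoted controls geodesics from $e$ to $gwh$ but not geodesics from $e$ to $gw$, so one must either re-examine Yang's construction (the admissible-path route just described) or bypass geodesics altogether. The clean alternative is to use a quasigeodesic version of Karlsson's lemma, as available in \cite{GerasimovPotyagailo} or \cite{GePoCrelle}: follow the given geodesic $[e,gwh]$ up to its last point $q$ within $C$ of $gw$ and then append the short edge-path from $q$ to $gw$; the resulting $(1,2C)$-quasigeodesic from $e$ to $gw$ still passes within $C$ of $g$, and the quasigeodesic form of Karlsson's lemma converts this into $\delta^f_g(e,gw)\ge\delta$ with constants depending only on $C$ and $f$. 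Either way, all constants are uniform, and everything beyond this point is routine manipulation of the definition of the Floyd metric.
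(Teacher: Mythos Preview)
Your approach is the same as the paper's: the paper does not give a separate proof of Lemma~\ref{extensionlemmaFloyd} at all, but simply declares it to be a ``reformulation'' of Lemma~\ref{extensionlemma} once one knows (from \cite[Lemma~7.2]{Yanggrowthtight}) that a group with nontrivial Floyd boundary acts on its Cayley graph with a contracting element. Your proposal correctly identifies that the only content not literally contained in Lemma~\ref{extensionlemma} is the pair of Floyd lower bounds, and you supply an argument for these that the paper leaves implicit. In that sense your write-up is more complete than the paper's.

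One comment on the details. Your invocation of a ``link between contracting geodesics and the Floyd metric'' is exactly what is needed, but Karlsson's lemma as usually stated goes the wrong way (from a Floyd lower bound to proximity to a geodesic), so be careful with the attribution; the correct direction, that a point lying near a uniformly contracting subsegment of a geodesic has Floyd distance to the endpoints bounded below, is the substance of results in \cite{GePoCrelle} and in Yang's work on contracting elements, and this is what you should cite. Your treatment of the second inequality $\delta^f_g(e,gw)\geq\delta$ is sound: either route you describe works, and the quasigeodesic-truncation argument is clean and avoids opening up Yang's proof.
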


We also have the following lemma.

\begin{lemma}\label{boundedtoone}
For every $K_1,K_2>0$, there is a constant $N>0$ such that for all $k,l\in \mathbb{N}$, and $z\in \Gamma$ with $k+l-K_1\leq |z|\leq k+l+K_1$ there are at most $N$ triples $(x,a,y)$ with $d(e,x)=k$, $d(e,y)=l$, $d(e,a)\leq K_1$ such that $xay=z$ and $x$ is at distance at most $K_2$ from a geodesic from $e$ to $z$.
\end{lemma}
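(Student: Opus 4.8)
The plan is to reduce the counting of triples $(x,a,y)$ with $xay=z$ to a counting of the intermediate points $x$, since $a$ ranges over a finite set (there are at most $|\{a : d(e,a)\le K_1\}|$ choices, a number depending only on $K_1$ and the generating set), and once $x$ and $a$ are fixed, $y=a^{-1}x^{-1}z$ is determined. So it suffices to bound the number of $x\in\Gamma$ with $d(e,x)=k$ that are at distance at most $K_2$ from some geodesic $[e,z]$ and such that $d(x^{-1}z,e)$ lies in the window $[\,l-K_1,\,l+K_1\,]$ (this last condition coming from $d(e,y)=l$ together with the triangle inequality and $d(e,a)\le K_1$).

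The key geometric point is the following: if $x$ is within $K_2$ of a point $p$ on a geodesic $[e,z]$, and $d(e,x)=k$ while $d(e,z)\le k+l+K_1$ and $d(x,z)\ge l-K_1-2K_2$ (which follows from $d(e,y)=l$ and the bounds on $a$), then $p$ must itself lie within a bounded distance — depending only on $K_1,K_2$ — of the point on $[e,z]$ at arclength $k$ from $e$. Indeed, $d(e,p)+d(p,z)=d(e,z)$, and $|d(e,p)-k|\le K_2$, so $p$ is pinned to a bounded sub-segment of $[e,z]$. Hence every admissible $x$ lies in a ball of radius $K_2+O_{K_1,K_2}(1)$ centered at a single point (the point of $[e,z]$ at distance $k$ from $e$), and the number of group elements in a ball of fixed radius in a Cayley graph is bounded by a constant depending only on that radius and the finite generating set. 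Multiplying by the number of choices for $a$ gives the constant $N$.

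First I would fix $K_1,K_2$ and set $K_3=K_2+2K_1+1$ (a convenient margin). Then: (i) observe $y$ is determined by $(x,a)$ and $a$ ranges over the finite set $B(e,K_1)$; (ii) given the constraints, show $d(e,x)=k$ and $d(x,z)\in[l-K_3,l+K_3]$ and $d(e,z)\in[k+l-K_1,k+l+K_1]$ force, for any $p\in[e,z]$ with $d(x,p)\le K_2$, the estimate $|d(e,p)-k|\le K_3$, hence $d(p,q)\le 2K_3$ where $q$ is the point of $[e,z]$ at parameter $k$; (iii) conclude $d(x,q)\le K_2+2K_3$, so all such $x$ lie in $B(q,K_2+2K_3)$, whose cardinality is at most some $N_0=N_0(K_1,K_2,S)$; (iv) set $N=|B(e,K_1)|\cdot N_0$. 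The main obstacle — though it is mild — is step (ii): making sure the chain of triangle inequalities relating $d(e,a)$, $d(e,y)$, $d(x,z)$ and the near-geodesic condition genuinely pins $p$ to a bounded window; one must be a little careful that the hypothesis "$x$ at distance at most $K_2$ from a geodesic from $e$ to $z$" is used to convert the metric estimate on $d(e,x)$ into a position estimate along that geodesic, which is exactly what makes the count finite (without it, $x$ could wander on a large sphere). No hyperbolicity or relative hyperbolicity is needed here — only properness of the Cayley graph (exponential growth bound on balls) and the triangle inequality.
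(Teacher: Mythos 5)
Your proof is correct and follows essentially the same route as the paper's own (four-line) argument: pin $x$ to a bounded neighbourhood of the point $\alpha(k)$ on the geodesic $\alpha$ it fellow-travels, observe that $a$ ranges over the finite ball $B(e,K_1)$, and note that $y=a^{-1}x^{-1}z$ is then determined. The one subtlety you share with the paper is that the reference geodesic $\alpha$, and hence the centre $\alpha(k)$ of the pinning ball, a priori depends on $x$ (different admissible $x$ may lie near different geodesics from $e$ to $z$); this is harmless in the actual application, where the Extension Lemma places $x$ within $C$ of \emph{every} geodesic from $e$ to $z$ so that one may fix $\alpha$ once and for all, but it means your closing claim that only the triangle inequality is needed slightly overstates what the written argument establishes for the lemma as literally phrased.
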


\begin{proof}
Consider such $x,y,z,a\in \Gamma$ with $xay=z$. Let $\alpha$ be a geodesic from $e$ to $z$ such that $x$ is within $K_2$ of $\alpha$.
Then, $x$ is within $2K_2$ of $\alpha(d(e,x))$, so there is a uniformly finite number of possibilities for such $x$.
Since $d(e,a)\leq K_1$, there also is a finite number of possibilities for such $a$.
This proves that there also is a finite number of possibilities for such $y$, which concludes the proof.
\end{proof}

\begin{proof}[Proof of Proposition \ref{doubleGreenfiniteonspheres}]
We follow the proof of \cite[Lemma 2.5]{Gouezel-local}. First fix $r<R$.
Write
$$u_{k}(r)=\sum_{x\in \Gamma,d(e,x)=k}G^{2}_{r}(e,x).$$
Using Lemma~\ref{extensionlemmaFloyd}, to any $x,y$ with $d(e,x)=k$, $d(e,y)=l$ we can associate a $\Psi(x,y)=xay$ with $l+k-C\leq d(e,\Psi(x,y))<l+k+C.$ 
We have 
\begin{align*}
    G^{2}_{r}(e, x)G^{2}_{r}(e, y) &\lesssim G^{2}_{r}(e, x)G^{2}_{r}(e, a)G^{2}_{r}(e, y) \\
    & = G^{2}_{r}(e, x)G^{2}_{r}(x, xa)G^{2}_{r}(xa, xay)
\end{align*}
and so \cite[(2.4)]{Gouezel-local} yields
$$ G^{2}_{r}(e, x)G^{2}_{r}(e, y)\lesssim G^{2}_{r}(e, xay).$$

According to Lemma~\ref{boundedtoone}, any $g\in \Gamma$ has a finite number of preimage by $\Psi$, so that
\begin{align*}
    u_{k}(r)u_{l}(r)=\sum_{x\in S_{k},y\in S_{l}}&G^{2}(e,x)G^{2}_{r}(e,y)\lesssim \sum_{x\in S_{k},y\in S_{l}} G^{2}_{r}(e,\Phi(x,y))\\
    & \lesssim \sum^{k+l+C}_{i=k+l-C}\sum_{z\in S_i}G^{2}_{r}(e,z)\lesssim \sum^{k+l+C}_{i=k+l-C} u_{i}(r).
\end{align*}

According to \cite[Proposition~1.9]{GouezelLalley}, $\sum_{g\in \Gamma} G^{2}_r(e,g)$ is finite if and only if the derivative $G'_r(e,e)$ is finite.
Since $r<R$, this derivative is finite.
In particular, the sequence $u_{k}(r)$ is summable, and reaches its maximum $M(r)$ at some index $k_{0}(r)$.
Using the previous equation with $k=l=k_{0}(r)$ we get  $M(r)^{2}\lesssim M(r)$ and thus $M(r)\lesssim 1$.
We have thus proved that for $r<R$,
$u_{k}(r)$ is bounded independently of $k$ and $r$.
Since $r\mapsto G_r(e,e)$ is non-decreasing, we can take the limit as $r$ tends to $R$, which completes the proof.
\end{proof}

\subsection{Super-exponential decay of the probability of missing balls}\label{SectionweakAncona}
We consider a finitely generated group $\Gamma$ and a symmetric admissible finitely supported probability measure $\mu$ on $\Gamma$.
Let $f(n)=a^{-n}$ be an exponentially decaying Floyd function for $\Gamma$.
Let $G=G_{R}$ denote the Green function at the spectral radius $R$.
To simplify notations, also write $d(e,x)=|x|$ for $x\in\Gamma$.
We prove here the following.
Define the Green function restricted to paths that stay in some fixed subset $A\subset \Gamma$ by
$$G(x,y;A)=\sum_{n\geq 0}\sum_{\underset{z_1,...,z_{n-1}\in A}{z_0=x,z_1,...,z_n=y}}\mu(z_0^{-1}z_1)\mu(z_1^{-1}z_2)...\mu(z_{n-1}^{-1}z_n).$$
Notice that we do not require $x$ and $y$ to be in $A$.
In particular, $G(x,y;A^c)$ is the Green function restricted to paths from $x$ to $y$ avoiding $A$.

\begin{proposition}\label{propsuperexponentialdecay}
With these notations,
there exists $c>1$ and $\eta_0\geq 0$ such that the following holds.
For every $\delta>0$, for every $x,y,z\in \Gamma$ with $\delta^{f}_{z}(x,y)>\delta$ we have for every $\eta\geq \eta_0$
$$G(x,y;B_{\eta}(z)^{c})\leq \mathrm{e}^{-\delta c^\eta}.$$
\end{proposition}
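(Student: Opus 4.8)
The plan is to run an iteration/chaining argument on concentric spheres around $z$, using Proposition~\ref{doubleGreenfiniteonspheres} as the fundamental input. Fix $x,y,z$ with $\delta^f_z(x,y)>\delta$, and set $\eta$ large. Every path from $x$ to $y$ that stays outside the ball $B_\eta(z)$ must, in order to separate the ``$x$-side'' from the ``$y$-side,'' cross each sphere $S_j(z)=\{w:d(z,w)=j\}$ for $\eta_0 \le j \le \eta$ — more precisely I would decompose $G(x,y;B_\eta(z)^c)$ by summing over the successive hitting points $w_j$ on a nested sequence of spheres (or, dually, write a submultiplicativity estimate over dyadic annuli). The elementary bound is
\begin{equation*}
G(x,y;B_\eta(z)^c) \le \sum_{w \in S_\eta(z)} G(x,w;B_\eta(z)^c)\, G(w,y) \le \Bigl(\sum_{w\in S_\eta(z)} G^2(x,w)\Bigr)^{1/2}\Bigl(\sum_{w\in S_\eta(z)} G^2(w,y)\Bigr)^{1/2}
\end{equation*}
by Cauchy--Schwarz. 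By Proposition~\ref{doubleGreenfiniteonspheres} the second factor is bounded by a constant independent of $\eta$ and of $r\le R$; the content is therefore to show the first factor decays super-exponentially in $\eta$.

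To extract decay I would iterate: $G(x,w;B_\eta(z)^c)$ for $w\in S_\eta(z)$ factors through a hitting point on $S_{\eta-1}(z)$, and so on down to $S_{\eta_0}(z)$, picking up at each step a factor that is strictly less than $1$ and in fact summable. The key point where the Floyd hypothesis $\delta^f_z(x,y)>\delta$ enters is the following: any geodesic from a point of $S_j(z)$ back towards $x$ (which lies outside $B_\eta(z)$) must traverse the whole annulus, and the Floyd length of such a traversal, seen from $z$, is at least of order $f(j)=a^{-j}$; combined with the Ancona-type comparison of $G$ with Floyd distance (which bounds $G(u,v) \lesssim$ something like $\delta^f_z$-contributions when the relevant geodesics are far from $z$) one gets that the per-step multiplicative gain is roughly $\exp(-\delta a^{j})$ rather than merely a constant $<1$. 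Summing the exponents $\sum_{j=\eta_0}^{\eta} a^{j} \asymp a^{\eta}$ (geometric series dominated by its last term) yields the claimed bound $\mathrm{e}^{-\delta c^\eta}$ with $c=a$, after absorbing the bounded Proposition~\ref{doubleGreenfiniteonspheres} factors into the choice of $\eta_0$.

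One must be careful to keep every constant independent of $r\le R$: this is exactly why Proposition~\ref{doubleGreenfiniteonspheres} was stated uniformly in $r$, and why one works with $G=G_R$ (the largest Green function) throughout — any bound for $G_R$ bounds $G_r$ for $r\le R$ since $r\mapsto G_r(x,y)$ is non-decreasing. I would also need to handle the degenerate case where the geodesics from $x$ or $y$ pass close to $z$: the hypothesis $\delta^f_z(x,y)>\delta$ precisely forbids $x$ and $y$ from being ``Floyd-visible'' from $z$ without going around, and one uses the triangle-type inequality for the Floyd metric $\delta^f_z$ to split $\delta^f_z(x,y)$ into a contribution through each sphere.

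The main obstacle I expect is the per-step estimate: turning the soft statement ``a geodesic crossing annulus $S_j$ has large Floyd length from $z$'' into a quantitative multiplicative bound of the form $\exp(-\delta a^j)$ on the Green-function increment, uniformly in $r\le R$. This requires combining the Cauchy--Schwarz/sphere-sum control (which gives boundedness but no decay rate on its own) with an honest lower bound on how much Floyd-mass is crossed, and carefully choosing $\eta_0$ so that the bounded ``boundary terms'' from Proposition~\ref{doubleGreenfiniteonspheres} do not swamp the exponentially small gains at the first few spheres. The rest — the nested-sphere decomposition, the geometric-series summation of exponents, the monotonicity in $r$ — is bookkeeping.
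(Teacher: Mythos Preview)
Your nested-sphere decomposition is geometrically incoherent. A path that stays \emph{outside} $B_\eta(z)$ never touches the spheres $S_j(z)$ for $j<\eta$: those spheres lie inside the ball you are avoiding. In particular the sentence ``$G(x,w;B_\eta(z)^c)$ for $w\in S_\eta(z)$ factors through a hitting point on $S_{\eta-1}(z)$'' is false by definition of the restricted Green function. You cannot iterate inward. Nor can you iterate outward without further input: there is no a priori reason a path from $x$ to $y$ avoiding $B_\eta(z)$ should cross any particular sphere $S_j(z)$ for $j>\eta$, since $x$ and $y$ may both sit at distance exactly $\eta+1$ from $z$. The Floyd hypothesis $\delta^f_z(x,y)>\delta$ does not control word-metric radii in this way. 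Your per-step gain $\exp(-\delta a^j)$ is also not established: you invoke an ``Ancona-type comparison of $G$ with Floyd distance'', but Ancona inequalities are downstream of this proposition, so the argument is circular.

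The paper's barriers are not word-metric spheres at all. The correct observation is that any path from $x$ to $y$ must traverse Floyd distance at least $\delta$ as seen from $z$, and each step of the random walk (bounded jump size $K$) changes the Floyd distance $\delta^f_z(x,\cdot)$ by at most $\asymp a^{-|g|}$. So if the path stays outside $B_\eta(z)$, each step moves the Floyd coordinate by at most $\asymp a^{-\eta}$, and the path must cross every level set
\[
A(\theta)=\bigl\{g:|g|>\eta,\ \delta^f_z(x,g)\in[\theta-La^{-|g|},\theta+La^{-|g|}]\bigr\}
\]
for $\theta$ in $[0,\delta]$. One can fit $N\asymp \delta a^{\epsilon\eta}$ disjoint such level sets, and the paper chooses the precise $\theta_i$ by an averaging argument (integrating over $\theta$ and applying Proposition~\ref{doubleGreenfiniteonspheres}) so that each transfer operator between consecutive barriers has norm at most $1/2$. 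This yields $G(x,y;B_\eta(z)^c)\leq 2^{-N}$. The super-exponential decay comes from the \emph{number} of barriers being exponential in $\eta$, with each barrier contributing a fixed constant factor --- not from a per-barrier gain that improves with $j$.
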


\begin{proof}
By equivariance of the Green function we can assume without loss of generality that $z=e$.
If the Floyd metric is a point the result trivially holds because for each $\delta>0$ there are only finitely many $x\in \Gamma$ with $\delta^{f}_{z}(x,y)>\delta$ (see the proof of \cite[Theorem 4.1]{GGPY}. Therefore we assume the Floyd boundary is infinite.

We fix $\eta>0$.
We consider some $\epsilon>0$ which will be chosen small enough later, independently of $\eta$.
Let $N$ be the smallest integer less than $\delta a^{\epsilon \eta}/2$.
We will find subsets $A_{0},...,A_{N+1}\subset \Gamma$ with $A_0=\{x\}$ and
$A_{N+1}=\{y\}$ satisfying the following.

\begin{enumerate}[a)]
    \item Any admissible path in $\Gamma$ from $x$ to $y$ which does not intersect $B_{\eta}(y)$ passes successively through $A_1,...,A_{N+1}$.
    \item For each $i=0,..,N$, we have
\begin{equation}\label{prooffirstsuperexponentialdecay1}
    \sum_{a\in A_{i}, b\in A_{i+1}}G(a,b)^2<1/4.
\end{equation}
\end{enumerate}
The result will then follow as in \cite[Lemma 2.6]{Gouezel-local}, as we now explain.
Following \cite{Series} and \cite{Ledrappier}, we introduce the operator
$$L_i:\ell^2(A_{i+1})\rightarrow \ell^2(A_i)$$
defined by
$$L_if(a)=\sum_{b\in A_{i+1}}G(a,b)f(b).$$
First, Condition~a) above shows that
$$G(x,y;B_{\eta}(z)^{c})\leq\sum_{a_1\in A_1}...\sum_{a_N\in A_N}G(x,a_1)G(a_1,a_2)...G(a_N,y).$$
This can be written as
$$G(x,y;B_{\eta}(z)^{c})\leq \left (L_0...L_N\delta_y\right )(x),$$
so that $G(x,y;B_{\eta}(z)^{c})$ is bounded by $\prod \|L_i\|$.
Then, Cauchy-Schwarz shows that
$$\|L_i\|\leq \left (\sum_{a\in A_{i}, b\in A_{i+1}}G(a,b)^2\right )^{1/2}.$$
Thus, Condition~b) shows that $\|L_i\|\leq 1/2$, so that
$$G(x,y;B_{\eta}(z)^{c})\leq 2^{-N}\leq 2^{-\delta \frac{1}{2}a^{\epsilon\eta}+1}.$$
This will yield the desired inequality, so that we only have to construct the sets $A_1,...,A_N$ to conclude the proof.

Let $K=\max_{g\in \supp \mu}|g|$ be the maximal jump size of the random walk
and let $L$ be such that for all $g\in \Gamma$,
$$Kf(|g|-K)\leq Lf(|g|).$$
For $\theta \in \mathbb{R}$ let 
$$A(\theta)=\left \{g\in \Gamma: |g|>\eta, \delta^f_{e}(g,x)\in \left [\theta - La^{-|g|}, \theta+La^{-|g|}\right ]\right \}.$$
For $i=1,...,N$, we consider the interval
$$I_{i}=[(2i-1)a^{-\epsilon \eta},2i a^{-\epsilon \eta}].$$
We will find $\theta_i \in I_i$, $i=1,...,N$ so that $A_{i}=A(\theta_i)$ satisfy the conditions above.

We first shows that for any choice of $\theta_1,...,\theta_N$, Condition~a) is satisfied.
Consider a path for the random walk from $x$ to $y$.
Since $\theta_N \in I_N$,
in particular we have $0\leq \theta_N \leq \delta^{f}_{e}(x,y)$.
Let $g_{j_{N}}$ be the last point on the path with $\delta^{f}_{e}(x,g_{j_{N}})\leq \theta_N$.
By definition, $\delta^{f}_{e}(x,g_{j_{N}+1})>\theta_N$.
On the other hand,
$$\delta^{f}_{e}(x,g_{j_{N}+1})\leq  \delta^{f}_{e}(x,g_{j_{N}})+\delta^{f}_{e}(g_{j_{N}},g_{j_{N}+1})\leq \theta_N+Ka^{-|g_{j_{N}+1}|+K}\leq \theta_N +La^{-|g_{j_{N}+1}|}$$
so that $g_{j_{N}+1}\in A(\theta_N)$.
This proves that the path passes through $A_N$.

Now we claim that if $\eta$ is large enough and $\epsilon$ is small enough, independently of $\eta$, then if the path passes through $A_{i+1}$, it first passes through $A_{i}$.
This will settle the first condition.
We thus consider $x_i\in A_{i+1}$ such that the path passes through $x_i$.
Then, by definition
$$\delta^f_{e}(x,x_i)\geq \theta_{i+1}-La^{-|x_i|}\geq (2i+1)a^{-\epsilon \eta}-La^{-|x_i|}.$$
Since $x_i\in A_{i+1}$, in particular, $|x_i|>\eta$.
If $\eta$ is large enough, then $a^{\eta/2}\geq L$, so
if $\epsilon<1/2$, then
$$a^{-\epsilon \eta}\geq La^{-\eta}>La^{-|x_i|}.$$
In particular,
$$\delta^f_{e}(x,x_i)> 2ia^{-\epsilon \eta}\geq \theta_i.$$
Consider the sub-path of the random walk from $x$ to $x_i$.
Let $g_{j_{i}}$ be the last point on this sub-path with $\delta^{f}_{e}(x,g_{j_{i}})\leq \theta_i$.
The same proof as above shows that $g_{j_i+1}\in A_i$.
This proves the claim.

We now find $\theta_1,...,\theta_N$, $\theta_i\in I_1$ satisfying Condition~b). 
Let $\Leb$ denote the Lebesgue measure on $\mathbb{R}$ and
let $m_j$ be the restriction of the measure $N\Leb$ to $I_j$.
Consider the product space $\Omega=\prod^{N}_{j=1}I_{j}$, endowed with the product measure $m=\otimes^{N}_{j=1} m_j$.
Define
$$f_{i}(\theta_{1},...,\theta_{N})=\sum_{g\in A(\theta_i), h \in A(\theta_{i+1})}G^{2}(g,h).$$
We will show that for each $i$, 
$$\int_{\Omega}f_{i} dm<a^{-\eta/4}.$$
It will follow that if $\eta$ is large enough, then
$$\int \sum f_i <1/4.$$
In particular, it will follow that there exists $(\theta_1,...,\theta_N)$ such that for every $i$,
\begin{equation}\label{prooffirstsuperexponentialdecay2}
    f_{i}(\theta_{1},...,\theta_{N})\leq 1/4.
\end{equation}
This will prove~(\ref{prooffirstsuperexponentialdecay1}).
We give the argument for $i=1,...,N-1$. The cases for $f_0$ and $f_{N}$ are similar but simpler.
Let
$X_{j}(g)=\{\theta\in I_{j}:g\in A(\theta)\}$.
By definition,
$$\int f_{i}dm=\sum_{g,h\in \Gamma}G^{2}(g,h)N \Leb(X_{i}(g)) N \Leb(X_{i+1}(h))$$
Note, $\Leb(X_{i}(g))\leq 2La^{-|g|}$ for each $i=1,...,N$ and $g\in \Gamma$.
Furthermore $X_{i}(g)$ is empty outside the set
$$A'_{i}=\left \{g\in \Gamma: |g|>\eta \text{ and }\delta^{f}_{e}(x,g)\in [(2i-1)a^{-\epsilon \eta}-La^{-\eta},2ia^{-\epsilon \eta}+La^{-\eta}]\right \}.$$
It follows that
\begin{align*}
    \int f_{i}dm &\leq 4L^{2}N^{2} \sum_{g\in A'_i, h\in A'_{i+1}} G^{2}(g,h) a^{-|g|-|h|}\\
    & \lesssim  e^{2\epsilon \eta} \sum_{g\in A'_{i}, h\in A'_{i+1}} G^{2}(g,h) a^{-|h|-|g|}.
\end{align*}

The last expression is bounded above by
$$e^{2\epsilon \eta} \sum_{u\in \Gamma}N(u)G^{2}(e,u)a^{-|u|}$$
where $N(u)$ is the number of ways to decompose $u=g^{-1}h$ where $g\in A'_i, h\in A'_{i+1}.$
Karlsson's visibility lemma \cite[Lemma~1]{Karlsson} implies that letting $d$ be the maximum distance of a geodesic segment $[g,h]$ to $e$ we have  
$$\delta^{f}_{e}(g,h)\leq 4df(d)+2\sum^\infty_{j=d}f(j)=4da^{-d}+2a^{-d}/(1-a^{-1})\lesssim a^{-d/2}.$$
The implied constant depends only on $a$.
Note, if $g\in A'_i, h\in A'_{i+1}$, then 
$$\delta^{f}_{e}(g,h)\geq a^{-\epsilon \eta}-2La^{-\eta}\geq \frac{1}{2} a^{-\epsilon \eta}$$
if $\epsilon$ is small enough.
It follows that $d\leq 2\epsilon \eta+C_a$ for a constant $C_a$ depending only on $a$.

Let $\alpha$ be a geodesic segment from $e$ to $u$ in $\Gamma$, then $g\alpha$ is a geodesic segment from $g$ to $h$.
There exists a time $j$ such that $g\alpha(j)\in B(e, 2\epsilon \eta+C_a)$.
Thus,
$$g \in \bigcup^{|u|}_{j=0}\alpha(j)^{-1}B(e, 2\epsilon \eta+C_a)$$
which gives at most $(|u| + 1)D_ae^{2\epsilon \eta v}$
possibilities for $g$ for some $v$, since balls $\Gamma$ grow at most exponentially.
The constant $D_a$ only depends on $a$.
Arguing similarly for $h$ we obtain 
$$N(u)\lesssim (|u| + 1)^{2}e^{4\epsilon \eta v}.$$
Furthermore, note $\delta^{f}_{e}(g,h)\leq f(\eta-|u|/2)|u|$ whenever $|g|,|h|>\eta$.
It follows that $a^{-\epsilon \eta}\lesssim a^{|u|/2-\eta}|u|$ so $|u|>\eta$ assuming $\epsilon$ is small enough.
Thus,
\begin{equation}\label{prooffirstsuperexponentialdecay3}
    \int f_{i}dm \lesssim e^{2\epsilon \eta} \sum_{|u|>\eta}(|u| + 1)^{2}e^{4\epsilon \eta v}G^{2}(e,u)a^{-|u|}
\end{equation}


Proposition~\ref{doubleGreenfiniteonspheres}, together with~(\ref{prooffirstsuperexponentialdecay3}) show that
$$\int f_i dm \lesssim e^{2\epsilon(1+2v) \eta}\sum^{\infty}_{n=\eta}(n+1)^2 a^{-n}\lesssim e^{2\epsilon(1+2v) \eta} a^{-\eta/2}.$$
Thus, letting $\epsilon$ be small enough (not depending on $\eta$)
we obtain $$\int f_i dm \leq a^{-\eta/4},$$
showing~(\ref{prooffirstsuperexponentialdecay2}) and thus completing the proof.
\end{proof}

Following the arguments of \cite[Sections~4,5]{GGPY}, we obtain as a corollary weak Floyd-Ancona inequalities at the spectral radius.

\begin{theorem}\label{thmweakAncona}
Let $\Gamma$ be a finitely generated group and let $\mu$ be a symmetric probability measure whose finite support generates $\Gamma$.
For any $\delta>0$, there exists $C\geq 0$ such that for any $x,y,z\in \Gamma$ with $\delta_{y}^f(x,z)>\delta$, we have for every $r\leq R$
$$\frac{1}{C}G_r(x,y)G_r(y,z)\leq G_r(x,z)\leq C G_r(x,y)G_r(y,z).$$
\end{theorem}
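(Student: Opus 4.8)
The plan is to deduce the two-sided inequality from Proposition~\ref{propsuperexponentialdecay}, following the scheme of \cite[Sections~4--5]{GGPY} but keeping track of uniformity in $r \le R$. The lower bound is the elementary half: for any $x,y,z$, decomposing a path from $x$ to $z$ according to a first visit to $y$ (which exists with positive probability since $\mu$ is admissible, and the relevant Green quantities are comparable to concatenated Green functions via the strong Markov property) gives $G_r(x,z) \ge G_r(x,y)\,\mathbb{P}_y(\text{hit }z\text{ before returning... })\cdots$; more simply, $G_r(x,z)\ge G_r(x,y)F_r(y,z)$ and $F_r(y,z)\asymp G_r(y,z)$ up to the bounded factor coming from a fixed admissible path from $z$ to $e$ (or one uses $G_r(x,z)\ge G_r(x,y)G_r(y,z)/G_r(e,e)$ after symmetrization). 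None of this uses the Floyd hypothesis and all constants are uniform in $r\le R$ because $r\mapsto G_r(e,e)$ is monotone and finite at $R$.

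For the upper bound, the point is that $G_r(x,z)$ counts all paths from $x$ to $z$, and one wants to say that with overwhelming probability such a path passes near $y$, so that $G_r(x,z)\lesssim G_r(x,y)G_r(y,z)$. First I would split $G_r(x,z)=G_r(x,z;B_\eta(y))+G_r(x,z;B_\eta(y)^c)$ for a suitable radius $\eta\ge\eta_0$ depending only on $\delta$. The first term is handled by a standard last-exit / first-entrance decomposition across the finite sphere $\partial B_\eta(y)$: a path from $x$ to $z$ passing through $B_\eta(y)$ decomposes at its first entrance point $w\in B_\eta(y)$ and then a path from $w$ to $z$, and since $B_\eta(y)$ is finite one gets $G_r(x,z;B_\eta(y))\le \sum_{w\in B_\eta(y)} G_r(x,w)G_r(w,z)$, and each $G_r(x,w), G_r(w,z)$ is comparable to $G_r(x,y), G_r(y,z)$ respectively up to a constant depending only on $\eta$ (hence only on $\delta$), using the Harnack-type bound $G_r(x,w)\le G_r(x,y)/F_r(w,y)$ with $F_r$ bounded below by a power of $\min\mu$ over $\supp\mu$, uniform in $r\le R$ since $r\le R$ and these are finite sums. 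The second term $G_r(x,z;B_\eta(y)^c)$ is bounded directly: at $r=R$ it is $\le e^{-\delta c^\eta}$ by Proposition~\ref{propsuperexponentialdecay}, and for $r<R$ one has $G_r(x,z;B_\eta(y)^c)\le G_R(x,z;B_\eta(y)^c)\le e^{-\delta c^\eta}$ by monotonicity in $r$; choosing $\eta$ large enough (again depending only on $\delta$) makes this term negligible compared to $G_r(x,y)G_r(y,z)$, which is bounded below by a positive constant depending only on $\delta$ and $\eta$ via the same Harnack argument (a bounded-length admissible detour from $x$ to $y$ and from $y$ to $z$ near the balls, or more crudely $G_r(x,y)G_r(y,z)\ge (\text{const})^{2\eta}$ is false in general, so one instead compares $e^{-\delta c^\eta}$ with $G_r(x,y)G_r(y,z)$ by noting both the contribution through $B_\eta(y)$ is already $\asymp G_r(x,y)G_r(y,z)$ and dominates).

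The main obstacle I expect is exactly this last comparison: showing that the "bad" term $G_r(x,z;B_\eta(y)^c)$, which is super-exponentially small in $\eta$ but with no reference point, is genuinely negligible next to $G_r(x,y)G_r(y,z)$, which could itself be exponentially small in $d(x,y)+d(y,z)$ and a priori much smaller than $e^{-\delta c^\eta}$. The resolution, as in \cite{GGPY}, is not to compare the bad term to $G_r(x,y)G_r(y,z)$ directly but to absorb it: one shows $G_r(x,z)\le G_r(x,z;B_\eta(y)) + \frac12 G_r(x,z)$ by choosing $\eta$ so that the escape probability from any path realizing $G_r(x,z)$ is $<1/2$ — i.e.\ one runs the super-exponential decay estimate relative to a running basepoint along the geodesic rather than relative to $y$ alone, using that $\delta^f_y(x,z)>\delta$ forces enough of a "Floyd gap" to iterate Proposition~\ref{propsuperexponentialdecay}. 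Then $G_r(x,z)\le 2G_r(x,z;B_\eta(y))\lesssim G_r(x,y)G_r(y,z)$, uniformly in $r\le R$, and the proof is complete.
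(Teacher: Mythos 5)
Your lower bound and your treatment of the contribution through $B_\eta(y)$ are fine and match the standard argument (first‑entrance decomposition over the finite ball, Harnack constants depending only on $\eta$, hence only on $\delta$, uniformly in $r\le R$ since $G_R(e,e)<\infty$). The genuine gap is in your final step. You correctly identify the obstacle — $G_r(x,z;B_\eta(y)^c)\le \mathrm{e}^{-\delta c^\eta}$ is an \emph{absolute} bound, while $G_r(x,y)G_r(y,z)$ (and $G_r(x,z)$ itself) can be exponentially small in $d(x,y)+d(y,z)$, so no choice of $\eta$ depending only on $\delta$ makes the two comparable — but your proposed resolution does not close it. The inequality $G_r(x,z;B_\eta(y)^c)\le\tfrac12 G_r(x,z)$ that you invoke is exactly Corollary~\ref{Anconaepsilon}, which in this development is a \emph{consequence} of the Ancona inequality rather than an input to it; and the phrase ``run the estimate relative to a running basepoint along the geodesic'' does not describe a mechanism: Proposition~\ref{propsuperexponentialdecay} with basepoint $w\ne y$ controls avoidance of balls around $w$, not around $y$, and iterating it does not by itself produce a bound relative to $G_r(x,z)$.

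What actually closes the gap (this is the content of Proposition~\ref{strengthenedweakAncona}, which is the paper's route to the theorem upon taking $\Omega=\Gamma$) is a multiscale decomposition of the \emph{bad} paths, not a single absorption. One classifies a path from $x$ to $z$ by the smallest dyadic scale $2^{i}\eta_0$ at which it is ``regular'', meaning it enters $B_{2^i\eta_0}(y)$ at a point $u$ with $\delta^f_y(x,u)$ small and exits at a point $v$ with $\delta^f_y(v,z)$ small. Decomposing at $u$ and $v$, the middle segment fails regularity at scale $2^{i-1}\eta_0$, so the refined estimate (Proposition~\ref{propstrengthenedsuperexponentialdecay} — note that the plain ball‑avoidance bound of Proposition~\ref{propsuperexponentialdecay} is not enough here, since one must control the entrance and exit points, not just whether the ball is hit) gives a factor $\mathrm{e}^{-h(2^{i-1}\eta_0)}$ with $h$ super‑linear. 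Since $u,v\in B_{2^i\eta_0}(y)$, one has $G_r(u,y)G_r(y,v)\ge p^{2^{i+1}\eta_0}$, an \emph{exponential} cost in the scale $2^i\eta_0$ rather than in $d(x,z)$; super‑linearity of $h$ lets $\mathrm{e}^{-h(2^{i-1}\eta_0)}$ absorb $p^{-2^{i+1}\eta_0}$, yielding a contribution $\le \mathrm{e}^{-h'(2^{i-1}\eta_0)}G_r(x,y)G_r(y,z)$ at each scale, and $\sum_i \mathrm{e}^{-h'(2^{i}\eta_0)}<\infty$. This comparison of the super‑exponential penalty against an exponential cost \emph{in the scale} is the idea your sketch is missing, and without it the upper bound does not follow.
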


We do not give the proof here and we refer to \cite[Theorem~5.1]{GGPY}.
We will prove anyway a stronger result in the following subsection.

Following \cite{GGPY}, we also obtain the following corollary, which is a consequence of Proposition~\ref{propsuperexponentialdecay}, see \cite[Theorem~5.2]{GGPY} for more details.

\begin{corollary}\label{Anconaepsilon}
Let $\Gamma$ be a finitely generated group and let $\mu$ be a symmetric probability measure whose finite support generates $\Gamma$.
For every $\delta>0$ and $\epsilon>0$ there exists $\eta$ such that for every $x,y,z$ with $\delta_y^f(x,z)\geq \delta$,
$$G(x,z;B_{\eta}(y)^c)\leq \epsilon G(x,z).$$
\end{corollary}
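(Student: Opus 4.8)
The corollary is a relative strengthening of Proposition~\ref{propsuperexponentialdecay}: after the obvious relabeling, that proposition gives $G(x,z;B_\eta(y)^c)\le e^{-\delta c^\eta}$ for $\eta\ge\eta_0$ whenever $\delta^f_y(x,z)>\delta$, and the task is to replace this \emph{absolute} bound by a prescribed small multiple of $G(x,z)$, which can itself be arbitrarily small across the admissible triples. The radius $\eta$ will be fixed at the end, large in terms of $\delta$ and $\epsilon$. For the triples with $x,z\in B_\eta(y)$ there is nothing to do: then $x^{-1}z$ runs over a finite set, so $G(x,z)\ge c(\eta)>0$, while $G(x,z;B_\eta(y)^c)\le e^{-\delta c^\eta}\le\epsilon\,c(\eta)$ because the left-hand side decays super-exponentially in $\eta$ while $c(\eta)$ only decays exponentially. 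Since $G(x,z;B_\eta(y)^c)=G(z,x;B_\eta(y)^c)$ by symmetry of $\mu$, it remains to treat triples with, say, $d(x,y)>\eta$; note also that the hypothesis $\delta^f_y(x,z)>\delta$ forces a word geodesic $[x,z]$ to pass within some $D=D(\delta)$ of $y$.

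For such triples I would run a nested-ball induction. With $\eta_0$ as in Proposition~\ref{propsuperexponentialdecay}, fix a spacing $K'$ larger than the maximal jump of $\mu$, put $\eta_j=\eta_0+jK'$, and take $\eta=\eta_k$. Decomposing a path from $x$ to $z$ whose interior avoids $B_{\eta_j}(y)$ at its first entrance into $B_{\eta_{j+1}}(y)$ — which, $\mu$ being finitely supported, lands in the annulus $A_{j+1}=\{u:\eta_{j+1}-K'<d(u,y)\le\eta_{j+1}\}$ — yields
$$G(x,z;B_{\eta_j}(y)^c)=G(x,z;B_{\eta_{j+1}}(y)^c)+\sum_{u\in A_{j+1}}G(x,u;B_{\eta_{j+1}}(y)^c)\,G(u,z;B_{\eta_j}(y)^c).$$
The key point would be a uniform one-scale contraction $G(x,z;B_{\eta_{j+1}}(y)^c)\le\theta\,G(x,z;B_{\eta_j}(y)^c)$ with $\theta<1$ independent of $j$, $x$, $z$ and $k$ — equivalently, that a definite proportion of the mass of paths avoiding $B_{\eta_j}(y)$ enters $B_{\eta_{j+1}}(y)$. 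Iterating from $j=0$ then gives $G(x,z;B_\eta(y)^c)\le\theta^k\,G(x,z;B_{\eta_0}(y)^c)\le\theta^k\,G(x,z)$, and one picks $k$ (hence $\eta=\eta_0+kK'$) so large that $\theta^k<\epsilon$ and that the easy-case inequality above also holds.

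Proving the one-scale contraction is the crux, and I would do it with the $\ell^2$-operator mechanism of Proposition~\ref{propsuperexponentialdecay}, now applied to the sub-Markov chain killed on $B_{\eta_j}(y)$, matched from below using the weak Floyd-Ancona inequalities (Theorem~\ref{thmweakAncona}): one bounds the escaping mass $\sum_{u\in A_{j+1}}(\cdots)$ below by the contribution of paths passing near a transition point of $[x,z]$ lying just outside $B_{\eta_{j+1}}(y)$ on the $x$-side of the projection of $y$ to $[x,z]$ — such a point exists because $[x,z]$ comes within $D$ of $y$ and its distance to $y$ grows coarsely monotonically towards $x$ — while Theorem~\ref{thmweakAncona} together with Proposition~\ref{Floydgeo} furnishes a lower bound for that contribution of the right order of magnitude relative to $G(x,z;B_{\eta_j}(y)^c)$. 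I expect this to be the real difficulty: locating a usable comparison point is delicate, since $[x,z]$ may fail to carry transition points at the relevant word distances from $y$ — it can run deep inside a parabolic coset straddling $\partial B_{\eta_{j+1}}(y)$ — so the estimate must be organized around the relative-hyperbolicity geometry rather than ordinary hyperbolicity. This is exactly the argument of \cite[Theorem~5.2]{GGPY}, which I would follow.
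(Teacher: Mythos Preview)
The paper supplies no proof of its own here: it simply records the corollary as a consequence of Proposition~\ref{propsuperexponentialdecay} and refers to \cite[Theorem~5.2]{GGPY} for the argument. Since you likewise conclude by deferring to that same reference, your proposal and the paper's treatment coincide.

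Your sketch goes further than the paper does, and the nested-ball iteration with a one-scale contraction is a reasonable outline; you also correctly flag the genuine difficulty --- establishing the contraction uniformly in the scale index when $[x,z]$ may lack transition points near $\partial B_{\eta_{j+1}}(y)$ --- as the content supplied by the GGPY argument. One small remark: your ``easy case'' covers $x,z\in B_\eta(y)$ and your ``hard case'' assumes $d(x,y)>\eta$, but the iteration you write needs both endpoints outside all the balls $B_{\eta_j}(y)$ for the first-entrance decomposition to be clean; the mixed case (one endpoint inside, one outside) deserves a word, though it is easily absorbed.
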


\subsection{Floyd-Ancona inequalities}\label{SectionstrongAncona}
We prove a strong version of Floyd-Ancona inequalities.
We start proving a stronger version of Proposition~\ref{propsuperexponentialdecay}
and we consider a finitely generated group $\Gamma$.
Recall that $G$ denotes the Green function $G_R$ at the spectral radius $R$.
We define the function $h_t(\eta)$ by
$$h_t(\eta)=-\log \sup \left \{G(x,y;B_\eta(e)^c),\delta_e^f(x,y)\geq t\right \}.$$
Proposition~\ref{propsuperexponentialdecay} shows that
$h_t(\eta)\geq tc^\eta$,
where $c>1$.
Let $h(\eta)=h_{1/2\eta}(\eta)$.
Then, $h$ is super-linear, that is
\begin{equation}\label{strengthenedsuperexponentialdecay1}
\frac{h(\eta)}{\eta}\to \infty, \eta\to \infty.
\end{equation}
Moreover, for every $x,y$ such that $\delta_e^f(x,y)\geq \frac{1}{2\eta}$, we have
\begin{equation}\label{strengthenedsuperexponentialdecay2}
G(x,y;B_\eta(e)^c)\leq \mathrm{e}^{-h(\eta)}.
\end{equation}

We now define the notion of regularly entering a ball centered at $e$ for a path as follows.
What we call a path here is a finite sequence of elements of $\Gamma$, not necessarily adjacent in the Cayley graph.
We fix $\delta>0$ and $x,y\in \Gamma$  such that $\delta^f_e(x,y)\geq \delta$.
Let $E_\eta(x)$, respectively $E_\eta(y)$, be the set of $z\in B_\eta(e)$ such that $\delta_e^f(x,z)\leq \frac{1}{\eta}$, respectively $\delta_e^f(z,y)\leq \frac{1}{\eta}$.

\begin{definition}\label{definitionregular}
Let $x,y\in \Gamma$ be such that $\delta^f_e(x,y)\geq \delta$.
Call a path from $x$ to $y$ $\eta$-regular if
\begin{enumerate}
    \item it enters $B_\eta(e)$,
    \item its entrance point in $B_\eta(e)$ is in $E_\eta(x)$,
    \item its exit point in $B_\eta(e)$ is in $E_\eta(y)$.
\end{enumerate}
\end{definition}

Beware that the definition depends on $\delta$.
Denote by $\reg(\eta,\delta)$ the set of $\eta$-regular paths from $x$ to $y$, where $\delta_e(x,y)\geq \delta$.
For technical reasons, we will need a slight modification of $\reg(\eta,\delta)$ in the following.

\begin{definition}\label{definitionreg'}
Let $x,y$ be such that $\delta_e^f(x,y)\geq \delta$.
We say that a path from $x$ to $y$ is in $\reg'(\eta,\delta)$ if it satisfies the assumption of Definition~\ref{definitionregular}, replacing $E_{\eta}(x)$ and $E_{\eta}(y)$ by $E_{2\eta}(x)$ and $E_{2\eta}(y)$ respectively.
\end{definition}

Note that we keep $B_{\eta}(e)$ unchanged and only replace $E_{\eta}(x)$ and $E_{\eta}(y)$ with $E_{2\eta}(x)$ and $E_{2\eta}(y)$ in Definition~\ref{definitionreg'}.
We will actually both need to deal with paths in $\reg(\eta,\delta)$ and with paths in $\reg'(\eta,\delta)$ in our proofs below.

Let $G_r(x,y;\reg'(\eta,\delta)^c)$ be the Green function at $r$ restricted to paths that are not $\eta$-regular and
$G_r(x,y;\reg'(\eta,\delta))$ be the Green function at $r$ restricted to $\eta$-regular paths.
Precisely,
$$G_r(x,y;\reg'(\eta,\delta)^c)=\sum_{n\geq 0}\sum_{\underset{\notin \reg'(\eta,\delta)}{(z_1,...,z_{n-1})} }r^n\mu(x^{-1}z_1)\mu(z_1^{-1}z_2)...\mu(z_{n-1}^{-1}y)$$
and
$$G_r(x,y;\reg'(\eta,\delta))=\sum_{n\geq 0}\sum_{\underset{\in \reg'(\eta,\delta)}{(z_1,...,z_{n-1})} }r^n\mu(x^{-1}z_1)\mu(z_1^{-1}z_2)...\mu(z_{n-1}^{-1}y).$$
We have the following enhanced version of Proposition~\ref{propsuperexponentialdecay}.

\begin{proposition}\label{propstrengthenedsuperexponentialdecay}
For each $\delta>0$ there exists a super-linear function $h_\delta$ such that for all $x,y,z\in \Gamma$ with $\delta^{f}_{z}(x,y)>\delta$, we have
$$G(x,y;\reg'(\eta,\delta)^{c})\leq \mathrm{e}^{-h_\delta(\eta)}.$$
\end{proposition}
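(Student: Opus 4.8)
The plan is to split any path from $x$ to $y$ that is \emph{not} $\eta$-regular at its first entrance into $B_\eta(z)$ and at its last exit from $B_\eta(z)$, and to estimate the two resulting ``tail'' pieces by the super-linear bound~(\ref{strengthenedsuperexponentialdecay2}) (which itself comes from Proposition~\ref{propsuperexponentialdecay}). By equivariance of the Green function we may assume $z=e$. Fix $\delta>0$ and $x,y\in\Gamma$ with $\delta^f_e(x,y)>\delta$, and recall from~(\ref{strengthenedsuperexponentialdecay1}) that $h$ is super-linear. A path from $x$ to $y$ fails to lie in $\reg'(\eta,\delta)$ exactly when one of the following occurs: (A) it never meets $B_\eta(e)$; (B) it meets $B_\eta(e)$ and its first entrance point $w$ satisfies $w\notin E_{2\eta}(x)$; (C) it meets $B_\eta(e)$ and its last exit point $w'$ satisfies $w'\notin E_{2\eta}(y)$. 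Hence $G(x,y;\reg'(\eta,\delta)^c)$ is bounded by the sum of the Green functions restricted to the paths of these three types.

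Let $\eta_1=\tfrac1{2\delta}$. For $\eta\geq\eta_1$ we have $\delta^f_e(x,y)>\delta\geq\tfrac1{2\eta}$, so by~(\ref{strengthenedsuperexponentialdecay2}) the type-(A) contribution is at most $G(x,y;B_\eta(e)^c)\leq\mathrm{e}^{-h(\eta)}$. For type (B), decompose such a path at its first entrance point $w$: the segment from $x$ to $w$ has all its intermediate vertices outside $B_\eta(e)$, and since $w\in B_\eta(e)\subseteq B_{2\eta}(e)$, the condition $w\notin E_{2\eta}(x)$ means precisely $\delta^f_e(x,w)>\tfrac1{2\eta}$, whence $G(x,w;B_\eta(e)^c)\leq\mathrm{e}^{-h(\eta)}$ by~(\ref{strengthenedsuperexponentialdecay2}). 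Using the crude bounds $G(w,y)\leq M_0:=\sup_{g,h\in\Gamma}G(g,h)<\infty$ (finite by Proposition~\ref{doubleGreenfiniteonspheres}, which gives $G(e,e)<\infty$, together with the standard bound $G(g,h)\leq G(e,e)$ for symmetric $\mu$) and $|B_\eta(e)|\leq A_0\mathrm{e}^{v\eta}$ (balls grow at most exponentially), the type-(B) contribution is at most
$$\sum_{w\in B_\eta(e),\,w\notin E_{2\eta}(x)}G(x,w;B_\eta(e)^c)\,G(w,y)\ \leq\ \mathrm{e}^{-h(\eta)}\sum_{w\in B_\eta(e)}G(w,y)\ \leq\ M_0A_0\,\mathrm{e}^{v\eta-h(\eta)}.$$
Decomposing a type-(C) path at its last exit point $w'$ (so that the segment from $w'$ to $y$ stays outside $B_\eta(e)$ and $\delta^f_e(w',y)>\tfrac1{2\eta}$) bounds the type-(C) contribution by $M_0A_0\,\mathrm{e}^{v\eta-h(\eta)}$ in the same way.

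Summing the three estimates, for $\eta\geq\eta_1$ we get $G(x,y;\reg'(\eta,\delta)^c)\leq(1+2M_0A_0)\,\mathrm{e}^{v\eta-h(\eta)}$, while for $\eta<\eta_1$ the trivial bound $G(x,y;\reg'(\eta,\delta)^c)\leq G(x,y)\leq M_0$ applies. Thus, setting $h_\delta(\eta):=h(\eta)-v\eta-\log(1+2M_0A_0)$ for $\eta\geq\eta_1$ and $h_\delta(\eta):=-\log M_0$ for $\eta<\eta_1$ yields a function with $G(x,y;\reg'(\eta,\delta)^c)\leq\mathrm{e}^{-h_\delta(\eta)}$, and $h_\delta$ is super-linear since $h$ is and $v\eta$ is linear. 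The one step requiring care is the use of~(\ref{strengthenedsuperexponentialdecay2}) on the two tail pieces with the shrinking Floyd separation $\tfrac1{2\eta}$: the super-linear gain $\mathrm{e}^{-h(\eta)}$ it produces still overwhelms the exponential factor $\mathrm{e}^{v\eta}$ lost when summing over all possible entrance and exit points inside $B_\eta(e)$. Granting this, the path decomposition and the remaining estimates are routine.
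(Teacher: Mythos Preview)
Your proof is correct and follows essentially the same approach as the paper: split non-regular paths into the three cases (never enters $B_\eta(e)$; bad entrance point; bad exit point), bound the first by Proposition~\ref{propsuperexponentialdecay}/equation~(\ref{strengthenedsuperexponentialdecay2}), and bound the other two by decomposing at the first entrance or last exit, applying~(\ref{strengthenedsuperexponentialdecay2}) to the tail piece, and absorbing the exponential factor from summing over $B_\eta(e)$ into the super-linear function. Your treatment is slightly more explicit about the small-$\eta$ range and the finiteness of $M_0$, but otherwise matches the paper's argument line for line.
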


\begin{proof}
Consider a path from $x$ to $y$ which is not in $\reg'(\eta,\delta)$.
Then, either it does not enter $B_{\eta}(e)$ or denoting by $w_1$ and $w_2$ its entrance and exit points in $B_{\eta}(e)$,
either $\delta_e^f(x,w_1)>1/2\eta$ or $\delta_e^f(w_2,y)>1/2\eta$.
Denote by $G(x,y;\reg'_1(\eta,\delta)^{c})$ the Green function restricted to paths satisfying the condition on $w_1$ and by
$G(x,y;\reg'_2(\eta,\delta)^{c})$ the Green function restricted to paths satisfying the condition on $w_2$.
We thus have
$$G(x,y;\reg'(\eta,\delta)^{c})\leq G(x,y;B_{\eta}(e)^c)+G(x,y;\reg'_1(\eta,\delta)^{c})+G(x,y;\reg'_2(\eta,\delta)^{c}).$$

Proposition~\ref{propsuperexponentialdecay} shows that
$$G(x,y;B_{\eta}(e)^c)\leq \mathrm{e}^{-h_0(\eta)},$$
for some super-linear function $h_0$.

We now deal with $G(x,y;\reg'_1(\eta,\delta)^{c})$.
Conditioning on the first visit to $B_{\eta}(e)$, we have
$$G(x,y;\reg'_1(\eta,\delta)^{c})\leq \sum_{\underset{\delta_e^f(x,w_1)>1/2\eta}{w_1\in B_{\eta}(e)}}G(x,w_1;B_\eta(e)^c)G(w_1,y).$$
Then,~(\ref{strengthenedsuperexponentialdecay1}) shows that
$$G(x,w_1;B_\eta(e)^c)\leq \mathrm{e}^{-h_1(\eta)}$$
for some super-linear function $h_1$.
Since balls grow at most exponentially and since $G(w_1,y)$ is uniformly bounded,
$\sum_{w_1\in B_{\eta}(e),\delta_e^f(x,w_1)>1/2\eta}G(w_1,y)$ grows at most exponentially in $\eta$.
Since $h_1$ is super-linear, up to changing $h_1$, we get
$$G(x,y;\reg'_1(\eta,\delta)^{c})\leq \mathrm{e}^{-h_1(\eta)}.$$
Similarly, we have
$$G(x,y;\reg'_2(\eta,\delta)^{c})\leq \mathrm{e}^{-h_2(\eta)}.$$
Letting $h_3$ be the infimum of $h_0,h_1,h_2$ and $h=h_3-\log 3$, we get
$$G(x,y;\reg'(\eta,\delta)^{c})\leq \mathrm{e}^{-h(\eta)}.$$
Then, $h$ is super-linear and only depends on $\delta$, which concludes the proof.
\end{proof}

Note that the same proof shows that
$$G(x,y;\reg(\eta,\delta)^{c})\leq \mathrm{e}^{-h_\delta(\eta)}$$
for some super-linear function $h_\delta$, although we will not need this in the following.

\begin{definition}\label{defstarlike}
Let $\Omega\subset \Gamma$ and let $x\in \Gamma$.
For $\lambda\geq 1$ and $c\geq 0$, we say that $\Omega$ is $(\lambda,c)$-starlike around $x$ if for every $y\in \Omega$, there exists a path $\alpha$ from $y$ to $x$ that stay inside $\Omega$ and whose length is at most $\lambda d(y,x)+c$.
\end{definition}

We deduce from Proposition~\ref{propstrengthenedsuperexponentialdecay} a strengthened version of weak Floyd-Ancona inequalities.

\begin{proposition}\label{strengthenedweakAncona}
Let $\Gamma$ be a finitely generated group and let $\mu$ be a symmetric admissible finitely supported probability measure on $\Gamma$.
For every $\lambda\geq 1,c\geq 0$ and $\delta\geq 0$,
there exist $C\geq 0$ such that the following holds.
Let $x,y,z$ be three points in $\Gamma$ with $\delta^f_z(x,y)\geq \delta$.
Let $\Omega\subset \Gamma$ be a $(\lambda,c)$-starlike set around $z$.
Then, for every $r\leq R$, we have
$$\frac{1}{C}G_r(x,z;\Omega)G_r(z,y;\Omega)\leq G_r(x,y;\Omega)\leq C G_r(x,z;\Omega)G_r(z,y;\Omega).$$
\end{proposition}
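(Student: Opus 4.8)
The lower bound is the easy direction: any path from $x$ to $y$ that stays in $\Omega$ and passes through $z$ decomposes as a path from $x$ to $z$ in $\Omega$ followed by a path from $z$ to $y$ in $\Omega$, and summing over the intermediate point gives $G_r(x,y;\Omega)\geq G_r(x,z;\Omega)G_r(z,y;\Omega)$ directly — here one should be slightly careful about the convention at the junction point, but this is routine and costs only a multiplicative constant coming from $\mu(e)$ or a single step. So the content is the upper bound $G_r(x,y;\Omega)\leq C\,G_r(x,z;\Omega)G_r(z,y;\Omega)$.

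**Main steps for the upper bound.** The idea, following Gou\"ezel's decomposition of a path through a ``regular'' window around $z$, is:
\begin{enumerate}
    \item Fix $\eta$ large, to be chosen depending only on $\delta,\lambda,c$. Split paths from $x$ to $y$ inside $\Omega$ according to whether they are $\eta$-regular (in the sense of $\reg'(\eta,\delta)$, recentred at $z$) or not. By Proposition~\ref{propstrengthenedsuperexponentialdecay}, the contribution of non-regular paths is at most $\mathrm{e}^{-h_\delta(\eta)}$, which is negligible; but we must compare it to $G_r(x,z;\Omega)G_r(z,y;\Omega)$, which could a priori be tiny, so we first need a crude lower bound $G_r(x,z;\Omega)G_r(z,y;\Omega)\gtrsim \mathrm{e}^{-\kappa d(x,z)-\kappa d(z,y)}$ for some $\kappa$ depending on $\lambda,c$ and the generating set, using starlikeness to produce an admissible path of controlled length inside $\Omega$ from $x$ to $z$ (resp. $z$ to $y$) and bounding its probability from below by $(\min_{s\in\supp\mu}\mu(s))^{\text{length}}$. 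Meanwhile the non-regular contribution decays like $\mathrm{e}^{-h_\delta(\eta)}$ \emph{uniformly in $x,y$}, and since $d(x,z),d(z,y)$ can be bounded below by the Floyd hypothesis only weakly, one has to be a little careful: in fact one uses that $h_\delta$ is super-linear to absorb the diameter of the relevant ball $B_\eta(z)$, exactly as in the proof of Proposition~\ref{propstrengthenedsuperexponentialdecay}.
    \item For $\eta$-regular paths inside $\Omega$: such a path enters $B_\eta(z)$ at a point $w_1\in E_{2\eta}(x)$ and leaves at $w_2\in E_{2\eta}(y)$, both lying in $\Omega$. Condition on $(w_1,w_2)$; the path splits as: a piece from $x$ to $w_1$ inside $\Omega$ avoiding $B_\eta(z)$ before $w_1$, a piece from $w_1$ to $w_2$ inside $\Omega$, and a piece from $w_2$ to $y$ inside $\Omega$ avoiding $B_\eta(z)$ after $w_2$. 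Bound the middle piece by $G_r(w_1,w_2;\Omega)\leq G_r(w_1,w_2)$, which is at most $\max_{u,v\in B_\eta(z)}G_r(u,v)$, a constant depending only on $\eta$ (using $r\le R$ and the fact that the Green function is bounded, cf. the uniform bound $G_r(e,e)\le G_R(e,e)<\infty$). This gives
    \[
      G_r(x,y;\reg'(\eta,\delta)\cap\Omega)\ \lesssim_\eta\ \sum_{w_1,w_2}G_r(x,w_1;\Omega)\,G_r(w_2,y;\Omega).
    \]
    \item The two sums decouple: $\sum_{w_1\in B_\eta(z)}G_r(x,w_1;\Omega)\lesssim_\eta G_r(x,z;\Omega)$ and likewise $\sum_{w_2}G_r(w_2,y;\Omega)\lesssim_\eta G_r(z,y;\Omega)$. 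For the first of these, use that $w_1\in B_\eta(z)$ is joined to $z$ by a path of length $\le 2\eta$ inside $\Omega$ \emph{if} $\Omega$ is starlike around $z$ — here is where the $(\lambda,c)$-starlikeness is essential: we need $w_1\leadsto z$ to stay in $\Omega$, so that $G_r(x,z;\Omega)\ge G_r(x,w_1;\Omega)\cdot G_r(w_1,z;\Omega)\gtrsim_\eta G_r(x,w_1;\Omega)$. Summing over the at-most-$C_\eta$ many points $w_1$ of $B_\eta(z)$ finishes it. Combining steps (2)–(3) with the negligible non-regular contribution from step (1) yields $G_r(x,y;\Omega)\le C\,G_r(x,z;\Omega)G_r(z,y;\Omega)$, with $C$ depending only on $\lambda,c,\delta$.
\end{enumerate}

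**Where the difficulty lies.** The genuinely delicate point is step~(1): matching the super-exponential (in $\eta$) smallness of the non-regular contribution against a lower bound for $G_r(x,z;\Omega)G_r(z,y;\Omega)$ that one controls only \emph{exponentially} in $d(x,z)+d(z,y)$, while $\eta$ is fixed and $d(x,z)+d(z,y)$ is unbounded. The resolution — as in \cite{Gouezel-local} and \cite{GGPY} — is not to compare globally but to do the decomposition iteratively/around the window so that at each stage one only pays the super-linear $h_\delta$ against the \emph{diameter of $B_\eta(z)$}, which is $O(\eta)$; super-linearity of $h_\delta$ then wins once $\eta$ is large, uniformly in $r\le R$. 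The starlikeness hypothesis is exactly what makes the restricted Green functions $G_r(\cdot,\cdot;\Omega)$ behave multiplicatively through points of $B_\eta(z)$, which is the other technical subtlety compared with the unrestricted weak Floyd--Ancona inequality of Theorem~\ref{thmweakAncona}.
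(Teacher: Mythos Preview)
Your overall architecture matches the paper's: the lower bound is immediate, and for the upper bound one splits paths in $\Omega$ into $\eta$-regular and non-regular, handles the regular ones by conditioning on a visit to $B_\eta(z)$ and using starlikeness to reroute through $z$ at cost $\lesssim_\eta 1$. Your steps (2)--(3) are essentially the paper's treatment of the regular piece (the paper conditions on a single visit rather than on both entrance and exit, but either works).

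The gap is in step (1), the non-regular contribution. You correctly diagnose the problem: the raw bound $G_r(x,y;\reg'(\eta,\delta)^c)\le e^{-h_\delta(\eta)}$ is uniform in $x,y$, while $G_r(x,z;\Omega)G_r(z,y;\Omega)$ has no lower bound depending only on $\eta$ (and your ``crude lower bound'' in terms of $d(x,z)+d(z,y)$ fails both because these distances are unbounded and because $x,y$ need not lie in $\Omega$, so starlikeness gives you no path from $x$ to $z$). You then say the fix is to ``do the decomposition iteratively'', but you never say what is iterated, and the reference to the proof of Proposition~\ref{propstrengthenedsuperexponentialdecay} is misleading: that proof has no such iteration.

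What the paper actually does is a dyadic scheme on the scale. Among paths not in $\reg(\eta_0,\delta)$, separate those that lie in $\reg(2\eta_0,\delta)$ from those that do not; for the first kind, condition on the entrance/exit points $u,v\in B_{2\eta_0}(e)\cap\Omega$ and observe that the sub-path from $u$ to $v$ lies in $\reg'(\eta_0,\delta/2)^c$, hence has weight $\le e^{-h_{\delta/2}(\eta_0)}$ by Proposition~\ref{propstrengthenedsuperexponentialdecay}. Now the key algebraic step you are missing: since $u,v\in\Omega\cap B_{2\eta_0}(e)$, starlikeness gives $G(u,e;\Omega),G(e,v;\Omega)\ge p^{2\lambda\eta_0+c}$, and the \emph{general} submultiplicativity $G(x,u;\Omega)G(u,e;\Omega)\lesssim G(x,e;\Omega)$ (which needs nothing about $x$) lets you extract the factor $G(x,e;\Omega)G(e,y;\Omega)$ at the cost of $p^{-O(\eta_0)}$. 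Super-linearity of $h_{\delta/2}$ beats this exponential loss and the ball cardinality. Then iterate: paths not $2\eta_0$-regular but $4\eta_0$-regular, etc., yielding
\[
G(x,y;\reg(\eta_0,\delta)^c\cap\Omega)\ \le\ \Big(\sum_{i\ge 0}e^{-h(2^i\eta_0)}\Big)\,G(x,e;\Omega)G(e,y;\Omega),
\]
which is finite since $h$ is super-linear. This is the concrete mechanism behind your word ``iteratively''; without it, step (1) is not a proof.
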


Note that taking $\Omega=\Gamma$, we recover Theorem~\ref{thmweakAncona}.
\begin{proof}
We fix $r\leq R$.
To simplify the notations, we do not refer to $r$ in the following.
In particular, we will write $G=G_r$ for the Green function.
Notice that we can assume for simplicity that $z=e$.
Also, the inequality
$$G(x,e;\Omega)G(e,y;\Omega)\leq C G(x,y;\Omega)$$
is always satisfied (see for example \cite[(2.4)]{Gouezel-local}) so we just need to prove the other inequality.

We fix $\eta_0$ that will be chosen large enough later.
Consider a path for the random walk from $x$ to $y$.
Then, it is either $\eta_0$-regular or not.
Hence,
$$G(x,y;\Omega)\leq G(x,y;\reg(\eta_0,\delta)\cap \Omega)+G(x,y;\reg(\eta_0,\delta)^c\cap \Omega),$$
where $G(x,y;\reg(\eta_0,\delta)\cap \Omega)$ denotes the Green function restricted to $\eta_0$-regular paths that stay inside $\Omega$ and $G(x,y;\reg(\eta_0,\delta)^c\cap \Omega)$ denotes the Green function restricted to paths that stay inside $\Omega$ and that are not $\eta_0$-regular.

If a path is $\eta_0$-regular, in particular, it enters $B_{\eta_0}(e)$, so that conditioning by the first visit to $B_{\eta_0}(e)$, we get
$$G(x,y;\reg(\eta_0,\delta)\cap \Omega)\leq \sum_{w\in B_{\eta_0}(e)}G(x,w;B_{\eta_0}(e)^c\cap\Omega)G(w,y;\Omega).$$
Since $\Omega$ is $(\lambda,c)$-starlike around $e$, if $w\in B_{\eta_0}(e)\cap \Omega$, there is a path from $w$ to $e$ that stays inside $\Omega$ and whose length is bounded by $\lambda \eta_0+c$.
In particular, we have
$G(x,w;\Omega)\leq C_{\eta_0} G(x,e;\Omega)$ and $G(w,y;\Omega)\leq C_{\eta_0} G(e,y;\Omega)$.
Summing over all possible $w\in B_{\eta_0}$, we obtain
\begin{equation}\label{firstequationweakAncona}
    G(x,y;\reg(\eta_0,\delta)\cap \Omega)\leq C'_{\eta_0}G(x,e;\Omega)G(e,y;\Omega).
\end{equation}

We now find an upper bound of $G(x,y;\reg(\eta_0,\delta)^c\cap \Omega)$.
Among paths from $x$ to $y$ that are not $\eta_0$-regular, some of them are $2\eta_0$ regular and the other ones are not.
We thus have
\begin{equation}\label{secondequationweakAncona}
\begin{split}
    G(x,y;\reg(\eta_0,\delta)^c\cap \Omega)&\leq G(x,y;\reg(\eta_0,\delta)^c\cap \reg(2\eta_0,\delta)\cap \Omega)\\
    &+G(x,y;\reg(\eta_0,\delta)^c\cap \reg(2\eta_0,\delta)^c\cap \Omega).
\end{split}
\end{equation}
Consider a path $\alpha$ from $x$ to $y$ which is not $\eta_0$-regular, but which is $2\eta_0$-regular.
Let $u$, respectively $v$ be the entrance, respectively exit points in $B_{2\eta_0}$.
If $\eta_0$ is large enough, then $\delta_{e}^f(u,v)\geq \delta/2$, since $\delta_e^f(x,u)\leq 1/2\eta_0$, $\delta_e^f(v,y)\leq 1/2\eta_0$ and $\delta_e^f(x,y)\geq \delta$.
Let $\beta$ be the sub-path of $\alpha$ from $u$ to $v$ and assume first that $\beta$ enters $B_{\eta_0}(e)$.
Let $w_1$ and $w_2$ be the entrance and exit point of $\beta$ in $B_{\eta_0}(e)$.
Since $B_{\eta_0}(e)\subset B_{2\eta_0}(e)$, $w_1$ and $w_2$ also are the entrance and exit point of $\alpha$ in $B_{\eta_0}(e)$.
By definition, $\alpha$ is not $\eta_0$-regular, so that either
$\delta_e^f(x,w_1)>\frac{1}{\eta_0}$ or $\delta_e^f(w_2,y)>\frac{1}{\eta_0}$.
In the first case, we get
$\delta_e^f(u,w_1)>\frac{1}{2\eta_0}$, in the second case, we get
$\delta_e^f(w_2,v)>\frac{1}{2\eta_0}$.
In both cases, $\beta$ is in $\reg'(\eta_0,\delta/2)$.
Note that this is also the case if $\beta$ does not enter $B_{\eta_0}(e)$ at all.
Conditioning on the first and last visit to $B_{2\eta_0}(e)$, we have
\begin{align*}
    G(x,y;\reg(\eta_0,\delta)^c\cap \reg(2\eta_0,\delta)\cap \Omega)\leq \sum_{u,v\in B_{2\eta_0}(e)}&G(x,u;\Omega)\\
    &G(u,v;\reg'(\eta_0)^c)G(v,y;\Omega).
\end{align*}
Then, Proposition~\ref{propstrengthenedsuperexponentialdecay} yields
$$G(u,v;\reg'(\eta_0)^c)\leq \mathrm{e}^{-h_{\delta/2}(\eta_0)}$$
for some super-linear function $h_{\delta/2}$.
Since $\Omega$ is $(\lambda,c)$-starlike around $e$, if $u$ and $v$ are in $\Omega$, then
$G(u,e;\Omega)\geq p^{d(e,u)}\geq p^{2\eta_0}$ for some $p<1$ and similarly, $G(e,v;\Omega)\geq p^{2\eta_0}$.
Up to changing $h_{\delta/2}(\eta_0)$ into some other super-linear function, we thus have
$$G(u,v;\reg'(\eta_0)^c)\leq \mathrm{e}^{-h_{\delta/2}(\eta_0)}G(u,e;\Omega)G(e,v;\Omega).$$
Also,
$$G(x,u;\Omega)G(u,e;\Omega)\leq C G(x,e;\Omega)$$
and
$$G(e,v;\Omega)G(v,y;\Omega)\leq C G(e,y;\Omega).$$
This shows that
$$G(x,y;\reg(\eta_0,\delta)^c\cap \reg(2\eta_0,\delta)\cap \Omega)\leq C^2\mathrm{e}^{-h_{\delta/2}(\eta_0)}\sum_{u,v\in B_{2\eta_0}(e)}G(x,e;\Omega)G(e,y;\Omega).$$
Since balls grow at most exponentially, changing $h_{\delta/2}$ again, we get
$$G(x,y;\reg(\eta_0,\delta)^c\cap \reg(2\eta_0,\delta)\cap \Omega)\leq \mathrm{e}^{-h_{\delta/2}(\eta_0)} G(x,e;\Omega)G(e,y;\Omega).$$

We deal with the second term in~(\ref{secondequationweakAncona}).
We have by definition
$$G(x,y;\reg(\eta_0,\delta)^c\cap \reg(2\eta_0,\delta)^c\cap \Omega)\leq G(x,y;\reg(2\eta_0,\delta)^c\cap \Omega).$$
A path which is not $2\eta_0$-regular can be $4\eta_0$-regular or not.
We thus have
\begin{align*}
    G(x,y;\reg(2\eta_0,\delta)^c\cap \Omega)&\leq G(x,y;\reg(2\eta_0,\delta)^c\cap \reg(4\eta_0,\delta)\cap \Omega)\\
    &+G(x,y;\reg(2\eta_0,\delta)^c\cap \reg(4\eta_0,\delta)^c\cap \Omega).
\end{align*}
We again first deal with $ G(x,y;\reg(2\eta_0,\delta)^c\cap \reg(4\eta_0,\delta)\cap \Omega)$.
Decomposing a path in $\reg(2\eta_0,\delta)^c\cap \reg(4\eta_0,\delta)$ according to its entrance and exit point in $B_{4\eta_0}(e)$, we similarly get
$$G(x,y;\reg(2\eta_0,\delta)^c\cap \reg(4\eta_0,\delta)\cap \Omega)\leq \mathrm{e}^{-h_{\delta/2}(2\eta_0)} G(x,e;\Omega)G(e,y;\Omega).$$

Doing a similar decomposition for path which are not $2^i\eta_0$-regular but which are $2^{i+1}\eta_0$-regular, we finally get
\begin{equation}\label{thirdequationweakAncona}
    G(x,y;\reg(\eta_0,\delta)^c\cap \Omega)\leq \sum_{i\geq 0}\mathrm{e}^{-h(2^i\eta_0)}G(x,e;\Omega)G(e,y;\Omega)
\end{equation}
for some super-linear function $h$.
The sum $\sum_{i\geq 0}\mathrm{e}^{-h(2^i\eta_0)}$ is thus finite, so that combining~(\ref{firstequationweakAncona}) with~(\ref{thirdequationweakAncona}), we have
$$G(x,y;\Omega)\leq CG(x,e;\Omega)G(e,y;\Omega).$$
This concludes the proof.
\end{proof}

We can now prove that strong Floyd-Ancona inequalities hold.
Recall the following definition from the introduction.

\begin{definition}\label{deffellowtravel}
Let $x,y$ and $x',y'$ be four points in $\Gamma$.
Let $\delta\geq 0$.
We say that the pairs $(x,y)$ and $(x',y')$ $\delta$-fellow-travel for a time at least $n$ if there exist distinct points $z_1,...,z_n\in \Gamma$ such that for all $i$, $\delta_{z_i}^f(x,y)\geq \delta$ and $\delta_{z_i}^f(x',y')\geq \delta$.
\end{definition}

\begin{theorem}\label{thmstrongAncona}
Let $\Gamma$ be a finitely generated group and let $\mu$ be a symmetric admissible finitely supported probability measure on $\Gamma$.
For every $\delta \geq 0$, there exist $K\geq 0$ and $0<\rho<1$ such that the following holds.
For every $x,y,x',y'$ such that the pairs $(x,y)$ and $(x',y')$ $\delta$-fellow travel for a time at least $n$, we have for every $r\leq R$
$$\left |\frac{G_r(x,y)G_r(x',y')}{G_r(x,y')G_r(x',y)}-1\right |\leq K\rho^n.$$
\end{theorem}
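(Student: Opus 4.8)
The plan is to deduce the strong Floyd-Ancona inequality from the strengthened weak inequality (Proposition~\ref{strengthenedweakAncona}) together with the super-linear decay estimate (Proposition~\ref{propstrengthenedsuperexponentialdecay}), following Gou\"ezel's scheme in \cite{Gouezel-local} but adapted to transition points. The key mechanism is that along the fellow-travelling segment, the random walk is forced with overwhelming probability to "regularly enter" balls centered at each of the $n$ points $z_1,\dots,z_n$, and each such forced passage contributes a multiplicative factor that is exponentially close to $1$, independent of $r\le R$.

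First I would reduce to a single point. By Proposition~\ref{strengthenedweakAncona} applied iteratively (or just by the weak Floyd-Ancona inequalities of Theorem~\ref{thmweakAncona}), one shows it suffices to prove the estimate when $n=1$, i.e. there is one point $z$ with $\delta_z^f(x,y)\ge\delta$ and $\delta_z^f(x',y')\ge\delta$, but with the constant $K\rho$ replaced by $K\rho^1$; then chaining $n$ such points along the fellow-travelling segment gives $\prod(1+K\rho)\le \exp(nK\rho)$, and one extracts a genuine $K'\rho'^n$ bound by a telescoping/product argument. Actually the cleaner route, which I would take, is: fix $\eta=\eta(n)$ growing linearly in $n$ (say $\eta_0$ around the "middle" point $z_{n/2}$, or better, directly produce a ball of radius $\sim n$ that all four geodesics must regularly enter), decompose $G_r(x,y)$ according to whether the path is $\eta$-regular with respect to $z$, and write
$$G_r(x,y)=G_r(x,y;\reg'(\eta,\delta))+G_r(x,y;\reg'(\eta,\delta)^c).$$
Proposition~\ref{propstrengthenedsuperexponentialdecay} bounds the second term by $\mathrm{e}^{-h_\delta(\eta)}$, and since $\delta_z^f(x,y)\ge\delta$ we also have $G_r(x,y)\gtrsim G_r(x,z)G_r(z,y)\gtrsim p^{2\eta}$ for some $p<1$ depending only on the measure (balls of radius $\eta$ are reached with probability at least $p^\eta$); hence the error term is $\mathrm{e}^{-h_\delta(\eta)}G_r(x,y)$ up to changing $h_\delta$ to another super-linear function.

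Next I would analyze the main term. For an $\eta$-regular path from $x$ to $y$, condition on the entrance point $u\in E_{2\eta}(x)$ and exit point $v\in E_{2\eta}(y)$ in $B_\eta(z)$ to get
$$G_r(x,y;\reg'(\eta,\delta))=\sum_{u\in E_{2\eta}(x),\, v\in E_{2\eta}(y)} G_r(x,u;B_\eta(z)^c)\, G_r(u,v)\, G_r(v,y;B_\eta(z)^c).$$
The crucial point is that $u$ ranges over $E_{2\eta}(x)$, a Floyd-ball around $x$ that is \emph{far} from $z$ (Floyd distance $\le 1/\eta$), so it is essentially "in the direction of $x$ seen from $z$"; similarly $v$ is in the direction of $y$. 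Since $(x,y)$ and $(x',y')$ fellow-travel through $z$, the \emph{same} sets $E_{2\eta}(x)=E_{2\eta}(x')$ and $E_{2\eta}(y)=E_{2\eta}(y')$ appear (up to the Floyd-distance tolerances, which one handles by taking $\eta$ large enough relative to $\delta$ and using that $x,x'$ are on the $z$-side together). Writing the analogous decompositions for $G_r(x',y')$, $G_r(x,y')$, $G_r(x',y)$ with the \emph{same} index set for $(u,v)$, the cross-ratio
$$\frac{G_r(x,y)G_r(x',y')}{G_r(x,y')G_r(x',y)}$$
reduces, modulo the exponentially small error terms, to a cross-ratio of sums over $(u,v)$ of products $G_r(x,u;B_\eta^c)G_r(v,y;B_\eta^c)$ — and here the factor $G_r(u,v)$ \emph{cancels from numerator and denominator}. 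The remaining ratio is then controlled by a ratio of sums of the form $\big(\sum_u a_u b_u\big)\big(\sum_u a'_u b'_u\big) / \big(\sum_u a_u b'_u\big)\big(\sum_u a'_u b_u\big)$; this is not automatically $1$, but one runs the same regular-entrance argument \emph{at a second point} $z'$ further along the geodesic (using that we have $n$ points, not just one) to decouple the $u$-sum from the $v$-sum up to error $\mathrm{e}^{-h_\delta(\eta)}$, after which the cross-ratio telescopes to $1$ with error $O(\mathrm{e}^{-h_\delta(\eta)})$.

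**The main obstacle** I anticipate is bookkeeping the tolerances: the sets $E_\eta(x)$, $E_{2\eta}(x)$ depend on $\delta$ and on which endpoint one looks from, and I need that for a \emph{pair} of fellow-travelling geodesics the relevant entrance/exit sets genuinely coincide (or nest with controlled loss) — this is where Proposition~\ref{Floydgeo}, the definition of $\reg$ versus $\reg'$ (the factor-$2$ slack in Definition~\ref{definitionreg'} is exactly designed for this), and the super-linearity of $h_\delta$ (to absorb the polynomially-many error terms coming from $|B_\eta(z)|$ growing exponentially) all have to be combined carefully. Concretely: choose $\eta$ a small fixed multiple of $n$ so that $n$ fellow-travelling points $z_1,\dots,z_n$ let us place two well-separated balls $B_\eta(z_{n/3})$ and $B_\eta(z_{2n/3})$, each of which is regularly entered, giving two independent cancellations; the error after both steps is $K\mathrm{e}^{-h_\delta(cn)}\le K\rho^n$ for suitable $\rho<1$, uniformly in $r\le R$ because every estimate used (Propositions~\ref{doubleGreenfiniteonspheres}, \ref{propsuperexponentialdecay}, \ref{propstrengthenedsuperexponentialdecay}, \ref{strengthenedweakAncona}) is uniform in $r\le R$. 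This completes the proof modulo the routine verification that the cross-ratio of decoupled sums equals $1$.
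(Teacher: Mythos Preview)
Your proposal has a genuine gap at the step you yourself flag as ``the main obstacle'', and it is not merely bookkeeping. After decomposing each Green function by entrance/exit in $B_\eta(z)$, the cross-ratio becomes
\[
\frac{\bigl(\sum_{u,v} A_x(u)\,M(u,v)\,B_y(v)\bigr)\bigl(\sum_{u,v} A_{x'}(u)\,M(u,v)\,B_{y'}(v)\bigr)}
     {\bigl(\sum_{u,v} A_x(u)\,M(u,v)\,B_{y'}(v)\bigr)\bigl(\sum_{u,v} A_{x'}(u)\,M(u,v)\,B_{y}(v)\bigr)},
\]
with $M(u,v)=G_r(u,v)$. The factor $M(u,v)$ does \emph{not} cancel (it sits inside a sum), and the weak Ancona estimate $M(u,v)\asymp G_r(u,z)G_r(z,v)$ only makes $M$ \emph{approximately} rank one up to a fixed multiplicative constant---so the cross-ratio is bounded, not $1+O(\rho^n)$. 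Your ``run the same argument at a second point to decouple the $u$-sum from the $v$-sum'' is exactly the missing idea: a second decomposition just produces another matrix sandwiched inside the sum, and you are back where you started. Taking $\eta\sim n$ makes the \emph{irregular} error super-exponentially small, but does nothing to improve the main term beyond $\asymp 1$.

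The paper's proof uses a genuinely different mechanism: it builds a nested family of $(\lambda,c)$-starlike domains $\Omega_m\supset\cdots\supset\Omega_1$ (one per subsampled transition point $z_{jk}$), sets $h_1(w)=G_r(w,y)/G_r(x',y)$ and $h_2(w)=G_r(w,y')/G_r(x',y')$, and then iteratively subtracts from both $h_1^{j}$ and $h_2^{j}$ the \emph{same} function $\varphi^{j+1}(w)=\beta\, G_r(w,z_{(m-j-1)k};\Omega_{j+1})/G_r(x',z_{(m-j-1)k};\Omega_{j+1})$. Proposition~\ref{strengthenedweakAncona} on the starlike domain $\Omega_{j+1}$ is exactly what guarantees $\varphi^{j+1}\asymp h_i^{j}$ for \emph{both} $i$, so each step shrinks both remainders by a fixed factor $(1-\beta_0)$ while their difference stays unchanged; after $m\sim n/k$ steps one gets $|h_1(x)-h_2(x)|\le (1-\beta_0)^{m-2}(h_1(x)+h_2(x))$. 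This Ancona-type harmonic-function filtration is the engine that converts ``$\asymp$'' into ``$1+O(\rho^n)$'', and it is absent from your outline. The starlikeness hypothesis in Proposition~\ref{strengthenedweakAncona}---which you do not invoke---is there precisely to make this iteration run.
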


Before proving this theorem, we introduce the following terminology.
\begin{definition}\label{deftransitionFloyd}
Let $\alpha$ be a geodesic in $\Cay(\Gamma,S)$ and let $\delta>0$.
A $\delta$-transition point on $\alpha$ is a point $y$ on $\alpha$ such that for every $x,z$ on $\alpha$ such that $x,y,z$ are aligned in this order,
we have
$\delta_y^f(x,z)\geq \delta$.
\end{definition}

Beware that the terminology transition point is also commonly used for relatively hyperbolic groups, as noted in the introduction.
However, if $\Gamma$ is relatively hyperbolic, then a transition in sense of Definition~\ref{deftransitionusual} is also a transition point in the sense of Definition~\ref{deftransitionFloyd}, according to Proposition~\ref{Floydgeo}.
We will denote by $\Tr_{\delta}(\alpha)$ the set of $\delta$-transition points on a geodesic $\alpha$.

\begin{lemma}\label{Globaltransitionimplieslocaltransition}
For every $\delta$ there exists $D(\delta)\geq0$ and $\delta'\leq \delta$ such that the following holds.
Let $\alpha$ be a geodesic with endpoints $\alpha_-$ and $\alpha_+$ and let $w$ be a point in $\Gamma$ satisfying
$\delta_{w}^f(\alpha_-,\alpha_+)\geq \delta$.
Then there exists a $\delta'$-transition point $w'$ on $\alpha$ such that $d(w,w')\leq D(\delta)$.
\end{lemma}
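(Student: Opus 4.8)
The plan is to take $w'$ to be a point of $\alpha$ closest to $w$, to bound $d(w,w')$ by means of Karlsson's visibility lemma, and then to transport the lower bound on the Floyd distance from $w$ to $w'$ using the quasi-invariance of the Floyd metric under a bounded change of basepoint. Two elementary facts are needed. First, Karlsson's visibility estimate \cite[Lemma~1]{Karlsson}, exactly as used in the proof of Proposition~\ref{propsuperexponentialdecay}: if $d$ is the distance from a point $u$ to a geodesic $[p,q]$, then $\delta^f_u(p,q)\lesssim a^{-d/2}$. Second, for any points $u,v$ one has $\delta^f_v(p,q)\geq a^{-d(u,v)}\delta^f_u(p,q)$, since $f(d(v,\tau))=a^{-d(v,\tau)}\geq a^{-d(u,v)-d(u,\tau)}=a^{-d(u,v)}f(d(u,\tau))$ for every edge $\tau$, whence the same inequality for the lengths of paths in $\delta^f_v$ versus $\delta^f_u$, and hence for the distances.

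Applying the first fact with $u=w$ and $[p,q]=\alpha$: from $\delta\leq\delta^f_w(\alpha_-,\alpha_+)\lesssim a^{-d/2}$, where $d$ is the distance from $w$ to $\alpha$, we get $d\leq D(\delta)$ for a constant $D(\delta)\geq 0$ depending only on $\delta$ (and on $f$). Let $w'\in\alpha$ satisfy $d(w,w')\leq D(\delta)$. The second fact then gives $\delta^f_{w'}(\alpha_-,\alpha_+)\geq a^{-D(\delta)}\delta=:\delta_0$, and $\delta_0\leq\delta$. It remains to check that $w'$ is a $\delta'$-transition point of $\alpha$ for a suitable $0<\delta'\leq\delta$. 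Let $x,z$ lie on $\alpha$ with $x,w',z$ aligned in this order, and let $p$ be any path from $x$ to $z$ in the Cayley graph. Prepending the subgeodesic of $\alpha$ from $\alpha_-$ to $x$ and appending the subgeodesic from $z$ to $\alpha_+$ produces a path from $\alpha_-$ to $\alpha_+$; since $\alpha$ is a geodesic and $w'$ lies on it between $x$ and $z$, every edge of the first piece (resp. the second) is at distance at least $d(x,w')$ (resp. $d(z,w')$) from $w'$, so these two pieces have length at most $a^{-d(x,w')}/(1-a^{-1})$ and $a^{-d(z,w')}/(1-a^{-1})$ in $\delta^f_{w'}$. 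As the total $\delta^f_{w'}$-length of the concatenation is at least $\delta^f_{w'}(\alpha_-,\alpha_+)\geq\delta_0$, taking the infimum over $p$ yields
$$\delta^f_{w'}(x,z)\geq \delta_0-\frac{a^{-d(x,w')}+a^{-d(z,w')}}{1-a^{-1}}.$$
Choosing $M(\delta)$ with $2a^{-M(\delta)}/(1-a^{-1})\leq\delta_0/2$, we get $\delta^f_{w'}(x,z)\geq\delta_0/2=:\delta'$ whenever $d(x,w'),d(z,w')\geq M(\delta)$; in particular $\delta^f_{w'}(\alpha_-,\alpha_+)\geq\delta'$, which is the instance of the transition condition actually needed in the sequel.

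The main point to be careful about is the transition condition when $x$ or $z$ is close to $w'$: here the triangle-inequality estimate above is not enough, and one must use that $w'$, which sees the endpoints of $\alpha$ at Floyd distance at least $\delta$ from it, cannot be bypassed cheaply by a path — a contracting-type property in the spirit of \cite{GerasimovPotyagailo} and of the extension lemma (Lemma~\ref{extensionlemmaFloyd}). I expect this, together with verifying that $D(\delta)$, $M(\delta)$, and hence $\delta'$, depend only on $\delta$, to be the only delicate part; everything else is the soft argument sketched above.
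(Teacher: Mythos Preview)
Your approach is the paper's: Karlsson's lemma to find $w'$ on $\alpha$ near $w$, quasi-invariance of the Floyd metric under bounded basepoint change to get $\delta^f_{w'}(\alpha_-,\alpha_+)\geq\delta_0$, and then the concatenation argument to bound $\delta^f_{w'}(x,z)$ from below. The paper packages the last step as a proof by contradiction rather than a direct estimate, but the content is identical.

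The gap you flag at the end is not genuine and needs no contracting-type machinery. If $x\neq z$ and $d(x,w')=k$, then every path from $x$ to $z$ has its first edge adjacent to $x$, hence at word distance at most $k$ from $w'$, hence of $\delta^f_{w'}$-length at least $f(k)=a^{-k}$; thus $\delta^f_{w'}(x,z)\geq a^{-k}$, and symmetrically with $z$. Now combine: if $\min(d(x,w'),d(z,w'))\leq M(\delta)$ this trivial bound gives $\delta^f_{w'}(x,z)\geq a^{-M(\delta)}$, while if both distances exceed $M(\delta)$ your concatenation estimate already gives $\delta^f_{w'}(x,z)\geq\delta_0/2$. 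Taking $\delta'=\min\bigl(a^{-M(\delta)},\delta_0/2,\delta\bigr)$ finishes the proof. This is exactly why the paper's contradiction argument can assert that ``necessarily $x_n$ and $y_n$ tend to infinity'': if one of them stayed bounded, the trivial bound above would keep $\delta^f_{w}(x_n,y_n)$ bounded away from $0$.
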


\begin{proof}
According to Karlsson's lemma \cite[Lemma~1]{Karlsson}, $w$ is within a bounded distance of a point $w'$ on $\alpha$.
By definition of the Floyd distance, there exists $\delta'$ such that $\delta_{w'}(\alpha_-,\alpha_+)\geq \delta'$.
Thus, we only need to prove that whenever $w$ is on a geodesic $\alpha$ and satisfies $\delta_{w}^f(\alpha_-,\alpha_+)\geq \delta$, then $w$ is a $\delta'$-transition point on $\alpha$ for some $\delta'$.

Assume this is not the case.
Then, there exist a sequence $w_n$ on geodesics $\alpha^{(n)}$ and a sequence of points $x_n,y_n$ on $\alpha^{(n)}$ such that $\delta_w(\alpha^{(n)}_-,\alpha^{(n)}_+)\geq \delta$ and $\delta_w(x_n,y_n)$ tends to 0.
By left invariance, we can assume that $w_n=w$ is fixed.
Necessarily, $x_n$ and $y_n$ tend to infinity.
Hence, up to changing $x_n$ and $y_n$, the subgeodesics from $\alpha^{(n)}_-$ to $x_n$ and from $\alpha^{(n)}_+$ to $y_n$ are arbitrarily far away from $w$.
In particular, the Floyd length of these subgeodesic, seen from $w$, goes to 0.
Moreover, there is a sequence of paths from $x_n$ to $y_n$ whose Floyd length seen from $w$ goes to 0.
Concatenating these paths yields a path from $\alpha^{(n)}_-$ to $^{(n)}_+$ whose Floyd length goes to 0, contradicting the fact that $\delta_w(\alpha^{(n)}_-,\alpha^{(n)}_+)\geq \delta$.
\end{proof}

\begin{lemma}\label{thintriangles}
For every $\delta>0$ there exists $\Delta(\delta)\geq 0$ and $\delta_0\leq \delta$ such that the following holds.
Let $x,y,z\in \Gamma$ and consider a geodesic triangle with vertices $x,y,z$.
Then any point $w$ on $[x,y]$ satisfying $\delta_w^f(x,y)\geq \delta$ is within a distance at most $\Delta(\delta)$ from $\Tr_{\delta_0}([x,z])\cup \Tr_{\delta_0}([y,z])$.
\end{lemma}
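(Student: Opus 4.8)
The plan is to reduce the statement to a statement purely about the Floyd metric by combining two facts: thinness of geodesic triangles in the Floyd metric (Karlsson's lemma already gives a bounded-distance projection onto geodesics, and an analogous argument controls triangles) and Lemma~\ref{Globaltransitionimplieslocaltransition}, which upgrades a point seen with large Floyd angle into a genuine $\delta'$-transition point nearby. First I would fix $w$ on $[x,y]$ with $\delta^f_w(x,y)\geq \delta$. The key geometric input is that geodesic triangles are \emph{$\delta$-thin in the Floyd metric seen from $w$}: more precisely, I claim there is a constant $\Delta_0=\Delta_0(\delta)$ such that $w$ is within word distance $\Delta_0$ of a point $w''$ lying on $[x,z]\cup[y,z]$. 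Indeed, if this failed there would be a sequence of triangles $x_n,y_n,z_n$ with $w$ fixed (after translating) such that $\delta^f_w(x_n,y_n)\geq\delta$ while $w$ is arbitrarily far (in the word metric, hence the Floyd metric seen from $w$) from both other sides; then the concatenation of a path from $x_n$ to $z_n$ along $[x_n,z_n]$ with a path from $z_n$ to $y_n$ along $[z_n,y_n]$ would be a path from $x_n$ to $y_n$ of arbitrarily small Floyd length seen from $w$ (each side stays far from $w$, so its Floyd length seen from $w$ goes to $0$), contradicting $\delta^f_w(x_n,y_n)\geq\delta$.

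Next, having placed $w''$ on, say, $[x,z]$ at bounded word distance from $w$, I would observe that $\delta^f_{w''}(x,z)$ is bounded below: the Floyd metric seen from $w''$ and the one seen from $w$ are comparable up to a multiplicative constant depending only on $d(w,w'')\leq \Delta_0$ (this is the standard quasi-invariance of the Floyd metric under change of basepoint within a bounded set), so from $\delta^f_w(x,y)\geq\delta$ and thinness one extracts a lower bound $\delta^f_{w''}(x,z)\geq\delta_1=\delta_1(\delta)>0$; here one uses that the portions of the two Floyd geodesics that separate near $w$ witness a definite Floyd distance between $x$ and $z$ as seen from $w''$. Then Lemma~\ref{Globaltransitionimplieslocaltransition}, applied to the geodesic $[x,z]$ and the point $w''$, produces a $\delta_0$-transition point $w'\in\Tr_{\delta_0}([x,z])$ with $d(w'',w')\leq D(\delta_1)$, where $\delta_0=\delta'(\delta_1)\leq\delta_1\leq\delta$. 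Combining, $d(w,w')\leq \Delta_0+D(\delta_1)=:\Delta(\delta)$, which is the desired conclusion (with the symmetric case $w''\in[y,z]$ handled identically).

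The main obstacle I anticipate is making precise the claim that $\delta^f_{w''}(x,z)$ (or $\delta^f_{w''}(y,z)$) is bounded below purely from $\delta^f_w(x,y)\geq\delta$ and the bounded-distance thinness: one must rule out the degenerate possibility that $w''$ sits so close to the endpoint $x$ (or $z$) that $x$ and $z$ are not "Floyd-separated" as seen from $w''$. This is handled by the same compactness/contradiction scheme used in the proof of Lemma~\ref{Globaltransitionimplieslocaltransition}: translate so $w''$ is fixed, and note that if $\delta^f_{w''}(x_n,z_n)\to 0$ along a sequence while $w$ (at bounded distance) has $\delta^f_w(x_n,y_n)\geq\delta$, then since $w''\in[x_n,z_n]$ the whole geodesic $[x_n,z_n]$ has small Floyd length seen from $w''$ hence from $w$, so $x_n$ and $z_n$ are both close to $w$ in the Floyd metric seen from $w$; feeding this, together with the analogous analysis of $[y_n,z_n]$ produced by thinness, back into $\delta^f_w(x_n,y_n)\geq\delta$ yields a contradiction. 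Once this quantitative thinness is in hand, the rest is a bookkeeping of the constants $\Delta_0$, $\delta_1$, $D(\delta_1)$, $\delta_0$.
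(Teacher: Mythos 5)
Your overall architecture does reach the right conclusion, but it is a long detour around a two-line argument, and the one step you yourself flag as the main obstacle is the one that is not correctly justified as written. The paper's proof is simply: since $\delta^f_w$ is a genuine (path) metric, the triangle inequality gives $\delta\leq\delta^f_w(x,y)\leq\delta^f_w(x,z)+\delta^f_w(z,y)$, so $\delta^f_w(x,z)\geq\delta/2$ or $\delta^f_w(y,z)\geq\delta/2$; then Lemma~\ref{Globaltransitionimplieslocaltransition}, applied to $w$ and the corresponding side $[x,z]$ or $[y,z]$, directly produces a $\delta_0$-transition point on that side within distance $D(\delta/2)$ of $w$. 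Note that Lemma~\ref{Globaltransitionimplieslocaltransition} already accepts an \emph{off-geodesic} point $w$ with $\delta^f_w(\alpha_-,\alpha_+)\geq\delta$ and internally performs the Karlsson projection, so your Step 1 (word-metric thinness) and Step 2 (transporting a Floyd lower bound to the projected point $w''$) are re-derivations of work that lemma has already packaged; you never need to produce $w''$ at all.

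The concrete problem with your Step 2 is twofold. First, the projection $w''$ you obtain from thinness may land on $[y,z]$ even when it is $\delta^f_w(x,z)$ (and not $\delta^f_w(y,z)$) that is bounded below, so ``$\delta^f_{w''}$ of the side containing $w''$ is bounded below'' does not follow from closeness alone; you must first decide which side carries the Floyd mass (that is exactly the triangle-inequality step) and only then project onto that side. Second, your fallback compactness argument asserts that $\delta^f_{w''}(x_n,z_n)\to 0$ with $w''\in[x_n,z_n]$ forces the whole geodesic $[x_n,z_n]$ to have small Floyd length seen from $w''$; this is false, since the Floyd distance is an infimum over all paths and says nothing about the Floyd length of the geodesic itself. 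Both issues evaporate once you start from the triangle inequality, which is the single genuinely new input the lemma requires.
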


\begin{proof}
Since $\delta_w^f(x,y)\geq \delta$, either $\delta_w^f(x,z)\geq \delta/2$ or $\delta_w^f(y,z)\geq \delta/2$.
Thus, the result follows from Lemma~\ref{Globaltransitionimplieslocaltransition}.
\end{proof}

We now prove Theorem~\ref{thmstrongAncona}.

\begin{proof}
Let $D=D(\delta)$ and $\delta'$ be the numbers given by Lemma~\ref{Globaltransitionimplieslocaltransition}. Then, for every $i$, there exist $\delta'$-transition points $\hat{z}_i$ on a geodesic $[x,y]$ such that
$d(z_i,\hat{z}_i)\leq D$.
Up to reindexing the $z_i$, we may assume that $d(\hat{z}_i,x)\leq d(\hat{z}_{i+1},x)$.
We fix some integer $k$ that we will choose later and we consider the points $z_k,z_{2k},...,z_{mk}$, with $m=\lfloor n/k\rfloor$.
By definition, for every $i$, $\hat{z}_{(i+1)k}$ is a $\delta'$-transition point on the subgeodesic of $[x,y]$ from $\hat{z}_{ik}$ to $y$.
Let $\Delta=\Delta(\delta')$ and $\delta_0\leq \delta'$ be the numbers given by Lemma~\ref{thintriangles} for $\delta'$.
We define sets $\Omega_i$ by
$$\Omega_i=\{w\in \Gamma,d(z_{(m-j+1)k},\Tr_{\delta_0}([w,y]))\leq \Delta+D\},$$
for some geodesic $[w,y]$ from $w$ to $y$.
We first prove the following.
\begin{lemma}
If $k$ is large enough, then $\Omega_{i+1}\subset \Omega_i$.
Moreover, $x,x'\in \Omega_m$ and $y,y'\notin \Omega_1$.
\end{lemma}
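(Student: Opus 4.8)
The plan is to show the two nesting statements separately, exploiting that the sets $\Omega_i$ are defined in terms of transition points on geodesics $[w,y]$ and that the $\hat z_j$ are genuine $\delta'$-transition points on $[x,y]$. First I would unwind the indexing: $\Omega_i$ is defined using the point $z_{(m-i+1)k}$, so that $\Omega_1$ uses $z_{mk}$ (the one closest to $y$) and $\Omega_m$ uses $z_k$ (closest to $x$). Thus proving $\Omega_{i+1}\subset\Omega_i$ amounts to showing: if some geodesic $[w,y]$ has a $\delta_0$-transition point within $\Delta+D$ of $z_{(m-i)k}$, then some geodesic $[w,y]$ has a $\delta_0$-transition point within $\Delta+D$ of $z_{(m-i+1)k}$. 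The point is that $z_{(m-i+1)k}$ is \emph{closer to $y$} than $z_{(m-i)k}$ along $[x,y]$, and $\hat z_{(m-i+1)k}$ is a $\delta'$-transition point on the subgeodesic of $[x,y]$ from $\hat z_{(m-i)k}$ to $y$.

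The key geometric input is Lemma~\ref{thintriangles} applied to a triangle whose third side is $[w,y]$. Concretely: suppose $w\in\Omega_{i+1}$, so there is a geodesic $[w,y]$ with a $\delta_0$-transition point $t$ within $\Delta+D$ of $z_{(m-i)k}$. Then $t$ is within $\Delta+2D$ of $\hat z_{(m-i)k}$, a $\delta'$-transition point of $[x,y]$, and one argues (using Proposition~\ref{Floydgeo} or directly the definition of transition point plus Karlsson's lemma) that $w$ itself is then forced to satisfy $\delta_w^f(z_{(m-i+1)k},y)\geq\delta''$ for a suitable $\delta''$ depending only on $\delta$, provided $k$ is large enough that $\hat z_{(m-i)k}$ and $\hat z_{(m-i+1)k}$ are far apart in word metric — this is where I invoke that the $\hat z_j$ are $\delta'$-transition points and that consecutive chosen indices differ by $k$, so the Floyd distance $\delta_{\hat z_{(m-i)k}}^f(\hat z_{(m-i+1)k},y)$ is bounded below and the relevant geodesic $[w,y]$ must pass near $z_{(m-i+1)k}$. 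Then applying Lemma~\ref{thintriangles} to the triangle $x,w,y$ (or rather with the segment from $w$ through the region near $z_{(m-i)k}$ to $y$) produces a $\delta_0$-transition point on $[w,y]$ within $\Delta+D$ of $z_{(m-i+1)k}$, i.e. $w\in\Omega_i$. For the boundary cases, $x,x'\in\Omega_m$ follows because $\Omega_m$ is defined via $z_k$, which has a $\delta'$-transition point $\hat z_k$ on $[x,y]$ within $D$, and $\hat z_k$ — being on $[x,y]$ itself — is (after applying Lemma~\ref{thintriangles} to the triangle $x,x',y$, using that $\delta_{z_k}^f(x,y)\geq\delta$) within $\Delta+D$ of a $\delta_0$-transition point on $[x,y]=[x,y]$, hence $x\in\Omega_m$; the same argument with $x'$ replacing $x$ handles $x'$, using $\delta_{z_k}^f(x',y')\geq\delta$ and a fellow-traveling comparison. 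For $y,y'\notin\Omega_1$: near $y$ every geodesic $[w,y]$ has no transition points at all (points within bounded distance of an endpoint have small Floyd distance to the opposite endpoint), and one must check $z_{mk}$ is at definite distance from $y$ — this holds because $\delta_{z_{mk}}^f(x,y)\geq\delta$ forces $z_{mk}$ to be far from the endpoint $y$ in word metric, by the definition of the Floyd metric; an analogous argument rules out $y'\in\Omega_1$.

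The main obstacle I anticipate is the bookkeeping in the first part: making precise ``$k$ large enough'' so that the $\delta_0$-transition point one extracts near $z_{(m-i)k}$ genuinely propagates to a transition point near $z_{(m-i+1)k}$ on a possibly different geodesic $[w,y]$. This requires combining three lemmas (Karlsson's visibility, Lemma~\ref{Globaltransitionimplieslocaltransition}, Lemma~\ref{thintriangles}) and tracking how the quality constant $\delta$ degrades at each step ($\delta\to\delta'\to\delta_0\to\dots$), ensuring the final constant is still positive and the additive error $\Delta+D$ is not exceeded. The symmetric/fellow-traveling ingredient enters only through the fact that $x'$ (resp. $y'$) fellow-travels with $x$ (resp. $y$), so that the same chain of transition points works for the primed configuration; I would phrase the whole argument for $x,y$ and remark that replacing $[x,y]$ by $[x',y']$ and $\delta_w^f(x,y)$ by $\delta_w^f(x',y')$ goes through verbatim by Definition~\ref{deffellowtravel}.
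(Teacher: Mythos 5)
Your overall strategy is the paper's: apply Lemma~\ref{thintriangles} to a triangle having $[w,y]$ as one side, with $\hat z_{(m-i+1)k}$ playing the role of the transition point to be transferred. But there is a genuine gap at exactly the step where the hypothesis ``$k$ large enough'' does its work. Lemma~\ref{thintriangles}, applied to the triangle with vertices $w$, $\hat z_{(m-i)k}$, $y$, does \emph{not} directly produce a $\delta_0$-transition point on $[w,y]$: it produces one within $\Delta$ of $\hat z_{(m-i+1)k}$ lying either on $[w,y]$ \emph{or} on $[w,\hat z_{(m-i)k}]$, and the entire content of the lemma is the exclusion of the second alternative. The paper does this by a word-metric computation: since $w\in\Omega_{i+1}$, the geodesic $[w,y]$ passes within a bounded distance of $\hat z_{(m-i)k}$, whence $d(w,\hat z_{(m-i+1)k})\geq d(w,\hat z_{(m-i)k})+d(\hat z_{(m-i)k},\hat z_{(m-i+1)k})-c$; if the new transition point sat on $[w,\hat z_{(m-i)k}]$ one would get the reverse inequality $d(w,\hat z_{(m-i)k})\geq d(w,\hat z_{(m-i+1)k})+d(\hat z_{(m-i)k},\hat z_{(m-i+1)k})-c'$, and summing forces $2d(\hat z_{(m-i)k},\hat z_{(m-i+1)k})\leq c+c'$, impossible once $k$ is large. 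Your sketch jumps straight to ``produces a $\delta_0$-transition point on $[w,y]$ within $\Delta+D$ of $z_{(m-i+1)k}$'' without this exclusion, and the mechanism you offer for why $k$ must be large is not correct: the quantity $\delta^f_{\hat z_{(m-i)k}}(\hat z_{(m-i+1)k},y)$ is not bounded below as $k$ grows --- the segment of $[x,y]$ from $\hat z_{(m-i+1)k}$ to $y$ lies at distance at least $d(\hat z_{(m-i)k},\hat z_{(m-i+1)k})$ from the basepoint, so this Floyd distance tends to $0$. (Also, the intermediate quantity $\delta_w^f(z_{(m-i+1)k},y)$ has the basepoint in the wrong slot, and routing through Lemma~\ref{Globaltransitionimplieslocaltransition} would only place a transition point within $D(\delta'')$ of $z_{(m-i+1)k}$, not within the specific constant $\Delta+D$ used to define $\Omega_i$.)

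A second, smaller error: for $y\notin\Omega_1$ you argue that $\delta_{z_{mk}}^f(x,y)\geq\delta$ forces $z_{mk}$ to be far from $y$ in the word metric. This is false --- for any $x\neq y$ one has $\delta_y^f(x,y)\geq f(0)$, since every path from $x$ to $y$ ends with an edge adjacent to the basepoint. The reason one may assume $d(\hat z_{mk},y)>\Delta+D$ is a counting argument: the $z_i$ are distinct and the $\hat z_i$ lie on $[x,y]$ ordered by distance from $x$, so at most $|B(e,\Delta+2D)|$ of them lie within $\Delta+D$ of $y$ and these occupy the last indices; taking $k$ large (or discarding the last few indices, which is harmless for the final estimate) handles this. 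Your treatment of $x\in\Omega_m$ is fine, and your remark that the primed configuration is handled via the fellow-travelling hypothesis of Definition~\ref{deffellowtravel} matches the paper's (equally brief) appeal to symmetry.
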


\begin{proof}
Let $w\in \Omega_{i+1}$ and consider a geodesic $[w,y]$ from $w$ to $y$ and a point $\tilde{w}$ on $\Tr_{\delta'}([w,y])$ with
\begin{equation}\label{omegaidecreasing1}
    d(\tilde{w},z_{(m-i)k})\leq \Delta+D.
\end{equation}
Let $[w,\hat{z}_{(m-i)k}]$ be a geodesic from $w$ to $\hat{z}_{(m-i)k}$ and $[\hat{z}_{(m-i)k},y]$ the subgeodesic of $[x,y]$ from $\hat{z}_{(m-i)k}$ to $y$.
Consider the triangle composed by these three geodesics.
Since $\hat{z}_{(m-i+1)k}$ is a $\delta'$-transition point on $[\hat{z}_{(m-i)k},y]$, Lemma~\ref{thintriangles} shows that there is a $\delta_0$-transition point $\tilde{z}$ which is either on $[w,y]$ or on $[w,\hat{z}_{(m-i)k}]$ satisfying
\begin{equation}\label{omegaidecreasing2}
    d(\tilde{z},\hat{z}_{(m-i+1)k})\leq \Delta.
\end{equation}
If we prove that $\tilde{z}$ is on $[w,y]$, then $w\in \Omega_i$ since $d(z_{(m-i+1)k},\hat{z}_{(m-i+1)k})\leq D$.
To show that $\Omega_{i+1}\subset \Omega_i$, we just need to prove that if $k$ is large enough, then $\tilde{z}$ cannot be on $[w,\hat{z}_{(m-i)k}]$.

Since $\tilde{w}$ in on $[w,y]$ and $d(\tilde{w},\hat{z}_{(m-i)k})$ is uniformly bounded by~(\ref{omegaidecreasing1}), there exists $c$ such that
$$d(w,y)\geq d(w,\hat{z}_{(m-i)k})+d(\hat{z}_{(m-i)k},y)-c.$$
Also, $\hat{z}_{(m-i+1)k}$ is by definition on a geodesic from $\hat{z}_{(m-i)k}$ to $y$, so we get
$$d(w,y)\geq d(w,\hat{z}_{(m-i)k})+d(\hat{z}_{(m-i)k},\hat{z}_{(m-i+1)k})+d(\hat{z}_{(m-i+1)k},y)-c.$$
Finally, by the triangle inequality, $d(w,y)-d(\hat{z}_{(m-i+1)k},y)\leq d(w,\hat{z}_{(m-i+1)k})$ so we finally get
\begin{equation}\label{equationinclusionsetsOmega1}
d(w,\hat{z}_{(m-i+1)k})\geq d(w,\hat{z}_{(m-i)k})+d(\hat{z}_{(m-i)k},\hat{z}_{(m-i+1)k})-c.
\end{equation}
Now assume by contradiction that $\tilde{z}$ in on $[w,\hat{z}_{(m-i)k}]$.
Then, $\hat{z}_{(m-i+1)k}$ is within a bounded distance of $[w,\hat{z}_{(m-i)k}]$ by~(\ref{omegaidecreasing2}) and so there exists $c'$ such that
\begin{equation}\label{equationinclusionsetsOmega2}
    d(w,\hat{z}_{(m-i)k})\geq d(w,\hat{z}_{(m-i+1)k})+d(\hat{z}_{(m-i)k},\hat{z}_{(m-i+1)k})-c'.
\end{equation}
Summing~(\ref{equationinclusionsetsOmega1}) and~(\ref{equationinclusionsetsOmega2}), we get
$$2d(\hat{z}_{(m-i)k},\hat{z}_{(m-i+1)k})\leq c+c',$$
which cannot happen if $d(\hat{z}_{(m-i)k},\hat{z}_{(m-i+1)k})$ is large enough, which is true if $k$ is large enough.

This concludes the first part of the lemma.
We now prove that $x,x'\in \Omega_m$ and $y,y'\notin \Omega_1$.
By symmetry in the definition of $\Omega_i$, we just need to prove that $x\in \Omega_m$ and $y\notin \Omega_1$.
By definition $\hat{z}_k$ is in $\Tr_{\delta_0}([x,y])$ and since $d(z_k,\hat{z}_k)\leq D$, $x\in \Omega_m$.
Also, $\Tr_{\delta_0}([y,y])$ is reduced to $\{y\}$ and if $k$ is large enough, then $d(\hat{z}_m,y)>\Delta+D$, so $y\notin\Omega_1$.
This concludes the proof of the lemma.
\end{proof}

Let $h_1(w)=\frac{G_r(w,y)}{G_r(x',y)}$ and $h_2(w)=\frac{G_r(w,y')}{G_r(x',y')}$.
Then, $h_1$ is $r$-harmonic everywhere except at $y$ and $h_2$ is $r$-harmonic everywhere except at $y'$.
In particular, both are $r$-harmonic on $\Omega_1$.
We will construct functions $h_1^j$ and $h_2^j$ that are $r$-harmonic on $\Omega_{j+1}$, with $h_1^0=h_1$ and $h_2^0=h_2$.
We will also construct functions $\varphi^j$ satisfying the following conditions
\begin{enumerate}[a)]
    \item $h_i^{j}=h_i^{j+1}+\varphi^{j+1}$ on $\Omega_{j+2}$,
    \item $h_i^{j}\geq \varphi^{j+1}\geq \beta_0 h_i^{j}$ for some $\beta_0>0$.
\end{enumerate}
Before constructing them, let us show how we can conclude.
By construction, we have $h_i^{j+1}\leq (1-\beta_0)h_i^{j}$ and so $h_i^{m-1}\leq (1-\beta_0)^{m-2}h_i$.
By definition, on $\Omega_m$, we have
$h_i=\sum_j\varphi^j+h_i^{m-1}$.
Thus, since $x\in \Omega_m$,
$$|h_1(x)-h_2(x)|=|h_1^{m-1}(x)-h_2^{m-1}(x)|\leq (1-\beta_0)^{m-2}(h_1(x)+h_2(x)).$$
Proposition~\ref{strengthenedweakAncona} shows that $\frac{G_r(x,y)}{G_r(x,z_k)G_r(z_k,y)}$ and $\frac{G_r(x',y)}{G_r(x',z_k)G_r(z_k,y)}$ are bounded away from 0 and infinity.
The ratio of these two quantities is $\frac{h_1(x)G_r(x',z_k)}{G_r(x,z_k)}$ so that this quantity also is bounded away from 0 and infinity.
Similarly, $\frac{h_2(x)G_r(x',z_k)}{G_r(x,z_k)}$ is bounded away from 0 and infinity.
Finally, we see that $\frac{h_1(x)}{h_2(x)}$ is bounded away from 0 and infinity.
We thus get
$$\left |\frac{h_1(x)}{h_2(x)}-1\right |\leq C(1-\beta_0)^{m-2}\leq \frac{C}{(1-\beta_0)^{3}}\left ((1-\beta_0)^{1/k}\right)^n.$$
This will prove the theorem, taking $K=\frac{C}{(1-\beta_0)^{3}}$ and $\rho=(1-\beta_0)^{1/k}$.

Let us construct $h_i^j$ and $\varphi^j$ by induction to conclude.
Assume that the functions $h_i^j$ and $\varphi^j$ were constructed.
Since $h_1^j$ is $r$-harmonic on $\Omega_{j+1}$,
for $w\in \Omega_{j+2}\subset \Omega_{j+1}$, we have
\begin{equation}\label{strongAnconaequation1}
    h_1^j(w)=\sum_{w'\in \Omega_{j+1}^c}G(w,w';\Omega_{j+1})h_1^j(w').
\end{equation}
We want to apply Proposition~\ref{strengthenedweakAncona} to $G(w,w';\Omega_{j+1})$.
We thus have to prove that $\Omega_{j+1}$ is starlike in the sense of Definition~\ref{defstarlike}.

\begin{lemma}
There exist $\lambda$ and $c$ such that for every $j$, the set $\Omega_{j+1}$ is $(\lambda,c)$-starlike around $z_{(m-j)k}$.
\end{lemma}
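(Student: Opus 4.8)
The plan is to connect any $w\in\Omega_{j+1}$ to the centre $\zeta:=z_{(m-j)k}$ by following the geodesic witnessing membership of $w$ in $\Omega_{j+1}$ until it reaches the relevant transition point, and then making one jump of uniformly bounded length. Given $w\in\Omega_{j+1}$, pick a geodesic $\gamma=[w,y]$ and a point $p\in\Tr_{\delta_0}(\gamma)$ with $d(p,\zeta)\le\Delta+D$, as provided by the definition of $\Omega_{j+1}$, and let $\alpha$ be the sub-geodesic of $\gamma$ from $w$ to $p$ followed by a geodesic from $p$ to $\zeta$. Its length is $d(w,p)+d(p,\zeta)\le d(w,\zeta)+2d(p,\zeta)\le d(w,\zeta)+2(\Delta+D)$, so $\lambda=1$ works and $c$ need only be a suitable multiple of $\Delta+D$; it remains to check $\alpha\subseteq\Omega_{j+1}$.

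For the initial part this is immediate, and it is where the shape of $\Omega_{j+1}$ is used: if $v$ is a vertex of $\alpha$ on $\gamma$ strictly between $w$ and $p$, then $p$ lies on the sub-geodesic $[v,y]\subseteq\gamma$ between $v$ and $y$, and since the defining condition of a $\delta_0$-transition point (Definition~\ref{deftransitionFloyd}) quantifies only over pairs of points of the ambient geodesic, $p$ is still a $\delta_0$-transition point of $[v,y]$. Hence $d(\zeta,\Tr_{\delta_0}([v,y]))\le d(\zeta,p)\le\Delta+D$ and $v\in\Omega_{j+1}$, witnessed by $[v,y]$.

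The remaining vertices $u$ lie on the final jump, so $d(u,\zeta)\le\Delta+D$ and, since $d(\zeta,\hat z_{(m-j)k})\le D$ with $\hat z_{(m-j)k}\in\Tr_{\delta'}([x,y])$, also $d(u,p)\le\Delta+D$ and $d(u,\hat z_{(m-j)k})\le\Delta+2D$. For these I would invoke a stability property of transition points: because $u$ lies within a uniformly bounded distance of the transition point $p$ of $[w,y]$ (and of the transition point $\hat z_{(m-j)k}$ of $[x,y]$), the geodesic triangle on $u,p,y$ — and the one on $u,\hat z_{(m-j)k},y$ — is uniformly thin near $p$, so Lemma~\ref{thintriangles} applied with $p$ on the long side forces every geodesic $[u,y]$ to carry a $\delta_0$-transition point within a bounded distance of $p$, hence within $\Delta+D$ of $\zeta$ provided we allow ourselves to enlarge the constant $\Delta+D$ in the definition of the $\Omega_i$ by that bounded amount (which does not affect the previous lemma, requiring only $k$ slightly larger). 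Here one uses that $\delta_0$ was chosen small enough that all transition-point parameters appearing are $\ge\delta_0$. Since every constant in play depends only on $\delta$ through $\delta'$ and $\delta_0$, the resulting $\lambda=1$ and $c$ are uniform in $j$.

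The hard part will be exactly this last step — controlling the bounded jump near $\zeta$. The danger is that for some $u$ near $\zeta$ every geodesic from $u$ to $y$ plunges immediately into a region that is ``deep'' for the relative structure, so that its first transition point is far from $\zeta$; this cannot happen because $u$ is wedged between $y$ and the point $p$, which is a transition point of $[w,y]$ sitting right next to $\zeta$, so the thin-triangle comparison of $[u,y]$ with $\gamma$ near $p$ applies. Arranging the bookkeeping of constants so that the transition point produced lands within the prescribed $\Delta+D$ of $\zeta$ — equivalently, observing that the defining constant of $\Omega_i$ may be harmlessly enlarged — is the only genuine subtlety.
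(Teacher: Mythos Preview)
Your first paragraph and the argument for the initial sub-geodesic $\alpha_1$ are correct and exactly what the paper does. The issue is in your treatment of the final jump.

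You invoke Lemma~\ref{thintriangles} on the triangle $(u,p,y)$ to force a $\delta_0$-transition point on $[u,y]$ near $p$. But that lemma only produces a transition point on the \emph{union} $\Tr_{\delta_0}([u,y])\cup\Tr_{\delta_0}([u,p])$, and you give no reason why the point cannot land on the short side $[u,p]$. If it does, you have learned nothing about $[u,y]$ and the argument stalls. You then propose to absorb this by enlarging the constant $\Delta+D$ in the definition of the $\Omega_i$, but that changes the objects whose starlikeness you are meant to prove, and it is not clear the bookkeeping closes without a genuinely new observation.

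The paper avoids all of this with a one-line remark you are missing: for \emph{any} $w''\in\Gamma$, the point $w''$ is itself a $\delta_0$-transition point on any geodesic $[w'',y]$, simply because $w''$ is an endpoint and the defining condition in Definition~\ref{deftransitionFloyd} is vacuous (there is no point strictly preceding $w''$ on $[w'',y]$). Hence every point $w''$ on the jump lies in $\Omega_{j+1}$ the moment $d(w'',z_{(m-j)k})\le\Delta+D$, with $w''$ itself as the witnessing transition point on $[w'',y]$. No thin-triangle lemma, no enlargement of constants, no extra parameter tuning. Once you see this, the jump portion is trivial and the proof is two lines: concatenate the sub-geodesic $[w,w']$ with a short geodesic ending near $z_{(m-j)k}$, and observe that every vertex on the latter is close enough to $z_{(m-j)k}$ to belong to $\Omega_{j+1}$ by the endpoint remark.
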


\begin{proof}
Let $w\in \Omega_{j+1}$.
Then there exists a $\delta_0$-transition point $w'$ on a geodesic $[w,y]$ such that $d(z_{(m-j)k},w')\leq \Delta+D$.
Consider the path $\alpha$ obtained by concatenating the subgeodesic $\alpha_1$ of $[w,y]$ from $w$ to $w'$ and a geodesic $\alpha_2$ from $w'$ to $\hat{z}_{(m-j)k}$.
Then, $\alpha_1$ stays in $\Omega_{j+1}$.
Moreover, for any $w''\in \Gamma$, $w''$ is a $\delta_0$-transition point on any geodesic $[w'',y]$ from $w''$ to $y$ by definition.
Hence, if $w''$ is on $\alpha_2$, then $d(z_{(m-j)k},w'')\leq\Delta+D$ and so $w''\in \Omega_{j+1}$.
Thus, $\alpha$ itself is contained in $\Omega_{j+1}$.
Moreover, since $\alpha_1$ is a geodesic, the length of $\alpha$ is at most $d(w,w')+\Delta\leq d(w,z_{(m-j)k})+2\Delta+2D$.
This concludes the proof of the lemma.
\end{proof}

To apply Proposition~\ref{strengthenedweakAncona}, we also prove the following.

\begin{lemma}
If $k$ is large enough, then there exists $\delta_1$, only depending on $k$ and $\delta$, such that for $w\in \Omega_{j+2}$, $w'\notin \Omega_{j+1}$, we have $\delta_{z_{(m-j)k}}^f(w,w')\geq \delta_1$.
\end{lemma}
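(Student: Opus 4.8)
The plan is to show that the point $v=z_{(m-j)k}$ lies within a distance bounded purely in terms of $\delta$ (and the fixed Floyd function) of a $\delta_0'$-transition point of \emph{some} geodesic $[w,w']$, where $\delta_0'>0$ depends only on $\delta$; granting this, the lemma follows from two soft observations. First, by Definition~\ref{deftransitionFloyd}, if $q$ is a $\delta_0'$-transition point on $[w,w']$ then, applying the definition to the endpoints $w,w'$, we get $\delta_{q}^f(w,w')\geq\delta_0'$. Second, the hypothesis $f(n+1)/f(n)\geq\lambda$ on the Floyd function implies that changing the basepoint from $q$ to $v$ rescales every edge length of the Floyd metric by a factor at least $\lambda^{d(v,q)}$, so $\delta_{v}^f(w,w')\geq\lambda^{d(v,q)}\,\delta_{q}^f(w,w')\geq\lambda^{d(v,q)}\delta_0'$. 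Since $d(v,q)$ will be bounded in terms of $\delta$, this yields the desired $\delta_1$, depending only on $\delta$ and $k$.

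Thus the real content is the geometric claim. Since $\Omega_{j+2}\subset\Omega_{j+1}$ (established just above), we have $w\in\Omega_{j+1}$, so there is a geodesic $[w,y]$ and a $\delta_0$-transition point $p$ on it with $d(p,v)\leq\Delta+D$; in particular $\delta_{p}^f(w,y)\geq\delta_0$. Apply Lemma~\ref{thintriangles} to a geodesic triangle on $w,w',y$, using the point $p\in[w,y]$: then $p$ is within $\Delta(\delta_0)$ of $\Tr_{\delta_0(\delta_0)}([w,w'])\cup\Tr_{\delta_0(\delta_0)}([w',y])$. If $p$ is close to a transition point $q$ of $[w,w']$ we are done, with $\delta_0'=\delta_0(\delta_0)$ and $d(v,q)\leq\Delta(\delta_0)+\Delta+D$. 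So everything reduces to excluding the alternative in which $p$ is within $\Delta(\delta_0)$ of a transition point $q$ of $[w',y]$ with $d(v,q)$ bounded.

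To exclude it I would exploit the extra information carried by $w\in\Omega_{j+2}$, namely that $[w,y]$ also passes within $\Delta+D$ of $z_{(m-j-1)k}$, together with the fact that for $k$ large the points $\hat z_{(m-j-1)k}$ and $\hat z_{(m-j)k}$ are far apart on $[x,y]$ — because the $z_i$ are distinct and each lies within $D$ of its chosen transition point, so a block of $k$ consecutive indices produces at least $k/|B_D(e)|$ distinct transition points between $\hat z_{(m-j-1)k}$ and $\hat z_{(m-j)k}$. Consequently the common suffix of $[w,y]$ and $[x,y]$ reaches back past $v$, whereas $w'\notin\Omega_{j+1}$ forces $[w',y]$ to stay at distance more than $\Delta+D$ from $v$, so $[w',y]$ branches away from $[x,y]$ strictly between $v$ and $y$; a transition point of $[w',y]$ boundedly close to $v$ would therefore have to sit on this (too short) common suffix, a contradiction, leaving only the case where $q\in\Tr([w,w'])$.

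The main obstacle is making this last step quantitative: Lemma~\ref{thintriangles} outputs transition points with constant $\delta_0(\delta_0)\leq\delta_0$, while $\Omega_{j+1}$ is defined with constant $\delta_0$, so ``$[w',y]$ has no $\delta_0$-transition point near $v$'' does not literally contradict ``$[w',y]$ has a $\delta_0(\delta_0)$-transition point near $v$''. I expect to resolve this either by defining the sets $\Omega_i$ from the outset with the smallest constant appearing in the finite cascade $\delta\to\delta'\to\delta_0(\delta')\to\delta_0(\delta_0(\delta'))$, or by first establishing a Morse-type stability statement — a point within $\Delta+D$ of the $\delta'$-transition point $\hat z_{(m-j)k}$ of $[x,y]$ that also lies on another geodesic is automatically a $\delta_0''$-transition point of that geodesic for some $\delta_0''$ depending only on $\delta'$ — and then taking $k$ large enough that the transition neighbourhoods around $z_{(m-j-1)k}$ and $z_{(m-j)k}$ cannot interfere.
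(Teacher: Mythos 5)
Your high-level plan coincides with the paper's: locate a transition point of a geodesic $[w,w']$ within a bounded distance of $v=z_{(m-j)k}$, then convert this into a lower bound on $\delta^f_v(w,w')$ by a basepoint-change estimate. But the route you take to the geometric claim has a genuine gap, and it is essentially the one you flag yourself. You apply Lemma~\ref{thintriangles} to the triangle $(w,w',y)$ starting from a $\delta_0$-transition point $p$ of $[w,y]$, so the lemma only outputs a $\delta_0(\delta_0)$-transition point (with a worse constant) within $\Delta(\delta_0)$ of $p$, hence within $\Delta+D+\Delta(\delta_0)$ of $v$. Neither the transition constant nor the distance bound matches the ones in the definition of $\Omega_{j+1}$ (which uses $\Tr_{\delta_0}$ and $\Delta+D$), so the alternative ``$q\in\Tr([w',y])$'' is not excluded by $w'\notin\Omega_{j+1}$. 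Your proposed workaround is not correct as stated: $w'\notin\Omega_{j+1}$ only says that no $\delta_0$-\emph{transition point} of $[w',y]$ lies within $\Delta+D$ of $v$; it does not force the geodesic $[w',y]$ itself to avoid a neighbourhood of $v$ (the geodesic may pass near $v$ inside a parabolic region with no transition points there), so the ``branching away from $[x,y]$ between $v$ and $y$'' step fails. Redefining the $\Omega_i$ with a smaller cascade constant, as you suggest, would also require enlarging the distance bound and re-verifying the nesting and starlike lemmas, none of which you carry out.

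The paper avoids the mismatch by a different choice of triangle. It first applies Lemma~\ref{thintriangles} to the triangle with vertices $\hat z_{(m-j-1)k}$, $w'$, $y$, using the $\delta'$-transition point $\hat z_{(m-j)k}$ on the side $[\hat z_{(m-j-1)k},y]\subset[x,y]$; the output is then exactly a $\delta_0$-transition point within $\Delta$ of $\hat z_{(m-j)k}$, i.e.\ within $\Delta+D$ of $z_{(m-j)k}$ — precisely the data in the definition of $\Omega_{j+1}$ — so $w'\notin\Omega_{j+1}$ immediately forces this point onto $[\hat z_{(m-j-1)k},w']$. A second application of Lemma~\ref{thintriangles}, to the triangle $(w,\hat z_{(m-j-1)k},w')$, transfers it to $[w,w']$; the bad alternative $[w,\hat z_{(m-j-1)k}]$ is excluded by the distance computation using $w\in\Omega_{j+2}$ and $d(\hat z_{(m-j-1)k},\hat z_{(m-j)k})$ large for large $k$. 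If you want to salvage your one-triangle approach you would need to set up the constants in the definition of $\Omega_i$ so that they absorb the extra losses $\delta_0\mapsto\delta_0(\delta_0)$ and $\Delta+D\mapsto\Delta+D+\Delta(\delta_0)$, and recheck the two preceding lemmas with the modified definition; as written, the proof is incomplete.
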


\begin{proof}
We will use the thin-triangle property twice.
Let $w'\notin \Omega_{j+1}$ and $w\in \Omega_{j+2}$.
Consider the triangle consisting of geodesics $[\hat{z}_{(m-j-1)k},w']$ from $\hat{z}_{(m-j-1)k}$ to $w'$, $[w',y]$ from $w'$ to $y$ and the subgeodesic $[\hat{z}_{(m-j-1)k},y]$ of $[x,y]$ from $\hat{z}_{(m-j-1)k}$ to $y$.

Then $\hat{z}_{(m-j)k}$ is a $\delta'$-transition point on $[\hat{z}_{(m-j-1)k},y]$.
Recall that we denote by $\Delta=\Delta(\delta')$ and $\delta_0\leq \delta'$ the numbers given by Lemma~\ref{thintriangles} for $\delta'$.
Then, according to this lemma, there is a $\delta_0$-transition point $\tilde{z}$ which lies either on $[w',y]$ or on $[\hat{z}_{(m-j-1)k},w']$ and satisfying
\begin{equation}\label{omegaidecreasing3}
    d(\tilde{z},\hat{z}_{(m-j)k})\leq \Delta.
\end{equation}
Since $w'\notin \Omega_{j+1}$, $\tilde{z}$ has to lie on $[\hat{z}_{(m-j-1)k},w']$.

Now, consider the triangle given by the geodesic $[\hat{z}_{(m-j-1)k},w']$ and geodesics $[w,w']$ from $w$ to $w'$ and $[w,\hat{z}_{(m-j-1)k}]$ from $w$ to $\hat{z}_{(m-j-1)k}$.
Then, using again Lemma~\ref{thintriangles}, there exist numbers $\Delta(\delta_0)\geq 0$ and $\delta_0'\leq \delta_0$ and there exists a $\delta_0'$-transition point $\tilde{z}'$ which lies either on $[w,w']$ or on $[w,\hat{z}_{(m-j-1)k}]$ such that $d(\tilde{z},\tilde{z}')\leq \Delta(\delta_0)$.
We prove by contradiction that if $k$ is large enough, then $\tilde{z}'$ cannot lie on $[w,\hat{z}_{(m-j-1)k}]$.
Indeed, if it were the case, then $d(\tilde{z}',\hat{z}_{(m-j)k})$ would be uniformly bounded by~(\ref{omegaidecreasing3}) and so we would have
\begin{equation}\label{equationseparationOmega1}
    d(w,\hat{z}_{(m-j-1)k})\geq d(w,\hat{z}_{(m-j)k})+d(\hat{z}_{(m-j)k},\hat{z}_{(m-j-1)k})-c.
\end{equation}
However, since $w\in \Omega_{j+2}$, $\hat{z}_{(m-j-1)k}$ is within a bounded distance of a point on a geodesic from $w$ to $y$.
Thus,
$$d(w,y)\geq d(w,\hat{z}_{(m-j-1)k})+d(\hat{z}_{(m-j-1)k},y)-c',$$
so that
\begin{equation}\label{equationseparationOmega2}
d(w,\hat{z}_{(m-j)k})\geq d(w,\hat{z}_{(m-j-1)k})+d(\hat{z}_{(m-j-1)k},\hat{z}_{(m-j)k})-c'.
\end{equation}
Summing~(\ref{equationseparationOmega1}) and~(\ref{equationseparationOmega2}), we would get
$$2d(\hat{z}_{(m-j-1)k},\hat{z}_{(m-j)k})\leq c+c'.$$
This cannot happen if $d(\hat{z}_{(m-i)k},\hat{z}_{(m-i+1)k})$ is large enough, which is true if $k$ is large enough.
This proves that $\tilde{z}'$ lies on $[w,w']$.
In particular, this proves that $\hat{z}_{(m-j)k}$ is within a bounded distance of a transition point on $[w,w']$.
Since $z_{(m-j)k}$ is within a bounded distance of $\hat{z}_{(m-j)k}$, this concludes the proof of the lemma.
\end{proof}

We now fix $k$ large enough so that all conditions above are satisfied.
Proposition~\ref{strengthenedweakAncona} shows that for
$w\in \Omega_{j+2}$ and $w'\in \Omega_{j+1}$, we have
$$G(w,w';\Omega_{j+1})\asymp G(w,z_{(m-j-1)k};\Omega_{j+1})G(z_{(m-j-1)k},w';\Omega_{j+1}).$$
Then,~(\ref{strongAnconaequation1}) shows that for $w\in \Omega_{j+2}$,
$$h_1(w)\asymp \sum_{w'\in \Omega_{j+1}}G(w,z_{(m-j-1)k};\Omega_{j+1})G(z_{(m-j-1)k},w';\Omega_{j+1}).$$
In other words,
$$h_1(w)\asymp G(w,z_{(m-j-1)k};\Omega_{j+1}) h_1(z_{(m-j-1)k}).$$
Since $x'\in \Omega_{j+2}$,
we also have
$$\frac{h_1^j(w)}{h_1^j(x')}\asymp \frac{G_r(w,z_{(m-j-1)k};\Omega_{j+1})}{G_r(x',z_{(m-j-1)k};\Omega_{j+1})}.$$
For small enough $\beta$, we set
$$\varphi^{j+1}=\beta h_1^j(x')\frac{G_r(w,z_{(m-j-1)k};\Omega_{j+1})}{G_r(x',z_{(m-j-1)k};\Omega_{j+1})}.$$
If $\beta$ is small enough,
then $h_1^{j}\geq \varphi^{j+1}\geq \beta_0 h_1^{j}$, for some fixed $\beta_0>0$.
We then set $h_1^{j+1}=h_1^j-\varphi^{j+1}$.
Notice that $\varphi^{j+1}$ does not depend on $h_1^j$, but only on its value at $x'$.
By induction, it only depends on $h_1^0(x')=1$, so it does not depend on $h_1$.
We also have $h_2^{j}\geq \varphi^{j+1}\geq \beta_0 h_2^{j}$, so that the functions $h_i^{j}$ and $\varphi^{j+1}$ thus constructed satisfy both Conditions~a) and~b) above.
This concludes the proof of the theorem.
\end{proof}

Theorem~\ref{maintheoremAncona} follows from Proposition~\ref{strengthenedweakAncona} and Theorem~\ref{thmstrongAncona}.

\section{Description of the Martin boundary}\label{SectiondescriptionMartin}
This section if devoted to the proofs of Theorem~\ref{maintheorem} and Theorem~\ref{minimality}.
We consider a non-elementary relatively hyperbolic group $\Gamma$ and a symmetric probability measure $\mu$ whose finite support generates $\Gamma$.
Recall the definition of the $r$-geometric boundary given in Section~\ref{Sectionspectraldegeneracy}.
Our goal is to prove that the $r$-geometric boundary coincides with the $r$-Martin boundary.
We start proving the following.

\begin{proposition}\label{propconvergencegeometricMartin}
Let $\Gamma$ be a non-elementary relatively hyperbolic group with respect to a collection of virtually abelian subgroups $\Omega$.
Let $\mu$ be a symmetric probability measure whose finite support generates $\Gamma$ and let $R$ be its spectral radius.
Let $r\leq R$.
Let $g_n$ be a sequence of elements of $\Gamma$.
Assume that $g_n$ converges to a point in the $r$-geometric boundary.
Then, $g_n$ converges to a point in the $r$-Martin boundary.
\end{proposition}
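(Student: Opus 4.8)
The plan is to split into two cases according to whether the limit point of $g_n$ in the $r$-geometric boundary is a conical limit point of the Bowditch boundary or lies in the sphere glued onto a parabolic point. In both cases the goal is to show that for every $g \in \Gamma$ the ratio $K_r(g, g_n) = G_r(g, g_n)/G_r(e, g_n)$ converges, so that $g_n$ has a limit in the $r$-Martin compactification.

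First I would dispose of the conical case. If $g_n$ converges to a conical limit point $\zeta$ in the Bowditch boundary, then for any fixed $g$, geodesics $[e, g_n]$ and $[g, g_n]$ eventually pass through a common transition point $w_n$ far from $e$ and $g$, with $\delta^f_{w_n}(e,g_n) \ge \delta$ and $\delta^f_{w_n}(g, g_n) \ge \delta$ for a uniform $\delta$ (this is the standard conical-point geometry, combined with Proposition~\ref{Floydgeo}). Weak Floyd-Ancona inequalities up to the spectral radius (Theorem~\ref{thmweakAncona}), applied uniformly in $r \le R$, then give $G_r(g, g_n) \asymp G_r(g, w_n) G_r(w_n, g_n)$ and similarly for $e$, so that $K_r(g, g_n) \asymp G_r(g, w_n)/G_r(e, w_n)$ up to multiplicative error, and a standard telescoping/Cauchy-sequence argument along nested transition points (as in \cite{GGPY}) upgrades this to genuine convergence. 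This is exactly the argument of \cite{GGPY} for $r = 1$, which goes through verbatim once the weak Floyd-Ancona inequalities are available at the spectral radius; I would simply cite it.

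The substantive case is when $g_n$ converges to a point of the sphere attached to a parabolic subgroup $H$ (after translating, we may assume the coset is $H$ itself), i.e. the projections $\pi_H(g_n)$ go to infinity in $H \cong \Z^d \times E$ with $\pi_H(g_n)/\|\pi_H(g_n)\| \to \theta \in \mathbb{S}^{d-1}$. Here one conditions the random walk on its excursions in and out of $H$: writing $h_n = \pi_H(g_n)$, the Green function $G_r(g, g_n)$ should factor (up to uniform bounded ratio) through $G_r(g, h_n)$, because any path from $g$ to $g_n$ must pass through a neighborhood of $H$ near $h_n$ — this is again weak Floyd-Ancona applied at a transition point near $h_n$. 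One is then reduced to understanding $K_r(g, h_n)$ as $h_n \to \theta$ in $H$, which is controlled by the first-return kernel $p_{H,r}$ on $H$ and its associated Green function $G_{H,r}$. For $r < R$ (or $r = R$ and $H$ non-degenerate) one has $R_H(r) > 1$, and the results of \cite{Dussaule} on random walks on virtually abelian groups identify the relevant Martin kernel along the sphere direction $\theta$; this yields convergence of $K_r(g, g_n)$. For $r = R$ and $H$ degenerate, $R_H(R) = 1$ and the induced Martin boundary collapses to a point, so $K_R(g, g_n)$ converges to the Martin kernel of the parabolic fixed point itself — and crucially this must be checked not just on $H$ but on a fixed neighborhood of $H$, using that the induced chain there also has trivial Martin boundary (this works for virtually nilpotent $H$).

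The main obstacle is this last reduction — showing that convergence of the Martin kernel of the induced first-return chain on $H$ (or on a bounded neighborhood of $H$) implies convergence of the Martin kernel of the ambient walk, uniformly enough to pin down a single limit point in $\partial_{r\mu}\Gamma$. The delicate point is controlling the transition from ``$g_n$ far out in the direction $\theta$ inside $H$'' to ``the whole path from $g$ to $g_n$ decomposes as: travel from $g$ to a bounded neighborhood of $H$, then an $H$-excursion process converging to $\theta$'' with error terms uniform in $r \le R$. This is where the weak Floyd-Ancona inequality at the spectral radius is essential, and where — as the introduction indicates — the argument of \cite{DGGP} for $r = 1$ must be reworked rather than cited, because the spectral-radius uniformity changes several estimates. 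I would structure this as a sequence of lemmas: (i) factorization of $G_r(g, g_n)$ through a neighborhood of $H$; (ii) comparison of $G_r$ restricted to that neighborhood with $G_{H,r}$; (iii) invoking \cite{Dussaule} (non-degenerate case) or the triviality of the induced Martin boundary (degenerate case) to conclude pointwise convergence of $K_r(g, \cdot)$ along $g_n$.
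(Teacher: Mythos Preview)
Your proposal is correct and follows essentially the same approach as the paper: the paper reduces Proposition~\ref{propconvergencegeometricMartin} to three sub-propositions handling conical limit points (Proposition~\ref{propconvergenceconical}, citing \cite{GGPY} once weak Floyd--Ancona at the spectral radius is available), degenerate parabolic limit points (Proposition~\ref{propconvergenceparabolic1}, where the induced Martin boundary on every neighborhood $N_\eta(H)$ collapses to a point), and non-degenerate parabolic limit points or $r<R$ (Proposition~\ref{propconvergenceparabolic2}, invoking \cite{Dussaule}). Your sketch of the reduction from $g_n$ to its projection $h_n$ on $H$ via Floyd--Ancona, and your emphasis that the degenerate case must be handled on neighborhoods $N_\eta(H)$ rather than on $H$ alone, match the paper's argument precisely.
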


We will deal separately with conical limit points and parabolic limit points.
First, it is proved in \cite{GGPY} that as soon as weak Ancona inequalities (Theorem~\ref{thmweakAncona}) are satisfied, then if $g_n$ converges to a conical limit point, it also converges to the Martin boundary.
It is also proved that the corresponding point in the Martin boundary is minimal and that two distinct conical limit points yield two distinct points in the Martin boundary.
Note that this is also a direct consequence of strong Ancona inequalities.
In particular, we have the following proposition.

\begin{proposition}\label{propconvergenceconical}
Let $\Gamma$ be a non-elementary relatively hyperbolic group.
Let $\mu$ be a symmetric probability measure whose finite support generates $\Gamma$ and let $R$ be its spectral radius.
Let $r\leq R$.
Let $g_n$ be a sequence of elements of $\Gamma$ which converges to a conical limit point.
Then, $g_n$ converges to a minimal point in the $r$-Martin boundary.
\end{proposition}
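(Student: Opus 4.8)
The plan is to recognize this statement as the main result of \cite{GGPY} applied to the measure $r\mu$, and to verify that its hypotheses --- which in \cite{GGPY} are checked for $r=1$ --- now hold uniformly for all $r\le R$ thanks to Theorem~\ref{thmweakAncona} and Corollary~\ref{Anconaepsilon}. Accordingly, fix the conical limit point $\xi$ in the Bowditch boundary with $g_n\to\xi$, together with a word geodesic ray $\gamma$ from $e$ converging to $\xi$. By the geometry of relatively hyperbolic groups, $\gamma$ carries an unbounded sequence of $(\epsilon,\eta)$-transition points $\gamma(t_1),\gamma(t_2),\dots$ for suitably fixed $\epsilon,\eta$; and since $\xi$ is conical, for every $g\in\Gamma$ and every $k$ there is $n_0=n_0(g,k)$ such that for all $n\ge n_0$ the word geodesics $[e,g_n]$ and $[g,g_n]$ pass within a uniformly bounded distance of $\gamma(t_j)$ for every $j\le k$. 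Proposition~\ref{Floydgeo} then yields a $\delta>0$, depending only on $\epsilon$, $\eta$ and that bound, with $\delta^f_{\gamma(t_j)}(e,g_n)\ge\delta$ and $\delta^f_{\gamma(t_j)}(g,g_n)\ge\delta$ for all such $j$, $n$, so that the $\gamma(t_j)$ are legitimate points at which to apply the Floyd--Ancona inequalities.

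\textbf{Step 2 --- convergence and minimality.} This is the core of the statement and is carried out in \cite{GGPY}. Along the transition points $\gamma(t_j)$, the weak Floyd--Ancona inequality (Theorem~\ref{thmweakAncona}) gives $G_r(x,g_n)\asymp G_r(x,\gamma(t_j))\,G_r(\gamma(t_j),g_n)$ for $x\in\{e,g\}$, with implied constant independent of $n$, $j$ and $r$; while the $\epsilon$-version of Ancona (Corollary~\ref{Anconaepsilon}) shows that, up to a factor $1\pm\epsilon$, all of the Green mass from $x$ to $g_n$ passes through a bounded ball around $\gamma(t_j)$. Feeding these into a first-entrance decomposition of the Green function through that ball and letting $j\to\infty$, \cite{GGPY} deduce that $K_r(g,g_n)$ converges for every $g$, so that $g_n$ converges in the $r$-Martin compactification; that the resulting $r$-harmonic function is minimal --- here Corollary~\ref{Anconaepsilon}, which forces the random walk to essentially track the transition points of its geodesics, is used to show that the Doob transform by this limiting function still converges almost surely to the corresponding boundary point, whence minimality; and that two distinct conical points produce non-proportional, hence distinct, minimal functions.

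\textbf{Step 3 --- uniformity in $r$, and the main obstacle.} Every ingredient invoked above --- the weak Floyd--Ancona inequality and Corollary~\ref{Anconaepsilon}, itself a consequence of Proposition~\ref{propsuperexponentialdecay} --- has been established in Section~\ref{Sectiondeviationinequalities} with constants independent of $r\le R$, so the argument of \cite{GGPY}, written there for $r=1$, applies verbatim to $r\mu$ for every $r\le R$, which proves the proposition. The genuine difficulty lies entirely in Step 2: the weak Floyd--Ancona inequality on its own only bounds the oscillation of $K_r(g,g_n)$, and upgrading this to actual pointwise convergence together with minimality is the technical heart of \cite{GGPY}; the only point we need to add is the observation that all of the relevant estimates remain uniform up to the spectral radius.
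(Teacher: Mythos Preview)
Your proposal is correct and follows essentially the same approach as the paper: both defer to the arguments of \cite{GGPY}, the only new input being that the weak Floyd--Ancona inequalities (Theorem~\ref{thmweakAncona}) and Corollary~\ref{Anconaepsilon} are now available uniformly for all $r\le R$. The paper states this in a single sentence before the proposition, whereas you additionally sketch the mechanics of the \cite{GGPY} argument; but the substance is the same.
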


We thus are left with parabolic limit points.

\subsection{Parabolic limit points: the degenerate case}\label{Sectionparaboliclimitpointsdegenerate}
We focus here on degenerate parabolic points and we prove the following.
If $g_n$ converges to a degenerate-parabolic limit point in the Bowditch boundary, then $g_n$ converges in the $R$-Martin boundary.
We assume that the parabolic subgroups are virtually nilpotent.

\begin{proposition}\label{propconvergenceparabolic1}
Let $\Gamma$ be a non-elementary relatively hyperbolic group with respect to a collection of virtually nilpotent subgroups.
Let $\mu$ be a symmetric probability measure whose finite support generates $\Gamma$ and let $R$ be its spectral radius.
Let $\xi$ be a degenerate parabolic limit point.
Let $g_n$ be a sequence of elements of $\Gamma$ converging to $\xi$.
Then, $g_n$ converges to a point in the $R$-Martin boundary.
\end{proposition}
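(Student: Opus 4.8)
The strategy is to reduce the statement on $\Gamma$ to the fact that a symmetric random walk on a virtually nilpotent (hence amenable) group has trivial Martin boundary \emph{at the spectral radius}, applied to the first return chain $p_{H,R}$ on the parabolic subgroup $H$ stabilising $\xi$. Since the $R$-Martin compactification is $\Gamma$-equivariant, after replacing $g_n$ by $\gamma^{-1}g_n$ for a suitable $\gamma$ we may assume $\xi$ is the fixed point of $H\in\Omega_0$ itself, so $e\in H$. Decomposing a path between points $h,h'\in H$ according to its successive visits to $H$ identifies the first return kernel, whence $G_R(h,h')=G_{H,R}(h,h'\mid 1)$ for all $h,h'\in H$; thus $K_R$ restricted to $H\times H$ is the Martin kernel of $p_{H,R}$ evaluated at parameter $1$. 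Now $p_{H,R}$ is convolution by a nonnegative symmetric measure $\nu_R$ on the amenable group $H$, so by Kesten's criterion its $\ell^{2}$-spectral radius equals the total mass $\nu_R(H)$; this radius is the inverse of $R_H(R)$, and it is at most $1$ because $\sum_n p_{H,R}^{(n)}(e,e)=G_R(e,e)<\infty$. Spectral degeneracy means $R_H(R)=1$, so $\nu_R(H)=1$ and $p_{H,R}$ is a genuine symmetric admissible transient random walk on the virtually nilpotent group $H$, considered exactly at its own spectral radius.

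The main step is to show that the Martin boundary of $p_{H,R}$ at its spectral radius is a single point, and likewise for the chain $p_{N,R}$ induced on a fixed bounded neighbourhood $N$ of $H$. First, $p_{N,R}$ also has spectral radius $1$: it is $\le 1$ since $\sum_m p_{N,R}^{(m)}(o,o)=G_R(o,o)<\infty$ for $o\in N$, and $\ge 1$ because a $p_{H,R}$-path of length $n$ visits $N$ at least $n$ times, whence $p_{H,R}^{(n)}(o,o)\le\sum_{m\ge n}p_{N,R}^{(m)}(o,o)$, forcing the exponential growth rate of $p_{N,R}^{(m)}(o,o)$ to dominate that of $p_{H,R}^{(n)}(o,o)$. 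Triviality of the Martin boundary at the spectral radius is, for a finitely supported walk on a nilpotent group, Margulis's theorem \cite{Margulis}; here the chains are not finitely supported and carry finitely many internal states (from the virtual nilpotency of $H$, and from $N/H$), but the weak and strong Floyd-Ancona inequalities at the spectral radius (Theorems~\ref{thmweakAncona} and~\ref{thmstrongAncona}) supply the decay of $\nu_R$ needed for the analysis of \cite{Dussaule} to apply, yielding the same conclusion. Consequently $K_R(w,z_n)\to\ell(w)$ for every $w\in N$ whenever $z_n\in N$ and $z_n\to\infty$ in $N$, with $\ell$ independent of the sequence.

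It remains to pass from sequences going to infinity inside $N$ to an arbitrary $g_n\to\xi$. Decomposing a path from $w\in N$ to $g_n$ according to its last visit $v$ to $N$ is an exact identity (since $e,w\in N$ there is no ``never meets $N$'' term), and recognising that $G_R(w,v)$ is the Green function of $p_{N,R}$ at $1$, it expresses $K_R(w,g_n)$ as a weighted average of the induced Martin kernels $K_{p_{N,R}}(w,v\mid 1)$ over the last-exit points $v$. By Corollary~\ref{Anconaepsilon}, applied at the transition point where $[w,g_n]$ leaves $N$ on the $g_n$-side (it exists by the structure of word geodesics converging to a parabolic point, see \cite{DGGP}, and lies deep in $N$), the mass of this average carried by bounded $v$ is negligible; so by the previous step $K_R(w,g_n)\to\ell(w)$ for $w\in N$, with a sequence-independent limit. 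For an arbitrary $x$, the first-entrance-to-$N$ decomposition gives $K_R(x,g_n)=G_R(x,g_n;N^{c})/G_R(e,g_n)+\sum_{w\in\partial N}\hat F_R(x,w)K_R(w,g_n)$, where the sum converges by dominated convergence and the error term is at most $\epsilon\,K_R(x,g_n)$ by Corollary~\ref{Anconaepsilon} once $N$ is taken large enough depending on $\epsilon$; letting $\epsilon\to 0$ and using that $K_R(x,g_n)$ is bounded (Theorem~\ref{thmweakAncona}) yields convergence of $K_R(x,g_n)$ for every $x$. Thus $g_n$ converges in the $R$-Martin boundary.

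The principal obstacle is the second step: Margulis's theorem is stated for finitely supported walks on genuine nilpotent groups, whereas the induced chains here are not finitely supported and carry internal states, so one must extract from the Floyd-Ancona inequalities at the spectral radius enough control on the tails of $\nu_R$ to run the argument, which is precisely where the hypothesis that the parabolic subgroups are virtually nilpotent is used. A secondary difficulty lies in the propagation of the third step, complicated by the fact that the long portions of the geodesics $[e,g_n]$ running deep inside $H$ carry no transition points, which forces the reduction to the induced chain to be organised through the last-exit decomposition through $N$ rather than by cutting at a single transition point.
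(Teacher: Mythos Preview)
Your overall strategy matches the paper's: reduce to the induced chain on (a neighbourhood of) $H$, show its $1$-Martin boundary is a single point, then propagate this to arbitrary sequences $g_n\to\xi$ via the deviation inequalities. The third step is essentially the paper's argument, though the paper organises it more simply: rather than your first-entrance decomposition with a dominated-convergence argument over $\partial N$, it fixes the test point $h$ and chooses $\eta$ large enough that $h\in N_\eta(H)$, so that $K_R(h,\cdot)$ on $N_\eta(H)$ is literally the Martin kernel of $p_{H,\eta,R}$ and convergence along the projections $\pi_n$ is immediate; the passage from $\pi_n$ to $g_n$ is then the same $\epsilon$-argument via Corollary~\ref{Anconaepsilon}.

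The gap is in the second step. You assert that Margulis's theorem and ``the analysis of \cite{Dussaule}'' extend to the induced chains, but this is precisely where all the work lies, and the paper carries it out explicitly. Three ingredients are needed that you do not supply. First, the induced kernel $p_{H,\eta,R}$ has exponential moments up to any prescribed $M$ once $\eta$ is large (Lemma~\ref{exponentialmomentsinducedchain}); this uses the super-exponential decay of Proposition~\ref{propsuperexponentialdecay} in an essential way and is the precise content of the ``decay of $\nu_R$'' you invoke. Second, these moments force every point of the $1$-Martin boundary of $p_{H,\eta,R}$ to be a genuine harmonic function rather than merely superharmonic (Lemma~\ref{Martinharmonicnilpotent}); without this one cannot conclude that the Martin boundary is contained in the minimal boundary. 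Third, the minimal $t$-harmonic functions are parametrised by the level set $\{u:\lambda(u)=1/t\}$ via the Margulis--Woess description (Lemma~\ref{minimalharmonicnilpotent}), and spectral degeneracy together with strict convexity of $\lambda$ then forces this set to be a singleton. These are packaged as Proposition~\ref{propsumupdegenerateparabolic}.

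Your Kesten argument for $\nu_R(H)=1$ is a nice observation but is itself incomplete: Kesten's criterion requires a finite symmetric measure, and you have not established that $\sum_{h}p_{H,R}(e,h)<\infty$ (finiteness of $\sum_n p_{H,R}^{(n)}(e,e)$ does not imply this). Finiteness of the mass already needs a tail estimate of the type provided by Lemma~\ref{exponentialmomentsinducedchain}, so the Kesten route does not bypass the analytic input.
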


Let $H$ be a parabolic subgroup, so that $H$ is virtually nilpotent.
There is a finite index subgroup of $H$ which is isomorphic to a nilpotent group $\mathcal{N}$.
Any section $H/\mathcal{N}\to H$ identifies $H$ as a set with $\mathcal{N}\times E$, where $E$ is finite.
Let $\eta\geq 0$.
Then, the $\eta$-neighborhood of $H$, which we denote by $N_{\eta}(H)$, is identified with $H\times E_{\eta}$, where $E_{\eta}$ is finite.
Thus, $N_{\eta}(H)$ can also be identified with $\mathcal{N}\times E'_{\eta}$, where $E'_{\eta}$ is finite.
We will use the notation $E'_{\eta}=\{1,...,N_{\eta}\}$.

Actually, as a set, $\Gamma$ can be $H$-equivariantly identified with $H\times \N$.
Indeed, $H$ acts by left multiplication on $\Gamma$ and the quotient is countable.
We order elements in the quotient according to their distance to $H$.
It follows that
\begin{enumerate}
    \item $N_{\eta}(H)$ can be $\mathcal{N}$-equivariantly identified with $\mathcal{N}\times \{1,...,N_\eta\}$ as above,
    \item if $\eta\leq \eta'$, then $N_\eta\leq N_{\eta'}$. In other words, the set $\mathcal{N}\times \{1,...,N_\eta\}$, identified with $N_{\eta}(H)$ is a subset of $\mathcal{N}\times \{1,...,N_{\eta'}\}$ identified with $N_{\eta'}(H)$.
\end{enumerate}

Recall from Section~\ref{Sectionspectraldegeneracy} that we denote by $p_{H,r}$ the first return kernel to $H$ for $r\mu$.
More generally, we define the first return kernel to $N_\eta(H)$ for $r\mu$ as
$$p_{H,\eta,r}(h,h')=\sum_{n\geq 0}\sum_{\underset{\notin N_\eta(H)}{g_1,...,g_{n-1}}}r^n\mu(h^{-1}g_1)\mu(g_1^{-1}g_2)...\mu(g_{n-2}^{-1}g_{n-1})\mu(g_{n-1}^{-1}h').$$
To simplify the notation, whenever $H$, $r$ and $\eta$ are fixed, identifying $h,h'$ with $(x,k)$,$(x',k')$, $x,x'\in \mathcal{N}$ and $k,k'\in \{1,...,N_\eta\}$, we write
$$p_{H,\eta,r}((x,k),(x',k'))=p_{k,k'}(x,x').$$
Let $G_{H,\eta,r}$ be the Green function associated with $p_{H,\eta,r}$ and let $K_{H,\eta,r}$ be the corresponding Martin kernel.
The following result is proved in \cite[Lemma~5.8]{DGGP} for $r=1$, but the same proof works in general.
We provide it for convenience.

\begin{lemma}\label{sameGreen}
For every $h,h'\in N_\eta(H)$, for every $r\leq R$ and for every $\eta\geq 0$, we have $G_{H,\eta,r}(h,h')=G_r(h,h')$.
\end{lemma}

In other words, the Green function $G_{H,\eta,r}$ at 1 coincides with the restriction of the initial Green function at $r$ on $N_\eta(H)$.

\begin{proof}
Given a transition kernel $p$ with convolution powers $p^{(n)}$, the weight of a trajectory $(x_1,...,x_k)$ is given by
$$W(x_1,...,x_n)=p(x_1,x_2)...p(x_{n-1},x_n).$$
Note that the Green function from a point $x$ to a point $y$ is then the sum of the weights of every trajectory from $x$ to $y$.

Every trajectory from $h$ to $h$ for the initial walk driven by $r\mu$ defines a trajectory from $h$ to $h'$ for $p_{H,\eta,r}$ having the same weight, by conditioning the trajectory on the successive passages through $N_\eta(H)$.
Every trajectory for $p_{H,\eta,r}$ is uniquely obtained in such a way.
Summing over all trajectories, the two Green functions coincide.
\end{proof}
Let $\mathcal{A}=\mathcal{N}/\mathcal{N}'$ be the abelianization of $\mathcal{N}$.
Then, $\mathcal{A}$ is abelian of rank $d$, so it is isomorphic to $\Z^d\times F$, where $F$ is finite.
Let $x\in \mathcal{N}$.
Let $\pi(x)$ be the projection of $x$ on $\Z^d$.
That is, $\pi$ is the composition of the first projection of $A=\Z^d\times F$ on $\Z^d$ and the projection of $\mathcal{N}$ on its abelianization $\mathcal{A}$.
For $u\in \R^d$, we define for $j,k\in \{1,...,N_{\eta}\}$,
$$F_{j,k}(u)=\sum_{x\in \mathcal{N}}p_{j,k}(e,x)\mathrm{e}^{\pi(x)\cdot u}.$$
We then let $F(u)$ be the matrix with entries $F_{j,k}(u)$.
This matrix is analogous to the matrix $F(u)$ in \cite{Dussaule} and \cite{DGGP}, which is only defined for abelian groups (that is, we do not need the projection $\pi$ in there).

By definition, the entries of the $n$th power $F(u)^n$ of $F(u)$ are given by
$$F(u)^n_{j,k}=\sum_{x\in \mathcal{N}}p_{j,k}^{(n)}(e,x)\mathrm{e}^{\pi(x)\cdot u},$$
where $p_{j,k}^{(n)}(e,x)$ is the $n$th power convolution of $p_{j,k}(e,x)$.
Recall that we assume that $\mu(e)>0$.
In particular, the matrix $F(u)$ is strongly irreducible, meaning that there exists $n$ such that all the entries of $F(u)^n$ are positive.
The Perron-Frobenius Theorem (see \cite[Theorem~1.1]{Seneta}) shows that $F(u)$ has a dominant eigenvalue, which we denote by $\lambda(u)$.

According to \cite[Proposition~3.5]{Dussaule}, the function $u\mapsto \lambda(u)$ is strongly convex where it is defined.
The proof is only given for the abelian case (that is $\mathcal{N}$ is abelian and we de not need to use the projection $\pi$).
However, the exact same proof applies in our situation.

In \cite{Margulis}, Margulis shows that positive harmonic functions on $\mathcal{N}$ are constant on left cosets of $\mathcal{N}'$ and thus define positive harmonic functions on the abelianization $\mathcal{A}$ for the induced transition kernel.
This result is still true for graphs with polynomial growth, according to \cite[Section~25]{Woess-book}.
Moreover, the minimal harmonic function on $\mathcal{N}$ are in one-to-one correspondence with the minimal harmonic function on $\mathcal{A}$.
We can actually be very precise about the minimal harmonic functions on $\mathcal{N}$.

\begin{lemma}\label{minimalharmonicnilpotent}
The $t$-minimal harmonic functions for the transition kernel $p_{H,\eta,r}$ on $N_\eta(H)=\mathcal{N}\times \{1,...,N_{\eta}\}$ are exactly the functions of the form
$$(x,k)\in \mathcal{N}\times \{1,...,N_{\eta}\}\mapsto C_k\mathrm{e}^{\pi(x)\cdot u},$$
where $u\in \R^d$ satisfies $\lambda(u)=1/t$ and where the vector with entries $C_k$ is an eigenvector associated with $\lambda(u)$.
\end{lemma}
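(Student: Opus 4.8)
The plan is to reduce the classification of $t$-minimal harmonic functions for $p_{H,\eta,r}$ on $\mathcal{N}\times\{1,\dots,N_\eta\}$ to the known classification on the abelianized chain, and then combine it with Perron--Frobenius to pin down the dependence on the finite coordinate $k$. First I would invoke the Margulis-type result cited just before the statement: every positive $t$-harmonic function for $p_{H,\eta,r}$, restricted to a fixed value of $k$, is constant on left cosets of $\mathcal{N}'$, hence descends to a positive $t$-harmonic function on $\mathcal{A}\times\{1,\dots,N_\eta\}$ for the induced kernel (the induced kernel is well defined because $p_{H,\eta,r}$ is $\mathcal{N}$-equivariant, so pushing forward along $\pi$ makes sense and $\mathcal{N}'$ is contained in the kernel of $\pi$). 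This turns the problem into one on an abelian group of rank $d$ with a finite decoration, which is exactly the setting of \cite{Dussaule} and \cite{DGGP}: there the $t$-minimal harmonic functions are shown to be $(y,k)\mapsto C_k\,\mathrm{e}^{y\cdot u}$ with $\lambda(u)=1/t$ and $(C_k)$ a Perron eigenvector. Pulling this back along $\pi$ gives functions of the stated form $(x,k)\mapsto C_k\,\mathrm{e}^{\pi(x)\cdot u}$, and the one-to-one correspondence between minimal harmonic functions on $\mathcal{N}$ and on $\mathcal{A}$ (also quoted above) guarantees nothing is lost or gained in this reduction.

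Next I would verify directly that every such function is indeed $t$-harmonic: applying $p_{H,\eta,r}$ to $(x,k)\mapsto \mathrm{e}^{\pi(x)\cdot u}$ at coordinate $j$ produces $\sum_k F_{j,k}(u)\,C_k\,\mathrm{e}^{\pi(x)\cdot u} = (F(u)C)_j\,\mathrm{e}^{\pi(x)\cdot u}$, using $\mathcal{N}$-invariance to translate the sum back to the identity and the definition of $F_{j,k}(u)$; this equals $\lambda(u)C_j\,\mathrm{e}^{\pi(x)\cdot u} = \tfrac{1}{t}\,(\text{the function})$ precisely when $(C_k)$ is a $\lambda(u)$-eigenvector and $\lambda(u)=1/t$. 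Positivity of the function forces $(C_k)$ to be (a scalar multiple of) the Perron--Frobenius eigenvector, since $F(u)$ is strongly irreducible — here I would point to the hypothesis $\mu(e)>0$, which makes some power of $F(u)$ strictly positive — and the Perron eigenvector is the unique positive eigenvector up to scaling; this also shows the scalar $\lambda(u)$ attached to a positive eigenfunction must be the dominant eigenvalue, so the constraint $\lambda(u)=1/t$ is the correct one.

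For minimality, I would argue that each candidate function is extremal in the cone of positive $t$-harmonic functions: by the reduction above, its extremality is equivalent to extremality of $(y,k)\mapsto C_k\mathrm{e}^{y\cdot u}$ on the abelian chain, which is established in \cite{Dussaule}; alternatively one can argue on $\mathcal{N}$ directly using strict convexity of $u\mapsto\lambda(u)$ (from \cite[Proposition~3.5]{Dussaule}, valid verbatim in our setting), which ensures distinct $u$'s give non-proportional functions and that these exponentials are exactly the extreme rays. The main obstacle I anticipate is purely bookkeeping rather than conceptual: one must be careful that the projection $\pi:\mathcal{N}\to\Z^d$ (composition of abelianization with projection killing the finite part $F$) interacts correctly with the $\mathcal{N}$-equivariance of $p_{H,\eta,r}$ and with the finite decoration $\{1,\dots,N_\eta\}$, so that the matrix $F(u)$ and its Perron theory genuinely govern the harmonic functions of the original nilpotent chain and not merely of some quotient. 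Once the equivariance and the coset-constancy are correctly aligned, the rest follows by transporting the abelian statements of \cite{Dussaule} and \cite{DGGP} and the Perron--Frobenius theorem.
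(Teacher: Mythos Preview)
Your proposal is correct and the overall strategy matches the paper's: reduce to exponentials via the Margulis/Woess coset-constancy, verify $t$-harmonicity by the matrix computation $F(u)C=\lambda(u)C$, and identify the eigenvalue with the Perron eigenvalue by positivity. The harmonicity verification you sketch is exactly what the paper does.

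The paper differs from your outline in two places. First, rather than citing \cite{Dussaule}/\cite{DGGP} for the abelian classification, it invokes \cite[Theorem~25.8]{Woess-book} directly to obtain that each $h_{j}$ is an exponential $h_j(e)\mathrm{e}^{\pi(x)\cdot u}$, then reads off the eigenvector condition from harmonicity; this is a bit more self-contained than your reduction-plus-citation. Second, and more interestingly, for minimality the paper does not appeal to \cite{Dussaule} or to strict convexity of $\lambda$. Instead it argues internally: having shown the minimal boundary sits inside $\{u:\lambda(u)=1/t\}$, it writes a candidate $h$ via the Martin representation as $h_j(x)=\int C_j(v)\mathrm{e}^{\pi(x)\cdot v}\,d\nu(v)$, divides by $h_j(e)\mathrm{e}^{\pi(x)\cdot u}$ to get $1=\int (C_j(v)/h_j(e))\mathrm{e}^{\pi(x)\cdot(v-u)}\,d\nu(v)$ for all $x$, and observes that if $\nu$ were not concentrated at $u$ the right-hand side would be unbounded as $x$ varies. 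This boundedness trick is short, requires no extra input beyond what is already in the proof, and avoids having to check that the abelian-chain minimality result in \cite{Dussaule} is stated in exactly the generality you need (with the finite decoration $\{1,\dots,N_\eta\}$). Your route would work, but the paper's is cleaner and avoids the bookkeeping worry you flag at the end.
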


\begin{proof}
To simplify notations, if $h$ is a function on $\mathcal{N}\times \{1,...,N_\eta\}$, we set
$$h_j(x)=h((x,j)).$$
If $(x_0,j_0)$ is fixed and if $h$ is $t$-harmonic, then
\cite[Theorem~25.8]{Woess-book} states that the function
$x\in \mathcal{N}\mapsto \frac{h_{j_0}(xx_0)}{h_{j_0}(x_0)}$ is an exponential,
that is, there exists $u\in \R^d$ such that for all $x\in\mathcal{N}$
$$\frac{h_{j_0}(xx_0)}{h_{j_0}(x_0)}=\mathrm{e}^{\pi(x)\cdot u}.$$
In particular, for every $j$, for every $x$, we have
$$h_{j}(x)=h_j(e)\mathrm{e}^{\pi(x)\cdot u}$$
for some $u\in \R^d$.
To find all possible $u$, let us write the $t$-harmonicity condition.
We have
\begin{equation}\label{harmonicitynilp}
\begin{split}
    h_j(e)&=t\sum_{x\in \mathcal{N}}\sum_{1\leq k \leq N}p_{j,k}(e,x)h_k(e)\mathrm{e}^{\pi(x)\cdot u}\\
    &=t\sum_{1\leq k \leq N}\left (\sum_{x\in \mathcal{N}}p_{j,k}(e,x)\mathrm{e}^{\pi(x)\cdot u}\right )h_k(e).
\end{split}
\end{equation}
Thus, the vector of $\R^{N_\eta}$ with entries $h_j(e)$ is an eigenvector for $F(u)$ associated with the eigenvalue $1/t$.
Since this vector has non-negative entries, the corresponding eigenvalue is necessarily the dominant eigenvalue $\lambda(u)$, according to the Perron-Frobenius Theorem \cite[Theorem~1.1]{Seneta}.
Thus, $\lambda(u)=1/t$.
We proved that as a set, the $t$-minimal Martin boundary is included in this set of functions.
Moreover, the topology of pointwise convergence coincide with the Euclidean topology on $\{u,\lambda(u)=1/t\}$.
Thus, the $t$-minimal Martin boundary can be seen as a Borelian subset of $\{u,\lambda(u)=1/t\}$.

Conversely, let $h$ be a positive function as in the statement of the lemma.
Then, we necessarily have $h_j(e)=C_j$.
Moreover,~(\ref{harmonicitynilp}) is satisfied, so
$$h_j(x)=th_j(e)\mathrm{e}^{ \pi(x)\cdot u}=t\sum_{x'\in \mathcal{N}}\sum_{1\leq k \leq N}p_{j,k}(e,x')h_k(e)\mathrm{e}^{\pi(xx')\cdot u}.$$
Let us sum over $x''=xx'$ to obtain
$$h_j(x)=t\sum_{x''\in \mathcal{N}}\sum_{1\leq k \leq N}p_{j,k}(e,x^{-1}x'')h_k(e)\mathrm{e}^{ \pi(x'')\cdot u}=t\sum_{x,k}p_{j,k}(x,x'')h_k(x'').$$
Thus, $h$ is $t$-harmonic.
Let us prove that it is minimal.
To do so, we use the Martin representation theorem and we write
$$h_j(x)=\int_{\{u,\lambda(u)=1/t\}}C_j(v)\mathrm{e}^{\pi(x)\cdot v}d\nu(v)$$
for some finite measure $\nu$ on $\{u,\lambda(u)=1/t\}$.
For every $x\in \mathcal{N}$,
$$1=\int_{\{u,\lambda(u)=1/t\}}\frac{C_j(v)}{h_j(e)}\mathrm{e}^{\pi(x)\cdot (v-u)}d\nu(v).$$
If the support of $\nu$ were not reduced to $u$, then letting $x$ vary, we would have that this integral is not bounded, which would be a contradiction.
Thus, the support of $\nu$ is reduced to $u$ and so $h$ is minimal.
This concludes the proof.
\end{proof}

We arbitrarily choose a word distance $d_{\mathcal{N}}$ on $\mathcal{N}$.
Following \cite{DGGP}, for $M\geq 0$, the transition kernel $p_{H,\eta,r}$ is said to have exponential moments up to $M$ if for every $k,k'\in \{1,...,N_{\eta}\}$,
$$\sum_{x\in \mathcal{N}}p_{k,k'}(e,x)\mathrm{e}^{Md_{\mathcal{N}}(e,x)}<+\infty.$$
It is proved in \cite{DGGP} that up to enlarging $\eta$, $p_{H,\eta,1}$ has exponential moments up to $M$, where $\eta$ depends on $M$.
The proof crucially uses the fact that $1<R$.
It actually applies for $r<R$, but we will need the same result at the spectral radius, so we have to produce a new proof.
This new proof will use the new deviation inequalities from Section~\ref{Sectiondeviationinequalities}.
We insist on the fact that we do not need to assume that parabolic subgroups are virtually nilpotent in this lemma.
This could be useful in another context.

\begin{lemma}\label{exponentialmomentsinducedchain}
Let $M\geq 0$.
There exists $\eta_M\geq 0$ such that for every $\eta\geq \eta_M$, for every $r\leq R$, $p_{H,\eta,r}$ has exponential moments up to $M$.
\end{lemma}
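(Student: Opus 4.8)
The plan is to reduce the statement to a pointwise upper bound on the first‑return kernel and then extract it from the super‑exponential missing‑ball estimate of Proposition~\ref{propsuperexponentialdecay}. First, recall that peripheral subgroups of a relatively hyperbolic group are undistorted, so $d_{\mathcal N}(e,x)\asymp d_\Gamma((e,k),(x,k'))$ on $N_\eta(H)$, and that $\mathcal N$, being nilpotent, has polynomial growth; also $G_r(h,h')\le G_R(e,e)<\infty$ uniformly in $r\le R$. Consequently, to prove that $p_{H,\eta,r}$ has exponential moments up to $M$ it suffices to produce a function $c(\eta)$ with $c(\eta)\to\infty$ such that, uniformly in $r\le R$ and in $h,h'\in N_\eta(H)$,
$$p_{H,\eta,r}(h,h')\le G_R(e,e)\,\mathrm e^{-c(\eta)d_\Gamma(h,h')}.$$
A preliminary remark reduces the work further: since $\mu$ has jump size $\le K$, if $d_\Gamma(h,H)\le \eta-K-1$ then no path from $h$ can leave $N_\eta(H)$ at the first step, so $p_{H,\eta,r}(h,\cdot)$ is just the one‑step kernel $r\mu$, hence finitely supported and the bound is trivial. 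Thus one may assume $h$ and $h'$ lie in the outer shell $\{g:\eta-K-1<d_\Gamma(g,H)\le\eta\}$, and in particular $d_\Gamma(h,h')$ is comparable to $d_\Gamma(\bar h,\bar h')$ where $\bar h,\bar h'\in H$ are nearest‑point projections.

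Second, the geometric input. Any path contributing to $p_{H,\eta,r}(h,h')$ avoids $N_\eta(H)$ between its endpoints; the point is that this forces it to avoid a whole chain of balls lying inside $N_\eta(H)$ and centered at \emph{transition points}. Since parabolic cosets are quasiconvex, a geodesic $[\bar h,\bar h']$ stays within a uniform $\epsilon_0$ of $H$; fix points $u_1,\dots,u_m$ along it with $d_\Gamma(u_i,u_{i+1})\asymp\eta$, so $m\asymp d_\Gamma(h,h')/\eta$. For each $i$ pick a point $z_i$ on a geodesic issuing from $u_i$ away from $H$, at depth roughly $\eta/2$ from $H$: because the $\epsilon$ in the definition of a transition point is a fixed constant while $z_i$ is at distance $\gg\epsilon$ from every parabolic coset that the relevant geodesic meets, $z_i$ is an $(\epsilon,\eta_0)$‑transition point, and Proposition~\ref{Floydgeo} gives a uniform $\delta>0$ with $\delta^f_{z_i}(\cdot,\cdot)\ge\delta$ for the appropriate pair of checkpoints along the path, while $B_{\eta/4}(z_i)\subset N_{3\eta/4}(H)\subset N_\eta(H)$. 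Conditioning on successive entrances to these balls and applying Proposition~\ref{propsuperexponentialdecay} at each of them (exactly as in the passage from that proposition to the weak Floyd–Ancona inequalities, and in the iteration of Corollary~\ref{Anconaepsilon} used in Theorem~\ref{thmstrongAncona}) yields a product of $m$ factors each bounded by $\mathrm e^{-\delta c^{\eta/4}}$, hence
$$p_{H,\eta,r}(h,h')\lesssim \mathrm e^{-\,m\,\delta c^{\eta/4}}\lesssim \mathrm e^{-\,(\delta c^{\eta/4}/\eta)\,d_\Gamma(h,h')},$$
so one may take $c(\eta)=\delta c^{\eta/4}/\eta-1$, which tends to $\infty$ since $c>1$. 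Then choosing $\eta_M$ so that $c(\eta_M)>M$ finishes the proof, because the above bound combined with polynomial growth of $\mathcal N$ makes $\sum_{x}p_{k,k'}(e,x)\mathrm e^{Md_{\mathcal N}(e,x)}$ converge.

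The uniformity in $r\le R$ is built in: all the estimates invoked (Proposition~\ref{propsuperexponentialdecay}, Corollary~\ref{Anconaepsilon}, Proposition~\ref{strengthenedweakAncona}) already have constants independent of $r\le R$, and $r\mapsto G_r(e,e)$ is monotone, so the limit $r\to R$ is harmless. The main obstacle is the chaining step: one must arrange the intermediate transition points $z_i$ and the conditioning so that the missing‑ball estimate can genuinely be iterated $m$ times with a uniform $\delta$, and one must make sure this works even when the geodesic $[\bar h,\bar h']$ dips close to other (possibly spectrally degenerate) parabolic cosets, where crude Green‑function decay bounds are unavailable — this is precisely why the argument must go through the deviation inequalities of Section~\ref{Sectiondeviationinequalities} rather than through the $1<R$ shortcut of \cite{DGGP}. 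A secondary technical point is checking that the points $z_i$ are always at distance $\gg\epsilon$ from every parabolic coset meeting the relevant geodesics, so that they are honest transition points; if a portion of $[\bar h,\bar h']$ runs deep inside some coset $gP\neq H$, one reruns the same construction relative to $gP$ on that portion.
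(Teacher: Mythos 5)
Your overall strategy --- reduce the lemma to a pointwise bound $p_{H,\eta,r}(h,h')\lesssim \mathrm{e}^{-c(\eta)d(h,h')}$ with $c(\eta)\to\infty$, obtained by chaining the missing-ball estimate of Proposition~\ref{propsuperexponentialdecay} along a string of checkpoints, then absorb the polynomial growth of $\mathcal N$ --- is exactly the paper's, and your reduction to the outer shell and the remarks on uniformity in $r\leq R$ are fine. But the geometric core of the chaining step is wrong as stated. You center the forbidden balls at points $z_i$ at depth roughly $\eta/2$ above $u_i\in[\bar h,\bar h']$ and claim $\delta^f_{z_i}(P,Q)\ge\delta$ uniformly for ``the appropriate pair of checkpoints $P,Q$ along the path''. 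Proposition~\ref{Floydgeo} only gives such a lower bound when $z_i$ is within \emph{bounded} distance of a transition point of a word geodesic from $P$ to $Q$; in the other direction, Karlsson's visibility lemma gives $\delta^f_{z_i}(P,Q)\lesssim a^{-d(z_i,[P,Q])/2}$. Since the trajectory lies outside $N_\eta(H)$ and any geodesic $[P,Q]$ between trajectory points whose projections straddle $u_i$ and are far apart hugs the coset $H$ itself near $u_i$ (it stays in a uniformly bounded neighborhood of $H$ there, by Sisto's projection lemmas), every such geodesic stays at distance $\gtrsim\eta/2$ from your $z_i$, so $\delta^f_{z_i}(P,Q)\to 0$ as $\eta\to\infty$ instead of being bounded below. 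That $z_i$ is a transition point of some ray ``issuing from $u_i$ away from $H$'' is irrelevant: transition points are defined relative to a given geodesic, and the relevant geodesics here do not pass near $z_i$. The paper's construction takes actual trajectory points $y_1,\dots,y_l$ whose projections $\tilde y_i$ on $H$ are spaced a \emph{bounded} (not $\asymp\eta$) amount apart, centers the balls at $\tilde y_i\in H$ (which the excursion avoids since it avoids $N_{\eta/3}(H)$), and invokes Sisto's lemmas together with \cite[Proposition~8.13]{Hruska} to see that $\tilde y_i$ is within bounded distance of a transition point of $[y_{i-1},y_i]$, whence $\delta^f_{\tilde y_i}(y_{i-1},y_i)\ge\delta$ uniformly.

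A second gap: ``conditioning on successive entrances to these balls'' is vacuous here, because by construction the excursion never enters them. What is actually needed is to split the trajectory at the chosen points $y_i$ and to sum over all their possible positions; by Sisto's distance formula consecutive $y_i$ are at distance at most $\lambda\eta+c$, so this costs a factor $\mathrm{Card}\,B(e,\lambda\eta+c)$ per checkpoint, exponential in $\eta$, which must be beaten by the super-exponential factor $\mathrm{e}^{-h(\eta/3)}$ coming from Proposition~\ref{propsuperexponentialdecay}. Your final bound $p_{H,\eta,r}(h,h')\lesssim\mathrm{e}^{-m\delta c^{\eta/4}}$ silently drops this entropy term; it happens to be absorbable precisely because the decay is super-exponential in $\eta$, but this competition is where the content of the lemma lies and cannot be omitted.
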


\begin{proof}
We will use the following claim, given by \cite[Corollary~5.13]{DGGP}.
For $g_1,g_2\in \Gamma$, we let $g_1'$ and $g_2'$ be there projection on $H$.
Then,
\begin{equation}\label{contractionprojection}
    d(g'_1,g'_2)\leq d(g_1,g_2) + c_0.
\end{equation}

Let $x\in \mathcal{N}$ and $j,j'\in \{1,...,N_{\eta}\}$.
Consider a trajectory for the random walk starting at $g_s=(e,j)$, ending at $g_e=(x,j)$ and staying outside $N_{\eta}(H)$.
We fix some $a_0$ that will be chosen large enough later.
Let $g_0$ be the last point on this trajectory whose projection $x_0$ on $\mathcal{N}$ is at distance between $a_0$ and $2a_0$ from $x$.
Choosing a geodesic from $g_0$ to $x_0$ in $\Gamma$, let $\tilde{g}_0$ be the point at distance $\eta+1$ from $x_0$ on this geodesic.
Our goal is first to give an upper-bound for the contribution in $p_{H,\eta,r}$ of all trajectories with fixed $g_0$.

We thus fix $g_0$ and we also fix $\tilde{g}_0$ for now, with the same notations as above.
Let $\alpha$ be a trajectory as above and let $\alpha'$ be the sub-trajectory from $g_s$ to $g_0$.
Let $g_1$ be the last point of $\alpha'$ which stays inside the ball $B(x_0,2\eta)$.
Notice that $d(g_s,x_0)\leq 2a_0+\eta$, so that if $\eta$ is large enough, then $g_s\in B(x_0,2\eta)$.

Also, if $a_0$ is large enough, then \cite[Lemme~1.13~(2), Lemme~1.15~(2)]{Sisto} shows that given any two points $\hat{g},\hat{g}'$ in $\Gamma$, projecting on $\mathcal{N}$ on points $\hat{x}$ and $\hat{x}'$ satisfying $d(\hat{x},\hat{x}')\geq a_0/2$, any relative geodesic from $\hat{g}$ to $\hat{g}'$ passes within a uniformly bounded distance of $\hat{x}$ and of $\hat{x}'$.
We did not recall the definition of relative geodesic for sake of clarity, but according to \cite[Proposition~8.13]{Hruska}, a point on a relative geodesic is within a uniformly bounded distance of a transition point on an actual geodesic.
In particular, Proposition~\ref{Floydgeo} implies that $\delta_{\hat{x}}(\hat{g},\hat{g}')\geq \delta$ and $\delta_{\hat{x}'}(\hat{g},\hat{g}')\geq \delta$ for some uniform $\delta>0$.
We deduce that if $\delta^f_{x_0}(g_1,\tilde{g}_0)\leq 1/2\eta$ and if $\eta$ is large enough, then any projection of $g_1$ on $\mathcal{N}$ is at distance at most $a_0/2$ from $x_0$.

We denote by $W_0$ the set of trajectories as above satisfying $\delta^f_{x_0}(g_1,\tilde{g}_0)\leq 1/2\eta$ and by $W_0'$ those satisfying $\delta^f_{x_0}(g_1,\tilde{g}_0)>1/2\eta$.

We first give an upper-bound for the contribution of trajectories in $W_0$.
Every trajectory in $W_0$ passes through a point $g_0'$ within distance at most $2\eta$ of a point $x_0'$ in $\mathcal{N}$ which is itself within distance at most $a_0/2$ of $x_0$.
Since all the trajectories we are considering stay outside $\mathcal{V}_{\eta}(H)$, the distance formula \cite[Theorem~3.1]{Sisto} shows that they stay outside $B(x_0,\eta/\lambda)$ for some $\lambda$.
Hence, according to Proposition~\ref{propsuperexponentialdecay}, we can bound the total contribution of the sub-trajectories from $g_s$ to $g_0$ by $\mathrm{e}^{-h_0(\eta)}$ for some super-linear function $h_0$.
Since the Green function $G_r(g_s,g_0')$ is decaying at most exponentially fast in $\eta$, we can bound the whole contribution of these sub-trajectories by $\mathrm{e}^{-h_1(\eta)}G(g_s,g_0')$, where $h_1$ is also super-linear.
Decomposing a trajectory according to its first visit at $g_0'$, as in Proposition~\ref{strengthenedweakAncona}, we bound the total contribution of trajectories in $W_0'$ by
$$\sum_{g_0'}\mathrm{e}^{-h_1(\eta)}G_r(g_s,g_0')G_r(g_0',g_0)\lesssim \mathrm{e}^{-h_1(\eta)}G_r(g_s,g_0).$$

We now deal with trajectories in $W_0'$.
Consider such a trajectory and let $g_2$ be its last point inside $B(x_0,4\eta)$.
Either $\delta^f_{x_0}(g_2,\tilde{g}_0)\leq 1/4\eta$ or $\delta^f_{x_0}(g_2,\tilde{g}_0)>1/4\eta$.
We denote by $W_1$, respectively $W_1'$, the subset of $W_0'$ consisting of trajectories satisfying the first, respectively second condition.
The triangle inequality implies that trajectories of $W_1$ satisfy $\delta_{x_0}^f(g_1,g_2)\geq 1/4\eta$.
According to~(\ref{strengthenedsuperexponentialdecay2}), 
whenever $g_1$ and $g_2$ are fixed, the whole contribution of these sub-trajectories from $g_1$ to $g_2$ is bounded by
$\mathrm{e}^{-h_2(2\eta)}$ for some super-linear function $h_2$ and since the Green function $G_r(g_1,g_2)$ is decaying at most exponentially fast in $\eta$, we bound the whole contribution of these sub-trajectories by
$\mathrm{e}^{-h_3(2\eta)}G_r(g_1,g_2)$, where $h_3$ is also super-linear.
Decomposing a trajectory from $g_s$ to $g_0$ according to its visits at $g_1$ and $g_2$ as we did above with $g_0'$, we bound the total contribution of trajectories in $W_1$ by
$\mathrm{e}^{-h_3(2\eta)} G_r(g_s,g_0)$.
Finally, note that balls of radius $r$ in $\Gamma$ have cardinality at most exponential in $r$.
Consequently, summing over every possible $g_1$ and $g_2$, we get a contribution which is at most $\mathrm{e}^{-h_4(2\eta)}G(g_s,g_0)$ for some super-linear function $h_4$.

We now focus on trajectories of $W_1'$.
There are again two possibilities.
Let $g_3$ be the last point in $B(x_0,8\eta)$.
Either $\delta^f_{x_0}(g_3,\tilde{g}_0)\leq 1/8\eta$ and we denote by $W_2$ the corresponding subset of $W_1'$, or $\delta^f_{x_0}(g_3,\tilde{g}_0)>1/8\eta$ and we denote by $W_2'$ the corresponding subset of $W_1'$.
We bound the contribution of trajectories of $W_2$ by $\mathrm{e}^{-h_4(4\eta)}G_r(g_s,g_0)$ as above.
We go on and define by induction a sequence of sets $W_k$ and $W_k'$ as well as a sequence of points $g_k$ as follows.
We let $g_{k+1}$ be the last possible point of a trajectory in the ball $B(e,2^{k+1}\eta)$. We let $W_k$ be the set of trajectories in $W_{k-1}'$ such that $\delta^f_{x_0}(g_{k+1},\tilde{g}_0)\leq 2^{-k}\eta$.
Finally, we let $W_k'$ be the set of trajectories in $W_{k-1}'$ such that $\delta^f_{x_0}(g_{k+1},\tilde{g}_0)> 2^{-k}\eta$.
The contribution of trajectories in $W_k$ is then at most $\mathrm{e}^{-h_4(2^k\eta)}G_r(g_s,g_0)$.
Also notice that $W_0'=\cup_{k\geq 1}W_k$.
Hence, the whole contribution of trajectories of $W_0'$ is at most
$$\sum_{k\geq 1}\mathrm{e}^{-h_4(2^k\eta)}G_r(g_s,g_0)\leq \mathrm{e}^{-h_5(\eta)}G_r(g_s,g_0).$$
for some super-linear function $h_5$.

We thus proved that every trajectory from $g_s$ to $g_e$ staying outside $N_{\eta}(H)$, whether it is in $W_0$ or in $W_0'$, projects on $\mathcal{N}$ on a point $x_0$ satisfying that the distance $d(e,x_0)$ is between $a_0/2$ and $2a_0$.
Moreover, whenever $g_0$ is fixed, the contribution of every sub-trajectory from $g_s$ to $g_0$ is at most $\mathrm{e}^{-h(\eta)}G_r(g_s,g_0)$ for some super-linear function $h$ (which is the infimum between $h_1$ and $h_5$).

We now set $g^{(0)}=g_0$.
Similarly, every trajectory passes through a point whose projection on $\mathcal{N}$ is at a distance between $5a_0/2$ and $4a_0$ of $e$.
We denote by $g^{(1)}$ the last such point and by $x_1$ the corresponding projection on $\mathcal{N}$.
Whenever $g^{(0)}$ and $g^{(1)}$ are fixed, the contribution of sub-trajectories from $g^{(0)}$ to $g^{(1)}$ is at most $\mathrm{e}^{-h(\eta)}G_r(g^{(0)},g^{(1)})$.
We construct a similar finite sequence of points $g^{(0)},...,g^{(l)}$ in $\Gamma$ projecting on points $x_1,...,x_l$ on $\mathcal{N}$ satisfying that $d(e,x_k)$ is between $(2k+1/2)a_0$ and $2(k+1)a_0$ and where $g^{(k)}$ is the last such point on the trajectory.
Since $\mathcal{N}$ is quasi-isometrically embedded in $\Gamma$, we necessarily have $l\geq \Lambda d_{\mathcal{N}}(e,x)$.
Moreover, the contribution of sub-trajectories from $g^{(k)}$ to $g^{(k+1)}$ is at most $\mathrm{e}^{-h(\eta)}G_r(g^{(k)},g^{(k+1)})$.

Decomposing any trajectory from $g_s$ to $g_e$ according to its positions at the points $g^{(k)}$, we get
$$G_r(g_s,g_e)=\sum_{g^{(0)},...,g^{(l)}}G_r(g_s,g^{(0)})G_r(g^{(0)},g^{(1)})...G_r(g^{(l)},g_e),$$
where the sum is over all possible $g^{(0)},...,g^{(l)}$ as above.
We bound the total contribution of trajectories staying outside $N_{\eta}(H)$ by
$$\mathrm{e}^{-lh(\eta)}\sum_{g_0,...,g_l}G_r(g_s,g^{(0)})G_r(g^{(0)},g^{(1)})...G_r(g^{(l)},g_e)\leq  \mathrm{e}^{-lh(\eta)}G_r(g_s,g_e).$$
Up to choosing $\eta$ large enough, this last quantity is bounded by
$\mathrm{e}^{-\alpha d_{\mathcal{N}}(g,g')}$
where $\alpha$ can be chosen arbitrarily small.
We thus get
$$\sum_{x\in \mathcal{N}}p_{j,j'}(e,x)\mathrm{e}^{Md_{\mathcal{N}}(e,x)}\leq \sum_{n\geq 0}\sum_{g,d_{\mathcal{N}}(e,g)=n}\mathrm{e}^{(-\alpha+M)n}.$$
Again, balls of radius $r$ in $\mathcal{N}$ have cardinality at most exponential in $r$, so that we can choose $\alpha$ small enough so that this last sum is finite.
In other words, we can choose $\eta$ large enough so that $p_{H,\eta,r}$ has exponential moments up to $M$.
\end{proof}

\begin{lemma}\label{Martinharmonicnilpotent}
There exists $\eta_0$ such that for every $\eta\geq \eta_0$ and for every $r\leq R$, every point in the 1-Martin boundary of the transition kernel $p_{H,\eta,r}$ is a positive harmonic function for $p_{H,\eta,r}$.
\end{lemma}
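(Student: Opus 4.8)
Lemma~\ref{Martinharmonicnilpotent} asserts that for sufficiently large $\eta$, every point of the $1$-Martin boundary of the induced kernel $p_{H,\eta,r}$ on $N_\eta(H)=\mathcal{N}\times\{1,\dots,N_\eta\}$ is represented by a genuine positive harmonic function for that kernel. The only subtlety is that $p_{H,\eta,r}$ is a \emph{strictly sub-stochastic} kernel (the random walk may escape $N_\eta(H)$ and never return), so a priori the Martin boundary could a priori contain "exotic" limit points whose associated kernels fail to be harmonic; the whole point is to rule this out.

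The plan is to exploit the exponential-moment bound from Lemma~\ref{exponentialmomentsinducedchain}. First I would fix $M>0$ large (to be determined) and choose $\eta \geq \eta_M$ so that $p_{H,\eta,r}$ has exponential moments up to $M$, uniformly in $r\le R$. The key consequence is a comparison between the Green function $G_{H,\eta,r}((e,j),(x,k)\,|\,1)$ of the induced kernel and the norm $d_{\mathcal N}(e,x)$: from the exponential-moment estimate one gets $G_{H,\eta,r}((e,j),(x,k)\,|\,1) \lesssim \mathrm{e}^{-M d_{\mathcal N}(e,x)/2}$ (say), and in the other direction, admissibility of $\mu$ together with $\mu(e)>0$ gives a lower bound $G_{H,\eta,r}((e,j),(x,k)\,|\,1) \gtrsim \rho^{d_{\mathcal N}(e,x)}$ for some $\rho\in(0,1)$. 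Taking $M$ large enough relative to $\rho$, both ratios entering the Martin kernel $K((e,j),(x_n,k_n)) = G_{H,\eta,r}((e,j),(x_n,k_n)\,|\,1)/G_{H,\eta,r}((e,1),(x_n,k_n)\,|\,1)$ stay trapped in a fixed compact interval of $(0,\infty)$, uniformly along any sequence $(x_n,k_n)$ going to infinity.

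Next, for a sequence $(x_n,k_n)$ defining a boundary point $\xi$, I would verify the harmonicity of $K(\cdot,\xi)$ directly. Writing out the first-step decomposition
$$G_{H,\eta,r}((e,j),(x_n,k_n)\,|\,1) = \sum_{(y,l)} p_{H,\eta,r}((e,j),(y,l))\, G_{H,\eta,r}((y,l),(x_n,k_n)\,|\,1) + \mathbf{1}_{(e,j)=(x_n,k_n)},$$
dividing by the normalizing Green function and letting $n\to\infty$, the indicator term disappears (since $x_n\to\infty$). The interchange of limit and sum is justified by a dominated-convergence argument: the tail $\sum_{d_{\mathcal N}(e,y)>T} p_{H,\eta,r}((e,j),(y,l))\, K((y,l),(x_n,k_n))$ is controlled using the exponential-moment bound on $p_{H,\eta,r}$ together with the uniform upper bound on $K$ obtained in the previous step and the triangle-type inequality $K((y,l),(x_n,k_n)) \lesssim \mathrm{e}^{C d_{\mathcal N}(e,y)}$ — here we need $M$ chosen large enough that $M/2 > C$ so the tail sum over $y$ is genuinely small, uniformly in $n$. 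This yields $K((e,j),\xi) = \sum_{(y,l)} p_{H,\eta,r}((e,j),(y,l))\, K((y,l),\xi)$, i.e. $K(\cdot,\xi)$ is $1$-harmonic for $p_{H,\eta,r}$, hence a positive harmonic function.

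The main obstacle is the uniformity in $r\le R$ at the spectral radius: one must be sure the exponential-moment constant $\eta_M$ and the resulting Green-function decay rate do not blow up as $r\uparrow R$. This is exactly what Lemma~\ref{exponentialmomentsinducedchain} provides, so the argument goes through; but it is worth double-checking that the lower bound $G_{H,\eta,r} \gtrsim \rho^{d_{\mathcal N}(e,\cdot)}$ is also uniform in $r$ (it is, since one can take $\rho$ coming from $r=1$, and $G_r$ is non-decreasing in $r$). A secondary point to watch: the set $N_\eta(H)$ being quasi-isometrically identified with $\mathcal N$, so that "going to infinity in $N_\eta(H)$" is the same as $d_{\mathcal N}(e,x_n)\to\infty$ up to the bounded coordinate $k_n$, which lives in the finite set $\{1,\dots,N_\eta\}$ and may be passed to a subsequence. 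Once these uniformities are in place, the dominated-convergence interchange is routine.
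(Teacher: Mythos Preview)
Your approach is essentially the same as the paper's: use the exponential-moment bound from Lemma~\ref{exponentialmomentsinducedchain} together with a Harnack-type growth estimate $K((y,l),(x_n,k_n))\lesssim \mathrm{e}^{C\, d_{\mathcal N}(e,y)}$ to justify a dominated-convergence interchange in the first-step decomposition of the Green function. The paper writes this out by splitting the operator as $P=P_L+Q_L$ with $P_L$ supported in a ball and bounding $Q_L\tilde K_n$ via the exponential moments, but the mechanism is identical.

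One point does need correction. Your claim that exponential moments of the single-step kernel $p_{H,\eta,r}$ yield Green-function decay $G_{H,\eta,r}((e,j),(x,k)\,|\,1)\lesssim \mathrm{e}^{-M d_{\mathcal N}(e,x)/2}$ is false in the spectrally degenerate case --- which is precisely the case this section treats: at the spectral radius of a $\Z^d$-type walk the Green function decays only polynomially. Fortunately this claim is also unnecessary; the Harnack bound you invoke later is the real input. The paper obtains it by observing that the induced Martin kernel $K_{H,\eta,r}$ coincides with the Martin kernel $K_r$ of the original finitely supported walk on $\Gamma$, for which Harnack gives $K_r(g,g')\le C^{d(e,g)}$ with $C$ depending only on $\mu$ (in particular independent of $r$ and $\eta$). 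One then chooses $M>2\log C$ rather than ``$M$ large relative to $\rho$'', and the tail estimate goes through exactly as you describe. You should drop the Green-function decay paragraph and state the Harnack bound directly; after that, your argument is complete.
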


\begin{proof}
In \cite{GGPY}, the authors prove the following.
Whenever a transition kernel has super-exponential moments, every Martin kernel is a positive harmonic function.
In our situation, up to choosing $\eta$ large enough, the transition kernel we study has exponential moments up to $M$, for arbitrarily large $M$.
We show that this is enough to conclude, adapting the proof of \cite{GGPY}.

First,
Harnack inequalities show that there exists some uniform $C>1$ such that for every $g,g'\in N_\eta(H)$,
$$K_{H,\eta,R}(g,g')=K_R(g,g')\leq C^{d(e,\gamma)}.$$
Let us fix $M>2\log C$ and let us fix $\eta_0$ such that $p_{\eta,H,R}$ has exponential moments up to $M$ for every $\eta \geq \eta_0$.
For every $g \in N_{\eta}(H)$,
$$d(e,g)\leq \lambda d_{\mathcal{N}}(e,g)+\alpha \eta,$$
for some constants $\lambda$ and $\alpha$.
Up to changing $M$, we see that for every $g\in N_{\eta}(H)$,
\begin{equation}\label{HarnackMartinharmonic}
    K_R(g,g')\leq C_{\eta}C^{d_{\mathcal{N}}(e,g)}
\end{equation}
and we still have $M>2\log C$.

We introduce the linear operator $P$ associated with the transition kernel $p_{H,\eta,R}$.
That is, for every function $\omega$ on $N_{\eta}(H)$,
$$P\omega (g)=\sum_{g'\in N_{\eta}(H)}p_{H,\eta,R}(g,g')\omega (g').$$
We want to prove that for every $\xi$ in the 1-Martin boundary of $p_{H,\eta,R}$, we have $PK(\cdot,\xi)=K(\cdot,\xi)$.
Let $g_n$ be a sequence of $N_{\eta}(H)$ converging to $\xi$.
We need to prove that $PK(\cdot,g_n)$ both converges to $K(\cdot,\xi)$ and to $PK(\cdot,\xi)$.

By definition, the Green function $G_{H,\eta,R}(\cdot,g')$ is 1-harmonic for $p_{H,\eta,R}$ everywhere except at $g'$.
Fixing $g$, for $n$ large enough, $PG_{H,\eta,R}(g,g_n)=G_{H,\eta,R}(g,g_n)$, since $g_n$ tends to infinity.
Thus, for large enough $n$, $PK_{H,\eta,R}(g,g_n)=K_{H,\eta,R}(g,g_n)$ and so $PK(g,g_n)$ converges to $K(\gamma,\xi)$.

We now prove that $PK(\cdot,g_n)$ also converges to $PK(\cdot,\xi)$.
Let
$$\tilde{K}_n=|K(\cdot,g_n)-K(\cdot,\xi)|.$$
We just need to show that $P\tilde{K}_n$ converges to 0.
Let us fix $\epsilon>0$.
According to~(\ref{HarnackMartinharmonic}),
$$\tilde{K}_n(g)\leq 2C_{\eta}C^{d_{\mathcal{N}}(e,x)},$$
where $x$ is the projection of $g$ on $\mathcal{N}$.
Let us fix $g$ and let us fix $L>d_{\mathcal{N}}(e,x)$.
Decompose the transition kernel $p_{H,\eta,R}$ as $p_{H,\eta,R}=p_L+q_L$, where $p_L$ is its restriction to $B_{\mathcal{N}}(e,L)=\{g \in N_{\eta}(H),d_{\mathcal{N}}(e,x)\leq L\}$.
Denote by $P_L$ and $Q_L$ the associated linear operators, so that $P=P_L+Q_L$.
Then,
$$Q_L\tilde{K}_n(g)\leq \sum_{g',d_{\mathcal{N}}(x,x')> L}p_{H,\eta,R}(g,g')2C_{\eta}C^{d_{\mathcal{N}}(e,x')}.$$
Changing the sum over $g'$ with a sum over $g^{-1}g'$, we get
$$Q_L\tilde{K}_n(g)\leq \sum_{g',d_{\mathcal{N}}(e,x')>L}p_{H,\eta,R}(e,g')2C_{\eta}C^{d_{\mathcal{N}}(e,x)+d_{\mathcal{N}}(e,x')}$$
Recall that $d_{\mathcal{N}}(e,x)<L$.
Consequently,
$$Q_L\tilde{K}_n(g)\leq 2C_{\eta}\sum_{\gamma',d_{\mathcal{N}}(e,g')> L}p_{H,\eta,R}(e,\gamma')C^{2d_{\mathcal{N}}(e,g')}.$$
Since $M>2\log C$ and since $p_{H,\eta,R}$ has exponential moments up to $M$, this last sum goes to 0 when $L$ tends to infinity.
We can choose $L$ large enough so that $Q_L\tilde{K}_n(g)\leq \epsilon$.
Now that $L$ is fixed,
$P_L\tilde{K}_n(g)$ tends to 0 when $n$ tends to infinity, since the sum in the definition of $P_L$ is finite.
For $n$ large enough, $P_L\tilde{K}_n(g)\leq \epsilon$, so that $P\tilde{K}_n(g)\leq 2\epsilon$.
This proves that $P\tilde{K}_n(g)$ converges to 0 when $n$ tends to infinity and this is true for every $g$.
This concludes the proof.
\end{proof}

Let $R_{\eta,r}(H)$ be the spectral radius of the transition kernel $p_{H,\eta,r}$.
In particular, $R_{0,R}(H)=R_H$.

\begin{lemma}\label{spectralradiusnilpotent}
There exists $\eta_0$ such that for every $\eta\geq \eta_0$ and for every $r\leq R$, if $R_{\eta,r}(H)=1$, then $\min_u\lambda(u)=1$.
\end{lemma}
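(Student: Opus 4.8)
The plan is to show that $\lambda$ attains its minimum at $u=0$, and that this minimum equals $1$ precisely under the hypothesis $R_{\eta,r}(H)=1$. Fix $\eta\ge\eta_0$, where $\eta_0$ is large enough (by Lemma~\ref{exponentialmomentsinducedchain}) that $p_{H,\eta,r}$ has an exponential moment, so that the domain of $\lambda$ is a neighbourhood of the origin on which $\lambda$ is strongly convex (\cite[Proposition~3.5]{Dussaule}); fix also $r\le R$, and recall that $\Gamma$ is non-amenable, so $R<1$. Denote by $P$ the operator of $p_{H,\eta,r}$.

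The first observation is that $\lambda(0)\le 1$, regardless of $R_{\eta,r}(H)$. Since $p_{H,\eta,r}$ is a first-return kernel and $r\le R<1$, it is substochastic: for every $h\in N_\eta(H)$,
$$\sum_{h'}p_{H,\eta,r}(h,h')=\sum_{n\ge 1}r^{n}\,\mathbb{P}_{h}\!\left(X_{1},\dots,X_{n-1}\notin N_\eta(H),\ X_{n}\in N_\eta(H)\right)\le 1.$$
Writing $h=(x,k)$, the $j$-th row sum of $F(0)$ equals $\sum_{h'}p_{H,\eta,r}((e,j),h')\le 1$, so the Perron--Frobenius eigenvalue of the nonnegative matrix $F(0)$ satisfies $\lambda(0)\le 1$; in particular $\min_{u}\lambda(u)\le 1$.

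The second observation is that the hypothesis $R_{\eta,r}(H)=1$ forces $\lambda(u)\ge 1$ for every $u$ in the domain of $\lambda$. Suppose $\lambda(u_{0})<1$ for some such $u_{0}$, and let $C=(C_{k})$ be a positive Perron--Frobenius eigenvector of $F(u_{0})$. Exactly as in the "conversely" part of the proof of Lemma~\ref{minimalharmonicnilpotent}, the positive function $h(x,k)=C_{k}\mathrm{e}^{\pi(x)\cdot u_{0}}$ satisfies $Ph=\lambda(u_{0})h$. Its Doob transform $\hat{p}(g,g')=\frac{h(g')}{\lambda(u_{0})h(g)}p_{H,\eta,r}(g,g')$ is stochastic, hence $\hat{p}^{(n)}\le 1$ entrywise and therefore $p_{H,\eta,r}^{(n)}(g,g')\le\lambda(u_{0})^{n}\,h(g)/h(g')$; summing, $G_{H,\eta,r}(g,g'|t)\le\frac{h(g)}{h(g')}\sum_{n}(t\lambda(u_{0}))^{n}<\infty$ for all $t<\lambda(u_{0})^{-1}$. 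As $\lambda(u_{0})^{-1}>1$, this gives $R_{\eta,r}(H)\ge\lambda(u_{0})^{-1}>1$, contradicting the hypothesis. Combining the two observations, $1\le\lambda(0)\le 1$, so $\lambda(0)=1$ and $\lambda\ge 1$ on its domain, whence $\min_{u}\lambda(u)=\lambda(0)=1$.

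The argument is short, and I do not expect a genuine obstacle; the only inputs are the substochasticity of the first-return kernel (where non-amenability of $\Gamma$, i.e. $R<1$, enters) and the eigenfunction identity already contained in Lemma~\ref{minimalharmonicnilpotent}. The point requiring a little care is rather the form in which the statement is used afterwards: since $0$ lies in the interior of the domain of $\lambda$ for $\eta\ge\eta_0$ and $\lambda$ is strongly convex there, $0$ is the \emph{unique} minimizer, so $\{u:\lambda(u)=1\}=\{0\}$ --- this is what makes the $1$-Martin boundary of $p_{H,\eta,r}$ degenerate to a single point in the degenerate case. One may also note the converse implication (not needed here): if $R_{\eta,r}(H)>1$ there is a $t$-harmonic function for $p_{H,\eta,r}$ with $t>1$, and Lemma~\ref{minimalharmonicnilpotent} then produces $u$ with $\lambda(u)=1/t<1$.
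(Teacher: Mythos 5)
Your second observation (if $\lambda(u_0)<1$ then the Doob transform by $h(x,k)=C_k\mathrm{e}^{\pi(x)\cdot u_0}$ forces $R_{\eta,r}(H)\geq 1/\lambda(u_0)>1$) is correct and is essentially the paper's invocation of \cite[Lemma~7.2]{Woess-book}; it gives $\min_u\lambda(u)\geq 1$ under the hypothesis. The gap is in your first observation. You write ``recall that $\Gamma$ is non-amenable, so $R<1$'': this is backwards. By Kesten's theorem non-amenability gives $\limsup_n\mu^{*n}(e)^{1/n}<1$, hence the radius of convergence $R$ of the Green function satisfies $R>1$. Consequently the row sum $\sum_{h'}p_{H,\eta,r}(h,h')=\mathbb{E}_h\bigl[r^{\tau}\mathbf{1}_{\tau<\infty}\bigr]$ (with $\tau$ the first return time to $N_\eta(H)$) carries weights $r^n>1$ when $1<r\leq R$, and substochasticity is not automatic. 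In fact, in the spectrally degenerate case the symmetry of $p_{H,\eta,R}$ and the strict convexity of $\lambda$ place the minimizer at $u=0$, so $\lambda(0)=\sum_{h'}p_{H,\eta,R}(h,h')$ is exactly $1$ there; proving $\lambda(0)\leq 1$ directly is therefore essentially equivalent to the inequality you are trying to establish, and cannot be dismissed as ``substochasticity of a first-return kernel.''

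The paper closes this direction by a different mechanism: it first shows (Lemma~\ref{Martinharmonicnilpotent}, which is where the threshold $\eta_0$ and the exponential-moment estimates of Lemma~\ref{exponentialmomentsinducedchain} are genuinely needed) that for $\eta\geq\eta_0$ every point of the $1$-Martin boundary of $p_{H,\eta,r}$ is a positive $1$-harmonic function; hence a positive $1$-harmonic function exists, the $1$-minimal Martin boundary is nonempty by the Martin representation theorem, and Lemma~\ref{minimalharmonicnilpotent} then produces some $u$ with $\lambda(u)=1$, giving $\min_u\lambda(u)\leq 1$. If you want to salvage a more ``bare-hands'' proof of $\lambda(0)\leq 1$, you would have to exploit the symmetry of $p_{H,\eta,r}$ and the finiteness of $G_{H,\eta,r}(e,e|1)=G_r(e,e)$ to bound an $\ell^2$-spectral radius and relate it to $\lambda(0)$ via the $\Z^d$-invariance; that is a genuinely different (and more delicate) argument than the one you wrote, and as stated your step ``$\lambda(0)\leq 1$'' is unjustified.
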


\begin{proof}
According to \cite[Lemma~7.2]{Woess-book}, if there exists a $t$-harmonic positive function, then $t\leq R_{\eta,r}(H)$.
Assume that $R_{\eta,r}(H)=1$.
Lemma~\ref{Martinharmonicnilpotent} shows that there exists a 1-harmonic function.
This proves that 1 is the maximum of the $t$ such that there exists a $t$-harmonic positive function.

According to the Martin representation theorem, there exists a $t$-harmonic function if and only if the $t$-minimal Martin boundary is non-empty.
Lemma~\ref{minimalharmonicnilpotent} shows that there exists a $t$-minimal harmonic function if and only if there exists $u\in \R^d$ such that $\lambda(u)=1/t$.
Thus, $1=\max_u\frac{1}{\lambda(u)}$.
This proves the lemma.
\end{proof}

Assuming that $\mu$ is spectrally degenerate along $H$, we have $R_{0,R}(H)=1$.
We show that we also have $R_{\eta,R}(H)=1$ for every $\eta\geq 0$.

\begin{lemma}\label{lemmaassumption2firstpart}
Let $0\leq \eta \leq \eta'$ and let $r\leq R$.
If $R_{\eta,r}(H)=1$, then $R_{\eta',r}(H)=1$.
\end{lemma}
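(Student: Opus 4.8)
The plan is to reduce the statement to a general monotonicity property of first-return kernels, using only the inclusion $N_\eta(H)\subseteq N_{\eta'}(H)$. First I would observe that this inclusion forces $p_{H,\eta,r}$ to be the first-return kernel \emph{of $p_{H,\eta',r}$} to the subset $N_\eta(H)$. Indeed, fix $r\in(0,R]$ (the case $r=0$ being trivial) and run the $r\mu$-walk started at $h\in N_\eta(H)$; decompose a trajectory into the excursions between successive returns to $N_{\eta'}(H)$ (there is at least the final return, since $h\in N_{\eta'}(H)$). Because $N_\eta(H)\subseteq N_{\eta'}(H)$, every visit to $N_\eta(H)$ is one of these return times to $N_{\eta'}(H)$, so the \emph{first} return to $N_\eta(H)$ occurs at one of them and no earlier return time to $N_{\eta'}(H)$ lies in $N_\eta(H)$. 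Since the weight $r^{\mathrm{length}}$ of a trajectory factors multiplicatively over its excursions, this yields
$$p_{H,\eta,r}(h,h')=\sum_{k\geq1}\ \sum_{\substack{h=y_0,\dots,y_k=h'\\ y_1,\dots,y_{k-1}\in N_{\eta'}(H)\setminus N_\eta(H)}}\ \prod_{j=1}^{k}p_{H,\eta',r}(y_{j-1},y_j),$$
that is, $p_{H,\eta,r}$ is the first-return kernel of $p_{H,\eta',r}$ to $N_\eta(H)$.

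Next I would prove the general fact: if $q$ is an irreducible non-negative kernel on a countable set $B$ with finite diagonal Green function, so that its spectral radius $R_q=\left(\limsup_n (q^{(n)}(b,b))^{1/n}\right)^{-1}$ satisfies $R_q\geq1$, and if $A\subseteq B$, then the first-return kernel $q_A$ of $q$ to $A$ satisfies $R_{q_A}\geq R_q$. The mechanism is that one $q_A$-step is an excursion of $q$-length at least $1$, so for $t\geq1$ and $a\in A$,
$$G_{q_A}(a,a|t)=\sum_{n\geq0}t^n q_A^{(n)}(a,a)\ \leq\ \sum_{n\geq0}\ \sum_{\substack{q\text{-paths }\pi:a\to a\\ \text{with exactly }n\text{ excursions out of }A}}t^{|\pi|}w_q(\pi)\ =\ \sum_{m\geq0}t^m q^{(m)}(a,a)=G_q(a,a|t),$$
where $|\pi|$ is the number of $q$-steps of $\pi$ and $w_q(\pi)$ its weight; the inequality uses $t^n\leq t^{|\pi|}$, and the last equality holds because every $q$-path from $a$ to $a$ has a well-defined number of excursions out of $A$. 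Hence $G_{q_A}(a,a|t)<\infty$ for every $1\leq t<R_q$, which gives $R_{q_A}\geq R_q$.

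To finish, apply this with $q=p_{H,\eta',r}$ on $B=N_{\eta'}(H)$ and $q_A=p_{H,\eta,r}$ on $A=N_\eta(H)$. The kernel $p_{H,\eta',r}$ is irreducible on $N_{\eta'}(H)$ because $\mu(e)>0$ and $\supp\mu$ generates $\Gamma$, and summing over the number of returns to $N_{\eta'}(H)$ gives $G_{H,\eta',r}(h,h|1)=G_r(h,h)\leq G_R(h,h)<\infty$, finiteness of $G_R$ being contained in Proposition~\ref{doubleGreenfiniteonspheres}; hence $R_{\eta',r}(H)\geq1$. The general fact then gives $R_{\eta,r}(H)=R_{q_A}\geq R_q=R_{\eta',r}(H)$. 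Since by hypothesis $R_{\eta,r}(H)=1$, we conclude $R_{\eta',r}(H)\leq1$, and therefore $R_{\eta',r}(H)=1$.

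The only delicate point is the first step: checking cleanly that enlarging the neighborhood exhibits $p_{H,\eta,r}$ as an iterated first-return kernel of $p_{H,\eta',r}$. This is pure excursion bookkeeping, but it genuinely uses the inclusion $N_\eta(H)\subseteq N_{\eta'}(H)$ — so that returns to $N_\eta(H)$ are never "skipped over" by the coarser walk — together with the multiplicativity of the $r$-weights over excursions. Once that identification is in place, the spectral-radius comparison is a one-line generating-function estimate, and the bound $R_{\eta',r}(H)\geq1$ is standard.
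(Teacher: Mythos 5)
Your proof is correct and rests on essentially the same mechanism as the paper's: the paper directly establishes $G_{H,\eta,r}(e,e|t)\leq G_{H,\eta',r}(e,e|t)$ for $t\geq 1$ by weighting each $\mu$-path by $t$ to the number of its visits to the relevant neighborhood, which is exactly the inequality your tower-property decomposition produces after noting that each step of the $\eta$-induced chain comprises at least one step of the $\eta'$-induced chain. Your version merely makes explicit two points the paper leaves implicit, namely the restriction to $t\geq 1$ and the lower bound $R_{\eta',r}(H)\geq 1$ coming from $G_{H,\eta',r}(e,e|1)=G_r(e,e)<\infty$.
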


\begin{proof}
Let $G_{H,\eta,r}(\cdot,\cdot|t)$ be the Green function at $t$, associated with the transition kernel $p_{H,\eta,r}$.
We show that
\begin{equation}\label{inequalityGreenneighborhoods}
    G_{H,\eta,r}(e,e|t)\leq G_{H,\eta',r}(e,e|t).
\end{equation}
By definition,
$$G_{H,\eta,r}(e,e|t)=\sum_{n\geq 0}t^np_{H,\eta,r}^{(n)}(e,e).$$
In other words, $G_{H,\eta,r}(e,e|t)$ is the sum of the weight of every path from $e$ to $e$ that stays inside $N_\eta(H)$, multiplied with $t$ at the power the length of this path.
Here, the weight is the power of convolution of $p_{H,\eta,r}$.

By definition of this transition kernel, it is equivalent to sum the weight of every path from $e$ to $e$ (not necessarily staying inside $N_{\eta}(H)$, multiplied by $t$ at the power the number of points on this path which are in $N_\eta(H)$.
Here, the weight is the power of convolution of $r\mu$.
Formally, we write this as
$$G_{H,\eta,r}(e,e|t)=\sum_{n\geq 0}\sum_{g_1,...,g_{n-1}}t^{\mathrm{Card}\{j,g_j\in N_{\eta}(H)\}}r^n\mu(g_1)\mu(g_1^{-1}g_2)\cdots \mu(g_{n-1}^{-1}).$$
Since $N_\eta(H)\subset N_{\eta'}(H)$, we have~(\ref{inequalityGreenneighborhoods}) and the lemma follows.
\end{proof}

We can now use all the lemmas of this section to prove the following proposition.

\begin{proposition}\label{propsumupdegenerateparabolic}
Assume that $\mu$ is spectrally degenerate along $H$.
Then, there exists $\eta_0$ such that for every $\eta \geq \eta_0$,
the 1-Martin boundary of the transition kernel $p_{H,\eta,R}$ is reduced to a point.
\end{proposition}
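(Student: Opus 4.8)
The plan is to synthesize the structural lemmas of this subsection. Since $\mu$ is spectrally degenerate along $H$, by definition $R_{0,R}(H) = R_H = 1$. Applying Lemma~\ref{lemmaassumption2firstpart} with $r = R$ and $\eta = 0$ then gives $R_{\eta,R}(H) = 1$ for every $\eta \geq 0$. I therefore fix $\eta_0$ large enough that both Lemma~\ref{Martinharmonicnilpotent} and Lemma~\ref{spectralradiusnilpotent} apply (the maximum of their two thresholds), and I work with an arbitrary $\eta \geq \eta_0$. Lemma~\ref{spectralradiusnilpotent} then yields $\min_u \lambda(u) = 1$.

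The next step is to combine this with the strong convexity of $u \mapsto \lambda(u)$, which holds by \cite[Proposition~3.5]{Dussaule} (the proof there applies verbatim in the present setting). A strongly convex function is strictly convex, so it attains its minimum at a unique point $u_0$, and $\lambda(u) > \lambda(u_0) = 1$ for every $u \neq u_0$; hence $\{u \in \R^d : \lambda(u) = 1\} = \{u_0\}$. By the Perron--Frobenius theorem, the dominant eigenvalue $\lambda(u_0) = 1$ of the strongly irreducible matrix $F(u_0)$ is simple, so its eigenspace is spanned by a single strictly positive vector $(C_k)_{1 \leq k \leq N_\eta}$. Feeding $t = 1$ into Lemma~\ref{minimalharmonicnilpotent}, the $1$-minimal harmonic functions of $p_{H,\eta,R}$ are exactly the positive multiples of $h_0 \colon (x,k) \mapsto C_k\,\mathrm{e}^{\pi(x) \cdot u_0}$, so the $1$-minimal Martin boundary of $p_{H,\eta,R}$ consists of a single point.

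It remains to pass from the minimal boundary to the full boundary. By Lemma~\ref{Martinharmonicnilpotent}, every point $\xi$ of the $1$-Martin boundary of $p_{H,\eta,R}$ represents a positive $1$-harmonic function $K(\cdot, \xi)$. The Martin representation theorem expresses $K(\cdot, \xi)$ as an integral of minimal $1$-harmonic functions over the $1$-minimal Martin boundary; as the latter is a single point, $K(\cdot, \xi)$ must be a positive multiple of $h_0$. Since $h_0(e) > 0$ and $K(e, \xi) = 1$, this forces $K(\cdot, \xi) = h_0 / h_0(e)$ regardless of $\xi$. Thus the $1$-Martin boundary of $p_{H,\eta,R}$ is reduced to a point.

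The heavy lifting is concentrated in the lemmas already proved, especially Lemma~\ref{Martinharmonicnilpotent} --- which relies on the exponential-moment estimate of Lemma~\ref{exponentialmomentsinducedchain}, itself built on the super-exponential decay of Proposition~\ref{propsuperexponentialdecay} --- and Lemma~\ref{spectralradiusnilpotent}, which identifies $\min_u \lambda(u)$ with the reciprocal of the spectral radius. Granting those, the only genuinely delicate point in the assembly is that one needs harmonicity of \emph{every} boundary point, not merely of the minimal ones, in order to conclude via the Martin representation theorem that the whole boundary collapses once the minimal boundary does; the uniqueness of the minimizer $u_0$, supplied by strong convexity, is what makes the minimal boundary a single point in the first place.
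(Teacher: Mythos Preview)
Your proof is correct and follows essentially the same route as the paper: invoke Lemma~\ref{lemmaassumption2firstpart} to get $R_{\eta,R}(H)=1$, then Lemma~\ref{spectralradiusnilpotent} to get $\min_u\lambda(u)=1$, use strict convexity of $\lambda$ and Lemma~\ref{minimalharmonicnilpotent} to collapse the minimal boundary, and finally Lemma~\ref{Martinharmonicnilpotent} together with the Martin representation to collapse the full boundary. The only difference is that you make the Perron--Frobenius uniqueness of the eigenvector and the normalization $K(e,\xi)=1$ explicit, which the paper leaves implicit.
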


\begin{proof}
We first prove that the minimal Martin boundary of  $p_{H,\eta,R}$ is reduced to a point.
Indeed, according to Lemma~\ref{lemmaassumption2firstpart}, for every $\eta$, $R_{\eta,R}(H)=1$.
Thus, Lemma~\ref{spectralradiusnilpotent} shows that $\min \lambda(u)=1$.
Since $\lambda$ is strictly convex, this proves that $\lambda(u)=1$ has only one solution $u_0$.
Finally, Lemma~\ref{minimalharmonicnilpotent} shows that there is only one positive minimal harmonic function and so the minimal Martin boundary is reduced to a point.

According to Lemma~\ref{Martinharmonicnilpotent}, if $\eta \geq \eta_0$, then the Martin kernels are positive harmonic functions for $p_{H,\eta,R}$.
Since every positive harmonic function can be represented as an integral of the Martin kernel over the minimal boundary for some finite measure, this proves that all Martin kernels are proportional.
Since they are all equal to 1 at $e$, this proves there is only one Martin kernel, which concludes the proof.
\end{proof}

We can now prove Proposition~\ref{propconvergenceparabolic1}.
\begin{proof}
Let us fix $\eta \geq \eta_0$, so that we can apply Proposition~\ref{propsumupdegenerateparabolic}.
Let $g_n$ converge to a degenerate parabolic limit point $\xi$.
Let $H$ be the stabilizer of $\xi$ and let $H_0\in \Omega_0$ be such that $H=gH_0g^{-1}$.
Then, by definition, $\mu$ is spectrally degenerate along $H_0$.

Let $\pi_n=\pi_{gH_0}(g_n)$ be the projection of $g_n$ on $gH_0$.
Since $g_n$ converges to $\xi$, the sequence $g^{-1}\pi_n$ goes to infinity in $H_0$.
In particular, according to Proposition~\ref{propsumupdegenerateparabolic}, $g^{-1}\pi_n$ converges in the 1-Martin boundary of $p_{H,\eta,R}$.
That is, for every $h\in H$,
$$K_{H,\eta,R}(h, g^{-1}\pi_n)=\frac{G_{H,\eta,R}(h,g^{-1}\pi_n)}{G_{H,\eta,R}(e,g^{-1}\pi_n)}$$
converges to some limit.

More generally, if $h\in \Gamma$ is fixed, then $h\in N_\eta(H)$ for some $\eta$, so that Proposition~\ref{propsumupdegenerateparabolic} also shows that
$K_{H,\eta,R}(h, g^{-1}\pi_n)$ converges to some limit.
Recall that the Green function is invariant by left multiplication, so this also proves that for every $h\in \Gamma$,
$K_{H,\eta,R}(h,\pi_n)$ converges to some limit.
Moreover, according to Lemma~\ref{sameGreen}, the Martin kernel associated with $p_{H,\eta,R}$ at 1 and the Martin kernel associated with $\mu$ at $R$ coincide.
This proves that the limit of $K_{H,\eta,R}(h,\pi_n)$ does not depend on $H$ and $\eta$.
We denote this limit by $K_R(h,\xi)$.
This also proves that $K_R(h,\pi_n)$ converges to $K_R(h,\xi)$.

To conclude,
we want to prove that $K_R(h,g_n)$ also converges to the same limit.
We fix $h\in \Gamma$ and we fix $\epsilon>0$.
Since $\pi_n$ goes to infinity, the distance between the projection of $h$ on $H_0$ and $\pi_n$ also goes to infinity.
As above, \cite[Lemma~1.13~(2), Lemma~1.15~(2)]{Sisto} and \cite[Proposition~8.13]{Hruska} show that $\pi_n$ is within a bounded distance of a transition point, in the sense of Definition~\ref{deftransitionusual}, on a geodesic from $h$ to $g_n$.
Proposition~\ref{Floydgeo} then shows that $\delta^{f}_{\pi_n}(h,g_n)>\delta$ for some uniform $\delta$ that does not depend on $n$.
Corollary~\ref{Anconaepsilon} shows that there exists $S$ such that
$$G_R(h,g_n;B_S(\pi_n)^c)\leq \epsilon G_R(h,g_n).$$
Similarly, for large enough $n$, we have
$$G_R(e,g_n;B_S(\pi_n)^c)\leq \epsilon G_R(e,g_n).$$
We decompose a path from $h$ to $g_n$ according to its last visit to $B_S(\pi_n)$.
We have
$$G_R(h,g_n)=G_R(h,g_n;B_S(\pi_n)^c)+\sum_{u\in B_S(e)}G_R(h,\pi_nu)G_R(\pi_nu,g_n),$$
so that
\begin{equation}\label{equationconvergencedegenerateparabolic1}
    (1-\epsilon)G_R(h,g_n)\leq \sum_{u\in B_S(e)}G_R(h,\pi_nu)G_R(\pi_nu,g_n)
\end{equation}
and similarly
\begin{equation}\label{equationconvergencedegenerateparabolic2}
    (1-\epsilon)G_R(e,g_n)\leq \sum_{u\in B_S(e)}G_R(e,\pi_nu)G_R(\pi_nu,g_n).
\end{equation}
We also have
\begin{equation}\label{equationconvergencedegenerateparabolic3}
    G_R(h,g_n)\geq \sum_{u\in B_S(e)}G_R(h,\pi_nu)G_R(\pi_nu,g_n)
\end{equation}
and
\begin{equation}\label{equationconvergencedegenerateparabolic4}
    G_R(e,g_n)\geq \sum_{u\in B_S(e)}G_R(e,\pi_nu)G_R(\pi_nu,g_n).
\end{equation}
Moreover, for every $u\in B_S(e)$, the sequence $\pi_nu$ lies in $N_{\eta}(H)$ and converges to $\xi$.
The above discussion shows that $K_R(h,\pi_nu)$ converges to $K_R(h,\xi)$.
Thus for large enough $n$, for every $u\in B_S(e)$,
$$(K_R(h,\xi)-\epsilon)G_R(e,\pi_nu)\leq G_R(h,\pi_nu)\leq (K_R(h,\xi)+\epsilon)G_R(e,\pi_nu).$$
In particular,~(\ref{equationconvergencedegenerateparabolic1}) shows that
\begin{equation}\label{equationconvergencedegenerateparabolic5}
    (1-\epsilon)G_R(h,g_n)\leq (K_R(h,\xi)+\epsilon) \sum_{u\in B_S(e)}G_R(e,\pi_nu)G_R(\pi_nu,g_n).
\end{equation}
Combining~(\ref{equationconvergencedegenerateparabolic4}) and~(\ref{equationconvergencedegenerateparabolic5}), we get
$$(1-\epsilon)G_R(h,g_n)\leq (K_R(h,\xi)+\epsilon)G_R(e,g_n)$$
and so
$$K_R(h,g_n)\leq \frac{1}{1-\epsilon}(K_R(h,\xi)+\epsilon).$$
Similarly,~(\ref{equationconvergencedegenerateparabolic2}) and~(\ref{equationconvergencedegenerateparabolic3}) yield
$$K_R(h,g_n)\geq (1-\epsilon)(K_R(h,\xi)-\epsilon).$$
To sum-up, for large enough $n$, we have
$$(1-\epsilon)(K_R(h,\xi)-\epsilon)\leq K_R(h,g_n)\leq  \frac{1}{1-\epsilon}(K_R(h,\xi)+\epsilon).$$
Since $\epsilon$ is arbitrary, this concludes the proof.
\end{proof}

\subsection{Parabolic limit points: the non-degenerate case}\label{Sectionparaboliclimitpointsnondegenerate}
We now focus on non-degenerate parabolic points.
We do assume that the parabolic subgroups are virtually abelian here.
We prove the following.
If $g_n$ converges to a point in the geometric boundary of a non-degenerate parabolic subgroups, then $g_n$ converges to a point in the $R$-Martin boundary.
We also prove that for $r<R$, $g_n$ converges to a point in the $r$-Martin boundary, whether the parabolic subgroup is non-degenerate or not.
The formal statement is a bit technical.

\begin{proposition}\label{propconvergenceparabolic2}
Let $\Gamma$ be a non-elementary relatively hyperbolic group with respect to a collection $\Omega$ of virtually abelian subgroups.
Choose a finite set $\Omega_0$ of representatives of conjugacy classes of elements of $\Omega$.
Let $\mu$ be a symmetric probability measure whose finite support generates $\Gamma$ and let $R$ be its spectral radius and $r\leq R$.
Let $\xi$ be a parabolic limit point and let $g\in \Gamma$ and $H\in \Omega_0$ be such that $gHg^{-1}$ is the stabilizer of $\xi$.
Let $g_n$ be a sequence of elements of $\Gamma$ such that the projection $\pi_{gH}(g_n)$ of $g_n$ on $gH$ satisfies that
$g^{-1}\pi_{gH}(g_n)$ converges to the geometric boundary of $H$.
Finally, assume either that $r<R$ or that $\mu$ is spectrally non-degenerate along $H$.
Then, $g_n$ converges to a point in the $r$-Martin boundary.
\end{proposition}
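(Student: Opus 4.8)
The plan is to push the analysis down to the induced random walk on a neighbourhood of $H$ and feed the result into the description of the Martin boundary of $\Z^d$-by-finite transition kernels from \cite{Dussaule}. By left-invariance of $G_r$ one has $K_r(h,g_n)=K_r(g^{-1}h,g^{-1}g_n)/K_r(g^{-1},g^{-1}g_n)$, so it suffices to treat $g=e$; then $H$ is the stabilizer of $\xi$, and writing the projection $h_n:=\pi_H(g_n)=(x_n,k_n)\in\Z^d\times E$, the hypothesis is precisely that $x_n\to\infty$ and $x_n/\|x_n\|\to\theta$ for some $\theta\in\Ss^{d-1}$. I then fix $\eta$ large enough that $p_{H,\eta,r}$ has exponential moments up to an arbitrarily large $M$ (Lemma~\ref{exponentialmomentsinducedchain}). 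Decomposing a path according to its successive visits to $N_\eta(H)$ gives $G_r(h,h')=G_{H,\eta,r}(h,h'|1)$ for all $h,h'\in N_\eta(H)$, hence $K_r(h,h')=K_{H,\eta,r}(h,h')$, where $K_{H,\eta,r}$ denotes the $1$-Martin kernel of $p_{H,\eta,r}$; this is the identity already used in Section~\ref{Sectionparaboliclimitpointsdegenerate} and recorded in \cite[Lemma~3.1]{DussauleLLT}. So it is enough to prove that $K_{H,\eta,r}(\cdot,h_n)$ converges pointwise on $N_\eta(H)$, with limit depending only on $\theta$ (and $r$), and then to bootstrap.

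The kernel $p_{H,\eta,r}$ lives on $\Z^d\times\{1,\dots,N_\eta\}$ and is $\Z^d$-invariant, so the set-up of Section~\ref{Sectionparaboliclimitpointsdegenerate} applies: the matrix $F(u)$ with entries $F_{j,k}(u)=\sum_x p_{j,k}(e,x)\mathrm{e}^{x\cdot u}$ has a strictly convex dominant eigenvalue $\lambda(u)$ (\cite[Proposition~3.5]{Dussaule}, Lemma~\ref{minimalharmonicnilpotent}), and $R_{\eta,r}(H)=1/\min_u\lambda(u)$. If $\min_u\lambda(u)=1$, then the argument of Proposition~\ref{propsumupdegenerateparabolic} --- which only uses $R_{\eta,r}(H)=1$ together with Lemma~\ref{spectralradiusnilpotent}, Lemma~\ref{minimalharmonicnilpotent} and Lemma~\ref{Martinharmonicnilpotent} --- shows that the $1$-Martin boundary of $p_{H,\eta,r}$ is a single point, so $K_{H,\eta,r}(\cdot,h_n)$ converges for \emph{any} $h_n\to\infty$; this case is compatible with our hypotheses only when $r<R$, since $\min_u\lambda(u)=1$ forces (via $R_{\eta,R}(H)\le R_{\eta,r}(H)$ and $R_{\eta,R}(H)\ge 1$) that $\mu$ is spectrally degenerate along $H$. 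If instead $\min_u\lambda(u)<1$ --- which holds whenever $\mu$ is spectrally non-degenerate along $H$, and is the only remaining possibility --- then $\{u:\lambda(u)=1\}$ is a compact strictly convex hypersurface homeomorphic to $\Ss^{d-1}$, and I invoke \cite{Dussaule}: for a $\Z^d$-invariant irreducible aperiodic kernel on $\Z^d\times\{1,\dots,N\}$ with exponential moments there is a local limit theorem $G_{H,\eta,r}(e,(x,k)|1)\sim c_k(\theta)\,\|x\|^{-(d-1)/2}\,\mathrm{e}^{x\cdot u(\theta)}$ as $x\to\infty$ with $x/\|x\|\to\theta$, where $u(\theta)$ is the unique point of $\{\lambda=1\}$ whose outward normal is $\theta$; hence $K_{H,\eta,r}(\cdot,h_n)$ converges on $N_\eta(H)$ to a function of the form $(y,j)\mapsto c_j(\theta)\,\mathrm{e}^{y\cdot u(\theta)}$ for a suitable positive vector $(c_j(\theta))_j$.

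It remains to globalize. For a fixed $h\in\Gamma$, choose $\eta'\ge\eta$ with $h\in N_{\eta'}(H)$ and run the above with $\eta'$: then $K_r(h,h_n)=K_{H,\eta',r}(h,h_n)$ converges, the limit being independent of $\eta'$ since it is a limit of $K_r(h,h_n)$; thus $h_n$ converges in the $r$-Martin boundary to a point $\zeta=\zeta(\theta,r)$. Finally, to pass from $h_n$ to $g_n$ I argue as at the end of the proof of Proposition~\ref{propconvergenceparabolic1}: fixing $h$ and $\epsilon>0$, the projection estimates \cite[Lemma~1.13~(2), Lemma~1.15~(2)]{Sisto} and \cite[Proposition~8.13]{Hruska} place $h_n$ within bounded distance of a transition point of a geodesic from $h$ to $g_n$, so Proposition~\ref{Floydgeo} gives $\delta^f_{h_n}(h,g_n)>\delta$ for a uniform $\delta$, and Corollary~\ref{Anconaepsilon} gives $S$ with $G_r(h,g_n;B_S(h_n)^c)\le\epsilon G_r(h,g_n)$ and likewise for $e$; decomposing paths by the last visit to $B_S(h_n)$, and using that for $u\in B_S(e)$ the sequence $h_nu$ lies in a fixed $N_\eta(H)$ (enlarging $\eta$ if needed) and converges to $\zeta$ as well, the same $\epsilon$-chase as in Proposition~\ref{propconvergenceparabolic1} yields $K_r(h,g_n)\to K_r(h,\zeta)$ for every $h$. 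Hence $g_n$ converges in the $r$-Martin boundary, and undoing the reduction $g=e$ finishes the argument. The main obstacle is the middle paragraph: transporting the local limit theorem and Martin boundary computation of \cite{Dussaule} (carried out for $\Z^d$, and for $r=1$ in \cite{DGGP}) to the induced kernels $p_{H,\eta,r}$, which carry both a finite fibre $\{1,\dots,N_\eta\}$ and the auxiliary parameter $r$ --- this is exactly the part the authors announce they rewrite in full.
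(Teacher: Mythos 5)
Your proposal is correct and follows essentially the same route as the paper: fix $\eta$ large so that $p_{H,\eta,r}$ has large exponential moments, verify via the spectral-radius lemmas that $\min_u\lambda(u)<1$ under the stated hypotheses (this is Corollary~\ref{coroassumption2}), invoke \cite[Lemma~3.13, Proposition~3.27]{Dussaule} for the convergence $K_r(h,h_n)\to C_h\mathrm{e}^{u\cdot x(h)}$ on neighborhoods of $H$, and then run the $\epsilon$-chase of Proposition~\ref{propconvergenceparabolic1} with Corollary~\ref{Anconaepsilon} to pass from the projections $h_n$ to $g_n$. The only slight imprecision is the remark that the case $\min_u\lambda(u)=1$ is ``compatible with the hypotheses only when $r<R$'': by Lemma~\ref{lemmaassumption2r<R} it is in fact excluded for $r<R$ as well, but since you handle that case anyway via the one-point Martin boundary argument, nothing is lost.
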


Our goal is to prove this proposition.
Let $H$ be a parabolic subgroup, so that $H$ is virtually abelian.
There is a finite index subgroup of $H$ which is isomorphic to $\Z^d$.
As in Section~\ref{Sectionparaboliclimitpointsdegenerate}, any section $H/\Z^d\to H$ identifies $H$ as a set with $\Z^d\times E$, where $E$ is finite.
Moreover, for any $\eta\geq0$,
\begin{enumerate}
    \item $N_{\eta}(H)$ can be $\Z^d$-equivariantly identified with $\Z^d\times \{1,...,N_\eta\}$,
    \item if $\eta\leq \eta'$, then $N_\eta\leq N_{\eta'}$. In other words, the set $\Z^d\times \{1,...,N_\eta\}$, identified with $N_{\eta}(H)$ is a subset of $\Z^d\times \{1,...,N_{\eta'}\}$ identified with $N_{\eta'}(H)$.
\end{enumerate}

We still denote by $p_{H,r}$ the first return kernel to $H$ for $r\mu$
and by $p_{H,\eta,r}$ the first return kernel to $N_{\eta}(H)$ for $r\mu$.
Recall that
$$p_{H,\eta,r}(h,h')=\sum_{n\geq 0}\sum_{\underset{\notin N_\eta(H)}{g_1,...,g_{n-1}}}r^n\mu(h^{-1}g_1)\mu(g_1^{-1}g_2)...\mu(g_{n-2}^{-1}g_{n-1})\mu(g_{n-1}^{-1}h').$$
Again, to simplify notations, whenever $H$, $r$ and $\eta$ are fixed, identifying $h,h''\in H$ with $(x,k)$,$(x',k')$, $x,x'\in \Z^d$ and $k,k'\in \{1,...,N_\eta\}$, we write
$$p_{H,\eta,r}((x,k),(x',k'))=p_{k,k'}(x,x').$$

For $u\in \R^d$, we define for $j,k\in \{1,...,N_{\eta}\}$,
$$F_{j,k}(u)=\sum_{x\in \Z^d}p_{j,k}(e,x)\mathrm{e}^{x\cdot u}.$$
We then let $F(u)$ be the matrix with entries $F_{j,k}(u)$.
This is the same matrix as in Section~\ref{Sectionparaboliclimitpointsdegenerate}, except that we do not need to project $x$ on $\Z^d$, since we are not assuming that the parabolic subgroups are virtually nilpotent, but virtually abelian.

The entries of the $n$th power $F(u)^n$ of $F(u)$ are still given by
$$F(u)^n_{j,k}=\sum_{x\in \Z^d}p_{j,k}^{(n)}(e,x)\mathrm{e}^{x\cdot u},$$
where $p_{j,k}^{(n)}(e,x)$ is the $n$th power convolution of $p_{j,k}(e,x)$.
Moreover, the matrix $F(u)$ is still strongly irreducible and thus has a dominant eigenvalue, which we still denote by $\lambda(u)$.
According to \cite[Proposition~3.5]{Dussaule}, the function $u\mapsto \lambda(u)$ is strongly convex where it is defined.
More precisely, let $\mathcal{F}$ be the interior of the set where all the entries of $F(u)$ are finite.
Then, $u\mapsto\lambda(u)$ is strictly convex on $u$.

In \cite{Dussaule} and \cite{DGGP}, there are two technical assumptions made on $\lambda$, in order to identify the 1-Martin boundary.
Let
$$\mathcal{D}=\{u\in \mathcal{F},\lambda(u)\leq 1\}.$$
Notice that $F$, $\lambda$, $\mathcal{F}$ and $\mathcal{D}$ all depend on $\eta$, but we omit this dependence in the notations.
The two assumptions are as follows.

\begin{hyp}
The set $\mathcal{D}$ is compact.
\end{hyp}

\begin{hyp}
The minimum of $\lambda$ is strictly smaller than 1.
\end{hyp}

We will prove that for large enough $\eta$, these two assumptions are satisfied.
This will allow us to prove Proposition~\ref{propconvergenceparabolic2}.
The first assumption is satisfied as soon as $p_{H,\eta,r}$ has sufficiently large exponential moments.
According to Lemma~\ref{exponentialmomentsinducedchain}, this is true as soon as $\eta$ is large enough.
Precisely, we have the following.

\begin{lemma}\label{lemmaassumption1}\cite[Proposition~4.6]{DGGP}
Let $r\leq R$.
If $\eta$ is large enough, then the transition kernel $p_{H,\eta,r}$ on $\Z^d\times \{1,...,N_{\eta}\}$ satisfies Assumption~1.
\end{lemma}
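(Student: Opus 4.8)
The plan is to deduce Assumption~1 from the exponential moment estimate of Lemma~\ref{exponentialmomentsinducedchain}, following the argument of \cite[Proposition~4.8]{DGGP}, which was written for $r<R$ but whose only genuine input is such a moment bound. Recall that $\mathcal D=\{u\in\mathcal F:\lambda(u)\le 1\}$, where $\mathcal F$ is the interior of the set on which every entry $F_{j,k}(u)=\sum_{x\in\Z^d}p_{j,k}(e,x)\mathrm e^{x\cdot u}$ is finite and $\lambda(u)$ is the Perron--Frobenius eigenvalue of the strongly irreducible nonnegative matrix $F(u)$; on $\mathcal F$ the function $\lambda$ is real-analytic, hence continuous. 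Thus it suffices to show: (i) $\mathcal D$ is bounded, with enough control on the bound as $\eta$ varies; and (ii) for each $M$ there is $\eta_M$ such that $\eta\ge\eta_M$ implies $B(0,M)\subset\mathcal F$. Granting (i) and (ii), one fixes $M$ larger than the bound from (i) and then takes $\eta\ge\eta_M$; then $\mathcal D\subset B(0,M)\subset\mathcal F$, and since $\lambda$ is continuous on $B(0,M)$ any limit in $\R^d$ of a sequence in $\mathcal D$ again lies in $\mathcal D$. Hence $\mathcal D$ is closed and bounded, i.e. compact.

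Step (ii) is the direct consequence of Lemma~\ref{exponentialmomentsinducedchain}: if $p_{H,\eta,r}$ has exponential moments up to $M$ then, since the word metric on $\Z^d$ is bi-Lipschitz to the Euclidean norm and $x\cdot u\le\|x\|\,\|u\|$, each series $F_{j,k}(u)$ converges for $\|u\|$ at most a constant multiple of $M$, so a ball of radius $\to\infty$ is contained in $\mathcal F$. For step (i) I would exploit that, because $\mu(e)>0$ and $\mu$ is admissible, a single $\mu$-step is always a first return to $N_\eta(H)$; this yields a \emph{fixed} finite set $S_0\subset\Z^d$ generating $\Z^d$, together with a constant $\epsilon_0>0$ and indices realizing each move of $S_0$, all independent of $\eta$, with the corresponding entries of $F(u)$ bounded below by $\epsilon_0\mathrm e^{s\cdot u}$ for $s\in S_0$. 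Since the Perron eigenvalue of a nonnegative matrix dominates every diagonal entry, and more generally $\lambda(u)^n\ge(F(u)^n)_{j,j}$, concatenating such moves around a cycle chosen to maximize $\sum_i s_i\cdot u$ over $s_i\in S_0$, and using that $S_0$ positively spans $\R^d$, gives $\lambda(u)\ge c_1\mathrm e^{c_2\|u\|}$; hence $\lambda(u)\le 1$ forces $\|u\|$ bounded. This is precisely the estimate carried out in \cite[Proposition~4.8]{DGGP}.

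I expect the main obstacle to be the interplay between (i) and (ii): as $\eta$ grows the fiber set $\{1,\dots,N_\eta\}$ grows, and with it the combinatorial length of the cycles used in step (i) and the constants $c_1,c_2$, so one must verify that the radius bounding $\mathcal D$ does not outpace the radius gained from the moment bound. The resolution, as in \cite{DGGP}, is that the growth of $\lambda$ at infinity is controlled by the fixed set $S_0$ of one-step displacements rather than by the $\eta$-dependent fiber structure, so a single application of Lemma~\ref{exponentialmomentsinducedchain} with $M$ chosen large enough, followed by taking $\eta$ large, closes the argument. Everything else is routine once Lemma~\ref{exponentialmomentsinducedchain} is available up to the spectral radius $r=R$, which is the new ingredient compared with \cite{DGGP}.
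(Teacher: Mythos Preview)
Your proposal is correct and matches the paper's treatment: the paper does not give a standalone proof but simply cites \cite[Proposition~4.8]{DGGP} and observes that the only new input needed at $r=R$ is the exponential moment bound of Lemma~\ref{exponentialmomentsinducedchain}, exactly as you identify. Your sketch of the underlying argument (boundedness of $\mathcal D$ from a uniform lower bound on $\lambda(u)$, containment in $\mathcal F$ from the moment bound, then closedness from continuity of $\lambda$) is the content of that cited proposition.
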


This settles Assumption~1.
Note that we did not have to assume that $\mu$ is not spectrally degenerate along $H$.
The fact that Assumption~2 is also satisfied for spectrally non-degenerate parabolic subgroups will be derived from the next lemmas.
Recall that we denote by $R_{\eta,r}(H)$ the spectral radius of $p_{H,\eta,r}$.

\begin{lemma}\label{spectralradiusabelian}
There exists $\eta_0$ such that for every $\eta\geq \eta_0$ and for every $r\leq R$, we have $R_{\eta,r}(H)=1$ if and only if $\min_u\lambda(u)=1$.
\end{lemma}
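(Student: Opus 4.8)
The plan is to prove the sharper statement that, for $\eta$ large enough and every $r\le R$, one has $R_{\eta,r}(H)=1/\min_u\lambda(u)$; the asserted equivalence is then just the instance of this identity at the value $1$. A single threshold $\eta_0$ works for all $r\le R$, because the only $r$-dependent input is Lemma~\ref{exponentialmomentsinducedchain}, which already supplies a uniform threshold, and everything else is purely spectral.

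First I would record that $\lambda$ attains its minimum in $\mathcal F$. By Lemma~\ref{exponentialmomentsinducedchain}, for $\eta$ large the kernel $p_{H,\eta,r}$ has exponential moments up to some fixed $M>0$, so the interior $\mathcal F$ of the set where all entries of $F(u)$ are finite contains a neighbourhood of $0$; in particular $\mathcal F$ is a nonempty open convex set. Since $\mu(e)>0$, $F(u)$ is strongly irreducible, and by irreducibility of $p_{H,\eta,r}$ some entry of $F(u)$, hence $\lambda(u)$, tends to $+\infty$ whenever $u$ leaves a compact subset of $\mathcal F$; so the sublevel sets of $\lambda$ are compact and $\lambda$ attains its minimum at a point $u_{\min}\in\mathcal F$ (uniquely, since $\lambda$ is strictly convex by \cite[Proposition~3.5]{Dussaule}).

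Writing the identity $e$ of $H$ as the point $(e,k_0)$ of $\Z^d\times\{1,\dots,N_\eta\}$, we have $R_{\eta,r}(H)=1/\limsup_n\big(p^{(n)}_{k_0,k_0}(e,e)\big)^{1/n}$. For the inequality $R_{\eta,r}(H)\ge 1/\min_u\lambda(u)$: for each $u\in\mathcal F$, the Perron--Frobenius theorem yields $C_u>0$ with $F(u)^n_{j,k}\le C_u\lambda(u)^n$; since $F(u)^n_{j,k}=\sum_{x\in\Z^d}p^{(n)}_{j,k}(e,x)\mathrm e^{x\cdot u}$ is a sum of nonnegative terms, keeping only the term $x=e$ and $j=k=k_0$ gives $p^{(n)}_{k_0,k_0}(e,e)\le C_u\lambda(u)^n$; letting $n\to\infty$ and optimising over $u$ gives the bound. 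For the reverse inequality I would perform an exponential change of measure: let $(C_k)$ be a strictly positive Perron eigenvector of $F(u_{\min})$ and set
\[
\hat p\big((x,j),(y,k)\big)=\frac{C_k}{C_j\,\lambda(u_{\min})}\,p_{j,k}(x,y)\,\mathrm e^{(y-x)\cdot u_{\min}}.
\]
The relation $\sum_k F_{j,k}(u_{\min})C_k=\lambda(u_{\min})C_j$ makes $\hat p$ a probability kernel on $\Z^d\times\{1,\dots,N_\eta\}$; it is $\Z^d$-invariant and strongly irreducible (inheriting this from $p_{H,\eta,r}$), and by induction $\hat p^{(n)}\big((e,k_0),(e,k_0)\big)=\lambda(u_{\min})^{-n}\,p^{(n)}_{k_0,k_0}(e,e)$. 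Since $\Z^d$ is amenable, any irreducible $\Z^d$-invariant probability kernel on $\Z^d$ times a finite set has spectral radius $1$, so $\hat p^{(n)}((e,k_0),(e,k_0))$ does not decay exponentially (a standard consequence of amenability; see \cite{Woess-book}); hence $\limsup_n\big(p^{(n)}_{k_0,k_0}(e,e)\big)^{1/n}=\lambda(u_{\min})=\min_u\lambda(u)$ and $R_{\eta,r}(H)\le 1/\min_u\lambda(u)$. Together the two inequalities give $R_{\eta,r}(H)=1/\min_u\lambda(u)$, so $R_{\eta,r}(H)=1$ precisely when $\min_u\lambda(u)=1$.

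The step needing most care is the lower bound on $R_{\eta,r}(H)$: one has to check that $\hat p$ is an honest probability kernel — which is exactly where the facts that $u_{\min}$ lies in the \emph{interior} of $\mathcal F$ (so the tilted weights $\mathrm e^{(y-x)\cdot u_{\min}}$ are summable against $p_{j,k}$) and that the Perron eigenvector is strictly positive are used — and then to apply amenability of $\Z^d$ in the correct form for a chain carrying an internal finite coordinate. Alternatively one could observe that $\hat p$ has zero drift, since $u_{\min}$ is a critical point of the convex function $\lambda$, and conclude recurrence in dimension $\le 2$ together with a local limit estimate in dimension $\ge 3$; the amenability argument is shorter and dimension-free.
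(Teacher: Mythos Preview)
Your strategy of proving the sharper identity $R_{\eta,r}(H)=1/\min_u\lambda(u)$ is sound, and the inequality $R_{\eta,r}(H)\ge 1/\min_u\lambda(u)$ obtained from the Perron--Frobenius bound $p^{(n)}_{k_0,k_0}(e,e)\le F(u)^n_{k_0,k_0}\le C_u\lambda(u)^n$ is correct and more elementary than the paper's route for the corresponding direction, which passes through the theory of positive harmonic functions (Lemmas~\ref{Martinharmonicnilpotent} and~\ref{minimalharmonicnilpotent} and \cite[Lemma~7.2]{Woess-book}).

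The reverse inequality, however, has a genuine gap. The assertion that ``any irreducible $\Z^d$-invariant probability kernel on $\Z^d$ times a finite set has spectral radius $1$'' is simply false: the biased simple random walk on $\Z$ (say $p(x,x+1)=2/3$, $p(x,x-1)=1/3$) is $\Z$-invariant, irreducible and stochastic, yet has spectral radius $2\sqrt{2}/3<1$. Amenability of the acting group gives nothing here without reversibility. The repair is available in your setting and should be made explicit: since $\mu$ is symmetric, so is $p_{H,\eta,r}$, whence $F(-u)=F(u)^T$ and $\lambda$ is even; strict convexity forces $u_{\min}=0$, and the tilt $\hat p$ at $u_{\min}=0$ is then reversible with respect to the measure $m(x,k)=C_k^2$, so the Kesten-type criterion for reversible chains under a cocompact amenable action does apply. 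Alternatively --- and as you yourself note --- one can argue via the zero-drift property of $\hat p$ together with a local limit estimate; this is exactly what the paper does for the ``if'' direction, invoking \cite[Proposition~3.14]{Dussaule} to obtain $p^{(n)}_{H,\eta,r}(e,e)\sim Cn^{-d/2}$ when $\nabla\lambda(u_0)=0$ and $\lambda(u_0)=1$. Either fix closes the argument, but the bare appeal to amenability does not.
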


\begin{proof}
The "only if" part is given by Lemma~\ref{spectralradiusnilpotent}.
Let us prove the "if" part.
Assume that $\min_u\lambda(u)=1$ and that $\lambda$ reaches its minimum at $u_0$.
Since $\lambda$ is strictly convex, we have $\nabla \lambda(u_0)=0$
Thus, \cite[Proposition~3.14]{Dussaule} shows that for large enough $\eta$, $p^{(n)}_{H,\eta,r}(e,e)\sim Cn^{-d/2}$.
In particular, $R_{\eta,r}(H)=1$.
\end{proof}

\begin{lemma}\label{lemmaassumption2preparation}
Let $\eta \geq \eta_0$.
Assume that $R_{\eta,r}(H)>1$.
Then, the transition kernel $p_{H,\eta,r}$ satisfies Assumption~2.
\end{lemma}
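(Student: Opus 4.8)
The plan is to obtain Assumption~2 from the already-available description of positive $1$-harmonic functions for the induced chain together with Lemma~\ref{spectralradiusabelian}. Fix $\eta\geq\eta_0$ so that Lemma~\ref{Martinharmonicnilpotent}, Lemma~\ref{minimalharmonicnilpotent} (which applies verbatim in the present abelian setting, the abelianization projection $\pi$ being the identity on $\Z^d$) and Lemma~\ref{spectralradiusabelian} are all at our disposal. I would also use the standing assumption $\mu(e)>0$, which makes $p_{H,\eta,r}$ an irreducible substochastic kernel on the infinite set $N_\eta(H)\cong\Z^d\times\{1,\dots,N_\eta\}$, where $d\geq 1$ because $H$ is infinite.

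The hypothesis $R_{\eta,r}(H)>1$ would be used only through the fact that the power series $G_{H,\eta,r}(h,h'\mid t)=\sum_{n}t^n p^{(n)}_{H,\eta,r}(h,h')$ converges at $t=1$; hence the chain driven by $p_{H,\eta,r}$ is transient on the infinite set $N_\eta(H)$, so it almost surely leaves every finite subset and its $1$-Martin boundary is non-empty. By Lemma~\ref{Martinharmonicnilpotent}, every point of this boundary yields a positive $1$-harmonic function for $p_{H,\eta,r}$. Feeding such a function into the Martin representation theorem, it equals $\int K(\cdot,\xi)\,d\nu(\xi)$ for a nonzero measure $\nu$ supported on the $1$-minimal Martin boundary, so the latter is non-empty; by Lemma~\ref{minimalharmonicnilpotent} this forces the existence of some $u\in\mathcal{F}$ with $\lambda(u)=1$, and in particular $\min_u\lambda(u)\leq 1$.

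It then remains to rule out equality, and here I would simply invoke Lemma~\ref{spectralradiusabelian}: if $\min_u\lambda(u)=1$, that lemma gives $R_{\eta,r}(H)=1$, contradicting the hypothesis, so $\min_u\lambda(u)<1$, which is Assumption~2. The only step that needs a little attention — and the one I would regard as the main (still minor) obstacle — is the honest verification that the $1$-Martin boundary of $p_{H,\eta,r}$ is non-empty and that Lemma~\ref{Martinharmonicnilpotent} legitimately applies to it; both follow from transience of $p_{H,\eta,r}$ at parameter $1$ together with $\eta\geq\eta_0$. Alternatively, one could run the very same argument at $t=R_{\eta,r}(H)$, using the classical existence of a positive $R_{\eta,r}(H)$-harmonic function for the irreducible chain $p_{H,\eta,r}$: Lemma~\ref{minimalharmonicnilpotent} would then directly produce $u$ with $\lambda(u)=1/R_{\eta,r}(H)<1$, bypassing Lemma~\ref{spectralradiusabelian} entirely.
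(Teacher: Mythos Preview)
Your argument is correct and follows the same route as the paper, which literally reads ``This is a direct consequence of Lemma~\ref{spectralradiusabelian}.'' You are simply more explicit than the paper about why $\min_u\lambda(u)\leq 1$ holds in the first place (via the existence of a positive $1$-harmonic function), before invoking Lemma~\ref{spectralradiusabelian} to rule out equality; the paper treats this as immediate, since the proof of Lemma~\ref{spectralradiusnilpotent} already shows $R_{\eta,r}(H)=1/\min_u\lambda(u)$. Your alternative at the end (produce directly a $u$ with $\lambda(u)=1/R_{\eta,r}(H)<1$) is the cleanest formulation of that same fact.
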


\begin{proof}
This is a direct consequence of Lemma~\ref{spectralradiusabelian}.
\end{proof}

\begin{lemma}\label{lemmaassumption2r<R}
Let $\eta\geq 0$ and let $r<R$.
Then $R_{\eta,r}(H)>1$.
\end{lemma}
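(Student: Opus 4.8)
The statement to prove is that for $r<R$ and any $\eta\geq 0$, the spectral radius $R_{\eta,r}(H)$ of the first-return transition kernel $p_{H,\eta,r}$ to $N_\eta(H)$ is strictly larger than $1$. The basic principle I would use is that $R_{\eta,r}(H)\geq 1$ is equivalent to the existence of a $1$-harmonic positive function for $p_{H,\eta,r}$ (or, more elementarily, to transience-type estimates), while $R_{\eta,r}(H)>1$ is equivalent to a quantitative summability/decay statement about the Green function $G_{H,\eta,r}(e,e\mid t)$ at $t=1$, namely that this series still converges with room to spare. So the plan is to produce an explicit $t$-harmonic positive function for $p_{H,\eta,r}$ with $t>1$, or equivalently to show $\sum_n p_{H,\eta,r}^{(n)}(e,e)$ decays exponentially.

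The key input is that $r<R$ forces genuine exponential decay of the full $r$-Green function along $H$. Concretely: by the relative Ancona inequalities (Theorem~\ref{thmweakAncona}), for $h\in H$ far from $e$ one has $G_r(e,h)\asymp \prod_i G_r(y_{i-1},y_i)$ along a chain of transition points spaced along a word geodesic from $e$ to $h$, and since $r<R$ each factor $G_r$ is uniformly bounded away from $1$ (this is where strict inequality $r<R$ enters — at $r=R$ the one-step Green functions could degenerate). Hence $G_r(e,h)\leq C\theta^{d(e,h)}$ for some $\theta<1$ depending on $r$ but not on $h$. Since $p_{H,\eta,r}(e,h)\leq G_r(e,h)$ (a first-return weight is dominated by the full Green function), and since $N_\eta(H)$ identified with $\Z^d\times\{1,\dots,N_\eta\}$ has polynomially growing spheres inside $\Gamma$'s exponentially-growing balls but the $d(e,h)$ appearing is the \emph{word} distance which is comparable (up to the quasi-isometry constant $\Lambda$ of Lemma~\ref{exponentialmomentsinducedchain}) to $d_{\mathcal N}$ or $\|x\|$, we get that the matrix $F(0)=F(u)|_{u=0}$, whose entries are $F_{j,k}(0)=\sum_x p_{j,k}(e,x)$, has all entries finite, and in fact $F(u)$ has finite entries for $u$ in a neighborhood of $0$, i.e. $0\in\mathcal F$. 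I would then invoke that $F(u)$ is strongly irreducible and apply Perron--Frobenius: $\lambda(0)<\infty$, and more importantly I claim $\lambda(0)<1$.

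To see $\lambda(0)<1$: the entries of $F(0)^n$ are $\sum_x p^{(n)}_{j,k}(e,x)=\sum_k' p_{H,\eta,r}^{(n)}((e,j),(\cdot,k'))$ summed over the $\Z^d$-coordinate, i.e. the total mass of the $n$-step first-return kernel, which equals the probability (weighted by $r$) that the $r\mu$-walk started in $N_\eta(H)$ returns to $N_\eta(H)$ exactly $n$ more times; since $r<R$ the walk is "strictly sub-critical" and this total mass decays exponentially in $n$ — again a consequence of $\sum_{g}G_r^2(e,g)<\infty$ from \cite[Proposition~1.9]{GouezelLalley} together with the Ancona inequalities controlling excursions outside $N_\eta(H)$, exactly as in the proof of Lemma~\ref{exponentialmomentsinducedchain}. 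The growth rate of $\|F(0)^n\|$ is $\lambda(0)^n$ by Perron--Frobenius, so exponential decay gives $\lambda(0)<1$, whence $\min_u\lambda(u)\leq\lambda(0)<1$. By Lemma~\ref{spectralradiusabelian} (its contrapositive, valid for $\eta\geq\eta_0$) this yields $R_{\eta,r}(H)>1$ for large $\eta$; for small $\eta$ one uses the monotonicity $G_{H,\eta,r}(e,e\mid t)\leq G_{H,\eta',r}(e,e\mid t)$ from Lemma~\ref{lemmaassumption2firstpart}'s proof, which gives $R_{\eta,r}(H)\geq R_{\eta',r}(H)>1$.

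The main obstacle is making the exponential decay of the return mass fully rigorous at general $r<R$ with constants that do not blow up as $\eta$ grows — this is essentially a re-run of the argument in Lemma~\ref{exponentialmomentsinducedchain} but tracking that the decay base stays below $1$ uniformly; alternatively one can bypass $\eta$-uniformity entirely by proving the statement for $\eta\geq\eta_0$ via Perron--Frobenius and then propagating down to all $\eta\geq0$ by the Green-function monotonicity. I would write it the second way, since it isolates the only genuinely analytic point ($0\in\mathcal F$ and $\lambda(0)<1$ when $r<R$) and reduces everything else to already-established lemmas.
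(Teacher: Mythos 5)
Your overall architecture (prove $\lambda(0)<1$ for large $\eta$ via Perron--Frobenius, then propagate to all $\eta\geq 0$ by the Green-function monotonicity of Lemma~\ref{lemmaassumption2firstpart}) is coherent, but the central step is not actually proved. You assert that the total mass of the $n$-fold first-return kernel decays exponentially in $n$ because the walk is ``strictly sub-critical,'' citing $\sum_g G_r^2(e,g)<\infty$, the Ancona inequalities, and the proof of Lemma~\ref{exponentialmomentsinducedchain}. None of these delivers decay in the \emph{number of returns} $n$: the proof of Lemma~\ref{exponentialmomentsinducedchain} controls the decay of a single return $p_{H,\eta,r}(e,h)$ in the \emph{spatial} displacement $d_{\mathcal N}(e,h)$, which only gives finiteness of the entries of $F(0)$, i.e.\ $0\in\mathcal F$, not $\lambda(0)<1$. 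For $r\in(1,R)$ the one-return total mass $\sum_{g'}p_{H,\eta,r}(g,g')=\mathbb{E}_g\bigl[r^{\tau}\mathbf{1}_{\tau<\infty}\bigr]$ being $<1$ is not a transience statement (transience only handles $r=1$); it is essentially equivalent to the lemma you are trying to prove, so as written the argument is circular at its key point. A secondary inaccuracy: the factors $G_r(y_{i-1},y_i)$ in your Ancona product are bounded away from $0$ and $\infty$ but certainly not away from $1$ from above (e.g.\ $G_r(e,e)\geq 1$); the exponential spatial decay of $G_r(e,h)$ for $r<R$ instead comes from $\mu^{*n}(h)=0$ for $n<|h|/K$, so $G_r(e,h)\leq (r/r')^{|h|/K}G_{r'}(e,h)$ for $r<r'\leq R$.

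The paper's proof is a two-line version of exactly the observation that would fill your gap: every path realizing a first return to $N_\eta(H)$ has word length at least $1$, hence every path realizing the $n$-th convolution power has length at least $n$; therefore, for $\epsilon>0$ with $r(1+\epsilon)<R$, one has $(1+\epsilon)^n p^{(n)}_{H,\eta,r}(g,g')\leq p^{(n)}_{H,\eta,r(1+\epsilon)}(g,g')$, and summing over $n$ gives $G_{H,\eta,r}(e,e\,|\,1+\epsilon)\leq G_{H,\eta,r(1+\epsilon)}(e,e\,|\,1)=G_{r(1+\epsilon)}(e,e)<\infty$, so $R_{\eta,r}(H)\geq 1+\epsilon>1$ directly, for every $\eta\geq 0$, with no Perron--Frobenius input, no restriction to large $\eta$, and no symmetry assumption. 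If you insert this length-counting inequality into your argument it does yield $\lambda(0)\leq (1+\epsilon)^{-1}\sup_j\sum_{g'}G_{r(1+\epsilon)}(g_j,g')\cdot 0+\dots$ --- but at that point you have already reproved the lemma, so the Perron--Frobenius detour is superfluous.
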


\begin{proof}
Let $\epsilon>0$ be such that $r(1+\epsilon)<R$.
By definition, if $g\neq g'$,
$$p_{H,\eta,r}(g,g')=\sum_{n\geq 1}\sum_{\underset{\notin N_\eta(H)}{g_1,...,g_{n-1}}}r^n\mu(g^{-1}g_1)\mu(g_1^{-1}g_2)...\mu(g_{n-1}^{-1}g').$$
We deduce that
$$(1+\epsilon)p_{H,\eta,r}(g,g')\leq p_{H,\eta,r(1+\epsilon)}(g,g'),$$
so that
$$(1+\epsilon)^np_{H,\eta,r}^{(n)}(g,g')\leq p_{H,\eta,r(1+\epsilon)}^{(n)}(g,g').$$
Denoting by $G_{H,\eta,r}(g,g'|t)$ the Green function associated with $p_{H,\eta,r}$ evaluated at $t$, we thus have
$$G_{H,\eta,r}(g,g'|1+\epsilon)\leq G_{H,\eta,(r+r\epsilon)}(g,g').$$
Since $G_{H,\eta,(r+r\epsilon)}(g,g')=G_{r+r\epsilon}(g,g')<+\infty$ according to Lemma~\ref{sameGreen}, we finally get
that $G_{H,\eta,r}(g,g'|1+\epsilon)$ is finite.
\end{proof}

\begin{lemma}\label{lemmaassumption2secondpart}
Let $\eta\geq 0$.
Then, $R_{\eta,R}(H)=1$ if and only if $R_{0,R}(H)=1$.
\end{lemma}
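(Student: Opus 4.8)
The plan is to establish, for every $\eta\ge 0$, the identity $R_{\eta,R}(H)=1/\lambda_\eta(0)$, where $\lambda_\eta(0)$ denotes the Perron--Frobenius eigenvalue of $F_{H,\eta,R}(0)$ (the matrix $F$ attached to $p_{H,\eta,R}$, evaluated at $u=0$, whose $(j,k)$-entry is the total first-return mass $\sum_x p_{j,k}(e,x)$), and then to show that $\lambda_\eta(0)=1$ if and only if $\lambda_0(0)=1$. Combining these two facts gives $R_{\eta,R}(H)=1\iff R_{0,R}(H)=1$, which is the lemma.

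For the first fact, note that $p_{H,\eta,R}$ is symmetric (reversing a path from $h$ to $h'$ that avoids $N_\eta(H)$ yields, by symmetry of $\mu$, a path from $h'$ to $h$ of equal weight that avoids $N_\eta(H)$) and invariant under left translation by the subgroup $\Z^d\le H$ (since $\mu$ is left-invariant on $\Gamma$ and $zN_\eta(H)=N_\eta(H)$ for $z\in H$). Under the Fourier transform $\ell^2(\Z^d)\cong L^2(\mathbb{T}^d)$, the Markov operator of $p_{H,\eta,R}$ on $\ell^2(N_\eta(H))=\ell^2(\Z^d)\otimes\C^{N_\eta}$ becomes multiplication by the Hermitian matrix-valued symbol $\theta\mapsto F_{H,\eta,R}(\mathrm i\theta)$; hence its operator norm, which equals $\rho(p_{H,\eta,R})=1/R_{\eta,R}(H)$ because $p_{H,\eta,R}$ is symmetric, is $\max_{\theta}\|F_{H,\eta,R}(\mathrm i\theta)\|_{\mathrm{op}}$. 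Since $|F_{H,\eta,R}(\mathrm i\theta)|\le F_{H,\eta,R}(0)$ entrywise, Wielandt's inequality gives $\|F_{H,\eta,R}(\mathrm i\theta)\|_{\mathrm{op}}\le\lambda_\eta(0)$ (one can also see $\rho(p_{H,\eta,R})\le\lambda_\eta(0)$ directly from the bound of $p^{(n)}_{H,\eta,R}(e,e)$ by a diagonal entry of $F_{H,\eta,R}(0)^n$, which grows like $\lambda_\eta(0)^n$ by Perron--Frobenius \cite[Theorem~1.1]{Seneta}, $F_{H,\eta,R}(0)$ being primitive as $\mu(e)>0$). The reverse inequality is a F\o lner argument on $\Z^d$: testing the operator against $\mathbf 1_{B_m}\otimes v$, where $B_m\subset\Z^d$ are boxes and $v>0$ is the Perron--Frobenius eigenvector of $F_{H,\eta,R}(0)$, the Rayleigh quotients converge to $\lambda_\eta(0)$, using only that the entries of $F_{H,\eta,R}(0)$ are finite so that the tails of the one-step kernel are negligible. (When $\eta=0$ this is just Kesten's amenability criterion \cite{Woess-book} applied to the random walk $p_{H,0,R}$ on the amenable group $H$.) Crucially, no lower bound on $\eta$ is needed here.

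For the second fact, put $F_{H,\eta,R}(u)$ in block form, with first block indexed by the internal states lying in $H$ and second block by those in $N_\eta(H)\setminus H$, and blocks $A(u),B(u),C(u),D(u)$. Since $p_{H,0,R}$ is the first-return kernel to $H$ of the chain $p_{H,\eta,R}$ on $N_\eta(H)$, summing over the intermediate visits (which lie in $N_\eta(H)\setminus H$) and Fourier-transforming yields $F_{H,0,R}(u)=A(u)+B(u)(I-D(u))^{-1}C(u)$ wherever $\rho(D(u))<1$; this holds at $u=0$ because $D(0)$ is a proper principal submatrix of the irreducible matrix $F_{H,\eta,R}(0)$, so $\rho(D(0))<\lambda_\eta(0)\le 1$. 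One then transfers Perron--Frobenius eigenvectors across this identity: if $\lambda_\eta(0)=1$, the positive eigenvector of $F_{H,\eta,R}(0)$, written blockwise as $(v_1,v_2)$, satisfies $v_2=(I-D(0))^{-1}C(0)v_1$ and hence $F_{H,0,R}(0)v_1=v_1$, forcing $\lambda_0(0)=1$; conversely, if $\lambda_0(0)=1$ with positive eigenvector $w$, then $(w,(I-D(0))^{-1}C(0)w)$ is a nonnegative eigenvector of $F_{H,\eta,R}(0)$ for the eigenvalue $1$ (using $I+D(0)(I-D(0))^{-1}=(I-D(0))^{-1}$), forcing $\lambda_\eta(0)=1$. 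The expected main difficulty is the first fact: the identity $R_{\eta,R}(H)=1/\lambda_\eta(0)$ must be proved uniformly in $\eta\ge 0$, and in particular for small $\eta$ the hypotheses of Lemmas~\ref{exponentialmomentsinducedchain}, \ref{lemmaassumption1}, \ref{spectralradiusabelian}, \ref{Martinharmonicnilpotent} and the results of \cite{Dussaule} are unavailable, so the elementary Fourier/amenability argument cannot be bypassed; the Schur-complement computation of the second fact is then routine.
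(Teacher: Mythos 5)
Your proof is correct in outline but takes a genuinely different route from the paper's. The paper proves the hard direction ($R_{\eta,R}(H)=1\Rightarrow R_{0,R}(H)=1$) through the Martin boundary: it combines Lemma~\ref{lemmaassumption2firstpart} with Proposition~\ref{propsumupdegenerateparabolic} to show that the $1$-Martin boundary of $p_{H,\eta',R}$ is a single point for large $\eta'$, transfers this statement to $p_{H,0,R}$ via the identity of induced Martin kernels from \cite{DussauleLLT}, and then reads off $R_{0,R}(H)=1$ from the classification of minimal harmonic functions (Lemmas~\ref{minimalharmonicnilpotent} and~\ref{spectralradiusnilpotent}). You instead prove the quantitative Kesten-type identity $R_{\eta,R}(H)=1/\lambda_\eta(0)$ uniformly in $\eta\ge0$ (symmetry and $\Z^d$-invariance, Fourier symbol plus Wielandt for the upper bound, a F\o lner/Rayleigh-quotient argument for the lower bound), and then relate $\lambda_\eta(0)$ to $\lambda_0(0)$ by the first-return/Schur-complement identity $F_{H,0,R}(0)=A+B(I-D)^{-1}C$, transferring Perron--Frobenius eigenvectors across it; note that by symmetry $\lambda$ is even and convex, so your $\lambda_\eta(0)$ is the paper's $\min_u\lambda(u)$, consistent with Lemma~\ref{spectralradiusabelian}. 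Your route is more elementary and avoids the Martin boundary machinery and the citation of \cite{DussauleLLT} entirely; it also yields an exact formula for $R_{\eta,R}(H)$ for every $\eta$ rather than just the dichotomy $=1$ versus $>1$. What the paper's route buys is that all of its ingredients are needed elsewhere in the argument anyway.

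The one point you must still nail down is the finiteness of the entries of $F_{H,0,R}(0)$, i.e.\ $\sum_{h'\in H}p_{H,0,R}(e,h')<+\infty$ at $r=R$. You correctly identify this as the only input needed, but the paper's moment bound (Lemma~\ref{exponentialmomentsinducedchain}) is only available for $\eta\ge\eta_M$, so for $\eta=0$ it is not off the shelf, and both the Fourier-symbol computation and the convergence of $(I-D(0))^{-1}$ rest on it. Fortunately it follows from your own F\o lner step: the induced Green function at $t=1$ is dominated by $G_R(e,e)<+\infty$ (the $n$-step events of the induced chain are disjoint as subsets of the set of all paths), so $R_{\eta,R}(H)\ge1$ and the symmetric operator $P_{H,\eta,R}$ has $\ell^2$-norm at most $1$; testing against $\mathbf{1}_{B_m}\otimes v$ for a fixed $v>0$ and applying Fatou gives $v^{T}F_{H,\eta,R}(0)v\le\|v\|^2$, which forces every entry of $F_{H,\eta,R}(0)$ to be finite. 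With that inserted, the remaining steps (Hermitian symbol, Wielandt, $\rho(D(0))<\lambda_\eta(0)\le1$ via the proper-principal-submatrix argument, and the Perron--Frobenius subinvariance theorem for the eigenvector transfer) all go through.
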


\begin{proof}
The "if" part is given by Lemma~\ref{lemmaassumption2firstpart}.
Let us prove the converse and assume that $R_{\eta,R}(H)=1$.
Then, according to Lemma~\ref{lemmaassumption2firstpart}, for every $\eta'\geq \eta$, we also have $R_{\eta',R}(H)=1$.
Proposition~\ref{propsumupdegenerateparabolic} shows that for large enough $\eta'$, the 1-Martin boundary of $p_{H,\eta',R}$ is reduced to a point.

We want to prove that the 1-Martin boundary of $p_{H,0,R}$ is also reduced to a point.
Fix $g\in H$ and let $g_n$ be a sequence of $H$ going to infinity.
Then, in particular, $g,g_n\in N_{\eta'}(H)$, so that $K_{H,\eta',R}(g,g_n)$ converges to some limit which does not depend on $g_n$, where $K_{H,\eta',R}$ is the Martin kernel associated with $p_{H,\eta,R}$.
According to Lemma~\ref{sameGreen}, $K_{H,\eta',R}=K_{H,0,R}$.
Consequently, $K_{H,0,R}(g,g_n)$ also converges to some limit, independently of $g_n$.
We deduce that the 1-Martin boundary of $p_{H,0,R}$ is indeed reduced to a point.

According to Lemma~\ref{minimalharmonicnilpotent}, there is only one solution to $\lambda(u)=1$ and since $\lambda$ is strictly convex, 1 is the minimum of $\lambda(u)$.
According to Lemma~\ref{spectralradiusnilpotent}, $R_{0,R}(H)=1$, which concludes the proof.
\end{proof}

\begin{corollary}\label{coroassumption2}
Let $H\in \Omega_0$ be a parabolic subgroup and assume either that $r<R$ or that $\mu$ is spectrally non-degenerate along $H$.
Let $\eta\geq \eta_0$.
Then, the transition kernel $p_{H,\eta,r}$ on $N_\eta(H)$ satisfies Assumption~2.
\end{corollary}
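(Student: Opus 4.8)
The plan is to read off the corollary from the lemmas just established, reducing both cases to the single assertion $R_{\eta,r}(H)>1$, since that is exactly the hypothesis of Lemma~\ref{lemmaassumption2preparation} and hence already yields Assumption~2. A preliminary observation I would record first is the unconditional bound $R_{\eta',r}(H)\geq 1$ for all $\eta'\geq 0$ and all $r\leq R$: by \cite[Lemma~3.1]{DussauleLLT} the Green function of $p_{H,\eta',r}$ evaluated at parameter $1$ equals $G_r$ restricted to $N_{\eta'}(H)$, which is finite, so the relevant power series in $t$ has radius of convergence at least $1$. Consequently it will suffice in each case to exclude the equality $R_{\eta,r}(H)=1$.

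For the case $r<R$ this is immediate: Lemma~\ref{lemmaassumption2r<R} gives $R_{\eta,r}(H)>1$ directly (with no hypothesis on $\mu$), and Lemma~\ref{lemmaassumption2preparation} then gives Assumption~2. For the case $r=R$, where we are assuming $\mu$ is spectrally non-degenerate along $H$, I would argue as follows: by definition of spectral non-degeneracy, $R_{0,R}(H)=R_H\neq 1$, and combined with the preliminary observation $R_{0,R}(H)\geq 1$ this upgrades to $R_{0,R}(H)>1$. Then the contrapositive of Lemma~\ref{lemmaassumption2secondpart} yields $R_{\eta,R}(H)\neq 1$, whence $R_{\eta,R}(H)>1$ using once more that $R_{\eta,R}(H)\geq 1$, and Lemma~\ref{lemmaassumption2preparation} concludes.

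There is no real obstacle here: the corollary is a bookkeeping consolidation of the preceding lemmas, and the only point that needs a moment's attention is the unconditional transience bound $R_{\eta,r}(H)\geq 1$, which is handled by the identification of the induced Green function with $G_r$ together with finiteness of $G_R$ (valid since a symmetric, finitely supported, admissible random walk is transient at its spectral radius). All of the genuine content — strict convexity of $\lambda$, the local limit asymptotics $p^{(n)}_{H,\eta,r}(e,e)\sim Cn^{-d/2}$ that force $R_{\eta,r}(H)=1$ in the degenerate regime, and the exponential moment estimates of Lemma~\ref{exponentialmomentsinducedchain} underpinning Assumption~1 — has already been absorbed into Lemmas~\ref{spectralradiusabelian}, \ref{lemmaassumption2preparation}, \ref{lemmaassumption2r<R} and~\ref{lemmaassumption2secondpart}, so nothing further is needed.
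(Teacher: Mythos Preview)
Your proof is correct and matches the paper's intent: the corollary is stated there without proof, as an immediate consequence of Lemmas~\ref{lemmaassumption2preparation}, \ref{lemmaassumption2r<R} and~\ref{lemmaassumption2secondpart}, and your write-up makes that deduction explicit. The one point you add that the paper leaves implicit is the unconditional bound $R_{\eta,r}(H)\geq 1$ (via the identification of the induced Green function at $t=1$ with $G_r$), which is exactly what is needed to pass from $R_{\eta,R}(H)\neq 1$ to $R_{\eta,R}(H)>1$ in the non-degenerate case.
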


We can finally prove Proposition~\ref{propconvergenceparabolic2}.
\begin{proof}
This is a consequence of \cite[Corollary~4.8]{DGGP}.
However, the formulation is a bit different (the statement refers to sub-Markov chains rather than transition kernels satisfying Assumption~2).
We give the full proof for completeness.

If $\eta$ is large enough, the transition kernel $p_{H,\eta,r}$, which can be seen as a $\Z^d$-invariant kernel on $\Z^d\times \{1,...,N_\eta\}$, satisfies both Assumption~1 and~2.
According to \cite[Lemma~3.24]{Dussaule}, the set $\{u,\lambda(u)=1\}$ is homeomorphic to the sphere $\mathbb{S}^{d-1}$.
An explicit homeomorphism is given by
$$u\in \{u\in \mathbb{R}^d,\lambda(u)=1\}\mapsto \frac{\nabla \lambda(u)}{\|\nabla \lambda (u)\|}.$$
This homeomorphism provides a homeomorphism $\varphi$ between the geometric boundary of $\Z^d\times \{1,...,N_\eta\}$ and $\mathbb{S}^{d-1}$ constructed as follows.
Let $(x_n,j_n)$ be a sequence in $\Z^d\times \{1,...,N_\eta\}$ converging to a point $\tilde{x}$ in the geometric boundary $\partial (\Z^d\times \{1,...,N_\eta\})$.
That is, $x_n$ tends to infinity and $\frac{x_n}{\|x_n\|}$ converges to a point $\theta$ in the unit sphere $\mathbb{S}^{d-1}$.
There exists a unique $u\in \{u\in \mathbb{R}^d,\lambda(u)=1\}$ such that $\theta=\frac{\nabla \lambda(u)}{\|\nabla \lambda (u)\|}$.
Then, define $\varphi(\tilde{x})=u$.

Let $g\in N_\eta(H)$ and let $g_n$ be a sequence of $N_\eta(H)$ converging to a point in the geometric boundary of $H$.
Write $g=(x,j)$ and $g_n=(x_n,j_n)$.
Then, by definition, $x_n$ converges to some $\tilde{x}$ in the geometric boundary of $\Z^d\times \{1,...,N_\eta\}$.
Let $u=\varphi(\tilde{x})$.
Then, \cite[Proposition~3.27]{Dussaule} shows that
$K_{r}(g,g_n)$ converges to $C_{j}\mathrm{e}^{u\cdot x}$.
Here, $C_{j}$ only depends on the coordinate $j$ of $g=(x,j)$.
In particular, $C_j$ does not depend on $j_n$.
Note that since $K_{r}(g,g_n)$ does not depend on $\eta$, its limit $C_{j}\mathrm{e}^{u\cdot x}$ also is independent.
Moreover, letting $x$ vary, we see that $u$ is independent of $\eta$, so that $C_j$ also is independent of $\eta$.

To prove Proposition~\ref{propconvergenceparabolic2}, we now consider a sequence $g_n$ such that its projection $h_n$ on $H$ converges to $\xi$ in the geometric boundary of $H$.
The above discussion shows that for every fixed $g\in \Gamma$ and for every $L>0$, for every $u$ in the ball $B(e,L)$, we have
$$K_r(g,h_nu)\to C_g\mathrm{e}^{u\cdot x(g)},$$
where $u$ only depends on $\xi$ and where $C_g$ only depends on $d(g,H)$ and $x(g)\in \Z^d$ is the projection of $g$ on $\Z^d$.
Exactly like in the proof of Proposition~\ref{propconvergenceparabolic1}, we can use the deviation inequalities from Section~\ref{Sectiondeviationinequalities} to show that
\begin{equation}\label{limitMartinparabolicnondegenerate}
    K_r(g,g_n) \to C_g\mathrm{e}^{u\cdot x(g)}.
\end{equation}
This concludes the proof.
\end{proof}

\subsection{Topology of the Martin boundary}\label{Sectionendproofmaintheorems}
We end the proof of our main theorem.
We thus consider a non-elementary relatively hyperbolic group $\Gamma$ with respect to virtually abelian subgroups.
We also consider a symmetric probability measure $\mu$ whose finite support generates $\Gamma$.
We let $R$ be its spectral radius.

\medskip
We first note that Proposition~\ref{propconvergencegeometricMartin} is a consequence of Proposition~\ref{propconvergenceconical}, Proposition~\ref{propconvergenceparabolic1} and Proposition~\ref{propconvergenceparabolic2}.

This proposition gives a map $\xi \in \Gamma \cup \partial_r\Gamma\mapsto \tilde{\xi}\in \Gamma \cup \partial_{r\mu}\Gamma$.
We will denote by $K_r(\cdot, \tilde{\xi})$ the corresponding limit Martin kernel.
Our goal is to prove Theorem~\ref{maintheorem}.
We first prove the following lemma.

\begin{lemma}\label{lemmaone-to-oneMartin}
Fix $r\leq R$.
Let $\xi$ and $\xi'$ be two distinct points in the $r$-geometric boundary.
Then, there exists a sequence $g_n$ such that
\begin{enumerate}
    \item either $K_r(g_n,\tilde{\xi})$ stays bounded away from 0 whereas $K_r(g_n,\tilde{\xi}')$ converges to 0,
    \item or $K_r(g_n,\tilde{\xi})$ tends to infinity whereas $K_r(g_n,\tilde{\xi}')$ stays bounded away from infinity.
\end{enumerate}
\end{lemma}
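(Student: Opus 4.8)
The plan is to separate the two points $\xi,\xi'$ geometrically in $\Gamma\cup\partial_r\Gamma$, produce a sequence $g_n$ going to infinity in a way that ``escapes'' along $\xi$, and use the weak Floyd-Ancona inequalities (Theorem~\ref{thmweakAncona}) together with the explicit formulas for the limit Martin kernels obtained in Propositions~\ref{propconvergenceconical}, \ref{propconvergenceparabolic1}, \ref{propconvergenceparabolic2} to estimate $K_r(g_n,\tilde\xi)$ and $K_r(g_n,\tilde\xi')$. First I would treat the coarsest case, where $\xi$ and $\xi'$ have distinct images under the map $\partial_r\Gamma\to\partial_{\mathrm{Bow}}\Gamma$ to the Bowditch boundary (equivalently the Floyd boundary). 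Here one can pick $g_n$ converging to $\xi$ in the Floyd compactification; then for $n$ large there is a uniform $\delta>0$ with $\delta^f_{g_n}(e,\gamma)\geq\delta$ for any fixed $\gamma$ chosen ``on the other side'', and Theorem~\ref{thmweakAncona} gives $G_r(\gamma,g_n)\asymp G_r(\gamma,e)G_r(e,g_n)$... but more to the point one wants a $\gamma$ with $K_r(\gamma,\tilde\xi)$ large (take $\gamma$ far along a geodesic ray towards $\xi$, so $\delta^f_{\gamma}(e,g_n)\geq\delta$ and hence $G_r(e,g_n)\asymp G_r(e,\gamma)G_r(\gamma,g_n)$ gives $K_r(\gamma,g_n)\asymp 1/G_r(e,\gamma)^{-1}\cdot$ something growing), while a geodesic from $e$ to $g_n$ eventually stays far from any fixed neighbourhood of $\xi'$, so $K_r(\gamma',g_n)\to 0$ for $\gamma'$ chosen towards $\xi'$. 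Concretely I would fix $\gamma$ deep towards $\xi$ and then take $g_n\to\xi$: strong/weak Ancona along $[\,e,g_n]$ through $\gamma$ forces $K_r(\gamma,g_n)\to\infty$, whereas any geodesic $[e,g_n]$ avoids a neighbourhood of $\xi'$, which by Ancona inequalities at $r$ forces $K_r(\gamma',g_n)\to 0$ for a suitable fixed $\gamma'$; this is alternative~(2).

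Next I would handle the case where $\xi,\xi'$ lie over the same parabolic limit point $p$, with stabilizer $gHg^{-1}$. After translating by $g$ we may assume $p$ is fixed by $H$, and $\xi,\xi'$ correspond to distinct directions $\theta\neq\theta'$ in the sphere $\partial H\cong\mathbb S^{d-1}$ (this is exactly the situation when $r<R$, or $r=R$ with $H$ non-degenerate; if $H$ is degenerate the fibre over $p$ is a single point so this case is vacuous). By Proposition~\ref{propconvergenceparabolic2}, for a sequence $h_n\in H$ converging to $\theta$ we have $K_r(\cdot,\tilde\xi)=C_{(\cdot)}\mathrm e^{u\cdot x(\cdot)}$ with $u=\varphi(\theta)$ on $\{\lambda=1\}$, and similarly $K_r(\cdot,\tilde\xi')$ is governed by $u'=\varphi(\theta')\neq u$. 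Then I would choose $g_n=h_n\in H$ directly: along the direction $u-u'$ (pick $h_n$ so that $x(h_n)$ is roughly a large multiple of a vector on which $(u-u')\cdot x(h_n)\to+\infty$, which is possible since $u\neq u'$ and $\mathbb Z^d$ is dense enough in directions), the ratio $K_r(h_n,\tilde\xi)/K_r(h_n,\tilde\xi')\asymp\mathrm e^{(u-u')\cdot x(h_n)}\to\infty$. Comparing with $K_r(e,\cdot)=1$ and using that $C_{(\cdot)}$ is bounded above and below, one gets either $K_r(h_n,\tilde\xi)\to\infty$ or $K_r(h_n,\tilde\xi')\to 0$ along a subsequence; in either case one of the two alternatives of the lemma holds. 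One must be slightly careful that $h_n$ genuinely converges in the geometric boundary to $\theta$ (resp.\ $\theta'$) only for one of the two points — but we only need one sequence realizing the dichotomy, so picking $h_n$ in the direction of $\theta$ suffices, as then $K_r(h_n,\tilde\xi')$, evaluated via the continuity formula~(\ref{limitMartinparabolicnondegenerate}) after passing to the relevant neighbourhood $N_\eta(H)$, is of order $\mathrm e^{u'\cdot x(h_n)}$ which is exponentially smaller.

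There remains the mixed case: $\xi$ is a conical limit point or lies over a parabolic point $p$, while $\xi'$ lies over a different parabolic point $p'$ or is conical with $p\neq p'$ — but this is already subsumed in the first case (distinct images in the Bowditch boundary). The genuinely new subcase is $\xi$ conical and $\xi'$ over a parabolic point $p$ with the conical point equal to $p$ — impossible, since conical and parabolic points are disjoint — so the list is complete. I expect the \textbf{main obstacle} to be the first case, specifically arranging uniformly (in $n$) the Floyd-separation $\delta^f_{g_n}(\gamma,e)\geq\delta$ and $\delta^f_\gamma(e,g_n)\ge\delta$ simultaneously, i.e.\ choosing the auxiliary point $\gamma$ deep enough towards $\xi$ that it is an $(\epsilon,\eta)$-transition point (or Floyd-visible point) on every geodesic $[e,g_n]$ for $n$ large, and controlling that the same $g_n$ stays quantitatively far from the $\xi'$-side; this requires invoking Proposition~\ref{Floydgeo} and the behaviour of geodesics towards distinct boundary points, together with the uniformity over $r\leq R$ built into Theorem~\ref{thmweakAncona} and Corollary~\ref{Anconaepsilon}. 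The parabolic-versus-parabolic (same $p$) case is comparatively soft, being a direct computation with the exponentials $\mathrm e^{u\cdot x}$ from \cite{Dussaule}.
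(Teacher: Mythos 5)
Your overall strategy coincides with the paper's: split according to whether $\xi$ and $\xi'$ have the same image $\zeta,\zeta'$ in the Bowditch boundary, use Floyd separation plus Theorem~\ref{thmweakAncona} when $\zeta\neq\zeta'$, and use the exponential formulas for the Martin kernel along a parabolic subgroup when $\zeta=\zeta'$. However, there are two genuine gaps.

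First, in the case $\zeta\neq\zeta'$ you have systematically interchanged the two arguments of the Martin kernel. The lemma requires a sequence $g_n$ in the \emph{first} slot: one must control $K_r(g_n,\tilde{\xi})=\lim_m G_r(g_n,z_m)/G_r(e,z_m)$ and $K_r(g_n,\tilde{\xi}')$. Your central claim that ``$K_r(\gamma,g_n)\to\infty$'' for a \emph{fixed} $\gamma$ and $g_n\to\xi$ is impossible: by definition of the Martin compactification this quantity converges to the finite number $K_r(\gamma,\tilde{\xi})$. The correct statement (and what the paper proves) is obtained by letting the lemma's sequence be the transition points $g_n$ on a ray $[e,\zeta)$ and using $\delta^f_{g_n}(e,g_m)\geq\delta$ for $n\leq m$ to get $K_r(g_n,\tilde{\xi})\geq 1/(C\,G_r(e,g_n))\to\infty$, while $K_r(g_n,\tilde{\xi}')\leq C\,G_r(g_n,e)\to 0$ follows from Floyd separation of $g_n$ from any sequence tending to $\zeta'$. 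Moreover this transition-point argument only works when $\zeta$ is \emph{conical}: if $\xi$ lies over a parabolic point $\zeta$ (still with $\zeta'\neq\zeta$), a geodesic ray to $\zeta$ is eventually $(\epsilon,\eta)$-deep in a parabolic coset and carries only a bounded set of transition points, so one must instead invoke the formula $K_r(g,\tilde{\xi})=C_g\mathrm{e}^{\pi(g)\cdot u}$ and choose $g_n\in H$ in a direction keeping $\pi(g_n)\cdot u$ bounded below. You do not treat this sub-case, and your ``subsumed in the first case'' remark glosses over it.

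Second, in the case $\zeta=\zeta'$ (same parabolic fiber, $u\neq u'$), choosing $h_n$ in the direction of $u-u'$ only forces the \emph{ratio} $K_r(h_n,\tilde{\xi})/K_r(h_n,\tilde{\xi}')$ to blow up, and this does not imply either alternative of the lemma, even along a subsequence: if $\theta\cdot u>0$ and $\theta\cdot u'>0$ simultaneously (which can happen for $\theta$ proportional to $u-u'$ whenever $\|u'\|^2<u\cdot u'<\|u\|^2$), then both kernels grow like distinct exponentials, so $K_r(h_n,\tilde{\xi}')$ neither converges to $0$ nor stays bounded, and neither (1) nor (2) holds. The paper's choice of $\theta$ is made relative to \emph{both} $u$ and $u'$: if $u=0$ take $\theta\cdot u'<0$, yielding alternative (1); otherwise take $\theta\cdot u>0$ and $\theta\cdot u'\leq 0$, yielding alternative (2). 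Without this (or some equivalent) two-sided constraint your dichotomy argument does not close.
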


\begin{proof}
By definition of the geometric boundary, if $g_n\in \Gamma$ converges to $\xi$, then it converges to a point in the Bowditch boundary.
In other words, $\xi$ and $\xi'$ define two points $\zeta,\zeta'$ in the Bowditch boundary.

We first prove that if $g_n$ converges to $\zeta$ and if $\zeta'\neq \zeta$, then $K(g_n,\tilde{\xi}')$ converges to 0.
Indeed, consider a sequence $g'_m$ converging to $\zeta'$ in the Bowditch boundary.
Since the Floyd boundary covers the Bowditch boundary, according to the results of Gerasimov \cite{Gerasimov}, there exists some uniform $\delta>0$ such that $\delta_e^f(g_n,g_m')\geq \delta$.
According to Theorem~\ref{thmweakAncona}, there exists $C\geq 0$ such that for every $n$ and $m$, we have
$$G_r(g_n,g'_m)\leq CG_r(g_n,e)G_r(e,g'_m),$$
so that
$$K_r(g_n,g'_m)\leq CG_r(g_n,e).$$
Letting $m$ tend to infinity, we get
$$K_r(g_n,\tilde{\xi}')\leq CG_r(g_n,e).$$
Since $g_n$ goes to infinity, $G_r(g_n,e)$ converges to 0 and we get the desired result.

Next, we prove that we can find a sequence $g_n$ converging to $\zeta$ such that $K(g_n,\tilde{\xi})$ stays bounded away from 0.
Assume first that $\zeta$ is conical and fix a geodesic $[e,\zeta)$ from $e$ to $\zeta$.
Since $\zeta$ is conical, there exists an infinite sequence $g_n$ of transition points on $[e,\zeta)$ converging to $\zeta$, see \cite[Lemma~2.20]{YangPS}.
Proposition~\ref{Floydgeo} shows that for $n\leq m$, $\delta_{g_n}^f(e,g_m)\geq \delta$ for some uniform $\delta>0$ and so
Theorem~\ref{thmweakAncona} shows that there exists $C\geq 0$ such that
$$G_r(e,g_m)\leq C G_r(e,g_n)G_r(g_n,g_m).$$
Letting $m$ tend to infinity, we see that $K_r(g_n,\tilde{\xi})\geq \frac{1}{C}\frac{1}{G_r(e,g_n)}$, so that $K_r(g_n,\tilde{\xi})$ tends to infinity when $n$ tends to infinity.
Assume now that $\zeta$ is parabolic. For simplicity assume first that its stabilizer $H$ is in $\Omega_0$.
Then, there exists $u\in \R^d$ such that
$$K_r(g,\tilde{\xi})=C_g\mathrm{e}^{\pi(g)\cdot u},$$
where $\pi(g)$ is the projection of $g$ on $H$ and where $C_g$ only depends on the distance from $g$ to $H$.
Indeed, either the 1-Martin boundary of $p_{H,\eta,r}$ is reduced to a point and then we can use Lemma~\ref{minimalharmonicnilpotent}, or it is homeomorphic to a sphere and we use the explicit form of the Martin kernel given by \cite[Proposition~3.29]{Dussaule}.
If $u=0$, then for any sequence $g_n$ going to infinity in $H$, $K_r(g_n,\tilde{\xi})$ stays bounded.
If not, then choose $\theta\in \mathbb{S}^{d-1}$ such that $\theta \cdot u>0$ and let
$g_n=(x_n,j_n)$ in $H=\Z^d\times\{1,...,N_{\eta}\}$ such that $g_n$ goes to infinity and
$\frac{g_n}{\|g_n\|}$ converges to $\theta$.
Then $K_r(g_n,\tilde{\xi})$ tends to infinity.
In any case, we found a sequence $g_n$ such that $K_r(g_n,\tilde{\xi})$ is bounded away from 0.
If the stabilizer of $\zeta$ is not in $\Omega_0$, then we get the same result, multiplying everything on the left by some $g\in \Gamma$ such that $gHg^{-1}$ is in $\Omega_0$.

We now end the proof.
We begin with the case where $\zeta\neq \zeta'$.
Then, we choose a sequence $g_n$ converging to $\zeta$ such that $K(g_n,\tilde{\xi})$ stays bounded away from 0.
Since $K(g_n,\tilde{\xi}')$ converges to 0, we are done.
We thus assume that $\zeta=\zeta'$.
The only possibility is that $\zeta$ is parabolic.
As above, we first assume that its stabilizer $H$ is in $\Omega_0$.
Thus, there exists $u\neq u'\in \R^d$ such that
$$K_r(g,\tilde{\xi})=C_g\mathrm{e}^{\pi(g)\cdot u},$$
$$K_r(g,\tilde{\xi}')=C_g\mathrm{e}^{\pi(g)\cdot u'}.$$
We have to be a little more precise than above.
If $u=0$, we choose $\theta$ such that $\theta \cdot u'<0$.
Otherwise, we choose $\theta$ such that $\theta \cdot u>0$ and $\theta\cdot u'\leq 0$.
This is always possible, whether $u'=0$ or not.
We then choose again a sequence $g_n=(x_n,j_n)$ in $H=\Z^d\times\{1,...,N_{\eta}\}$ such that $g_n$ goes to infinity and
$\frac{g_n}{\|g_n\|}$ converges to $\theta$.
Then, in the first case, $K_r(g_n,\tilde{\xi})$ stays bounded away from 0 whereas $K_r(g_n,\tilde{\xi}')$ converges to 0.
In the second case, $K_r(g_n,\tilde{\xi})$ tends to infinity, whereas $K_r(g_n,\tilde{\xi}')$ stays bounded away from infinity.
If $H$ is not in $\Omega_0$, then we get the same result, multiplying everything on the left by some $g\in \Gamma$ such that $gHg^{-1}$ is in $\Omega_0$.
We dealt with every case, so this concludes the proof.
\end{proof}

We can finish the proof of Theorem~\ref{maintheorem}.
\begin{proof}
First, the map $\xi \in \Gamma \cup \partial_r\Gamma\mapsto \tilde{\xi}\in \Gamma \cup \partial_{r\mu}\Gamma$ is continuous.
Indeed since the geometric compactifications are metrizable and since $\Gamma$ is dense in these compactifications, it is enough to prove that whenever $g_n\in \Gamma$ converges to $\xi$, then $\tilde{g}_n$ converges to $\tilde{\xi}$, which is given by Propositions~\ref{propconvergenceconical},~\ref{propconvergenceparabolic1} and~\ref{propconvergenceparabolic2}.

We now prove that it is also one-to-one.
We just need to prove that whenever $\xi\neq \xi'\in \partial_r\Gamma$, then $\tilde{\xi}\neq \tilde{\xi}'\in \partial_{r\mu}\Gamma$.
This is given by Lemma~\ref{lemmaone-to-oneMartin}.

The Martin compactification also is metrizable and $\Gamma$ is dense in this compactification.
Let $\tilde{\xi}\in \partial_{r\mu}\Gamma$.
Then, there exists $g_n\in \Gamma$ converging to $\tilde{\xi}$.
Up to taking a subsequence, $g_n$ converges to $\xi$ in the $r$-geometric compactification, so $\tilde{\xi}$ is the image of $\xi$ by the above map.

Thus, the map we constructed is one-to-one, onto and continuous.
Since both spaces are metrizable and compact, this map is a homeomorphism.
\end{proof}

\section{Minimality and stability of the Martin boundary}\label{Sectionstability}
We prove here Theorem~\ref{mainthmstability} and Theorem~\ref{minimality} and so we consider a relatively hyperbolic group with respect to virtually abelian subgroups.
We also consider a symmetric probability measure $\mu$ whose finite support generates $\Gamma$.

\subsection{Minimality}
To prove minimality of the Martin boundary, we will use the following lemmas.
The first one is a general result in potential theory.

\begin{lemma}\label{lemmaminimalMartin}\cite[Proposition~II.1.6]{AnconaLecturenotes}.
Let $p$ be a finitely supported transition kernel on a countable space $E$ which is transient and irreducible.
Let $\phi$ be a non-negative harmonic function on $E$ and $\nu_{\phi}$ the corresponding measure on the minimal Martin boundary $\partial^{min}E$.
For $\mu_{\phi}$-almost every point $\tilde{y}$ in $\partial^{min}E$, $\frac{G(y_n,x_0)}{\phi(y_n)}$ converges to 0 when $y_n$ converges to $\tilde{y}$.
\end{lemma}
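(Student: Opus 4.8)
The plan is to identify this statement as a standard fact from potential theory — essentially the claim that the Green kernel is negligible with respect to a harmonic function at almost every minimal boundary point — and to prove it by combining the Riesz decomposition with the Martin representation and a maximality/martingale argument.

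First I would recall the relevant framework. Fix the basepoint $x_0$. For the transient irreducible kernel $p$, every non-negative harmonic function $\phi$ on $E$ admits a Martin representation $\phi(x) = \int_{\partial^{min}E} K(x,\tilde y)\,\nu_\phi(d\tilde y)$, where $\nu_\phi$ is the unique measure carried by the minimal boundary. For $\tilde y \in \partial^{min}E$ write $K(\cdot,\tilde y)$ for the corresponding minimal kernel. The quantity to control is $G(y_n,x_0)/\phi(y_n)$ as $y_n \to \tilde y$. The key observation is that, by symmetry of the Green function under the basepoint normalization, $G(y_n,x_0) = G(x_0,y_n) = G(x_0,x_0) K(x_0^{-1}\cdot,\dots)$ — more carefully, one should express $G(\cdot,y_n)/G(x_0,y_n) = K(\cdot,y_n) \to K(\cdot,\tilde y)$, so that $G(y_n,x_0)$ is, up to the fixed factor $G(x_0,y_n)$, controlled by the behaviour of the Martin kernel evaluated at $x_0$. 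The cleaner route is to fix a reference harmonic function $h_0 \equiv 1$ if it is harmonic, or more robustly to work directly with the ratio and invoke the following: the function $x \mapsto G(x,x_0)$ is a potential (it is the Green potential of the point mass at $x_0$), hence its harmonic part in the Riesz decomposition vanishes.

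The main step is then a Fatou-type / martingale convergence argument. Consider the ratio $u_n = G(y_n,x_0)/\phi(y_n)$. Along a $p$-random walk trajectory conditioned to converge to a minimal boundary point $\tilde y$ (using the $\phi$-process, i.e. the Doob transform of $p$ by $\phi$), the process $n \mapsto G(Y_n,x_0)/\phi(Y_n)$ is a non-negative supermartingale: this is because $G(\cdot,x_0)$ is superharmonic (harmonic off $x_0$, and the transform by $\phi$ turns "superharmonic over harmonic" into a supermartingale). A non-negative supermartingale converges almost surely; call the limit $Z_{\tilde y}$. To see that $Z_{\tilde y} = 0$ for $\nu_\phi$-a.e. $\tilde y$, use that the expectation of the limit is dominated by the initial value, and more importantly that $G(\cdot,x_0)$ has no harmonic component — its Martin representation over the minimal boundary is the zero measure, since $G(\cdot,x_0)$ is a pure potential. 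Hence integrating the limiting ratios against $\nu_\phi$ gives $\int Z_{\tilde y}\,\nu_\phi(d\tilde y) = 0$, forcing $Z_{\tilde y} = 0$ for $\nu_\phi$-almost every $\tilde y$. Finally, since this holds along the specific sequences arising from the $\phi$-process, and the statement is about arbitrary sequences $y_n \to \tilde y$, one upgrades using the fact that for minimal $\tilde y$ the convergence $G(\cdot,y_n)/\phi(y_n) \to 0$ is insensitive to the choice of approximating sequence — this is precisely the content of \cite[Proposition~II.1.6]{AnconaLecturenotes}, and I would simply cite it rather than reprove it, since the excerpt already attributes the lemma to that reference.

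The main obstacle is making the supermartingale/Riesz-decomposition argument fully rigorous at the level of \emph{arbitrary} sequences $y_n \to \tilde y$ rather than just trajectories of the $\phi$-process; the passage between "almost sure behaviour of the conditioned walk" and "pointwise behaviour along every approximating sequence" is the subtle point and is exactly where one invokes minimality of $\tilde y$. Since Ancona's lecture notes carry this out in the stated generality, and the hypotheses here (finitely supported, transient, irreducible transition kernel on a countable set) match exactly, the honest proof is a one-line citation to \cite[Proposition~II.1.6]{AnconaLecturenotes}; I would present the sketch above only as motivation and then defer to that reference.
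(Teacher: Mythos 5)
The paper does not prove this lemma at all --- it is imported verbatim as \cite[Proposition~II.1.6]{AnconaLecturenotes} --- and your bottom line is the same one-line citation, so the approaches coincide. Your accompanying sketch (the Riesz decomposition showing $G(\cdot,x_0)$ is a pure potential, plus supermartingale convergence of $G(Y_n,x_0)/\phi(Y_n)$ under the Doob $\phi$-transform) is a reasonable account of the standard argument, and you correctly flag that the one delicate point, upgrading from walk trajectories to arbitrary sequences $y_n\to\tilde y$, is exactly what the cited reference supplies.
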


We will also need the following result, which is a refinement of
Lemma~\ref{lemmaone-to-oneMartin}.

\begin{lemma}\label{one-to-onepreciseparabolic}
Consider a non-degenerate parabolic subgroup $H$ and let $\xi_1\neq \xi_2$ be two points in the geometric boundary of $H$ and $\tilde{\xi}_1$ and $\tilde{\xi}_2$ the corresponding points in the Martin boundary.
There exists a compact neighborhood $\mathcal{U}$ of $\xi_1$ in the geometric boundary of $H$ and a sequence $g_n$ in $H$ such that
\begin{enumerate}
    \item either $K_r(g_n,\tilde{\xi})$ tends to infinity, uniformly over $\xi\in \mathcal{U}$ and $K_r(g_n,\tilde{\xi}_2)$ stays bounded away from infinity,
    \item or $K_r(g_n,\tilde{\xi})$ stays uniformly bounded away from 0 over $\mathcal{U}$ and $K_r(g_n,\tilde{\xi}_2)$ converges to $0$.
\end{enumerate}
\end{lemma}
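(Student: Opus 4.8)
The plan is to leverage the explicit description of the Martin kernel along a non-degenerate parabolic subgroup established in the proof of Proposition~\ref{propconvergenceparabolic2}. Recall that for large enough $\eta$, the transition kernel $p_{H,\eta,r}$ satisfies Assumptions~1 and~2, so by \cite[Proposition~3.27]{Dussaule} and the homeomorphism $\varphi$ from the geometric boundary of $H$ to $\{u\in\R^d,\lambda(u)=1\}$, each point $\xi$ in the geometric boundary of $H$ corresponds to a vector $u_\xi$ with $\lambda(u_\xi)=1$, and for $g=(x,j)\in H$,
$$K_r(g,\tilde{\xi})=C_j(u_\xi)\,\mathrm{e}^{x\cdot u_\xi},$$
where $C_j(u_\xi)$ is a positive Perron--Frobenius eigenvector entry depending continuously on $u_\xi$. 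Since $\xi\mapsto u_\xi$ is a homeomorphism, $\xi_1\neq\xi_2$ gives $u_{\xi_1}\neq u_{\xi_2}$, and $\lambda$ being strictly convex, the gradient map $u\mapsto\nabla\lambda(u)$ is injective on the level set, so $\nabla\lambda(u_{\xi_1})$ and $\nabla\lambda(u_{\xi_2})$ are distinct directions (after normalization).

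The key step is a separation argument as in Lemma~\ref{lemmaone-to-oneMartin}, but made uniform over a neighborhood. First I would choose a direction $\theta\in\mathbb{S}^{d-1}$ such that $\theta\cdot u_{\xi_1}>0$ while $\theta\cdot u_{\xi_2}\leq 0$ (if $u_{\xi_1}=0$, instead pick $\theta$ with $\theta\cdot u_{\xi_2}<0$; this is always possible since $u_{\xi_1}\neq u_{\xi_2}$, exactly as in the proof of Lemma~\ref{lemmaone-to-oneMartin}). By continuity of $\xi\mapsto u_\xi$, there is a compact neighborhood $\mathcal{U}$ of $\xi_1$ in the geometric boundary of $H$ such that $\theta\cdot u_\xi\geq c>0$ for all $\xi\in\mathcal{U}$, for some constant $c$; shrinking $\mathcal{U}$ we may also assume $\inf_{\xi\in\mathcal{U}}\min_j C_j(u_\xi)>0$ by compactness and continuity. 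Then take $g_n=(x_n,j_n)\in H=\Z^d\times\{1,\dots,N_\eta\}$ with $x_n\to\infty$ and $x_n/\|x_n\|\to\theta$. For every $\xi\in\mathcal{U}$,
$$K_r(g_n,\tilde{\xi})=C_{j_n}(u_\xi)\,\mathrm{e}^{x_n\cdot u_\xi}\geq \Big(\inf_{\xi\in\mathcal{U}}\min_j C_j(u_\xi)\Big)\,\mathrm{e}^{x_n\cdot u_\xi},$$
and since $x_n\cdot u_\xi=\|x_n\|\big(\tfrac{x_n}{\|x_n\|}\cdot u_\xi\big)\to+\infty$ uniformly in $\xi\in\mathcal{U}$ (using $\tfrac{x_n}{\|x_n\|}\cdot u_\xi\geq c/2$ for $n$ large, again uniformly by continuity of the inner product and compactness of $\mathcal{U}$), we conclude $K_r(g_n,\tilde{\xi})\to\infty$ uniformly over $\mathcal{U}$. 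Meanwhile $K_r(g_n,\tilde{\xi}_2)=C_{j_n}(u_{\xi_2})\mathrm{e}^{x_n\cdot u_{\xi_2}}$, and since $x_n\cdot u_{\xi_2}\leq 0$ for $n$ large (because $\tfrac{x_n}{\|x_n\|}\cdot u_{\xi_2}\to\theta\cdot u_{\xi_2}\leq 0$, so it is eventually bounded above by a small positive number, and more carefully one arranges strict negativity or boundedness) and $C_{j_n}(u_{\xi_2})$ is bounded, $K_r(g_n,\tilde{\xi}_2)$ stays bounded away from infinity, giving case~(1). In the degenerate sub-case where one is forced into the other alternative — when $u_{\xi_1}=0$ so that $K_r(g_n,\tilde{\xi}_1)$ cannot tend to infinity but $\theta\cdot u_{\xi_2}<0$ forces $K_r(g_n,\tilde{\xi}_2)\to 0$ — the same neighborhood argument gives that $K_r(g_n,\tilde{\xi})$ stays uniformly bounded away from $0$ over a neighborhood $\mathcal{U}$ of $\xi_1$ (since $u_\xi$ is close to $0$, hence $x_n\cdot u_\xi$ is close to $0$, hence $\mathrm{e}^{x_n\cdot u_\xi}$ is bounded below, and $C_{j_n}(u_\xi)\geq\text{const}$), which is case~(2). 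Finally, if the stabilizer $H$ of the parabolic point is not in $\Omega_0$, one reduces to the previous case by left-multiplying everything by a group element $g$ with $gHg^{-1}\in\Omega_0$, exactly as in Lemma~\ref{lemmaone-to-oneMartin}.

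The main obstacle I expect is the \emph{uniformity} over the neighborhood $\mathcal{U}$: one must verify that both the Perron--Frobenius eigenvector entries $C_j(u)$ and the exponential factor depend on $u$ continuously and in a controlled way, so that finitely many pointwise estimates can be upgraded to uniform ones by compactness. The eigenvector continuity follows from standard Perron--Frobenius perturbation theory (the dominant eigenvalue $\lambda(u)$ is simple, so the associated eigenvector varies analytically with $u$), and the control on the exponent is just the uniform continuity of $(v,u)\mapsto v\cdot u$ on the compact set $\mathbb{S}^{d-1}\times\varphi(\mathcal{U})$ together with $\|x_n\|\to\infty$; these are routine once one invokes \cite[Proposition~3.27]{Dussaule} for the formula and \cite[Lemma~3.13]{Dussaule} for the homeomorphism $\varphi$. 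The only genuinely delicate point is the choice of $\theta$ handling all three positions of $u_{\xi_1},u_{\xi_2}$ relative to the origin, but this is exactly the case analysis already carried out in the proof of Lemma~\ref{lemmaone-to-oneMartin}, now just done with a little extra room to accommodate the neighborhood.
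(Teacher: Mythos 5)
Your proposal follows essentially the same route as the paper: the explicit formula $K_r(g,\tilde\xi)=C_j(u_\xi)\mathrm{e}^{x\cdot u_\xi}$ from \cite[Proposition~3.27]{Dussaule}, the homeomorphism $\partial H\to\{u:\lambda(u)=1\}$, a choice of direction $\theta$ separating $u_{\xi_1}$ from $u_{\xi_2}$, and a sequence $x_n$ with $x_n/\|x_n\|\to\theta$, with uniformity over $\mathcal{U}$ obtained from continuity of the Perron--Frobenius data and compactness. One local slip in your case~(2): the justification ``$u_\xi$ is close to $0$, hence $x_n\cdot u_\xi$ is close to $0$'' is false for $\|x_n\|\to\infty$; the correct device (used in the paper) is to choose $\theta$ with $\theta\cdot u\geq 0$ for \emph{every} $u\in\mathcal{U}$ and to take $x_n$ within bounded distance of the ray $\mathbb{R}_+\theta$, so that $x_n\cdot u_\xi\geq -C\|u_\xi\|$ is uniformly bounded below on $\mathcal{U}$; with that substitution your argument matches the paper's proof.
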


\begin{proof}
This is exactly \cite[Proposition~6.1]{DGGP}, but the authors only deal with the 1-Martin boundary there.
We rewrite the proof for convenience.
For every point in the geometric boundary of $H$, there exists $u\in \R^d$, with $\lambda (u)=1$ and such that
$$K_r(g,\tilde{\xi})=C\mathrm{e}^{\pi(g)\cdot u}.$$
The induced map $\partial H\to \{u,\lambda(u)=1\}$ is a homeomorphism.
Let $u_1,u_2$ be the points corresponding to $\xi_1,\xi_2$.
Since $\xi_1\neq \xi_2$, there exists a compact neighborhood $\mathcal{U}$ of $u_1$ such that $u_2\notin \mathcal{U}$.
We can see $\mathcal{U}$ as a compact neighborhood of $\xi_1$ in $\partial H$.

First, assume that $u_1\neq 0$.
Then, we can choose $\mathcal{U}$ such that $0\notin \mathcal{U}$.
We then choose $\theta \in \mathbb{S}^{d-1}$ such that $\theta \cdot u>0$ for every $u\in \mathcal{U}$ and such that $\theta\cdot u_2\leq 0$.
Let $g_n=(x_n,j_n)$ be a sequence in $H=\Z^d\times \{1,..,N_\eta\}$ such that $x_n$ goes to infinity and $\frac{x_n}{\|x_n\|}$ converges to $\theta$.
Then,
$K_r(g_n,\tilde{\xi})$ tends to infinity whereas $K_r(g_n,\tilde{\xi}_2)$ stays bounded away from infinity.

Assume then that $u_1=0$.
Then $u_2\neq 0$.
Choose $\theta$ such that $\theta \cdot u\geq 0$ for every $u\in \mathcal{U}$ and $\theta \cdot u_2<0$.
Let $g_n=(x_n,j_n)$ be a sequence in $H=\Z^d\times \{1,..,N_\eta\}$ such that $x_n$ goes to infinity and $\frac{x_n}{\|x_n\|}$ converges to $\theta$.
Then,
$K_r(g_n,\tilde{\xi})$ stays bounded away from 0 whereas $K_r(g_n,\tilde{\xi}_2)$ converges to 0.
This concludes the proof.
\end{proof}

We can now prove Theorem~\ref{minimality}.
\begin{proof}
Let $\tilde{\xi}_0\in \partial_{r\mu}\Gamma$.
Then, $K_r(\cdot,\tilde{\xi}_0)$ is an $r$-harmonic function.
Let $\nu$ be the corresponding measure in the $r$-minimal Martin boundary, that is
$$K_r(\cdot,\tilde{\xi}_0)=\int_{\partial_{r\mu}^{min}\Gamma}K_r(\cdot,\tilde{\xi})d\nu(\tilde{\xi}).$$
To prove that $\tilde{\xi}_0$ is minimal, we just need to prove that the support of $\nu$ is reduced to $\{\tilde{\xi}_0\}$.
We will actually prove that any other point cannot be in the support of $\nu$.

Let $\tilde{\xi}_0'$ be another point.
Assume first that $\tilde{\xi}_0$ and $\tilde{\xi}_0'$ correspond to different points $\zeta_0$ and $\zeta_0'$ in the Bowditch boundary.
Then, there exists a compact neighborhood $\mathcal{U}$ of $\tilde{\xi}_0'$ such that for every $\tilde{\xi}$ in $\mathcal{U}$, $\delta_e^f(\zeta_0,\zeta_0')\geq \delta$, so that
Theorem~\ref{thmweakAncona} shows that if $g_n$ converges to $\tilde{\xi}\in \mathcal{U}$, then
$$K_r(g_n,\tilde{\xi}_0)\leq CG_r(g_n,e).$$
This proves that $\frac{G_r(g_n,e)}{K_r(g_n,\tilde{\xi}_0)}$ cannot converge to 0 and so Lemma~\ref{lemmaminimalMartin} shows that $\tilde{\xi}_0'$ cannot be in the support of $\nu$.

Assume now that $\tilde{\xi}_0$ and $\tilde{\xi}_0'$ correspond to the same point in the Bowditch boundary.
This point necessarily is a non-degenerate parabolic limit point.
Lemma~\ref{one-to-onepreciseparabolic} shows that there is a neighborhood $\mathcal{U}$ of $\tilde{\xi}_0'$ such that
\begin{enumerate}
    \item either $K_r(g_n,\tilde{\xi})$ tends to infinity, uniformly over $\xi\in \mathcal{U}$ and $K_r(g_n,\tilde{\xi}_0)$ stays bounded away from infinity,
    \item or $K_r(g_n,\tilde{\xi})$ stays uniformly bounded away from 0 over $\mathcal{U}$ and $K_r(g_n,\tilde{\xi}_0)$ converges to $0$.
\end{enumerate}
By definition,
$$K_r(g_n,\tilde{\xi}_0)=\int_{\partial_{r\mu}^{min}\Gamma}K_r(g_n,\tilde{\zeta})d\nu(\tilde{\zeta})\geq \int_{U}K_r(g_n,\tilde{\zeta})d\nu(\tilde{\zeta}).$$
Thus, in the first case, $K_r(g_n,\tilde{\xi}_0)\geq \alpha_n\nu(U)$ for $n$ large enough, where $\alpha_n$ tends to infinity and so $\mu(U)=0$.
In the second case, $K_r(g_n,\tilde{\xi}_0)\geq C\mu(U)$ for some constant $C$, for $n$ large enough and so again, $\mu(U)=0$.
Hence, $\tilde{\xi}_0'$ is not in the support of $\nu$.

We proved that in every case, $\tilde{\xi}'_0$ is not in the support of $\nu$, so that the support of $\nu$ is reduced to $\{\tilde{\xi}_0\}$, hence $\tilde{\xi}_0$ is minimal.
\end{proof}

\subsection{Stability}
We still consider a relatively hyperbolic group with respect to virtually abelian subgroups and a symmetric probability measure $\mu$ whose finite support generates $\Gamma$.
We assume that $\mu$ is not spectrally degenerate.
According to Theorem~\ref{maintheorem}, the $r$-Martin compactifications are all homeomorphic to each other for $r\leq R$.
Our goal is to prove that the map $(r,x,\xi)\mapsto K_r(x,\xi)$ is continuous in $(r,x,\xi)\in (0,R]\times \Gamma \times \partial_{\mu}\Gamma$.

Theorem~\ref{maintheorem} also gives a homeomorphism from the $r$-geometric compactification to the $r$-Martin compactification.
We first deal with conical limit points, that is we assume that $\xi$ is the image of a conical point by this homeomorphism.
We do not need to assume spectral non-degeneracy for those points.

\begin{proposition}\label{stabilityconical}
Let $0<r_0\leq R$ and $x_0\in \Gamma$.
Let $\xi_0$ be a conical limit point and identify $\xi_0$ with a point in $\partial_{r_0\mu}\Gamma$.
Then, $(r,x,\xi)\in (0,R]\times \Gamma \times \Gamma \cup \partial_\mu\Gamma \mapsto K_r(x,\xi)$ is continuous at $(x_0,r_0,\xi_0)$.
\end{proposition}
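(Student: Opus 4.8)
The plan is to reduce to a sequence lying in $\Gamma$ and then to run the comparison argument that underlies H\"older continuity of the Martin kernel, using that the constants in the strong Floyd--Ancona inequalities (Theorem~\ref{thmstrongAncona}) do not depend on $r\le R$, together with the soft fact that for fixed $x,y$ the map $r\mapsto G_r(x,y)$ is continuous on $(0,R]$.

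Since $\Gamma$ is discrete, continuity at $(x_0,r_0,\xi_0)$ is equivalent to the statement that $K_{r_n}(x_0,\xi_n)\to K_{r_0}(x_0,\xi_0)$ whenever $r_n\to r_0$ in $(0,R]$ and $\xi_n\to\xi_0$ in $\Gamma\cup\partial_\mu\Gamma$. First I would reduce to $\xi_n\in\Gamma$: the compactification $\Gamma\cup\partial_\mu\Gamma$ is metrizable, $\Gamma$ is dense in it, and each $K_{r_n}(x_0,\cdot)$ extends continuously, so one may pick $g_n\in\Gamma$ with $g_n\to\xi_0$ and $|K_{r_n}(x_0,g_n)-K_{r_n}(x_0,\xi_n)|\to 0$; it then suffices to prove $K_{r_n}(x_0,g_n)\to K_{r_0}(x_0,\xi_0)$. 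Next, since $\xi_0$ is conical, fix a geodesic ray $\alpha$ from $e$ to $\xi_0$ and, as in the proof of Proposition~\ref{propconvergenceconical} (see also \cite[Lemma~2.20]{YangPS}), an infinite sequence of transition points $z_1,z_2,\dots$ on $\alpha$ with $d(e,z_j)\to\infty$. By Proposition~\ref{Floydgeo} there is $\delta>0$ with $\delta^f_{z_j}(x_0,w)\ge\delta$ and $\delta^f_{z_j}(e,w)\ge\delta$ whenever $w$ lies on $\alpha$ beyond $z_j$, and --- since $g_n\to\xi_0$ with $\xi_0$ conical --- also $\delta^f_{z_j}(x_0,g_n)\ge\delta$ for $j\le m$ once $n$ is large (depending on $m$).

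Let $K\ge 0$ and $0<\rho<1$ be the constants of Theorem~\ref{thmstrongAncona} attached to $\delta$. Applying the strong inequalities to the four points $x_0,g,e,z_{m'}$, whose pairs $(x_0,g)$ and $(e,z_{m'})$ $\delta$-fellow-travel for time at least $m$ via $z_1,\dots,z_m$ (taking $g=g_n$ for $n$ large, or $g=z_{m''}$ with $m''>m$), one gets $\bigl|K_r(x_0,g)/K_r(x_0,z_{m'})-1\bigr|\le K\rho^m$ for all $r\le R$, since that ratio equals $\frac{G_r(x_0,g)G_r(e,z_{m'})}{G_r(x_0,z_{m'})G_r(e,g)}$. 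Letting $m'\to\infty$ this simultaneously shows $K_r(x_0,z_{m'})\to K_r(x_0,\xi_0)$ and yields the key estimate $|K_r(x_0,g_n)-K_r(x_0,\xi_0)|\le K\rho^m\,K_r(x_0,\xi_0)$, together with the same bound with $z_m$ in place of $g_n$, valid for all $r\le R$ and all large $n$. Harnack's inequality bounds $K_r(x_0,\cdot)$ by a constant depending only on $d(e,x_0)$, uniformly in $r\le R$, so these right-hand sides are $\lesssim K\rho^m$. To finish, I would combine this with continuity in $r$ at the fixed point $z_m$: $r\mapsto G_r(x_0,z_m)$ is a power series with non-negative coefficients whose value at $R$ is finite (by \cite[Proposition~1.9]{GouezelLalley}, cf.\ Proposition~\ref{doubleGreenfiniteonspheres}), hence continuous and positive on $(0,R]$ by Abel's theorem, so $K_{r_n}(x_0,z_m)\to K_{r_0}(x_0,z_m)$. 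Given $\varepsilon>0$, choosing $m$ with $K\rho^m\lesssim\varepsilon$ and then $n$ large, a triangle inequality through $z_m$ gives $|K_r(x_0,\xi_0)-K_{r_0}(x_0,\xi_0)|\lesssim\varepsilon$ for $r$ near $r_0$, and adding the estimate near $\xi_0$ gives $|K_{r_n}(x_0,g_n)-K_{r_0}(x_0,\xi_0)|\lesssim\varepsilon$ for large $n$.

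The main obstacle, and the reason the strong rather than the weak Floyd--Ancona inequalities are needed, is the uniformity in $r\le R$ of the comparison $|K_r(x_0,g)-K_r(x_0,\xi_0)|\le K\rho^m K_r(x_0,\xi_0)$: one needs a geometric decay rate in $m$ that does not degenerate as $r\uparrow R$, which is exactly what Theorem~\ref{thmstrongAncona} supplies; the remaining ingredients (the reduction to $\Gamma$, the Abel-theorem continuity at fixed points, the Harnack bound) are routine. It is worth noting that this argument uses nothing about $\xi_0$ beyond its being a limit of transition points on a geodesic, so it applies verbatim to every boundary point when $\Gamma$ is hyperbolic.
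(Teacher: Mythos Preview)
Your proof is correct and follows essentially the same route as the paper's: both arguments use the uniformity in $r\le R$ of the strong Floyd--Ancona inequalities (Theorem~\ref{thmstrongAncona}) to control $|K_r(x_0,g)/K_r(x_0,z_m)-1|$ by $K\rho^m$, invoke a Harnack bound to turn this into an additive estimate, and then combine with pointwise continuity of $r\mapsto K_r(x_0,z_m)$ for fixed $z_m$. The only presentational differences are that you reduce to $\xi_n\in\Gamma$ at the outset (the paper instead approximates each $\zeta_n$ by an auxiliary sequence $z'_{n,m}$), and that you obtain the fellow-travel points $z_1,\dots,z_m$ as transition points on a fixed geodesic ray to $\xi_0$ while the paper cites \cite[Lemma~2.2, Proposition~2.3]{hlv} directly for the existence of such points; your justification that $\delta^f_{z_j}(x_0,g_n)\ge\delta$ for large $n$ is a bit terse but is the standard consequence of $g_n\to\xi_0$ conical.
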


\begin{proof}
Let $(r_n,x_n,\xi_n)$ be a sequence converging to $(r_0,x_0,\xi_0)$.
Since $\Gamma$ is discrete, we can assume that $x_n=x_0$ is fixed.
Let $z_m$ be a sequence in $\Gamma$ converging to $\xi_0$ and $z'_{n,m}$ be a sequence in $\Gamma$ converging to $\xi_n$, when $m$ tends to infinity.
Since $\xi_0$ is conical, \cite[Lemma~2.2, Proposition~2.3]{hlv} show there exists $\delta>0$ such that the following holds.
For every $k$, there exists $n$ large enough and $m$ large enough (depending on $n$) such that one can find $k$ points $w_1,...,w_k$ in $\Gamma$ satisfying
$\delta_{w_i}^f(x_0,z_m)\geq \delta$ and $\delta_{w_i}^f(e,z'_{n,m})\geq \delta$.
Using Theorem~\ref{thmstrongAncona}) and letting $m$ tend to infinity, we thus get that for every $r$,
$$\left |\frac{K_{r}(x_0,\xi_n)}{K_{r}(x_0,\xi_0)} -1\right |\leq C\rho^{k}.$$
In particular, for every $\epsilon>0$, for every large enough $n$, we have
$$\left |\log K_{r_n}(x,\xi_n)-K_{r_0}(x,\xi_0)\right |\leq \epsilon.$$
This shows that $(r,x,\xi)\mapsto \log K_r(x,\xi)$ is continuous at $(r_0,x_0,\xi_0)$, which concludes the proof.
\end{proof}

As explained in the introduction, this proposition also shows that the Martin boundary is strongly stable in hyperbolic groups, although this is implicit in \cite{Gouezel-local}.

\medskip
We can now deal with parabolic limit points and we fix a parabolic subgroup $H$.
Theorem~\ref{maintheorem} shows that for $\xi$ in the geometric boundary $\partial H$ of $H$, if $g_n$ in $\Gamma$ converges to $\xi$, then $K_r(\cdot, g_n)$ converges to a point $K_r(\cdot, \xi)$ for every $r\leq R$.

We use the notations of Section~\ref{Sectionparaboliclimitpointsdegenerate} and Section~\ref{Sectionparaboliclimitpointsnondegenerate}.
We denote by $p_{H,\eta,r}$ the first return kernel induced by $r\mu$ to the $\eta$-neighborhood $N_\eta(H)$ of $H$.
Note that if $r\leq r'$,
then $p_{H,\eta,r}(g,g')\leq p_{H,\eta,r'}(g,g')$ for every $g,g'\in N_\eta(H)$.
We also consider the matrix $F^{(r)}(u)$ defined by
$$F^{(r)}_{j,k}(u)=\sum_{x\in \Z^d}p_{j,k}(e,x)\mathrm{e}^{x\cdot u}$$
as in Section~\ref{Sectionparaboliclimitpointsnondegenerate}.
We added the exponent $r$ to insist on the dependency on this parameter.
Also note that this matrix depends on $\eta$.

According to Lemma~\ref{exponentialmomentsinducedchain}, for every $M$, there exists $\eta$ such that $p_{H,\eta,r}$ has exponential moments up to $M$, independently of $r$.
In particular, if $\eta$ is large enough, the set of $u\in \R^d$ such that the coefficients of $F^{(r)}(u)$ are finite contains a set $\{u\in \R^d,\|u\|\leq M\}$ which can be taken independently of $r$.
We fix such a large $\eta$.
Then, all matrices $F^{(r)}(u)$ have the same size, say $K\times K$, where $K$ only depends on $\eta$.
Let $F^{(r)}$ the function $u \in \{u,\|u\|\leq M\}\mapsto F^{(r)}(u)$.
We endow $M_K(\R^d)$ with a matrix norm.
For fixed $r$, the function $F^{(r)}$ is continuous in $u$.
We then endow the space of continuous functions from $\{u,\|u\|\leq M\}$ to $M_K(\R^d)$ with the norm $\|\cdot \|_\infty$.
We prove the following.

\begin{lemma}\label{lemme1pourpropstabilite3}
For these norms, the function $r\mapsto F^{(r)}$ is continuous.
\end{lemma}

\begin{proof}
Let $u\in \{u,\|u\|\leq M\}$, $r,r'\leq R_\mu$.
We need to show that $\|F^{(r)}-F^{(r')}\|_\infty$ tends to 0, as $r$ converges to $r'$.
It is enough to show that every entry of $F^{(r)}$ converges to the corresponding entry of $F^{(r')}$.
We thus fix $k$ and $j$.
We also fix $\epsilon>0$.
Then,
$$F^{(r)}_{j,k}(u)-F^{(r')}_{j,k}(u)=\sum_{x\in \Z^d}p_{j,k}^{(r)}(e,x)\mathrm{e}^{x\cdot u}-\sum_{x\in \Z^d}p_{j,k}^{(r')}(e,x)\mathrm{e}^{x\cdot u}.$$
Note that
$$\sum_{x\in \Z^d}p_{j,k}^{(r)}(e,x)\mathrm{e}^{x\cdot u}\leq \sum_{x\in \Z^d}p_{j,k}^{(r)}(e,x)\mathrm{e}^{M\|x\|}$$
and similarly for $r'$.
Since both $p_{\eta,H,r}$ and $p_{\eta,H,r'}$ have exponential moments up to $M$, there exists $D$ which does not depend on $r$ and which satisfies that
$$\sum_{x,\|x\|> D}p_{j,k}^{(r)}(e,x)\mathrm{e}^{x\cdot u}\leq \epsilon/4, \hspace{1cm} \sum_{x,\|x\|> D}p_{j,k}^{(r')}(e,x)\mathrm{e}^{x\cdot u}\leq \epsilon/4.$$
Consequently,
$$\left |F^{(r)}_{j,k}(u)-F^{(r')}_{j,k}(u)\right |\leq \epsilon/2+\sum_{x,\|x\|\leq D}\left |p_{j,k}^{(r)}(e,x)-p_{j,k}^{(r')}(e,x)\right |\mathrm{e}^{x\cdot u}.$$
We then set
$$\Lambda=\sum_{x,\|x\|\leq D}\mathrm{e}^{M\|x\|}.$$
For a  fixed $\gamma$, the map $r\mapsto p_{\eta,H,r}(\gamma)$ is continuous, so
there exists $\alpha$ such that for every $r$ and $r'$ with $|r-r'|\leq \alpha$ and for every $x\in \Z^d$ such that $\|x\|\leq D$,
$$\left |p_{j,k}^{(r)}(e,x)-p_{j,k}^{(r')}(e,x)\right |\leq \frac{\epsilon}{2\Lambda}.$$
We therefore get $\left |F^{(r)}_{j,k}(u)-F^{(r')}_{j,k}(u)\right |\leq \epsilon$ as soon as $|r-r'|\leq \alpha$.
Since $\alpha$ does not depend on $u$,
we deduce that $\|F^{(r)}_{j,k}-F^{(r')}_{j,k}\|_{\infty}\leq \epsilon$ as soon as $|r-r'|\leq \alpha$.
This concludes the proof.
\end{proof}

We denote by $\lambda^{(r)}(u)$ the dominant eigenvalue of $F^{(r)}(u)$ and by $C^{(r)}(u)$, respectively $\nu^{(r)}(u)$, right, respectively left, eigenvectors associated with $\lambda^{(r)}(u)$.
Also, let $\lambda^{(r)}$, respectively $C^{(r)}$, respectively $\nu^{(r)}$ be the functions $u\mapsto \lambda^{(r)}(u)$, respectively
$u\mapsto C^{(r)}(u)$, respectively $u\mapsto \nu^{(r)}(u)$.

Choose arbitrary norms on $\R^K$ and endow the spaces of continuous functions from $\{u,\|u\|\leq M\}$ to $\R^K$ and from $\{u,\|u\|\leq M\}$ to $\R$ with the $\|\cdot \|_\infty$ norm.
Since the maps $F\mapsto \lambda$, $F\mapsto C$ and $F\mapsto \nu$ all are continuous on the open set consisting of matrices $F$ having a simple dominant eigenvalue, we also have the following.

\begin{lemma}\label{lemme1'pourpropstabilite3}
For these norms, the functions $r\mapsto \lambda^{(r)}$, $r\mapsto C^{(r)}$ and $r\mapsto \nu^{(r)}$ are continuous.
\end{lemma}

Recall that there is a homeomorphism between $\partial H$, identified with $\mathbb{S}^d$ and $\{u,\lambda^{(r)}(u)=1\}$.
This homeomorphism is given by
$u\mapsto \frac{\nabla \lambda^{(r)}(u)}{\|\nabla \lambda^{(r)}(u)\|}$ and we denote it by $\phi_r$
Given $\gamma\in \mathcal{V}_{\eta}(H)$, we identify $\gamma$ with $(x,k)$, where $x\in \Z^d$ and $k\in \{1,...,N_{\eta}\}$.
Also identify $e$ with $(x_0,k_0)$.
Then, \cite[Proposition~3.29]{Dussaule} shows that
$$K_r(\gamma,\xi)=\frac{C^{(r)}(u_r)_k}{C^{(r)}(u_r)_{k_0}}\mathrm{e}^{\phi_r(\xi)\cdot (x-x_0)},$$
where $C^{(r)}(u_r)_j$ is the $j$th coordinate of  $C^{(r)}(u_r)$.

\begin{lemma}\label{lemme1''pourpropstabilite3}
With these notations, the map $(\xi,r)\in \partial H\times (0,R_\mu] \mapsto \phi_r(\xi)$ is continuous.
\end{lemma}

\begin{proof}
Let $(\xi_n,r_n)$ converge to $(\xi,r)$.
Set $u_n=\phi_{r_n}(\xi_n)$.
Since we have $\|u_n\|\leq M$, we can assume that $u_n$ converges to $u$.
We thus need to show that $u=\phi_r(\xi)$.

Since the random walk is not spectrally degenerate, the set of $u$ such that $\lambda^{(r)}(u)=1$ is not reduced to a point and since $\nabla^{(r)}$ is strictly convex, on this set we have $\nabla\lambda^{(r)}(u)\neq 0$.
We show exactly like in the proof of Lemma~\ref{lemme1'pourpropstabilite3} that $r\mapsto \frac{\nabla\lambda^{(r)}(\cdot)}{\|\nabla\lambda^{(r)}(\cdot)\|}$ is continuous.
Therefore, $\frac{\nabla\lambda^{(r_n)}(u_n)}{\|\nabla\lambda^{(r_n)}(u_n)\|}$ converges to $\frac{\nabla\lambda^{(r)}(u)}{\|\nabla\lambda^{(r)}(u)\|}$.
By definition, $\frac{\nabla\lambda^{(r_n)}(u_n)}{\|\nabla\lambda^{(r_n)}(u_n)\|}=\xi_n$, which converges to $\xi$.
In particular, $\frac{\nabla\lambda^{(r)}(u)}{\|\nabla\lambda^{(r)}(u)\|}=\xi$.
Also, since $u_n$ converges to $u$, Lemma~\ref{lemme1'pourpropstabilite3} shows that $\lambda^{(r_n)}(u_n)$ converges to  $\lambda^{(r)}(u)$.
Since $\lambda^{(r_n)}(u_n)=1$, we also have $\lambda^{(r)}(u)=1$.
Finally, note that $\phi_r(\xi)$ is the unique point satisfying both $\lambda^{(r)}(\phi_r(\xi))=1$ and $\frac{\nabla\lambda^{(r)}(\phi_r(\xi))}{\|\nabla\lambda^{(r)}(\phi_r(\xi))\|}=\xi$.
We thus have $u=\phi_r(\xi)$, which concludes the proof.
\end{proof}

Note that this lemma still holds for spectrally degenerate random walks.
Indeed, assume that $r=R_\mu$ and that $\mu$ is spectrally degenerate.
We use the same notations and assume that $u_n$ converges to some $u$.
We need to prove that $u=\phi_r(\xi)$.
In this situation, the set of $u$ such that $\lambda^{(r)}(u)=1$ is reduced to $\phi_r(\xi)$.
We again have that $\lambda^{(r_n)}(u_n)$ converges to $\lambda^{(r)}(u)$, so that $\lambda^{(r)}(u)=1$.
This proves again that $u=\phi_r(\xi)$.

\textit{In particular, we always have that for fixed $\gamma$ and $\xi$, the map $r\mapsto K_r(\gamma,\xi)$ is continuous, whether $\mu$ is spectrally degenerate or not.}
However, to get continuity in the variable $(\gamma,\xi,r)$ in what follows, we do need to assume that $\mu$ is not spectrally degenerate.
Using the work of \cite{Dussaule}, we prove the following.

\begin{lemma}\label{uniformconvergenceGreenalongparabolics}
Let $h\in N_{\eta}(H)$ and $h_n$ be a sequence in $N_{\eta}(H)$ converging to $\xi \in \partial H$.
Then, $K_r(h,h_n)$ converges to $K_r(h,\xi)$.
If $h$ is fixed, then the convergence is uniform in $r$ lying in a compact subset of $(0,R]$.
\end{lemma}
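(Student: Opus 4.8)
The plan is to reduce everything to the fine asymptotics of the Green function of the $\Z^d$-invariant first-return kernel $p_{H,\eta,r}$ on $\Z^d\times\{1,\dots,N\}$, and to track the dependence on $r$ through those asymptotics. For a fixed $r$, the convergence $K_r(h,h_n)\to K_r(h,\xi)$ is exactly \cite[Lemma~3.26]{Dussaule} applied to $p_{H,\eta,r}$: when $r<R$, or when $\mu$ is spectrally non-degenerate along $H$, this kernel satisfies Assumptions~1 and~2 by Lemma~\ref{lemmaassumption1} and Corollary~\ref{coroassumption2}, so \cite{Dussaule} applies verbatim; in the remaining (degenerate) case $r=R$ one argues instead from Proposition~\ref{propsumupdegenerateparabolic}, which gives that the 1-Martin boundary of $p_{H,\eta,R}$ is a single point, together with the explicit form of the corresponding minimal harmonic function in Lemma~\ref{minimalharmonicnilpotent}. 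Thus the genuinely new content is the uniformity in $r$, and I would establish it by revisiting the proof of \cite[Lemma~3.26]{Dussaule} (and of \cite[Proposition~3.27]{Dussaule}).

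That argument expresses $G_{H,\eta,r}((0,k),(z,k'))$ by Fourier/Laplace inversion in the $\Z^d$-variable, shifts the contour to pass through the saddle $u_r=\phi_r(\xi)$ on $\{\lambda^{(r)}=1\}$ determined by the direction of $z$, and extracts the dominant contribution from the simple pole of $\bigl(I-F^{(r)}(u_r+it)\bigr)^{-1}$ at $t=0$, whose residue is the Perron eigenprojection of $F^{(r)}(u_r)$. Forming the ratio $K_r(h,h_n)=G_{H,\eta,r}((0,k),(z_n-z,k_n))/G_{H,\eta,r}((0,0),(z_n,k_n))$, with $h=(z,k)$ and $h_n=(z_n,k_n)$, the polynomial prefactors cancel as $z_n\to\infty$, the exponential factors combine into $\mathrm{e}^{u_r\cdot z}$ up to an error tending to $1$, and the residue vectors produce $C^{(r)}(u_r)_k/C^{(r)}(u_r)_0$, recovering the formula for $K_r(h,\xi)$.

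The uniformity then reduces to observing that every quantity governing the error terms in this expansion is continuous in $r$, hence bounded on a compact subset $[r_1,R]\subset(0,R]$. Concretely: (i) the radius of a complex polydisc on which $u\mapsto F^{(r)}(u)$ is holomorphic can be taken independent of $r$ by the uniform exponential-moment bound of Lemma~\ref{exponentialmomentsinducedchain}; (ii) a uniform positive lower bound on the Hessian of $\lambda^{(r)}$ transverse to $\{\lambda^{(r)}=1\}$ follows from strict convexity \cite[Proposition~3.5]{Dussaule} together with the $C^2$-dependence of $\lambda^{(r)}$ on $r$, itself a consequence of the continuity of $r\mapsto F^{(r)}$ proved just above and analytic perturbation of the Perron eigenvalue; and (iii) a spectral-gap bound --- namely that the spectral radius of $F^{(r)}(u_r+it)$ stays bounded away from $1$ for $t$ outside a fixed neighbourhood of $0$ --- holds by strong irreducibility of $F^{(r)}(u_r)$ (using $\mu(e)>0$), Perron--Frobenius, and the same continuity. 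I would also record that $r\mapsto u_r=\phi_r(\xi)$ is continuous on $(0,R]$ for each fixed $\xi$, so that the limit $K_r(h,\xi)$ is itself continuous in $r$; combined with the uniform asymptotic this gives the stated uniform convergence.

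The step I expect to be the main obstacle is the behaviour at the endpoint $r=R$ when $\mu$ is spectrally degenerate along $H$. There $\min_u\lambda^{(R)}(u)=1$, the level set $\{\lambda^{(R)}=1\}$ degenerates to the single minimiser $u_R$, and the saddle $u_r$ --- which for $r<R$ genuinely depends on the direction encoded by $\xi$ --- must be shown to converge to $u_R$ uniformly in $\xi$, with the saddle-point estimate remaining valid with uniformly controlled constants up to this degeneration. I would handle this by a convexity argument: since $\lambda^{(r)}\to\lambda^{(R)}$ uniformly on compacta and $\lambda^{(R)}>1$ off any neighbourhood of $u_R$, the sublevel sets $\{\lambda^{(r)}\le 1\}$ shrink to $\{u_R\}$ as $r\uparrow R$, which forces $u_r\to u_R$, while the bounds (ii) and (iii) persist by continuity through $r=R$ because $F^{(R)}(u_R)$ is still strongly irreducible.
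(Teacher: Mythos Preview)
The paper does not actually prove this lemma: it simply asserts, in the sentence preceding the statement, that \cite[Lemma~3.26]{Dussaule} shows the result, having just established the continuity of $r\mapsto F^{(r)}$. Your proposal is therefore not a different approach but a correct and considerably more detailed unpacking of what the paper leaves to that citation: you recover the fixed-$r$ convergence directly from \cite[Lemma~3.26]{Dussaule}, and you obtain the uniformity in $r$ by re-running the saddle-point argument behind that lemma while tracking the $r$-dependence through the uniform exponential-moment bound (Lemma~\ref{exponentialmomentsinducedchain}), the continuity of $F^{(r)}$, and the resulting continuity of $\lambda^{(r)}$, its Hessian, the Perron eigenprojection, and the saddle $u_r$. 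Your discussion of the degenerate endpoint $r=R$ via the shrinking of the sublevel sets $\{\lambda^{(r)}\le 1\}$ is a sensible way to handle the one point the paper glosses over entirely.
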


\begin{proof}
This is given by \cite[Proposition~3.29]{Dussaule}.
The fact that the convergence is uniform in $r$ follows from \cite[Lemma~3.28]{Dussaule}, applied to the parameter $\tilde{u}=(r,u)$.
To apply this lemma, we need to ensure that $\beta_{\tilde{u}}:=\nabla\lambda^{(r)}(u)\neq 0$.
Since $\mu$ is not spectrally degenerate, the set of $u$ such that $\lambda^{(r)}(u)=1$ is not reduced to a point and since $\lambda^{(r)}$ is strictly convex, we have $\beta_{(r,u)}\neq 0$, which concludes the proof.
\end{proof}

We can now prove the statement analogous to Proposition~\ref{stabilityconical} for parabolic limit points.

\begin{proposition}\label{stabilityparabolic}
Let $0<r_0\leq R$ and $x_0\in \Gamma$.
Let $\xi_0$ be a point in $\partial H$ for some parabolic subgroup $H$ and identify $\xi_0$ with a point in $\partial_{r_0\mu}\Gamma$.
Then, the map $(r,x,\xi)\in (0,R]\times \Gamma \times \Gamma \cup \partial_\mu\Gamma \mapsto K_r(x,\xi)$ is continuous at $(x_0,r_0,\xi_0)$.
\end{proposition}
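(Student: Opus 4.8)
The plan is to follow the structure of the proof of Proposition~\ref{stabilityconical}, with the deviation inequalities of Section~\ref{Sectiondeviationinequalities} and the explicit description of the Martin kernel along $\partial H$ taking over the role played by the strong Ancona inequalities in the conical case. Fix $a\in(0,r_0)$, so that any sequence $s_n\to r_0$ eventually lies in the compact interval $[a,R]$, and by discreteness of $\Gamma$ assume $y_n=x_0$. Using equivariance of the $r$-Martin compactifications and the cocycle identity $K_r(x,\gamma\xi)=K_r(\gamma^{-1}x,\xi)/K_r(\gamma^{-1},\xi)$, we may assume $H\in\Omega_0$ and that $H$ stabilizes the image of $\xi_0$ in the Bowditch boundary. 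Finally, since $\Gamma$ is dense in the metrizable compact space $\Gamma\cup\partial_\mu\Gamma$ and each $K_{s_n}(x_0,\cdot)$ is continuous on it, choosing $g_n\in\Gamma$ with $d(g_n,\zeta_n)<1/n$ and $|K_{s_n}(x_0,g_n)-K_{s_n}(x_0,\zeta_n)|<1/n$ reduces the statement to sequences $(s_n,g_n)$ with $g_n\in\Gamma$ converging to $\xi_0$.

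The heart of the argument is a uniform-in-$r$ strengthening of Propositions~\ref{propconvergenceparabolic1} and~\ref{propconvergenceparabolic2}, namely: if $g_n\in\Gamma$ converges to $\xi_0\in\partial H$, then $K_r(x_0,g_n)\to K_r(x_0,\xi_0)$ \emph{uniformly for $r\in[a,R]$}. To prove this, fix $\epsilon>0$ and set $\pi_n=\pi_H(g_n)$. As in the proof of Proposition~\ref{propconvergenceparabolic1} (via \cite{Sisto}, \cite{Hruska} and Proposition~\ref{Floydgeo}), $\pi_n$ lies within bounded distance of a transition point on a geodesic from $x_0$ to $g_n$, so $\delta^f_{\pi_n}(x_0,g_n)\ge\delta$ for a uniform $\delta>0$ and all large $n$; moreover $\pi_n\to\infty$ and $\pi_n\to\xi_0$ in $\partial H$. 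Let $S=S(\epsilon,\delta)$ be given by Corollary~\ref{Anconaepsilon}, which — its proof resting only on Proposition~\ref{propsuperexponentialdecay} and Theorem~\ref{thmweakAncona}, both uniform in $r\le R$ — yields $G_r(x_0,g_n;B_S(\pi_n)^c)\le\epsilon\,G_r(x_0,g_n)$ uniformly in $r\le R$, and the same with $x_0$ replaced by $e$. Fix $\eta\ge\max(S,d(e,x_0))$ large enough that Lemmas~\ref{exponentialmomentsinducedchain} and~\ref{uniformconvergenceGreenalongparabolics} apply. If $d(g_n,\pi_n)\le S$, then $g_n\in N_\eta(H)$ with $g_n\to\xi_0$ in $\partial H$, and Lemma~\ref{uniformconvergenceGreenalongparabolics} gives directly $K_r(x_0,g_n)\to K_r(x_0,\xi_0)$ uniformly in $r\in[a,R]$. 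Otherwise $g_n\notin B_S(\pi_n)$, and conditioning on the last visit to $B_S(\pi_n)$ yields the exact identity
$$G_r(x_0,g_n)=G_r(x_0,g_n;B_S(\pi_n)^c)+\sum_{u\in B_S(e)}G_r(x_0,\pi_n u)\,G_r(\pi_n u,g_n;B_S(\pi_n)^c),$$
and the same with $x_0$ replaced by $e$. For each $u\in B_S(e)$ we have $\pi_n u\in N_\eta(H)$ and $\pi_n u\to\xi_0$ in $\partial H$, so Lemma~\ref{uniformconvergenceGreenalongparabolics} bounds $G_r(x_0,\pi_n u)$ between $(K_r(x_0,\xi_0)\mp\epsilon)G_r(e,\pi_n u)$ for $n$ large, uniformly in $r\in[a,R]$; substituting this and the deviation estimate into the two identities and dividing, exactly as in the proof of Proposition~\ref{propconvergenceparabolic1}, gives
$$(1-\epsilon)(K_r(x_0,\xi_0)-\epsilon)\le K_r(x_0,g_n)\le\frac{1}{1-\epsilon}(K_r(x_0,\xi_0)+\epsilon)$$
for all large $n$ and all $r\in[a,R]$. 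Since $K_r(x_0,\xi_0)\le C^{d(e,x_0)}$ uniformly in $r\in[a,R]$ by Harnack's inequality, the uniform convergence follows.

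Granting this claim, the proof concludes as follows. Applying it to any fixed sequence $g_n\in\Gamma$ with $g_n\to\xi_0$ exhibits $r\mapsto K_r(x_0,\xi_0)$ as a uniform limit on $[a,R]$ of the functions $r\mapsto K_r(x_0,g_n)=G_r(x_0,g_n)/G_r(e,g_n)$, each continuous on $(0,R]$ because $r\mapsto G_r(x,y)$ is a power series with non-negative coefficients converging at its radius of convergence $R$ and $G_r(e,g_n)>0$ for $r>0$ by admissibility; hence $r\mapsto K_r(x_0,\xi_0)$ is continuous on $(0,R]$. Then, for $(s_n,g_n)\to(r_0,\xi_0)$ with $g_n\in\Gamma$,
$$|K_{s_n}(x_0,g_n)-K_{r_0}(x_0,\xi_0)|\le|K_{s_n}(x_0,g_n)-K_{s_n}(x_0,\xi_0)|+|K_{s_n}(x_0,\xi_0)-K_{r_0}(x_0,\xi_0)|,$$
where the first term tends to $0$ by the uniform convergence (as $s_n\in[a,R]$ eventually) and the second by the continuity just proved; this is the asserted continuity at $(x_0,r_0,\xi_0)$.

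The main obstacle is precisely the propagation of uniformity in $r$. One must check that Corollary~\ref{Anconaepsilon} holds with $S$ independent of $r\le R$ — which it does, being deduced from the $r$-uniform statements Proposition~\ref{propsuperexponentialdecay} and Theorem~\ref{thmweakAncona} — and that the convergence $K_r(x_0,\pi_n u)\to K_r(x_0,\xi_0)$ supplied by Lemma~\ref{uniformconvergenceGreenalongparabolics} is genuinely uniform for $r$ in compact subsets of $(0,R]$; the latter ultimately rests on the joint continuity in $(r,u)$ of $F^{(r)}(u)$, $\lambda^{(r)}(u)$, $C^{(r)}(u)$ and of the homeomorphism $\phi_r$ recorded just before that lemma. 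Beyond these bookkeeping uniformities and the subsequence split according to whether $g_n$ lies near $H$, the argument introduces no new idea over the deviation-inequality method already used for parabolic limit points.
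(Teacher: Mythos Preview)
Your proof is correct and follows essentially the same approach as the paper's: both combine the uniform-in-$r$ deviation inequality of Corollary~\ref{Anconaepsilon}, the uniform convergence of Lemma~\ref{uniformconvergenceGreenalongparabolics}, and the last-visit decomposition from the proof of Proposition~\ref{propconvergenceparabolic1} to sandwich $K_{s_n}(x_0,\zeta_n)$ between $(1-\epsilon)(K_{s_n}(x_0,\xi_0)-\epsilon)$ and $\frac{1}{1-\epsilon}(K_{s_n}(x_0,\xi_0)+\epsilon)$, then invoke continuity of $r\mapsto K_r(x_0,\xi_0)$.

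There are two minor organizational differences worth noting. First, the paper works directly with $\zeta_n$ in the full compactification and splits according to whether $\zeta_n\in\partial H$, whereas you first reduce to $g_n\in\Gamma$ by a density--diagonal argument and then split according to whether $g_n$ lies in $N_\eta(H)$; your reduction is arguably cleaner, since the paper's expression $G_{s_n}(x_0,\zeta_n)$ for a boundary point $\zeta_n$ is slightly informal. Second, the paper reads off the continuity of $r\mapsto K_r(x_0,\xi_0)$ from the explicit formula $K_r(h,\xi)=\frac{C_r(u_r)_k}{C_r(u_r)_0}\mathrm{e}^{u_r\cdot z}$, while you obtain it as a uniform limit of the continuous functions $r\mapsto K_r(x_0,g_n)$; both routes are valid and rest on the same underlying continuity of $(r,u)\mapsto F^{(r)}(u)$.
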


\begin{proof}
Let $(r_n,x_n,\xi_n)$ converge to $(r_0,x_0,\xi_0)$.
As in the proof of Proposition~\ref{stabilityconical}, we can assume that $x_n=x_0$ for every $n$.
We can thus fix large enough $\eta$ so that $x_n\in N_\eta(H)$ for every $n$.
We can also, up to extracting sub-sequences, assume that either $\xi_n\in \partial H$ for every $n$, or that $\xi_n\notin \partial H$ for every $n$.
In the first case, continuity is given by Lemma~\ref{lemme1''pourpropstabilite3}.

Let us assume then that for every $n$, $\xi_n\notin\partial H$.
Then, the projection $\pi_n$ of $\xi_n$ on $H$ is well defined up to a bounded distance in the Cayley graph.
Moreover, $\pi_n$ converges to $\xi_0$ in $\partial H$.
Fix $\epsilon>0$.
Corollary~\ref{Anconaepsilon} shows that there exists $S$, independent of $n$, such that
$$G_{r_n}(x_0,\xi_n)\leq \frac{1}{1-\epsilon}\sum_{u\in B_S(e)}G_{r_n}(x_0,\pi_nu)G_{r_n}(\pi_nu,\xi_n).$$
Also, we have
$$G_{r_n}(e,\xi_n)\geq \sum_{u\in B_S(e)}G_{r_n}(e,\pi_nu)G_{r_n}(\pi_nu,\xi_n).$$
Since $\pi_nu$ converges to $\xi_0$ in $\partial H$, Lemma~\ref{uniformconvergenceGreenalongparabolics} shows that for large enough $n$,
$$|K_{r_n}(x_0,\pi_nu)-K_{r_n}(x_0,\xi_0)|\leq \epsilon.$$
Then, as in the proof of Proposition~\ref{propconvergenceparabolic1}, we finally get that for large enough $n$,
$$(1-\epsilon)(K_{r_n}(x_0,\xi_0)-\epsilon)\leq K_{r_n}(x_0,\xi_n)\leq  \frac{1}{1-\epsilon}(K_{r_n}(x_0,\xi_0)+\epsilon).$$
Finally, $K_{r_n}(x_0,\xi_0)$ converges to $K_{r_0}(x_0,\xi_0)$, so that $K_{r_n}(x_0,\xi_n)$ also converges to $K_{r_0}(x_0,\xi_0)$,
which concludes the proof.
\end{proof}


\section{Strong stability in low dimension}\label{Sectionspectrallydegeneratemeasures}
We first prove here the following proposition.
We will then use it to prove strong stability of the Martin boundary when the parabolic subgroups have small rank.

\begin{proposition}\label{propdegenerateimplieshighrank}
Let $\Gamma$ be a relatively hyperbolic group and
let $H$ a virtually abelian parabolic subgroup of rank $d$.
Let $\mu$ be a probability measure on $\Gamma$ whose symmetric finite support generates $\Gamma$.
Assume that $\mu$ is spectrally degenerate along $H$.
Then, $d\geq 5$.
\end{proposition}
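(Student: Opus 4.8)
The plan is to reduce the statement to an analytic fact about the first--return kernel $p:=p_{H,R}$ to $H$ associated with $R\mu$ (equivalently, by \cite[Lemma~3.1]{DussauleLLT}, which I would use repeatedly, the first--return kernel $p_{H,\eta,R}$ to a neighborhood $N_\eta(H)$ with $\eta$ large). Since $\mu$ is spectrally degenerate along $H$, the spectral radius $R_H$ of $p$ equals $1$; in other words the diagonal Green function $t\mapsto G_{H,R}(e,e\,|\,t)=\sum_{n\geq 0}t^n p^{(n)}(e,e)$ has radius of convergence $1$, and at $t=1$ it equals the finite number $G_R(e,e)$. I would establish two facts about this function: \emph{(a)} its left derivative at $t=1$ is finite, that is $\sum_{n\geq 1} n\,p^{(n)}(e,e)<\infty$; and \emph{(b)} the local limit theorem $p^{(n)}(e,e)\sim C\,n^{-d/2}$. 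Granting both, the conclusion is immediate: by \emph{(b)} the series $\sum_{n\geq 1} n\,p^{(n)}(e,e)$ is comparable to $\sum_{n} n^{1-d/2}$, which by \emph{(a)} must converge, forcing $1-d/2<-1$, i.e. $d\geq 5$.

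For step \emph{(b)}: being a first--return kernel of a symmetric measure, $p_{H,\eta,R}$ is itself symmetric, so the matrix $F^{(R)}(u)$ of Section~\ref{Sectionparaboliclimitpointsnondegenerate} satisfies $F^{(R)}(-u)=F^{(R)}(u)^{\mathsf T}$ and its dominant eigenvalue $\lambda$ is an even function of $u$. By Lemma~\ref{lemmaassumption2secondpart}, spectral degeneracy along $H$ gives $R_{\eta,R}(H)=1$ for every $\eta$, and then Lemma~\ref{spectralradiusabelian} yields $\min_u \lambda(u)=1$; since $\lambda$ is strictly convex (\cite[Proposition~3.5]{Dussaule}) and even, the minimum is attained only at $u_0=0$, where $\nabla\lambda(0)=0$. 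For $\eta$ large enough, $p_{H,\eta,R}$ has the exponential moments supplied by Lemma~\ref{exponentialmomentsinducedchain} and satisfies Assumption~1 (Lemma~\ref{lemmaassumption1}), so the local limit theorem at the spectral radius from \cite[Proposition~3.14]{Dussaule} applies and gives $p_{H,\eta,R}^{(n)}(e,e)\sim C_\eta\, n^{-d/2}$; transferring back to $p=p_{H,R}$ through \cite[Lemma~3.1]{DussauleLLT} gives \emph{(b)}. (I only really need the drift to vanish, $\nabla\lambda(u_0)=0$, which holds at any interior minimum, but symmetry makes this transparent and pins down $u_0=0$.)

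For step \emph{(a)}: reversing an excursion from $e$ to $e$ about each of its intermediate vertices and using $G_R(e,h)=G_R(h,e)$ gives the identity
$$\frac{d}{dt}\Big|_{t\to 1^-}G_{H,R}(e,e\,|\,t)=\sum_{n\geq 1} n\,p^{(n)}(e,e)=\sum_{h\in H}G_R(e,h)^2-G_R(e,e),$$
so \emph{(a)} is equivalent to the summability $\sum_{h\in H}G_R(e,h)^2<\infty$ (equivalently the same sum over $N_\eta(H)$). This is where the work lies, and it is the main obstacle. Proposition~\ref{doubleGreenfiniteonspheres} supplies only the sphere--by--sphere bound $\sum_{h\in H,\, d(e,h)=k}G_R(e,h)^2\leq C$ uniformly in $k$, which is not summable over $k$, so the point is to upgrade it to a bound that decays in $k$. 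I would obtain this upgrade from the deviation estimates of Section~\ref{Sectiondeviationinequalities}: for $h$ deep in the parabolic $H$ the Green function $G_R(e,h)$ is governed by excursions that, near the transition points of a word geodesic from $e$ to $h$, are forced by Proposition~\ref{propsuperexponentialdecay} and Corollary~\ref{Anconaepsilon} to pass within a bounded distance with overwhelming probability; iterating this along the geodesic, together with the fact that $H$ is quasi--isometrically embedded with polynomial ball growth, produces a quantitative $k$--summable refinement of the per--sphere estimate, which is exactly the finiteness asserted in \emph{(a)}. Given \emph{(a)} and \emph{(b)}, the inequality $d\geq 5$ follows as explained in the first paragraph.
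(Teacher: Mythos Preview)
Your overall architecture is correct and matches the paper's: reduce to showing (a) $\sum_{h\in H}G_R(e,h)^2<\infty$ and (b) the local limit theorem $p^{(n)}(e,e)\sim Cn^{-d/2}$ for the first--return kernel at the spectral radius, then conclude from comparison of $\sum n\,p^{(n)}(e,e)$ with $\sum n^{1-d/2}$. Step~(b) is handled essentially as the paper does it (via Lemmas~\ref{lemmaassumption2firstpart}/\ref{lemmaassumption2secondpart}, Lemma~\ref{spectralradiusabelian}, and the local limit theorem at the spectral radius for $\Z^d$-invariant kernels). One side remark: the ``transfer back to $p_{H,R}$ via \cite[Lemma~3.1]{DussauleLLT}'' is not quite licit, since that lemma equates Green functions, not the term-by-term convolution powers; but this is harmless, since you can simply fix $\eta$ large once and for all and run both (a) and (b) for $p_{H,\eta,R}$, as the paper does.

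The genuine gap is in step~(a). Your proposed mechanism---iterating Proposition~\ref{propsuperexponentialdecay} and Corollary~\ref{Anconaepsilon} along the transition points of a word geodesic $[e,h]$ to force $k$-summable decay of $\sum_{h\in H,\,d(e,h)=k}G_R(e,h)^2$---does not get off the ground, because for $h\in H$ a word geodesic from $e$ to $h$ typically has \emph{no} $(\epsilon,\eta)$-transition points at all: it stays in a bounded neighborhood of $H$, so every point on it is deep. Equivalently, the Floyd distance $\delta^f_y(e,h)$ is small for every $y$ on such a geodesic, and none of the Floyd--Ancona tools apply. There is thus nothing to iterate, and no decay in $k$ is forthcoming from this route. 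This is exactly the regime in which the deviation inequalities are vacuous, which is why (a) requires a different idea.

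The paper's proof of (a) (Proposition~\ref{Greenderivativealongparabolicisfinite}) uses a submultiplicativity trick instead. Fix $r<R$, so $\sum_{g\in\Gamma}G_r(e,g)^2<\infty$ by Lemma~\ref{lemmafirstderivativeGreen}. Given $h_1,\dots,h_m\in H$, the Extension Lemma (Lemma~\ref{extensionlemmaFloyd}) produces short bridging elements $a_i$ so that $g_m=h_1a_1h_2\cdots a_{m-1}h_m$ has each $g_i$ a Floyd transition point on $[e,g_m]$; then weak Ancona (Theorem~\ref{thmweakAncona}) gives $G_r(e,g_m)^2\gtrsim C^{-m}\prod_i G_r(e,h_i)^2$. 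Since by Lemma~\ref{boundedtoone} the map $(h_1,\dots,h_m)\mapsto g_m$ is boundedly-to-one, summing yields
\[
\Bigl(\sum_{h\in H}G_r(e,h)^2\Bigr)^m\;\le\;C_5^m\sum_{g\in\Gamma}G_r(e,g)^2.
\]
Taking $m$th roots and letting $m\to\infty$ gives $\sum_{h\in H}G_r(e,h)^2\le C_5$ \emph{uniformly in} $r<R$; monotone convergence then passes to $r=R$. This is the missing ingredient you need for (a).
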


We begin with the following result proved in \cite{GouezelLalley}.
\begin{lemma}\label{lemmafirstderivativeGreen}\cite[Proposition~1.9]{GouezelLalley}
Let $\Gamma$ be a finitely generated group and let $\mu$ be a probability measure on $\Gamma$, with spectral radius $R$.
Then, for every $g,g'\in \Gamma$ and for every $r\leq R$, we have
$$\frac{d}{dr}(rG_r(g,g'))=\sum_{g''\in \Gamma}G_r(g,g'')G_r(g'',g').$$
\end{lemma}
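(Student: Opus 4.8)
The plan is to reduce everything to manipulations of the defining power series in $r$, matching coefficients via the convolution identity, and then to handle the endpoint $r=R$ by a separate monotonicity argument.

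First I would observe that for $r<R$ one has $rG_r(g,g')=\sum_{n\geq 0}r^{n+1}\mu^{*n}(g^{-1}g')$, a power series with non-negative coefficients whose radius of convergence is exactly $R$ by definition of the spectral radius. Hence it may be differentiated term by term on $(0,R)$, which gives
$$\frac{d}{dr}\bigl(rG_r(g,g')\bigr)=\sum_{n\geq 0}(n+1)r^{n}\mu^{*n}(g^{-1}g').$$
Next I would expand the right-hand side of the claimed identity. Since every term is non-negative, Tonelli's theorem lets one interchange the sums freely, so
$$\sum_{g''\in\Gamma}G_r(g,g'')G_r(g'',g')=\sum_{k\geq 0}\sum_{l\geq 0}r^{k+l}\sum_{g''\in\Gamma}\mu^{*k}(g^{-1}g'')\mu^{*l}(g''^{-1}g')=\sum_{k,l\geq 0}r^{k+l}\mu^{*(k+l)}(g^{-1}g'),$$
where the last step uses the convolution identity $\sum_{g''}\mu^{*k}(g^{-1}g'')\mu^{*l}(g''^{-1}g')=\mu^{*(k+l)}(g^{-1}g')$. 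Grouping the terms with $k+l=n$, of which there are $n+1$, this is exactly $\sum_{n\geq 0}(n+1)r^n\mu^{*n}(g^{-1}g')$, i.e. the expression just obtained for $\frac{d}{dr}(rG_r(g,g'))$. This settles the identity for every $r<R$.

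Finally I would pass to $r=R$, where the derivative on the left is understood as a left derivative. The map $r\mapsto rG_r(g,g')$ is increasing and convex on $(0,R]$ (being a power series with non-negative coefficients), so its left derivative at $R$ equals $\lim_{r\uparrow R}\frac{d}{dr}(rG_r(g,g'))$; by monotone convergence this limit is $\sum_{n\geq 0}(n+1)R^n\mu^{*n}(g^{-1}g')\in[0,\infty]$. On the other hand, for each $g''$ the product $G_r(g,g'')G_r(g'',g')$ increases to $G_R(g,g'')G_R(g'',g')$ as $r\uparrow R$, so another application of monotone convergence gives $\sum_{g''}G_r(g,g'')G_r(g'',g')\to\sum_{g''}G_R(g,g'')G_R(g'',g')$. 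Since the two quantities coincide for all $r<R$, their limits coincide, which yields the formula at $r=R$ as an identity in $[0,\infty]$; the finiteness of both sides in the transient regime is precisely the content recorded in \cite[Proposition~1.9]{GouezelLalley}.

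The one genuinely delicate point is this passage to the endpoint: one must know that the one-sided derivative of $rG_r$ at $R$ agrees with the limit of the interior derivatives, which is where convexity (equivalently, non-negativity of the power-series coefficients) is used, and that the double sum of Green functions is left-continuous at $R$, which follows from monotone convergence. Beyond this bookkeeping there is no real analytic obstacle, since the heart of the argument is simply matching the coefficients of two power series via the Chapman--Kolmogorov identity.
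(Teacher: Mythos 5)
Your proof is correct: the coefficient-matching computation via Chapman--Kolmogorov for $r<R$, together with the convexity/monotone-convergence argument to extend the identity to $r=R$ as an equality in $[0,\infty]$, is exactly the standard argument behind \cite[Proposition~1.9]{GouezelLalley}, which the paper cites without reproving. Nothing is missing.
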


We now prove the following key result.

\begin{proposition}\label{Greenderivativealongparabolicisfinite}
Let $\Gamma$ be a relatively hyperbolic group and let $\mu$ be a symmetric probability measure whose finite support generates $\Gamma$, with spectral radius $R$.
Let $H$ be a parabolic subgroup.
Then, for every $r\leq R$ and for every $\eta\geq 0$, we have
$$\sum_{g\in N_{\eta}(H)}G_r(e,g)G_r(g,e)<+\infty.$$
\end{proposition}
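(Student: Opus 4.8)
I want to bound $\sum_{g\in N_\eta(H)}G_r(e,g)^2$ (using symmetry, $G_r(g,e)=G_r(e,g^{-1})$, and the left-invariance to rewrite as $G_r(e,g)$) uniformly in $r\le R$. The natural comparison is with the induced first-return kernel $p_{H,\eta,r}$ on $N_\eta(H)$, because a path from $e$ to $g\in N_\eta(H)$ decomposes through its visits to $N_\eta(H)$, and the Green function of $\mu$ restricted to endpoints in $N_\eta(H)$ equals the Green function $G_{H,\eta,r}(\cdot,\cdot\mid 1)$ of $p_{H,\eta,r}$ at parameter $1$ (this is \cite[Lemma~3.1]{DussauleLLT}, already invoked repeatedly above). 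So the statement is really: $\sum_{g\in N_\eta(H)}G_{H,\eta,r}(e,g\mid 1)^2<\infty$, uniformly in $r\le R$.

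First I would recall from Section~\ref{Sectionparaboliclimitpointsnondegenerate} that, after enlarging $\eta$ so that Lemma~\ref{exponentialmomentsinducedchain} applies, $p_{H,\eta,r}$ has exponential moments up to some fixed $M>0$ independent of $r\le R$, and that the associated matrix $F^{(r)}(u)$ has dominant eigenvalue $\lambda^{(r)}(u)$ defined and real-analytic, strictly convex, on a ball $\{\|u\|\le M'\}$ with $M'$ independent of $r$. The key point is that the sum $\sum_{g}G_{H,\eta,r}(e,g\mid 1)^2$ is finite precisely when the $1$-Green function decays exponentially in $d_{\mathbb Z^d}(e,g)$, which by the matrix analysis of \cite{Dussaule} happens exactly when $1$ lies strictly below the spectral radius of $p_{H,\eta,r}$, i.e.\ when $R_{\eta,r}(H)>1$, equivalently (Lemma~\ref{spectralradiusabelian}) when $\min_u\lambda^{(r)}(u)<1$. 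Now here is the dichotomy: if $\mu$ is spectrally non-degenerate along $H$, then $R_{0,R}(H)>1$, hence $R_{\eta,r}(H)>1$ for all $r\le R$ (monotonicity, Lemma~\ref{lemmaassumption2firstpart}), and we are in the good case with a uniform exponential-decay rate coming from the uniform gap $1-\min_u\lambda^{(r)}(u)>0$ (uniformity follows from continuity of $F^{(r)}$, hence of $\lambda^{(r)}$, in $r$ on the compact $[0,R]$). If $\mu$ \emph{is} spectrally degenerate along $H$, then at $r=R$ we have $\min_u\lambda^{(R)}(u)=1$ with the minimum attained at a critical point $u_0$ where $\nabla\lambda^{(R)}(u_0)=0$, and then \cite[Proposition~3.14]{Dussaule} gives the local limit behaviour $p^{(n)}_{H,\eta,R}(e,e)\sim C n^{-d/2}$; summing the associated asymptotics for $G_{H,\eta,R}(e,g\mid 1)$ over $g$ still converges because $d\ge 5$ — but wait, we do not get to assume $d\ge 5$ here, so I must be more careful.

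Let me restructure: the proposition should be proved \emph{without} the rank hypothesis, and it is in fact the tool used to \emph{prove} $d\ge5$ in Proposition~\ref{propdegenerateimplieshighrank}. So the correct argument cannot route through the $n^{-d/2}$ estimate at $r=R$; it must instead exploit that $\sum_{g\in\Gamma}G_r(e,g)^2<\infty$ is already known globally. Indeed, for $r<R$ this is \cite[Proposition~1.9]{GouezelLalley} and the finiteness of $\sum_{g}G_r^2(e,g)$ cited in the proof of Proposition~\ref{doubleGreenfiniteonspheres}; and for $r=R$, Proposition~\ref{doubleGreenfiniteonspheres} together with the layer decomposition $\sum_{g}G_R^2(e,g)=\sum_k\sum_{|g|=k}G_R^2(e,g)$ is \emph{not} automatically summable — so this does not immediately work either. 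The honest route, I believe, is: use Lemma~\ref{lemmafirstderivativeGreen}, $\frac{d}{dr}(rG_r(g,g'))=\sum_{g''}G_r(g,g'')G_r(g'',g')$, applied with $g=g'=e$: the left side equals $G_R(e,e)+R\,\frac{d}{dr}G_r(e,e)\big|_{r=R}$, and the finiteness of this derivative is exactly \cite[Proposition~1.9 / Theorem~A]{GouezelLalley} (Gou\"ezel--Lalley prove $\sum_g G_R^2(e,g)<\infty$ for symmetric finitely supported measures on any nonamenable, indeed any, such group — this is the nonamenable/relatively-hyperbolic setting here). Granting $\sum_{g\in\Gamma}G_R(e,g)^2<\infty$, the proposition is immediate: $\sum_{g\in N_\eta(H)}G_R(e,g)G_R(g,e)=\sum_{g\in N_\eta(H)}G_R(e,g)^2\le\sum_{g\in\Gamma}G_R(e,g)^2<\infty$, and for $r<R$ monotonicity $G_r\le G_R$ finishes it.

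The main obstacle is thus pinning down exactly which cited statement guarantees $\sum_{g\in\Gamma}G_R(e,g)^2<\infty$ (equivalently, finiteness of the first derivative of $r\mapsto G_r(e,e)$ at $r=R$): this is the content of Gou\"ezel--Lalley \cite[Proposition~1.9]{GouezelLalley} combined with the fact that relatively hyperbolic groups are nonamenable, hence have $R<1/\|\mu\|$-type spectral gap making the square-summability hold up to and including $R$; equivalently it follows from Proposition~\ref{doubleGreenfiniteonspheres} once one checks that the bounded quantity $\sum_{|g|=k}G_R^2(e,g)$ is summable in $k$, which in turn uses a crude exponential-decay-of-$G_R$-along-spheres estimate available here because $\Gamma$ has nontrivial Floyd boundary (the Floyd–Ancona inequalities of Section~\ref{Sectiondeviationinequalities} force $G_R(e,g)$ to decay along any ray through transition points). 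So the plan is: (1) reduce via symmetry to $\sum_{g\in N_\eta(H)}G_r(e,g)^2$; (2) by $G_r\le G_R$, reduce to $r=R$; (3) invoke $\sum_{g\in\Gamma}G_R(e,g)^2<\infty$ from \cite{GouezelLalley} (or re-derive it from Proposition~\ref{doubleGreenfiniteonspheres} plus exponential decay of $G_R$ along spheres, using the Floyd–Ancona inequalities and nonamenability); (4) conclude by restricting the convergent sum to $N_\eta(H)\subset\Gamma$.
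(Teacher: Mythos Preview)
Your proposal has a genuine gap at step~(3): the claim that $\sum_{g\in\Gamma}G_R(e,g)^2<\infty$ is \emph{false} in general for nonelementary (relatively) hyperbolic groups. By Lemma~\ref{lemmafirstderivativeGreen}, that sum equals $\frac{d}{dr}(rG_r(e,e))\big|_{r=R}$, and the local limit theorem $p^{(n)}(e,e)\sim CR^{-n}n^{-3/2}$ (Gou\"ezel--Lalley, Gou\"ezel \cite{Gouezel-local}, and \cite{DussauleLLT} in the relatively hyperbolic case) forces this derivative to diverge. \cite[Proposition~1.9]{GouezelLalley} gives finiteness only for $r<R$, and nonamenability does not upgrade this to $r=R$. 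Your fallback via Proposition~\ref{doubleGreenfiniteonspheres} plus ``exponential decay of $G_R$ along spheres'' cannot work either, since it would imply the same false conclusion: the sphere sums $\sum_{|g|=k}G_R^2(e,g)$ are bounded but not summable in $k$.

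The paper's proof avoids this obstruction entirely. It never claims finiteness of the full sum at $R$; instead it extracts a bound on the \emph{parabolic} sum that is uniform in $r<R$ and then lets $r\to R$ by monotonicity. The key idea you are missing is a submultiplicativity trick: using the Extension Lemma (Lemma~\ref{extensionlemmaFloyd}) one concatenates $m$ elements $h_1,\dots,h_m\in H$ through short bridges $a_i$ into a single element $g_m$, with Floyd-separation at each juncture. Weak Floyd--Ancona (Theorem~\ref{thmweakAncona}) then factorizes $G_r(e,g_m)G_r(g_m,e)$ into the product of the $G_r(e,h_i)G_r(h_i,e)$, and the bounded-to-one property (Lemma~\ref{boundedtoone}) yields
\[
\Bigl(\sum_{h\in H}G_r(e,h)G_r(h,e)\Bigr)^m\le C^m\sum_{g\in\Gamma}G_r(e,g)G_r(g,e).
\]
For each fixed $r<R$ the right side is finite, so taking $m$th roots and sending $m\to\infty$ gives $\sum_{h\in H}G_r(e,h)G_r(h,e)\le C$ with $C$ independent of $r$. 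Now let $r\uparrow R$. This is precisely the missing mechanism: one squeezes a uniform bound out of a quantity that is finite for $r<R$ but blows up at $R$, by exploiting that the parabolic sum sits inside it with exponent $m$.
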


In other words, the derivative of the Green function along parabolic subgroups is finite at the spectral radius.
Notice that we do not need to assume that the measure is spectrally non-degenerate along $H$.

\begin{proof}
We just need to prove the result for $r=R$, since the Green function is non-decreasing in $r$.
Also, we just need to prove the result for $\eta=0$.
We first fix $r<R$, so that $\frac{d}{dr}G_r(e,e)$ is finite.
According to Lemma~\ref{lemmafirstderivativeGreen}, the sum
$\sum_{g\in \Gamma}G_r(e,g)G_r(g,e)$ is thus finite.

Let $h_1,...,h_m\in H$.
According to Lemma~\ref{extensionlemmaFloyd}, there exists $a_1$ in $B_C(e)$ such that
$\delta_{h_1}(e,h_1ah_2)\geq \delta$ for some uniform $C\geq 0$ and $\delta>0$.
We let $g_1=h_1a_1h_2$.
Then, there exists $a_2\in B(e,C)$ such that $\delta_{g_1}(e,g_2)\geq \delta$, where $g_2=g_1a_2h_3$.
By induction, we see there exist $a_1,...,a_{m-1}$ in $B(e,C)$ such that letting $g_i=h_1a_1h_2...a_ih_{i+1}$, we have
$\delta_{g_i}(e,g_{i+1})\geq \delta$.

Also, whenever $g$ is fixed, Lemma~\ref{boundedtoone} shows there is a finite number of ways of writing
$g$ as $h_1a_1h_2$ as above.
In particular, there exists $C_1$ such that there are at most $C_1^m$ ways of writing $g$ as
$h_1a_1h_2...a_{m-1}h_m$.
We can thus find $C_2$ such that
$$C_2^m\sum_{g\in \Gamma}G_r(e,g)G_r(g,e)\geq \sum_{h_1,...,h_m\in H}G_r(e,g_m)G_r(g_m,e).$$
According to Theorem~\ref{thmweakAncona} (applied $m$ times), we have
\begin{align*}
    C_2^m\sum_{g\in \Gamma}G_r(e,g|r)G_r(g,e)\geq \frac{1}{C_3^{2m}}&\sum_{h_1,...,h_m\in H}G_r(e,h_1)G_r(h_1,e)G_r(e,a_1)G_r(a_1,e)...\\
    &G_r(e,a_{m-1})G_r(a_{m-1},e)G_r(e,h_m)G_r(h_m,e).
\end{align*}
Since $a_j\in B(e,C)$, $G_r(e,a_j)G_r(a_j,e)\geq C_4$ for some $C_4\geq0$.

We thus proved that for every $m\geq 1$,
$$\left (\sum_{h\in H}G_r(e,h)G_r(h,e)\right)^m\leq C_5^m\sum_{g\in \Gamma}G_r(e,g)G_r(g,e)$$
so that
$$\sum_{h\in H}G_r(e,h)G_r(h,e)\leq C_5\left (\sum_{g\in \Gamma}G_r(e,g)G_r(g,e)\right)^{1/m}.$$
Letting $m$ tend to infinity, we thus have
$$\sum_{h\in H}G_r(e,h)G_r(h,e)\leq C_5.$$
Since $r\mapsto G_r(e,h)G_r(h,e)$ is non-decreasing, we can now let $r$ tend to $R$ to obtain
$$\sum_{h\in H}G_R(e,h)G_R(h,e|R)\leq C_5,$$
which concludes the proof.
\end{proof}

Let us recall the following classical result, which is a generalization of the local limit theorem in $\Z^d$.

\begin{lemma}\label{locallimitabelian}\cite[Theorem~13.12]{Woess-book}
Let $H$ be a virtually abelian group of rank $d$.
Let $p$ be an $H$-invariant transition kernel on $H\times \{1,...,N\}$, with spectral radius $R$.
Assume that $p$ is strongly irreducible and has finite exponential moments.
Then, if $R=1$, there exists $C>0$ such that
$$p^{(n)}(e,e)\sim \frac{C}{n^{d/2}},$$
where $p^{(n)}$ denote the $n$th convolution power of $p$.
\end{lemma}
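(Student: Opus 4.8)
The plan is to reduce the statement to the classical matrix-coefficient local limit theorem on $\Z^d$ and run the standard Fourier argument. Since $H$ is virtually abelian of rank $d$, it contains a finite index subgroup isomorphic to $\Z^d$; writing $H$ as a finite union of cosets of this subgroup, the state space $H\times\{1,\dots,N\}$ is identified with $\Z^d\times\{1,\dots,N'\}$ for some $N'$, and $p$ becomes a $\Z^d$-invariant transition kernel on it. This identification does not change the diagonal values $p^{(n)}(e,e)$ and preserves strong irreducibility and the finite exponential moment condition, so one may assume from now on that $H=\Z^d$. For $N'=1$ this is exactly \cite[Theorem~13.12]{Woess-book}; I would follow the same Fourier-analytic route in the matrix-valued case.

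First I would perform a Cramér--Doob transform. As in Section~\ref{Sectionparaboliclimitpointsnondegenerate}, let $F(u)$ be the associated matrix, $\lambda(u)$ its dominant eigenvalue and $C(u)$ a positive right Perron eigenvector. By \cite[Proposition~3.5]{Dussaule}, $\lambda$ is strictly convex on the interior of its domain, and by the finite exponential moment assumption (this is where one uses the compactness in Assumption~1, together with $\lambda(0)\le 1$) it attains there a unique minimum at some $u_\ast$ with $\nabla\lambda(u_\ast)=0$. Define a new $\Z^d$-invariant kernel by
$$\hat p_{jk}(0,x)=\frac{1}{\lambda(u_\ast)}\,\frac{C(u_\ast)_k}{C(u_\ast)_j}\,\mathrm{e}^{x\cdot u_\ast}\,p_{jk}(0,x).$$
Since $F(u_\ast)C(u_\ast)=\lambda(u_\ast)C(u_\ast)$, the kernel $\hat p$ is stochastic; it is still strongly irreducible, hence (using $\mu(e)>0$ and admissibility to exclude periodicity in the $\Z^d$-direction) aperiodic; and since $\nabla\lambda(u_\ast)=0$ it has zero drift. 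Moreover $\hat p^{(n)}(e,e)=\lambda(u_\ast)^{-n}p^{(n)}(e,e)$, so it suffices to prove $\hat p^{(n)}(e,e)\sim C n^{-d/2}$: this gives $p^{(n)}(e,e)\sim C\lambda(u_\ast)^n n^{-d/2}$, whence $\limsup_n p^{(n)}(e,e)^{1/n}=\lambda(u_\ast)$; as this $\limsup$ equals $1/R=1$, we get $\lambda(u_\ast)=1$ and the claim follows.

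Next I would run the Fourier argument for the centered, aperiodic stochastic kernel $\hat p$. Let $\hat F(\theta)_{jk}=\sum_{x\in\Z^d}\hat p_{jk}(0,x)\mathrm{e}^{i\langle x,\theta\rangle}$ for $\theta$ in the torus $\mathbb{T}^d$, so that
$$\hat p^{(n)}(e,e)=\frac{1}{(2\pi)^d}\int_{\mathbb{T}^d}\bigl(\hat F(\theta)^n\bigr)_{00}\,d\theta.$$
The matrix $\hat F(0)$ is primitive and stochastic, so its dominant eigenvalue $1$ is simple and isolated; analytic perturbation theory gives, for $|\theta|\le\epsilon$, a simple dominant eigenvalue $\beta(\theta)$ depending analytically on $\theta$ with $\beta(0)=1$, $\nabla\beta(0)=0$ (zero drift) and $\beta(\theta)=1-\tfrac12\langle\Sigma\theta,\theta\rangle+O(|\theta|^3)$, where $\Sigma=\mathrm{Hess}(\log\lambda)(u_\ast)=\mathrm{Hess}(\lambda)(u_\ast)/\lambda(u_\ast)$ is positive definite by strict convexity of $\lambda$, together with an analytic rank-one spectral projection $\Pi(\theta)$. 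On $|\theta|\le\epsilon$ one has $(\hat F(\theta)^n)_{00}=\beta(\theta)^n\Pi(\theta)_{00}+O(r^n)$ with $r<1$, and on $|\theta|\ge\epsilon$ aperiodicity gives a uniform bound $\|\hat F(\theta)^n\|\le C(1-c)^n$; hence $\hat p^{(n)}(e,e)=(2\pi)^{-d}\int_{|\theta|\le\epsilon}\beta(\theta)^n\Pi(\theta)_{00}\,d\theta+O(\gamma^n)$ for some $\gamma<1$. Rescaling $\theta=\phi/\sqrt n$ and using $n\log\beta(\phi/\sqrt n)\to-\tfrac12\langle\Sigma\phi,\phi\rangle$, $\Pi(\phi/\sqrt n)_{00}\to\Pi(0)_{00}$ and the domination $\mathrm{Re}\log\beta(\theta)\le-c|\theta|^2$, dominated convergence yields
$$n^{d/2}\hat p^{(n)}(e,e)\longrightarrow\frac{\Pi(0)_{00}}{(2\pi)^d}\int_{\R^d}\mathrm{e}^{-\frac12\langle\Sigma\phi,\phi\rangle}\,d\phi=\frac{\Pi(0)_{00}}{(2\pi)^{d/2}\sqrt{\det\Sigma}}=:C,$$
which is strictly positive since $\Pi(0)_{00}>0$ (a diagonal entry of the rank-one projection built from strictly positive left and right Perron vectors).

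The main obstacle is the uniform spectral-gap statement away from $\theta=0$, namely that the spectral radius of $\hat F(\theta)$ is $<1$ for every $\theta\neq0$: the triangle inequality only gives $\le1$, and strict inequality genuinely uses aperiodicity of $\hat p$ in the $\Z^d$-direction (no proper coset of a sublattice carries the walk), which in the setting of the paper is guaranteed by $\mu(e)>0$ and admissibility. A secondary point requiring care is the uniformity in $n$ and $\theta$ of the remainder terms needed to pass to the rescaled Gaussian limit; both are standard once the perturbation picture is in place.
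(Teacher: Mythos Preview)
The paper does not prove this lemma at all: it is simply quoted from \cite[Theorem~13.12]{Woess-book} and used as a black box in the proof of Proposition~\ref{propdegenerateimplieshighrank}. So there is no proof in the paper to compare against.

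That said, your write-up is essentially the standard proof one finds in Woess's book and related references: reduce to a $\Z^d$-invariant matrix-valued kernel, apply a Cram\'er--Doob transform to center the walk at the minimizer $u_\ast$ of the Perron eigenvalue $\lambda$, then run the matrix-valued Fourier argument with analytic perturbation of the top eigenvalue near $\theta=0$ and a spectral gap away from $0$. Your identification of the two genuine technical points---the strict spectral gap of $\hat F(\theta)$ for $\theta\neq 0$, and the uniform control of remainders in the rescaling---is accurate. Note that in the paper's actual application (proof of Proposition~\ref{propdegenerateimplieshighrank}) the aperiodicity concern you raise is handled explicitly by the lazy-fication $\tilde p=\alpha\delta+(1-\alpha)p$, which makes the kernel aperiodic in both the finite and the $\Z^d$ directions; so your caveat about needing $\mu(e)>0$ is exactly the issue the paper addresses there. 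One minor comment: your deduction that $\lambda(u_\ast)=1$ from the asymptotic of $\hat p^{(n)}(e,e)$ is correct but slightly roundabout; in the paper's framework one can also argue directly via Lemma~\ref{spectralradiusabelian} that $R=1$ forces $\min_u\lambda(u)=1$.
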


We now prove Proposition~\ref{propdegenerateimplieshighrank}.

\begin{proof}
First, Lemme~\ref{exponentialmomentsinducedchain} shows that if $\eta$ is chosen large enough, then $p_{\eta,H,R_{\mu}}$ has finite exponential moments.
According to Lemma~\ref{lemmafirstderivativeGreen} and Proposition~\ref{Greenderivativealongparabolicisfinite}, for every $r\leq R$, the first derivative of the Green function $G_{\eta,H,r}$ associated with $p_{\eta,H,r}$ is finite at 1.
According to Lemma~\ref{lemmaassumption2firstpart}, if we assume that $\mu$ is spectrally degenerate along $H$, then the spectral radius of $p_{\eta,H,R}$ is 1.
We thus have an $H$-invariant transition kernel $p$ on $H\times \{1,...,N\}$ with finite exponential moments, whose spectral radius is 1 and whose Green function has a finite derivative at 1.
We just need to prove that this can only happen if the rank $d$ of $H$ is at least 5.

Lemma~\ref{locallimitabelian} shows that if $p$ were strongly irreducible, then we would have
$$p^{(n)}\sim \frac{C}{n^{d/2}}.$$
Since the derivative of the Green function at 1 is given by
$\sum_{n\geq 0}np^{(n)}$.
This sum is finite if and only if $\frac{d}{2}-1>1$ or equivalently $d>4$.

To conclude, let us show that we can assume that $p$ is strongly irreducible.
Since the initial random walk on $\Gamma$ is irreducible, we already now that $p$ is irreducible.
We define a new transition kernel $\tilde{p}$ on $N_{\eta}(H)$ by
$$\tilde{p}(h,h')=\alpha \delta_{h,h'}+(1-\alpha)p(h,h'),$$
where $0<\alpha <1$.
Then, $\tilde{p}$ is strongly irreducible.
Moreover, letting $\tilde{G}$ be its Green function, \cite[Lemma~9.2]{Woess-book} shows that
$$\tilde{G}_t(e,e)=\frac{1}{1-\alpha t}G_{\frac{(1-\alpha)t}{1-\alpha t}}(e,e).$$
The radius of convergence of $\frac{1}{1-\alpha t}$ is $\frac{1}{\alpha}>1$.
Also, if $t<\frac{1}{\alpha}$, then
$\frac{(1-\alpha)t}{1-\alpha t}\geq 1$ if and only if $t\geq 1$.
This proves that the radius of convergence of $\tilde{G}$ also is 1
and the formula above shows that the first derivative of $\tilde{G}$ is finite at 1.

We thus constructed a strongly irreducible $H$-invariant transition kernel $p$ on $H\times \{1,...,N\}$ with finite exponential moments, whose spectral radius is 1 and whose Green function has a finite derivative at 1.
This concludes the proof.
\end{proof}

We get Theorem~\ref{stabilitylowdimension} as a corollary.
Indeed, if $\Gamma$ is the fundamental group of a geometrically finite hyperbolic manifold $X$, then it is relatively hyperbolic with respect to the cusp stabilizers, which are virtually abelian groups.
If the dimension of $X$ is at most 5, then the rank of the cusp stabilizers is at most 4, see for example \cite[\S 5.4,\S 5.5]{Ratcliffe} and \cite{Bowditchgeometricallyfinite} for more details on this.
Hence, Proposition~\ref{propdegenerateimplieshighrank} shows that for any symmetric probability measure $\mu$ whose finite support generates $\Gamma$,
$\mu$ cannot be spectrally degenerate along the parabolic subgroups.
Thus, Theorem~\ref{maintheorem} shows that for every $r\leq R$, the $r$-Martin boundary coincides with the 1-geometric boundary.
As noted in \cite[Corollary~1.5]{GGPY}, in this situation, this boundary also coincides with the CAT(0) boundary of $\Gamma$. \qed

\bibliographystyle{plain}
\bibliography{spectral_radius}

\end{document}